\tikzstyle arrowstyle=[scale=1]
\tikzstyle directed=[postaction={decorate,decoration={markings,
    mark=at position .65 with {\arrow[arrowstyle]{stealth}}}}]
\tikzstyle reverse directed=[postaction={decorate,decoration={markings,
    mark=at position .65 with {\arrowreversed[arrowstyle]{stealth};}}}]
\LetLtxMacro\orgvdots\vdots
\LetLtxMacro\orgddots\ddots
\DeclareRobustCommand\vdots{%
  \mathpalette\@vdots{}%
}
\newcommand*{\@vdots}[2]{%
  \sbox0{$#1\cdotp\cdotp\cdotp\m@th$}%
  \sbox2{$#1.\m@th$}%
  \vbox{%
    \dimen@=\wd0 %
    \advance\dimen@ -3\ht2 %
    \kern.5\dimen@
    \dimen@=\wd2 %
    \advance\dimen@ -\ht2 %
    \dimen2=\wd0 %
    \advance\dimen2 -\dimen@
    \vbox to \dimen2{%
      \offinterlineskip
      \copy2 \vfill\copy2 \vfill\copy2 %
    }%
  }%
}
\DeclareRobustCommand\ddots{%
  \mathinner{%
    \mathpalette\@ddots{}%
    \mkern\thinmuskip
  }%
}
\newcommand*{\@ddots}[2]{%
  \sbox0{$#1\cdotp\cdotp\cdotp\m@th$}%
  \sbox2{$#1.\m@th$}%
  \vbox{%
    \dimen@=\wd0 %
    \advance\dimen@ -3\ht2 %
    \kern.5\dimen@
    \dimen@=\wd2 %
    \advance\dimen@ -\ht2 %
    \dimen2=\wd0 %
    \advance\dimen2 -\dimen@
    \vbox to \dimen2{%
      \offinterlineskip
      \hbox{$#1\mathpunct{.}\m@th$}%
      \vfill
      \hbox{$#1\mathpunct{\kern\wd2}\mathpunct{.}\m@th$}%
      \vfill
      \hbox{$#1\mathpunct{\kern\wd2}\mathpunct{\kern\wd2}\mathpunct{.}\m@th$}%
    }%
  }%
}
\newtheorem{theorem}{Theorem}[section]
\newtheorem{lemma}[theorem]{Lemma}
\newtheorem*{lemma*}{Lemma}
\newtheorem*{theorem*}{Theorem}
\theoremstyle{definition}
\newtheorem{defn}[theorem]{Definition}
\newtheorem{prop}[theorem]{Proposition}
\newtheorem{cor}[theorem]{Corollary}
\newtheorem{question}[theorem]{Question}
\numberwithin{equation}{section}
\newcommand{\sgn}{\operatorname{sgn}}
\definecolor{pink}{rgb}{1,0,1}
\newcommand{\PSL}{\operatorname{PSL}}
\newcommand{\RR}{\mathbb{R}}
\newcommand{\NN}{\mathbb{N}}
\newcommand{\ZZ}{\mathbb{Z}}
\newcommand{\R}{\mathbb{R}}
\newcommand{\C}{\mathbb{C}}
\newcommand{\N}{\mathbb{N}}
\newcommand{\Z}{\mathbb{Z}}
\newcommand{\Q}{\mathbb{Q}}
\newcommand{\T}{\mathbb{T}}
\newcommand{\GL}{\mathrm{GL}}
\newcommand{\SL}{\mathrm{SL}}
\newcommand{\G}{\Gamma}
\newcommand{\Lam}{\Lambda}
\newcommand{\g}{\gamma}
\newcommand{\pos}{\textnormal{pos}}
\DeclareMathDelimiter{\backslash}    
   {\mathord}{symbols}{"6E}{largesymbols}{"0F}
\newcommand{\cM}{\mathcal{M}}
\DeclareMathSymbol{\setminus}{\mathbin}{symbols}{"6E}
\DeclareMathOperator{\MIN}{MIN}
\DeclareMathOperator{\Span}{Span}
\DeclareMathOperator{\vol}{vol}
\DeclareMathOperator{\Pc}{P}
\DeclareMathOperator{\Ima}{Im}
\newcommand{\oO}{\mathrm{O}}
\newcommand{\pa}{\partial} 
\newcommand{\cL}{\mathcal L} 
\newcommand{\choir}{\mathscr{C}}
\newcommand{\duets}{\mathscr{D}}
\newcommand{\solos}{\mathscr{S}}
\newcommand{\csec}{\mathcal{S}}
\newcommand{\audit}{\mathscr{R}}
\newcommand\inv[1]{#1^{\text{-} 1}}
\begin{document}
\title{The isospectral problem for flat tori from three perspectives}

\author{Erik Nilsson}
\address{Department of Mathematical Sciences\\KTH Royal Institute of Technology\\
SE-10044, Stockholm}

\email{erikni6@kth.se}

\author{Julie Rowlett}
\address{Department of Mathematical Sciences \\Chalmers University of Technology and The University of Gothenburg\\
SE-41296, Gothenburg}

\email{julie.rowlett@chalmers.se}
\thanks{JR is supported by Swedish Research Council Grant GAAME 2018-03873}

\author{Felix Rydell}
\address{Department of Mathematical Sciences\\KTH Royal Institute of Technology\\
SE-10044, Stockholm}

\email{felixry@kth.se}
\thanks{FR is partially supported by the Knut and Alice Wallenberg Foundation within their WASP (Wallenberg AI, Autonomous Systems and Software Program) AI/Math initiative.}

\subjclass[2020]{Primary 58C40, 11H55, 11H06; Secondary 11H50, 11H71, 94B05, 11F11}
\keywords{Eigenvalues, spectrum, flat torus, inverse spectral problem, representation numbers, lattice, linear code, quadratic form, modular form}

\date{}

\dedicatory{}

\begin{abstract} Flat tori are among the only types of Riemannian manifolds for which the Laplace eigenvalues can be explicitly computed.  
In 1964, Milnor used a construction of Witt to find an example of isospectral non-isometric Riemannian manifolds, a striking and concise result that occupied one page in the Proceedings of the National Academy of Science of the USA.  Milnor's example is a pair of 16-dimensional flat tori, whose set of Laplace eigenvalues are identical, in spite of the fact that these tori are not isometric.  A natural question is:  what is the \em lowest \em dimension in which such isospectral non-isometric pairs exist?  This isospectral question for flat tori can be equivalently formulated in analytic, geometric, and number theoretic language.  We explore this question from all three perspectives and describe its resolution by Schiemann in the 1990s. Moreover, we share a number of open problems.  
\end{abstract}

\maketitle

\section{Introduction} \label{s:intro}   
The Laplace eigenvalue problem is broadly appealing because it connects physics, number theory, analysis, and geometry.  At the same time, it is a challenging and frustrating problem because in general, one cannot solve it analytically.  Wielding heavy tools from functional analysis, one can prove that solutions exist, but this is not nearly as satisfying as being able to write down a solution in closed form. There is, however, a notable exception:  flat tori.  Although there is no smooth isometric embedding of a flat $n$-dimensional torus into $n+1$ dimensional Euclidean space, there is a $C^1$ embedding discovered by Nash \cite{nash_ii} and Kuiper \cite{kuiper}.  This embedding remained mysterious, eluding visualization until 2012  by Borrelli, Jabrane, Lazarus \& Thibert \cite{borrelli_book, borrelli_pnas}.  It is apparent from this \href{http://hevea-project.fr/ENPageToreImages.html#textLicence}{visualization} in Figure \ref{fig:cupcake} that the embedding is not smooth, because  the surface of the torus exhibits a fractal behavior in the normal direction.

\begin{figure}
\centering
 \includegraphics[width=0.8\linewidth]{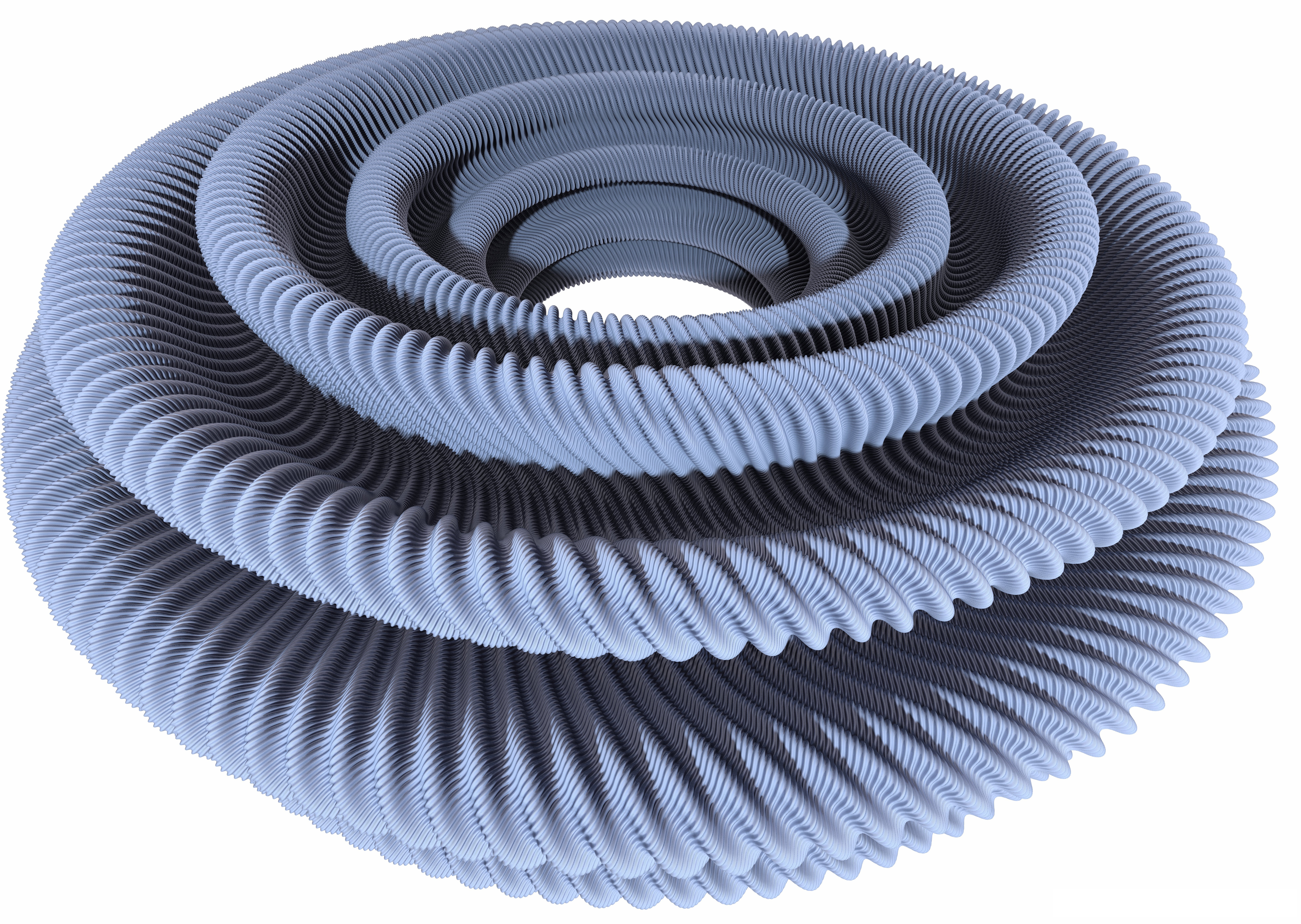}
\caption{A $C^1$ isometric embedding of a two-dimensional flat torus into three dimensional Euclidean space has a rough, corrugated looking surface as depicted in \cite{borrelli_pnas}.} 
\label{fig:cupcake}
\end{figure}

A flat torus is the quotient of $\R^n$ by a full-rank lattice with Riemannian metric induced by the standard Euclidean metric on $\R^n$.  It is a smooth and compact Riemannian manifold, whose Riemannian curvature tensor is identically zero. For the sake of completeness and inclusivity, we recall 

\begin{defn} \label{def:lattice} 
An $n$-dimensional (full rank) \em lattice \em $\G \subset \R^n$ is a set which can be expressed as $\G:=A\Z^n$ for an invertible $n \times n$ matrix $A$ with real coefficients. The matrix $A$ is called a \em basis matrix \em of $\G$.  The lattice defines the \em flat torus \em $\mathbb{T}_\G = \R^n / \G$, with Riemannian metric induced by the Euclidean metric on $\R^n$.  
\end{defn} 

The Laplace eigenvalue problem in this context is to find all functions defined on $\R^n$ for which there exists $\lambda \in \C$ such that   
\begin{align} \label{eq:eigPb}
    \Delta f(x) = \lambda f(x), \quad f(x+\ell) = f(x) \textrm{ and }\nonumber\\
    \nabla f(x+\ell) = \nabla f(x) \quad \forall \ell \in \G \textrm{ and } x \in \R^n.
\end{align}
Our sign convention for the Euclidean Laplace operator is 
\[ \Delta = - \sum_{i=1} ^n \frac{\pa^2}{\pa x_i ^2}.\] 
If $f$ is a solution to the Laplace eigenvalue problem, it is an \em eigenfunction. \em   With a bit of functional analysis \cite{cour-hil} one can prove that the corresponding eigenvalue is non-negative. The simplest case is a one-dimensional lattice, which consists of all integer multiples of a number, $\ell$.  The Laplace eigenvalue problem is then to find all functions which satisfy 
\[ - f''(x) = \lambda f(x), \quad f(x+k \ell) = f(x), \quad f'(x+k\ell) = f'(x), \quad \forall k \in \Z.\] 
Using calculus, the functions 
\[ f_n (x) := e^{2\pi i n x/\ell}, \quad n \in \Z\] 
are eigenfunctions, with corresponding eigenvalues 
\[ \lambda_n = \frac{4\pi^2 n^2}{\ell^2}.\] 
To prove that the functions $\{f_n\}_{n \in \Z}$ are \em all \em of the eigenfunctions up to scaling, it suffices to prove that they constitute an orthogonal basis for the Hilbert space $\cL^2$, as demonstrated in \cite{folland_fourier}.   The set of all of its Laplace eigenvalues with multiplicities is known as its \em spectrum.  \em  

In physics, the Laplace eigenvalue problem is a crucial step in solving the wave equation, as solutions can be expressed in terms of the Laplace eigenfunctions and eigenvalues.  The Laplace eigenvalues are in bijection with with the resonant frequencies of solutions to the wave equation.   For this reason, Kac \cite{kac} would say that \em spectral invariants, \em which are all quantities that are entirely determined by the spectrum, are \em audible. \em  We therefore ask \em can one hear the shape of a flat torus?  \em

\subsection{Three perspectives}\label{ss:three}   
We are interested in the question:  if two flat tori are isospectral, then are they isometric?  This has an equivalent formulation in both number theoretic terms as well as purely geometric terms as summarized in Table \ref{table:3eq} . This observation is crucial to obtain a thorough investigation.  
\begin{table}[H]
    \begin{tabular}{|l|l|} 
    \hline
    Analysis       & \makecell{\textit{Does the spectrum of the Laplace operator}\\ \textit{determine the geometry of flat tori?}}                            \\ \hline
    Number Theory & \makecell{\textit{Are positive definite quadratic forms determined}\\ \textit{by their values over the integers counting multiplicity?}}      \\ \hline
    Geometry        & \makecell{\textit{Do the lengths of points of a lattice with multiplicity}\\ \textit{determine the lattice itself up to congruency?}} \\ \hline
    \end{tabular}\caption{}
    \label{table:3eq} 
\end{table}
The first perspective gives the subject physical motivation. The second perspective is number theoretical, thereby opening the possibility to employ powerful  techniques from analytic number theory.  The third perspective is a more intuitive and purely geometric question about lattices. It is unfortunate that the mathematical language is quite different when the problem is investigated from these different viewpoints in the sense that disparate fields do not cross-reference each other.  Consequently, we will take this opportunity to connect some of the different terminologies. We aim to provide interested readers with a more thorough understanding of the question by studying it from all three perspectives.  


\subsection{Organization} \label{s:organization}   
In \S \ref{s:ingredients} we collect the essential ingredients required to investigate flat tori and their spectra.  We conclude that section by combining some of these ingredients to prove that isospectral rectangular flat tori are isometric, as are isospectral Euclidean boxes.  In \S \ref{s:kitchen} we introduce useful techniques, or key kitchenware, for investigating the spectrum and geometry of flat tori. We explore in \S \ref{s:differentdonuts} famous examples of non-isometric isospectral flat tori. Section \S \ref{s:schiemann} is dedicated to popularizing the fundamental yet not widely known theorem of Schiemann. We conclude in \S \ref{s:conjectures} with a collection of conjectures and open problems.  This includes a discussion of the open question:  how  \em many \em flat tori can be mutually isospectral and non-isometric in a given dimension?

\section{Essential ingredients and a case study} \label{s:ingredients} 
 We begin with the key ingredients in the mathematics of flat tori and their spectra.  
 
 \subsection{Lattices, congruency, and isometry} \label{ss:lattices_congruency_isometry} 
The general linear group is the set of all invertible $n \times n$ matrices with real coefficients, denoted $\GL_n (\R)$.  The unimodular group  $\GL_n(\Z) \subset \GL_n(\R)$ consists of those matrices $M \in \GL_n(\R)$ such that both $M$ and $M^{-1}$ have integer coefficients; necessarily all elements of $\GL_n(\Z)$ have determinant equal to $\pm 1$.  The orthogonal group $\oO_n(\R) \subset \GL_n(\R)$ consists of those matrices whose inverse matrix and transpose matrix are equal.  The following lemma summarizes basic facts about the orthogonal group.  Its proof is straightforward and therefore omitted.  

\begin{lemma}\label{le: orthl}  A matrix is an element of $\oO_n(\R)$  if and only if its column vectors form an orthonormal basis of $\R^n$. Further, $A\in \mathrm{O}_n(\RR)$ if and only if $Ax\cdot Ay=x\cdot y$ for any $x,y\in \RR^n$.   A matrix 
$C\in \oO_n(\R)$ if and only if the linear transformation $C:\R^n \to \R^n$ defined by $C(v) = Cv$ maps any orthonormal basis of $\R^n$ to another orthonormal basis of $\R^n$. 
\end{lemma}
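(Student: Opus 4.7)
\bigskip
\noindent\textbf{Proof proposal.} The three characterizations all reduce to the defining identity $A^{\mathsf{T}} A = I$, so the plan is to establish one bridge to that identity and then reuse it.

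First I would prove part (1) directly. The $(i,j)$-entry of $A^{\mathsf{T}} A$ is the standard inner product of the $i$-th and $j$-th columns of $A$. Hence $A^{\mathsf{T}} A = I$ if and only if those columns $c_1,\dots,c_n$ satisfy $c_i \cdot c_j = \delta_{ij}$, which is exactly the statement that they form an orthonormal set; since there are $n$ of them in $\R^n$, they automatically form a basis. Conversely, orthonormality of the columns forces $A^{\mathsf{T}} A = I$, i.e.\ $A^{\mathsf{T}} = A^{-1}$, so $A \in \oO_n(\R)$.

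For part (2), the forward direction is a one-line computation: if $A \in \oO_n(\R)$, then
\[
  Ax \cdot Ay = (Ax)^{\mathsf{T}}(Ay) = x^{\mathsf{T}} A^{\mathsf{T}} A\, y = x^{\mathsf{T}} y = x \cdot y.
\]
For the converse, specialize $x=e_i$, $y=e_j$ to the standard basis vectors. Then $Ae_i$ is the $i$-th column $c_i$ of $A$, and the hypothesis gives $c_i \cdot c_j = e_i \cdot e_j = \delta_{ij}$. By part (1), this places $A$ in $\oO_n(\R)$.

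For part (3), the forward direction follows immediately from part (2): if $\{v_1,\dots,v_n\}$ is an orthonormal basis and $A \in \oO_n(\R)$, then $Av_i \cdot Av_j = v_i \cdot v_j = \delta_{ij}$, so $\{Av_1,\dots,Av_n\}$ is again an orthonormal set of $n$ vectors, hence an orthonormal basis. For the converse, the statement hypothesizes that $A$ maps every orthonormal basis to an orthonormal basis; in particular it does so for the standard basis, and then the images $Ae_i$ are exactly the columns of $A$, so part (1) applies.

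The main things to be careful about are notational: orientation of the statement (``if and only if''), and in (3) distinguishing the quantifier ``for any orthonormal basis'' (used in the forward direction) from the existence of a single witnessing orthonormal basis (sufficient for the converse). No step is genuinely difficult; the only potential obstacle is ensuring that part (1) is proved before it is invoked in the converses of (2) and (3), so the logical order (1) $\Rightarrow$ (2) $\Rightarrow$ (3) should be preserved.
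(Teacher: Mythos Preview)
Your proof is correct and follows the standard route via the identity $A^{\mathsf T}A=I$. The paper itself omits the proof entirely, calling it ``straightforward,'' so there is no argument to compare against; your write-up is exactly the kind of verification the authors had in mind.
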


\begin{defn}[Lattices of arbitrary rank, trivial lattices, sublattices, and integer lattices] 
A set $v_1,\ldots,v_k\in \R^n$ of linearly independent vectors define the \em k-rank lattice,  \em also called a $k$-dimensional lattice:  
\[ \Gamma := v_1\Z+\cdots +v_k\Z = \left\{ \sum_{j=1} ^k z_j v_j : z_j \in \Z \;\forall j \right\}. \]  
The matrix whose column vectors are equal to $v_1, \ldots, v_k$ is a \em basis matrix. \em Full-rank lattices are n-rank lattices in $\R^n$.  A \em trivial lattice \em is a set whose only element is $0$. A \textit{sublattice} $\Lambda$ of a lattice $\G$ is a lattice such that $\Lambda\subseteq \G$. An \em integer lattice \em is a sublattice of $\Z^n$.  
\end{defn}

We next collect basic facts about lattices.  
\begin{prop}\label{prop:sublattice}
Let $\Lambda$ and $\G$ be full-rank lattices and let $A_\Lambda,A_\G$ be corresponding bases.
\begin{enumerate}
    \item $\Lambda$ is a sublattice of $\G$ if and only if $A_\Lambda=A_\G V$ for some $V\in \mathbb{Z}^{n\times n}$.
    \item If $\Lambda\subseteq\G$, then $\det(A_\Lambda)/\det(A_\G)\in \mathbb{Z}\setminus\{0\}$ and 
    
   \[ \big(\det(A_\Lambda)/\det(A_\G)\big)\G\subseteq \Lambda. \] 
   \item Two matrices $A_1$ and $A_2$ in $\GL_n(\R)$ are both bases for the same lattice  if and only if there is a matrix $B \in \GL_n (\Z)$ such that $A_2 = A_1 B$.
    \item If $\Lambda\subseteq \G$, then the \textit{index} of $\Lambda$ in $\G$ as a subgroup, denoted $[\G:\Lambda]$, is equal to $|\det(A_{\Lambda})/\det(A_{\G})|$.
       \item A non-trivial additive subgroup $\G \subset \R^n$ is discrete if and only if it is a k-rank lattice for some $1\leq k \leq n$.  
\end{enumerate}
\end{prop}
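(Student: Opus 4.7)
The plan is to handle the five parts in the order listed, since each one builds on the previous. Part (5) is the odd one out and should be treated separately.

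For part (1), the strategy is direct translation. If $\Lambda \subseteq \Gamma$, then each column of $A_\Lambda$ lies in $\Gamma$, hence is an integer combination of the columns of $A_\Gamma$; collecting these coefficients into a matrix $V \in \mathbb{Z}^{n \times n}$ gives $A_\Lambda = A_\Gamma V$. Conversely, writing a generic element of $\Lambda$ as $A_\Lambda z = A_\Gamma(Vz)$ for $z \in \mathbb{Z}^n$ shows that it lies in $\Gamma$. I would note that since $A_\Lambda$ is invertible, so is $V$, which will be used in parts (2)--(4).

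For part (2), I would take $V$ from part (1) and compute determinants: $\det(A_\Lambda) = \det(A_\Gamma)\det(V)$, and since $V$ is integer and invertible, $\det(V) \in \mathbb{Z} \setminus \{0\}$. For the containment, set $d := \det(V)$ and use Cramer's rule in the form $dV^{-1} = \operatorname{adj}(V)$, an integer matrix. Then for any $z \in \mathbb{Z}^n$,
\[
d A_\Gamma z = A_\Gamma V V^{-1} d z = A_\Lambda \operatorname{adj}(V) z \in \Lambda.
\]
Part (3) is then a two-sided application of part (1): $A_1 \Z^n = A_2 \Z^n$ forces $A_2 = A_1 B$ and $A_1 = A_2 B'$ for integer matrices $B, B'$, and uniqueness of the basis matrix combined with invertibility gives $B' = B^{-1}$, so $B \in \GL_n(\Z)$.

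For part (4), the clean way is to observe that left-multiplication by $A_\Gamma^{-1}$ is a group isomorphism sending $\Gamma$ to $\mathbb{Z}^n$ and $\Lambda$ to $V\mathbb{Z}^n$, so $[\Gamma : \Lambda] = [\mathbb{Z}^n : V\mathbb{Z}^n]$; the index of $V\mathbb{Z}^n$ in $\mathbb{Z}^n$ equals $|\det V|$, either by the Smith normal form (reducing $V$ to a diagonal matrix with the elementary divisors) or by directly counting a fundamental domain. Combining with the determinant computation from part (2) gives the formula.

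Part (5) will be the main obstacle because it does not follow from the matrix manipulations above. The backward direction is immediate, since a $k$-rank lattice inherits a positive minimal nonzero length from $\R^n$ and is therefore discrete. The forward direction requires a structural argument: assuming $\Gamma$ is a non-trivial discrete subgroup of $\R^n$, I would induct on $k := \dim_{\R} \operatorname{Span}(\Gamma)$. For $k=1$, take an element $v \in \Gamma$ of minimal positive norm among those in $\operatorname{Span}(\Gamma)$ (which exists by discreteness and a compactness argument) and show $\Gamma = \Z v$ using the Euclidean algorithm on real numbers. For the inductive step, pick $v_1, \ldots, v_k \in \Gamma$ spanning $\operatorname{Span}(\Gamma)$ and use a closest-vector argument inside the parallelepiped $P := \{\sum t_i v_i : 0 \le t_i \le 1\}$: since $\Gamma \cap P$ is discrete and bounded, hence finite, one can successively pick elements minimizing appropriate coordinates to build a basis of $\Gamma$, concluding that $\Gamma$ is indeed a $k$-rank lattice.
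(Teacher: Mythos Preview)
Your proposal is correct. The paper's own treatment is minimal: it declares (1), (2), (3) to be ``direct observations'' and refers to the literature for (4) and (5). Your outline supplies exactly the standard arguments those references contain---the adjugate/Cramer trick for the containment in (2), the Smith normal form (or fundamental-domain count) for the index formula in (4), and the induction-on-span with a shortest-vector/parallelepiped argument for (5). So your approach is not different in spirit, just self-contained where the paper is not; the benefit is that a reader need not chase the citations, at the cost of a longer write-up for what the paper treats as folklore.
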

\begin{proof} Statements (1), (2), and (3) are direct observations. A proof of (4) can be found in \cite[\S 1.2.4]{engel2004lattice}, and (5) is contained in \cite[p. 24]{neukirch2013algebraic}.  
\end{proof}

\begin{defn}[Congruent lattices and isometric flat tori] Let $\G_1 \subset \R^n$ and $\G_2 \subset \R^m$ be lattices. If $n=m$, then $\G_1$ and $\G_2$ are \em congruent \em if there is a $C \in \oO_n (\R)$ such that $\G_2 = C \G_1$.  If $n>m$, then $\G_1$ is congruent to $\G_2$ if $\G_2 \times \{0\}=C\G_1$ for an orthogonal matrix $C\in \oO_n(\R)$ and a trivial lattice in $\R^{n-m}$.  Two flat tori are \em isometric \em if they are isometric as Riemannian manifolds. 
\end{defn} 

We use the same notation $\cong$ for both isometry of flat tori as well as congruency of lattices as justified by the following theorem which shows that  flat tori are isometric if and only if the lattices that define them are congruent.    
\begin{theorem}[see p. 5 of \cite{berger1971spectre}]\label{th: RieIsom} Two flat tori are isometric in the Riemannian sense if and only if their associated lattices are congruent.  
\end{theorem}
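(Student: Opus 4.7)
The plan is to prove the two implications separately. The backward implication (congruent lattices yield isometric tori) is essentially immediate: if $\Gamma_2 = C\Gamma_1$ for some $C \in \oO_n(\R)$, then by Lemma~\ref{le: orthl} the linear map $C : \R^n \to \R^n$ is a Euclidean isometry, and it carries cosets $x + \Gamma_1$ to $Cx + \Gamma_2$, so it descends to a well-defined smooth bijection $\bar{C} : \mathbb{T}_{\Gamma_1} \to \mathbb{T}_{\Gamma_2}$ with smooth inverse $\overline{C^{-1}}$. Because the quotient projections are local isometries and $C$ is a global Euclidean isometry, $\bar{C}$ is a Riemannian isometry.

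For the forward implication, suppose $\phi : \mathbb{T}_{\Gamma_1} \to \mathbb{T}_{\Gamma_2}$ is a Riemannian isometry; since isometric Riemannian manifolds have equal dimension, we may write $\Gamma_1, \Gamma_2 \subset \R^n$ for the same $n$. The quotient projections $\pi_i : \R^n \to \mathbb{T}_{\Gamma_i}$ are Riemannian universal coverings, since $\R^n$ is simply connected, complete, and flat. By standard covering space theory, I would obtain a lift $\tilde{\phi} : \R^n \to \R^n$ such that $\pi_2 \circ \tilde{\phi} = \phi \circ \pi_1$; lifting $\phi^{-1}$ similarly and composing yields that $\tilde{\phi}$ is a diffeomorphism. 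Since both projections are local isometries and $\phi$ is a global isometry, $\tilde{\phi}$ is a Riemannian isometry of flat $\R^n$.

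The crux of the argument is the classical fact that every Riemannian isometry of flat $\R^n$ has the form $\tilde{\phi}(x) = Cx + b$ with $C \in \oO_n(\R)$ and $b \in \R^n$: a Riemannian isometry takes geodesics to geodesics, hence affine lines to affine lines, so is affine, and its linear part preserves the Euclidean inner product and is therefore orthogonal. The commutation relation $\pi_2 \circ \tilde{\phi} = \phi \circ \pi_1$ then forces $\tilde{\phi}(x + \gamma) - \tilde{\phi}(x) = C\gamma \in \Gamma_2$ for every $\gamma \in \Gamma_1$ and every $x \in \R^n$, so $C\Gamma_1 \subseteq \Gamma_2$. Applying the same reasoning to $\phi^{-1}$ (or comparing covolumes via Proposition~\ref{prop:sublattice}(4), noting that $|\det C| = 1$) produces the reverse inclusion, whence $C\Gamma_1 = \Gamma_2$ and $\Gamma_1 \cong \Gamma_2$.

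The main obstacle is the lifting step together with the characterization of Euclidean isometries as affine maps; both are classical but warrant care. In particular, one should invoke that an isometry between complete connected Riemannian manifolds lifts to an isometry between their universal covers, and that a bijective local isometry of $\R^n$ equipped with the flat metric is necessarily an element of the Euclidean motion group $\oO_n(\R) \ltimes \R^n$.
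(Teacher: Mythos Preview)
Your argument is correct and is precisely the classical proof one finds in the cited reference. The paper itself does not supply a proof of this theorem; it merely states the result with a pointer to \cite{berger1971spectre}, so there is nothing to compare against beyond noting that your covering-space lift followed by the identification of Euclidean isometries with $\oO_n(\R)\ltimes\R^n$ is exactly the standard route.
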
 

Necessary and sufficient conditions for full rank lattices to be congruent are given in the following lemma.  
\begin{lemma}\label{le:cong} Let $\G_1=A_1\Z^n$ and $\G_2=A_2\Z^n$ be two full-rank lattices.  Then 
\[ \G_1\cong\G_2 \iff CA_1=A_2B\] 
for some $B\in \GL_n(\ZZ),C\in \oO_n(\R)$.
\end{lemma}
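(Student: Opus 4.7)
The plan is to prove both implications by directly unpacking the definition of congruency and invoking Proposition~\ref{prop:sublattice}(3), which characterizes when two matrices are bases of the same lattice.

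For the forward direction, I would assume $\G_1 \cong \G_2$. By the definition of congruent lattices (in the full-rank case $n=m$), there exists $C \in \oO_n(\R)$ such that $\G_2 = C\G_1$. Rewriting in terms of basis matrices, this means $A_2 \Z^n = C A_1 \Z^n = (CA_1)\Z^n$. Hence both $A_2$ and $CA_1$ are basis matrices for the same full-rank lattice $\G_2$. I would then apply Proposition~\ref{prop:sublattice}(3) to conclude that there exists $B \in \GL_n(\Z)$ with $CA_1 = A_2 B$, as desired.

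For the backward direction, I would assume $CA_1 = A_2 B$ for some $C \in \oO_n(\R)$ and $B \in \GL_n(\Z)$. Because $B$ is unimodular, left-multiplication by $B$ permutes $\Z^n$ bijectively, so $B\Z^n = \Z^n$. Then
\[ C\G_1 = C(A_1 \Z^n) = (CA_1)\Z^n = (A_2 B)\Z^n = A_2(B\Z^n) = A_2 \Z^n = \G_2, \]
and since $C \in \oO_n(\R)$ this shows $\G_1 \cong \G_2$ by the definition of congruency.

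I do not anticipate a genuine obstacle here; the statement is essentially a translation of the set-theoretic equality $\G_2 = C\G_1$ into matrix language using the ambiguity of a lattice basis up to right-multiplication by $\GL_n(\Z)$. The only point requiring slight care is to note explicitly that $B\Z^n = \Z^n$ for $B \in \GL_n(\Z)$, which justifies the chain of equalities in the backward direction.
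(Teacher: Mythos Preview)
Your proposal is correct and is exactly the natural argument; the paper in fact states Lemma~\ref{le:cong} without proof, treating it as an immediate consequence of the definition of congruency together with Proposition~\ref{prop:sublattice}(3), which is precisely what you invoke.
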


\subsection{The spectrum of a flat torus} \label{ss:donut_song} 
The spectrum of a flat, $\R^n/\Gamma$, is in bijection with the lengths of the vectors in the dual lattice of $\Gamma$.

\begin{defn}[Dual lattice] \label{def:dual_lattice} For a full-rank lattice $\G \subset \R^n$, its \em dual lattice \em is defined to be 
\[ \G^*:=\{\ell\in \R^n:\ell\cdot \g \in \Z, \:\:\forall \g \in \G\}. \]  
\end{defn} 

It is straightforward to show that the dual lattice of a full-rank lattice $\G$ is itself a lattice, and there is a natural bijection between $\G^*$ and $\mathrm{Hom}(\G,\ZZ)$, justifying the name dual lattice.  If a full-rank lattice $\G$ has basis matrix $A$, then $(A^{-1})^T$ is a basis matrix for the dual lattice.

\begin{theorem}[The Laplace spectrum of a flat torus]\label{th: LaplaceSpec} The eigenvalues of a flat torus $\R^n / \G$ are precisely $4\pi^2 ||\ell||^2$ such that $\ell$ is an element of the dual lattice, $\G^*$.  The multiplicity of such an eigenvalue is the number of distinct elements of $\G^*$ that have the same length as $\ell$.  The eigenspace is spanned by the functions 
\[ \{u_\ell (x) = e^{2\pi i x \cdot \ell} \}_{\ell\in \G^*}. \]  
The collection of eigenvalues, counted with multiplicity, is \em the spectrum \em of the flat torus.
 \end{theorem}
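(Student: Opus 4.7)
The plan is to break the theorem into three steps: (i) exhibit the proposed functions as eigenfunctions, (ii) check that each $u_\ell$ descends to a well-defined smooth function on $\R^n/\Gamma$, and (iii) show that the family $\{u_\ell\}_{\ell \in \Gamma^*}$ spans $\mathcal{L}^2(\R^n/\Gamma)$, so nothing else can occur in the spectrum.

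For (i), I would compute directly: for $\ell = (\ell_1,\ldots,\ell_n) \in \R^n$, $\partial_{x_j} e^{2\pi i x \cdot \ell} = 2\pi i \ell_j e^{2\pi i x \cdot \ell}$, and therefore $\Delta u_\ell = -\sum_j (2\pi i \ell_j)^2 u_\ell = 4\pi^2 \|\ell\|^2 u_\ell$. For (ii), I would use the definition of the dual lattice: for $\gamma \in \Gamma$, $u_\ell(x+\gamma) = e^{2\pi i (x+\gamma)\cdot \ell} = e^{2\pi i x \cdot \ell} e^{2\pi i \gamma \cdot \ell} = u_\ell(x)$, since $\gamma \cdot \ell \in \Z$ exactly by Definition \ref{def:dual_lattice}. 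The gradient condition $\nabla u_\ell(x+\gamma) = \nabla u_\ell(x)$ follows identically. Conversely, if $\ell \notin \Gamma^*$, then $u_\ell$ is not $\Gamma$-periodic, so it does not define a function on the torus.

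For (iii), the main obstacle, I would reduce to the classical $n$-dimensional Fourier series theorem on the standard torus $\R^n/\Z^n$ via a linear change of variables. Concretely, write $\Gamma = A\Z^n$ and consider the diffeomorphism $\Phi : \R^n/\Z^n \to \R^n/\Gamma$, $[y] \mapsto [Ay]$. A function $f$ on $\R^n/\Gamma$ pulls back to $\tilde f(y) = f(Ay)$ on $\R^n/\Z^n$, and by classical Fourier analysis (as in \cite{folland_fourier}) $\{e^{2\pi i y \cdot m}\}_{m \in \Z^n}$ is a complete orthonormal basis of $\mathcal{L}^2(\R^n/\Z^n)$. The substitution $y = A^{-1} x$ converts $e^{2\pi i y \cdot m} = e^{2\pi i x \cdot (A^{-T} m)}$, and as $m$ ranges over $\Z^n$, the vector $A^{-T} m$ ranges over a basis of $\Gamma^*$ and hence over all of $\Gamma^*$ (using that $A^{-T}$ is a basis matrix for $\Gamma^*$, as noted right after Definition \ref{def:dual_lattice}). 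Hence $\{u_\ell\}_{\ell \in \Gamma^*}$ is an orthogonal basis for $\mathcal{L}^2(\R^n/\Gamma)$ with respect to the $L^2$ inner product induced from a fundamental domain of $\Gamma$.

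Finally, combining these facts: any $\mathcal{L}^2$ eigenfunction of $\Delta$ expands in this basis, and since the $u_\ell$ are eigenfunctions with distinct eigenvalues whenever the lengths $\|\ell\|$ differ, a Fourier series can be an eigenfunction with eigenvalue $\lambda$ only if all its nonzero coefficients correspond to $\ell$ with $4\pi^2 \|\ell\|^2 = \lambda$. Hence the spectrum is exactly $\{4\pi^2\|\ell\|^2 : \ell \in \Gamma^*\}$, and the $\lambda$-eigenspace is spanned by those $u_\ell$ with $\|\ell\|^2 = \lambda/(4\pi^2)$, giving the stated multiplicity. The only subtle point, and the main work, is justifying the reduction to the standard torus and verifying that $A^{-T}\Z^n = \Gamma^*$; everything else is straightforward.
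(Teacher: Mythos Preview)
Your proof is correct and follows the same overall structure as the paper's: verify that each $u_\ell$ is an eigenfunction, check $\Gamma$-periodicity via the defining property of the dual lattice, and then argue completeness of $\{u_\ell\}_{\ell\in\Gamma^*}$ in $\mathcal{L}^2$. The only difference is in how completeness is justified. The paper appeals abstractly to regular Sturm--Liouville theory (citing \cite{folland_fourier}), whereas you give an explicit reduction to the standard torus via the linear change of variables $y\mapsto Ay$, identifying $\{u_\ell\}_{\ell\in\Gamma^*}$ with the pullback of the classical Fourier basis $\{e^{2\pi i y\cdot m}\}_{m\in\Z^n}$ under $\ell = A^{-T}m$. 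Your route is more self-contained and makes the role of the identity $\Gamma^* = A^{-T}\Z^n$ transparent; the paper's route is shorter to state but imports a heavier theorem. One small wording slip: when you say ``$A^{-T}m$ ranges over a basis of $\Gamma^*$ and hence over all of $\Gamma^*$,'' you mean that $A^{-T}$ is a basis \emph{matrix} for $\Gamma^*$, so $A^{-T}m$ ranges over all of $\Gamma^*$ as $m$ ranges over $\Z^n$; the intermediate phrase ``ranges over a basis'' should be dropped.
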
 

\begin{proof} For any $\ell$ in the dual lattice, the function $u_\ell (x) = e^{2\pi i x \cdot \ell}$ satisfies both the Laplace eigenvalue equation on $\R^n$ as well as $u_\ell(x+\g) = u_\ell(x)$, and $\nabla u_\ell (x+\g) = \nabla u_\ell (x)$ for all $\g \in \G$.  Moreover, one can demonstrate that the functions $u_\ell$ are an orthogonal basis for the Hilbert space $\cL^2$ on the torus, for example by showing they are the solutions of a regular Sturm-Liouville problem \cite{folland_fourier}.    
\end{proof}

\begin{defn}[Isospectrality] Two flat tori $\R^n/\G$ and $\R^n/\Lambda$ are \em isospectral \em if they have the same Laplace spectrum.
\end{defn} 

One can readily check that if two flat tori are isometric, then they are isospectral.  Is the converse true?  We explore this question in \S \ref{s:schiemann}.

\subsubsection{Poisson's summation formula} \label{s:poissonsmagic} 
Poisson's summation formula is a powerful tool because it equates a purely analytical object with a purely geometric one.  Two important geometric ingredients in the formula are the volume of the flat torus and its length spectrum.

\begin{defn}[Volume]
For a flat torus $\mathbb{T}_\G = \R^n / \G$, with full-rank lattice $\G=A\Z^n$, the \em volume \em of $\mathbb T_\G$ (with respect to the flat Riemannian metric induced by the Euclidean metric on $\R^n$) is equal to 
\[ \vol(\mathbb{T}_\G):=|\det(A)|.\] 
\end{defn}

It is straightforward to show that the volume is independent of the choice of basis matrix.  If $\G$ is an integer lattice, then  $\vol(\G) \Z^n$ is a sublattice of $\G$.

\begin{defn}[The length spectrum] For a flat torus $\mathbb{T}_\G = \R^n / \G$, with lattice $\G=A\Z^n$, the \em length spectrum \em  of $\mathbb T_\G$ (with respect to the flat Riemannian metric induced by the Euclidean metric on $\R^n$) is equal to the collection of lengths of closed geodesics, counted with multiplicity, and denoted by $\cL_\G$. This length spectrum is also equal to the collection of lengths of lattice vectors, $||\gamma||$ for $\gamma \in \Gamma$, counted with multiplicity, which is how we define the length spectrum of the lattice $\Gamma$.\footnote{We note that there is a related question of \textit{length-equivalence} of lattices \cite{oishi2020positive}, in which one considers the set of lengths of lattice vectors, ignoring their multiplicity.} 
\end{defn}

The Poisson summation formula \cite[p. 125]{cohn2014formal} equates a sum over the Laplace spectrum with a sum over the length spectrum, thereby relating these two spectra. 

\begin{theorem}[Poisson summation formula]\label{th: poiss} For an full-rank lattice $\G \subset \R^n$ the following series converge for any $t\in (0,\infty)$ and satisfy
\begin{align} \label{eq:poisson1} 
    \sum_{\g^*\in \G^*}e^{-4\pi^2 \|\g^*\|^2t} & =\frac{\vol(\G)}{(4\pi t)^{n/2}}\sum_{\g\in \G}e^{-\|\g\|^2/4t}\\
    \sum_{\g\in \G}e^{-\|\g\|^2/4t} & = \frac{(4\pi t)^{n/2}}{\vol(\G)}\sum_{\g^*\in \G^*}e^{-4\pi^2 \|\g^*\|^2t}  \label{eq:poisson2} 
\end{align}
\end{theorem}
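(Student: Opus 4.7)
The plan is to reduce the statement to the classical Poisson summation formula for Schwartz functions on $\mathbb{R}^n$ applied to a Gaussian, after first generalizing the classical formula from $\mathbb{Z}^n$ to an arbitrary full-rank lattice $\Gamma = A\mathbb{Z}^n$ by a change of variables.

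First, I would take as a starting point the classical identity $\sum_{m\in\mathbb{Z}^n} f(m) = \sum_{m\in\mathbb{Z}^n}\hat f(m)$ for any Schwartz function $f$, using the convention $\hat f(\xi) = \int_{\mathbb{R}^n} f(x)e^{-2\pi i x\cdot\xi}\,dx$. Applying this to $g(x) := f(Ax)$ and using both $\hat g(\xi) = |\det A|^{-1}\hat f((A^{-T})\xi)$ and the fact (recalled in the excerpt) that $A^{-T}$ is a basis matrix for $\Gamma^*$, I obtain the lattice form
\begin{equation*}
\sum_{\gamma\in\Gamma} f(\gamma) \;=\; \frac{1}{\vol(\Gamma)}\sum_{\gamma^*\in\Gamma^*}\hat f(\gamma^*).
\end{equation*}

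Next, I would specialize to the Gaussian $f(x) = e^{-\|x\|^2/4t}$, whose Fourier transform is well-known to be $\hat f(\xi) = (4\pi t)^{n/2} e^{-4\pi^2 t\|\xi\|^2}$; this is a one-variable calculation done coordinatewise by completing the square and using $\int_{\mathbb{R}} e^{-\pi x^2}\,dx = 1$. Substituting into the lattice formula above gives \eqref{eq:poisson1} immediately, and \eqref{eq:poisson2} then follows either by rearranging \eqref{eq:poisson1}, or equivalently by applying \eqref{eq:poisson1} with $\Gamma$ replaced by $\Gamma^*$ and using $(\Gamma^*)^* = \Gamma$ together with $\vol(\Gamma^*) = 1/\vol(\Gamma)$.

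For the convergence claim, I would simply note that both sides are sums of the form $\sum_{\gamma\in\Gamma} e^{-c\|\gamma\|^2}$ with $c>0$, which converge absolutely (and very rapidly) by comparison with a Gaussian integral over $\mathbb{R}^n$; since all terms are positive there is no issue in swapping the finite/infinite limits required to justify the summation. The main obstacle, if any, is just bookkeeping: one must be careful about the Fourier transform normalization, the appearance of the factor $\vol(\Gamma) = |\det A|$ under the change of variables, and the specific constants in the Gaussian so that the factors of $4\pi t$ and $4\pi^2 t$ come out exactly as stated. The classical Poisson summation formula itself is a standard result (for instance \cite[Chapter 8]{folland_fourier}) which I would invoke as a black box.
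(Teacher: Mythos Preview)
Your proposal is correct and follows the standard derivation. Note, however, that the paper does not actually prove Theorem~\ref{th: poiss}: it states the result and cites \cite[p.~125]{cohn2014formal} for it, so there is no in-paper proof to compare against. Your argument---deducing the lattice version from the classical $\mathbb{Z}^n$ Poisson summation via the change of variables $x\mapsto Ax$, then specializing to the Gaussian $f(x)=e^{-\|x\|^2/4t}$---is exactly the standard route and would serve perfectly well as a proof here.
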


The first series in Poisson's summation formula is a spectral invariant known as the \em heat trace.\em

\begin{defn}[Heat trace] \label{def:theta} The \em heat trace \em of a flat torus $\T_\G$ (and lattice $\G$) is defined for $t>0$ by
\[ \sum_{\g^* \in \G^*} e^{-4\pi^2 ||\g^*||^2 t} = \sum_{k \geq 0} e^{-\lambda_k t},\] 
where the eigenvalues $\lambda_k$ are ordered as $0 = \lambda_0 < \lambda_1 \leq \lambda_2 \uparrow \infty$ and counting multiplicities.  
\end{defn} 

A closely related spectral invariant is the \em theta series.  \em 

\begin{defn}[Theta series] \label{def:theta}  Let $\G$ be a lattice. Then we define the \em theta series \em of the lattice (and flat torus) as  
\[ \theta_\Gamma(z):=\sum_{\g \in \G} e^{i \pi z ||\g||^2}, \quad z \in \C \textrm{ with } \Ima z > 0. \] 
\end{defn}

By Poisson's summation formula, the lattices $\G,\Lambda$ are isospectral if and only if their dual lattices $\G^*,\Lambda^*$ are isospectral.
The Poisson summation formula shows that the dimension, volume, and length spectrum are all spectral invariants.  Moreover, it establishes the equivalence of the analytical and geometrical formulations in \S \ref{ss:three}.  

\begin{cor} \label{cor:dim_vol} 
The following are equivalent:
\begin{enumerate} 
\item The flat tori $\mathbb{T}_\G$ and $\mathbb{T}_{\Lam}$ are isospectral.
\item The flat tori $\mathbb{T}_\G$ and $\mathbb{T}_{\Lam}$ have identical heat traces.
\item The flat tori $\mathbb{T}_\G$ and $\mathbb{T}_{\Lam}$ have identical theta series. 
\item The flat tori $\mathbb{T}_\G$ and $\mathbb{T}_{\Lam}$ have identical length spectra. 
\end{enumerate} 
If any of the above hold, then the flat tori have identical dimension and volume.
\end{cor}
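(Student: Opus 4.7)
The plan is to chain the equivalences $(1)\!\Leftrightarrow\!(2)\!\Leftrightarrow\!(3)\!\Leftrightarrow\!(4)$, using Poisson's summation formula (Theorem \ref{th: poiss}) as the bridge between the spectral data attached to $\Gamma^*$ (the heat trace and the Laplace spectrum) and the geometric data attached to $\Gamma$ itself (the theta series and the length spectrum). The dimension and volume assertions will fall out of the $t\to 0^+$ asymptotics of the Poisson identity.

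Both $(1)\!\Leftrightarrow\!(2)$ and $(3)\!\Leftrightarrow\!(4)$ reduce to a single principle: a generalized Dirichlet series of the form $\sum_{j\ge 0} m_j e^{-a_j t}$, with $0\le a_0<a_1<\cdots$ tending to $+\infty$ and positive integer multiplicities $m_j$, uniquely determines the sequence of pairs $(a_j,m_j)$. I would prove this by peeling: first $m_0=\lim_{t\to\infty} e^{a_0 t}\sum_j m_j e^{-a_j t}$, then $a_1=-\lim_{t\to\infty} t^{-1}\log\bigl(\mathrm{series}-m_0 e^{-a_0 t}\bigr)$, and iterate. Applied to the heat trace, regrouped by distinct Laplace eigenvalues with their multiplicities, this gives $(1)\!\Leftrightarrow\!(2)$. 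Applied to $\theta_\Gamma(is)=\sum_j m_j e^{-\pi s \ell_j^2}$, where the $\ell_j$ are the distinct lengths occurring in the length spectrum with multiplicities $m_j$, it gives $(3)\!\Leftrightarrow\!(4)$ on the positive imaginary axis; holomorphy of $\theta_\Gamma$ on the upper half plane together with the identity theorem then upgrades this to equality of theta series on all of $\{\Ima z>0\}$.

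The heart of the argument is $(2)\!\Leftrightarrow\!(3)$, which I would extract directly from Poisson:
\begin{equation*}
\sum_{\gamma^*\in\Gamma^*} e^{-4\pi^2\|\gamma^*\|^2 t}=\frac{\vol(\Gamma)}{(4\pi t)^{n/2}}\sum_{\gamma\in\Gamma}e^{-\|\gamma\|^2/4t}=\frac{\vol(\Gamma)}{(4\pi t)^{n/2}}\,\theta_\Gamma\!\left(\frac{i}{4\pi t}\right).
\end{equation*}
Hence the heat trace and the values of $\theta_\Gamma$ on the positive imaginary axis determine each other, provided $n$ and $\vol(\Gamma)$ are known. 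To recover those from the heat trace alone, I would use the $t\to 0^+$ asymptotics of the right-hand sum: only the $\gamma=0$ term survives in the limit, yielding $\text{heat trace}(t)\sim\vol(\Gamma)(4\pi t)^{-n/2}$, from which $n$ (the exponent) and $\vol(\Gamma)$ (the coefficient) are read off in that order. Equal heat traces thus force equal dimensions and volumes; plugging back into the Poisson identity then transfers equality of heat traces into equality of $\theta_\Gamma$ on the imaginary axis, and hence everywhere by analytic continuation. The only delicate step is the Dirichlet-series uniqueness; everything else is a computation built on Poisson.
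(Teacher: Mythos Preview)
Your proof is correct and follows essentially the same strategy as the paper's: Dirichlet-series peeling (as $t\to\infty$) for $(1)\Leftrightarrow(2)$ and $(3)\Leftrightarrow(4)$, Poisson summation to bridge the $\Gamma^*$-side (heat trace) and the $\Gamma$-side (theta series), and the $t\to 0^+$ asymptotics of the heat trace to extract dimension and volume. The only point you leave implicit is the direction $(3)\Rightarrow(2)$: having recovered $n$ and $\vol(\Gamma)$ from the heat trace to go one way, you should note the symmetric fact $\theta_\Gamma(is)\sim s^{-n/2}/\vol(\Gamma)$ as $s\to 0^+$ (which is Poisson read in the other direction, using that the heat trace tends to $1$ as $t\to\infty$) to recover $n$ and $\vol(\Gamma)$ from $\theta_\Gamma$ and close the loop; the paper sidesteps this by routing $(3)\Rightarrow(1)$ through the duality statement ``$\Gamma,\Lambda$ isospectral iff $\Gamma^*,\Lambda^*$ isospectral,'' but the underlying content is the same.
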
 
\begin{proof} The implication (1) $\implies$ (2) follows immediately from the definition.  To prove the converse, proceed inductively to prove that the heat trace determines all of the eigenvalues and their multiplicities by analyzing the asymptotic behavior of the heat trace as $t \nearrow \infty$.  The equivalence of (1) and (4) is an immediate consequence of Poisson's summation formula.  The heat trace is obtained by evaluating the theta series of the dual lattice at $z=4\pi i t$ for $t>0$.  Consequently, identical theta series imply that the dual lattices have identical heat traces and are therefore isospectral.  Isospectrality of lattices is equivalent to isospectrality of their dual lattices, showing that (3) implies (1).  The converse follows from Poisson's summation formula.  We have therefore established that (1) is equivalent to (2), (4), and (3).  To conclude that isospectral tori have identical dimension and volume, investigate the asymptotic behavior for $t$ approaching $0$ and for $t$ approaching infinity in Poisson's formula.
\end{proof}


\subsection{Quadratic forms equate the number theoretic formulation} \label{ss:quadratic_forms} 
We have seen that the analytical and geometrical formulations in \S \ref{ss:three} are equivalent with help of Poisson's powerful summation formula.  To equate these formulations in number theoretic language, we collect several facts about quadratic forms.  In \S \ref{ss:three_revisited}, we will use these facts to associate a quadratic form to a flat torus and identify its representation numbers with the spectrum. Quadratic forms will also be essential in \S \ref{s:schiemann}.

\begin{defn} \label{def:pqf} A \em quadratic form, \em  $q$, of $n$ variables is a homogeneous polynomial of degree $2$.  If $q(x) \geq 0$ for all nonzero $x \in \R^n $, it is \em positive semi-definite, \em and if the inequality is strict, then the form is \em positive definite.  \em We may equivalently refer to quadratic forms of $n$ variables as $n$-dimensional quadratic forms.
\end{defn} 

There is a well-known natural bijection between positive definite quadratic forms and symmetric matrices.  For any quadratic form $q$ of $n$ variables, there is a unique symmetric $n \times n$ matrix $Q$, known as the \em associated matrix, \em such that 
\[ q(x) = x^T Q x, \quad \forall x \in \R^n. \] 
The matrix $Q$ is positive (semi-)definite if and only if the quadratic form $q$ is. Although it is a slight abuse of notation, we hope the reader will pardon our identification of quadratic forms with their associated matrices in the following  

\begin{defn} \label{def:sets_of_forms} The notation $\mathcal{S}_{>0}^n$, respectively $\mathcal{S}_{\ge 0}^n$, is the set of $n \times n$ positive definite, respectively semi-definite, matrices and is also identified with the set of positive definite, respectively semi-definite, quadratic forms of $n$ variables.  A quadratic form is \em rational \em if the entries of its associated matrix are all rational.  A quadratic form is \em even \em if the entries of its associated matrix are integers, and the diagonal consists of even numbers.  If a quadratic form $q$ is positive definite, then its \em dual form, $q^*$, \em is defined by 
\[ q^*(x) := x^T Q^{-1} x, \quad \forall x \in \R^n. \]
\end{defn}

The matrix $Q$ associated with a positive definite quadratic form admits a \em Cholesky factorization.  \em 
\begin{theorem}[Theorem 11.2 in \cite{serre2000matrices}] 
Assume that $Q$ is a symmetric $n \times n$ real matrix such that $x^TQx>0$ for all non-zero $x\in \RR^n$.  Then there is an invertible matrix $A$ with $Q=A^TA$.  This is known as a \em Cholesky factorization. \em The matrix $A$ is unique up to left-multiplication with an orthogonal matrix. \end{theorem}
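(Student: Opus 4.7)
The plan is to establish existence via the spectral theorem applied to $Q$, and then to handle uniqueness by examining the matrix $BA^{-1}$ for any two candidate factorizations. Both parts are short, so I would organize the argument into an existence half and a uniqueness half, with invertibility of $A$ following as a bookkeeping remark from $\det Q = (\det A)^2 \neq 0$.

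For existence, since $Q$ is symmetric and real, the spectral theorem provides an orthogonal matrix $U \in \oO_n(\R)$ and a real diagonal matrix $D = \diag(\lambda_1,\ldots,\lambda_n)$ such that $Q = U^T D U$. The positive definiteness hypothesis $x^T Q x > 0$ for all nonzero $x$ forces every eigenvalue $\lambda_i$ to be strictly positive: indeed, testing $Q$ against $U^T e_i$ gives $\lambda_i = (U^T e_i)^T Q (U^T e_i) > 0$. One can therefore form the diagonal matrix $D^{1/2} := \diag(\sqrt{\lambda_1},\ldots,\sqrt{\lambda_n})$, which is symmetric and invertible, and set $A := D^{1/2} U$. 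A direct computation yields
\[
A^T A \;=\; U^T (D^{1/2})^T D^{1/2} U \;=\; U^T D U \;=\; Q,
\]
and $A$ is invertible because both $D^{1/2}$ and $U$ are.

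For uniqueness, suppose $A, B \in \GL_n(\R)$ both satisfy $A^T A = B^T B = Q$. Set $C := B A^{-1}$. Then
\[
C^T C \;=\; (A^{-1})^T B^T B A^{-1} \;=\; (A^{-1})^T (A^T A) A^{-1} \;=\; I,
\]
so $C \in \oO_n(\R)$, and $B = C A$ is exactly left-multiplication of $A$ by an orthogonal matrix. Conversely, any $CA$ with $C \in \oO_n(\R)$ clearly satisfies $(CA)^T (CA) = A^T C^T C A = A^T A = Q$, confirming that the orthogonal ambiguity is the full ambiguity.

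I do not anticipate a real obstacle here; the only mild subtlety is making sure to invoke positive definiteness (and not just symmetry) at exactly the point where one extracts the square roots $\sqrt{\lambda_i}$, since a merely positive semi-definite form would produce a non-invertible $A$. If one wanted the stronger statement that $A$ can be taken lower triangular (the classical Cholesky form), this would require an additional inductive argument peeling off one row/column at a time using Schur complements; but since the theorem as stated only asks for some invertible $A$, the spectral-theorem route above suffices.
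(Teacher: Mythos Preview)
Your proof is correct and follows essentially the same route as the paper: existence via the spectral theorem and a diagonal square root, uniqueness by checking that $C = BA^{-1}$ satisfies $C^T C = I$. The only cosmetic differences are notational (the paper writes $D = U^T Q U$ and takes $A = (USU)^T$, and phrases uniqueness as $(BA^{-1})^T = (BA^{-1})^{-1}$), and you additionally spell out the trivial converse that any $CA$ with $C$ orthogonal also factors $Q$.
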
 

\begin{proof} 
A symmetric matrix $Q$ can be diagonalized as $D=U^T Q U$, with $U^T = U^{-1}$ an orthogonal matrix.  Since $Q$ is positive definite, each diagonal entry of $D$ is positive, and hence there is a well-defined diagonal matrix $S$ with positive diagonal entries such that $D=S^2 = S S^T$.  Define $A=(USU)^T$.  For the uniqueness, observe that if $B^T B = Q$, then $(A^T)^{-1} B^T =(B A^{-1})^T = A B^{-1} = (B A^{-1})^{-1}$, so $B=CA$ for the orthogonal matrix $C=BA^{-1}$. 
\end{proof}

Motivated by the Cholesky factorization, we define \em underlying lattices \em of a quadratic form.  

\begin{defn}[Underlying lattice] \label{def:underlying}  
For a positive definite $n \times n$ matrix $Q$ with Cholesky factorization $Q=A^TA$, we say that $A\ZZ^n$ is an \textit{underlying lattice} of both $Q$ and the associated quadratic form. 
\end{defn} 

Cholesky factorization is unique up to left-multiplication with an orthogonal matrix, and all underlying lattices of a given positive definite quadratic form are congruent.

\begin{defn}[Integral equivalence] \label{defn:int_equiv_qf} Two quadratic forms $q$ and $p$ on $\R^n$ are \em integrally equivalent \em if their associated matrices $Q$ and $P$ satisfy $B^T Q B = P$, for a unimodular matrix $B$.   
\end{defn}

The following proposition collects several useful facts about quadratic forms. It can be proven using the spectral theorem for symmetric matrices. 
\begin{prop}\label{PosQ} Let $Q$ be a real symmetric $n\times n$ matrix. Let $\lambda_{\min}$ be its smallest eigenvalue and $\lambda_{\max}$ its biggest.  

\begin{enumerate}
    \item $x^TQx>0$ for all $x\in \Z^n\setminus\{0\}$ if and only if $x^TQx>0$ for all $x\in \Q^n\setminus\{0\}$.
    
    \item $x^TQx>0$ for all $x\in \Z^n\setminus\{0\}$ implies $Q\in\mathcal{S}_{\ge 0}^n$.
    
    \item $Q\in \mathcal{S}_{\ge 0}^n$ if and only if $Q$ has only non-negative real eigenvalues.
    
    \item $Q\in \mathcal{S}_{>0}^n$ if and only if $Q$ has only positive real eigenvalues.
    
    \item $\lambda_{\min}\|x\|^2\le x^TQx\le \lambda_{\max}\|x\|^2 $ for any $x\in \R^n$.
    
    \item $Q\in \mathcal{S}_{\ge0}^n$ implies $Q=E^TE$ for some $n \times n$ real matrix $E$. 
    
    \item $Q\in \mathcal{S}_{>0}^n$ if and only if $Q\in \mathcal{S}_{\ge 0}$ and $Q$ is of full rank.
    
    \item Viewed as a quadratic form, the image of $Q\in \mathcal{S}_{>0}^n$ over $\Z^n$ is discrete, and all multiplicities are finite.
\end{enumerate}
\end{prop}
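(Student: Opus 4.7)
The plan is to prove this proposition by first establishing the spectral decomposition of $Q$ as the central tool, and then deriving each of the eight statements from it, reusing earlier parts where convenient. By the spectral theorem for real symmetric matrices, there exists an orthogonal $U \in \oO_n(\R)$ and a real diagonal $D = \diag(\lambda_1,\ldots,\lambda_n)$ with $Q = U^T D U$. The substitution $y = Ux$ preserves the Euclidean norm (by Lemma \ref{le: orthl}) and yields
\[
x^T Q x = y^T D y = \sum_{i=1}^n \lambda_i y_i^2, \qquad \|y\|^2 = \sum_{i=1}^n y_i^2.
\]
Statements (5), (3), and (4) then fall out immediately: bounding each $\lambda_i y_i^2$ by $\lambda_{\min} y_i^2$ and $\lambda_{\max} y_i^2$ gives (5), and one sees that $x^T Q x \ge 0$ (resp.\ $>0$) for all nonzero $x \in \R^n$ is equivalent to all $\lambda_i \ge 0$ (resp.\ $>0$), since choosing $y$ to be a standard basis vector probes individual eigenvalues.

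Next, for (6), if $Q \in \mathcal{S}_{\ge 0}^n$, then by (3) the eigenvalues are non-negative, so the diagonal matrix $S := \diag(\sqrt{\lambda_1},\ldots,\sqrt{\lambda_n})$ is real, and $E := SU$ satisfies $E^T E = U^T S^T S U = U^T D U = Q$. For (7), $Q \in \mathcal{S}_{>0}^n$ iff every $\lambda_i > 0$ (by (4)) iff every $\lambda_i \geq 0$ and none vanish, which is equivalent to $Q \in \mathcal{S}_{\ge 0}^n$ together with $\det(Q) = \prod \lambda_i \ne 0$, i.e., full rank.

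For (1), one direction is trivial because $\Z^n \setminus \{0\} \subset \Q^n \setminus \{0\}$. For the other, given $x \in \Q^n \setminus \{0\}$, clear denominators to write $x = y / N$ with $y \in \Z^n \setminus \{0\}$ and $N \in \Z_{>0}$; then $x^T Q x = N^{-2} y^T Q y > 0$. For (2), by (1) the assumption extends to all of $\Q^n \setminus \{0\}$; since $\Q^n$ is dense in $\R^n$ and the map $x \mapsto x^T Q x$ is continuous, we obtain $x^T Q x \ge 0$ for every $x \in \R^n$, so $Q \in \mathcal{S}_{\ge 0}^n$.

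Finally, for (8), fix $R > 0$ and consider $\{x \in \Z^n : x^T Q x \le R\}$. By (5) and (4), $\lambda_{\min}\|x\|^2 \le x^T Q x \le R$, and $\lambda_{\min} > 0$, so $\|x\|^2 \le R/\lambda_{\min}$. Thus the set in question lies inside a bounded ball and is therefore finite, as $\Z^n$ is discrete. This simultaneously shows that each value $r = x^T Q x$ is attained by only finitely many integer vectors (finite multiplicity) and that the image has no accumulation point (discreteness). The main obstacle, if any, is merely bookkeeping: making sure that each part is deduced from statements already available, in particular that (2) uses (1) and density, that (6) uses (3), that (7) and (8) invoke (4) and (5), while (3), (4), (5) themselves rest only on the spectral theorem.
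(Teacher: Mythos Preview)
Your proof is correct and follows exactly the approach the paper indicates: the paper does not spell out a detailed argument but simply states that the proposition ``can be proven using the spectral theorem for symmetric matrices,'' which is precisely the tool you use throughout. Your handling of each item---in particular the density argument for (2), the square-root construction for (6), and the boundedness argument for (8)---is standard and sound.
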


\subsubsection{Representation numbers of quadratic forms} \label{ss:three_revisited} 
The connection between the spectra of flat tori and quadratic forms is obtained using the \em representation numbers \em of quadratic forms.  The representation numbers are the image of $\Z^n$ under the quadratic form, taking into account multiplicities.  

\begin{defn}[Representation Numbers] If $q$ is an $n$-dimensional positive definite quadratic form, its \em representation numbers \em are defined as follows for $t\in \R_{\ge 0}$ 
\[ \mathcal{R}(q,t):=\#\{x\in\Z^n:q(x)=t\}.\] 
We may also consider a subset $X \subset \Z^n$ and define 
\[ \mathcal{R}_X(q,t):=\#\{x\in X:q(x)=t\}.\] 
\end{defn}

A convenient reduction for verifying whether two quadratic forms have the same representation numbers is obtained by defining 
 \begin{equation} \label{eq:zn_star} \begin{gathered} \Z_*^n:=  \left\{x\in \Z^n\setminus\{0\}:  \gcd(x_1,\ldots,x_n)=1, \right . \\ 
 \left . \textnormal{ and the last non-zero coordinate is positive}\right\}.
\end{gathered}  \end{equation} 
Then, one can show that for two quadratic forms $q_1$ and $q_2$, for any $t \geq 0$, 
\[ \mathcal{R}(q_1,t) = \mathcal{R}(q_2,t) \iff   \mathcal{R}_X(q_1,t) = \mathcal{R}_X(q_2,t) \textrm{ for } X = \Z_* ^n.\] 

A positive definite quadratic form $q$ has a collection of underlying lattices, all of which are congruent.  Consequently, the associated flat tori are all isometric and therefore also isospectral. On the one hand, for a full-rank lattice $A \Z^n$, for any unimodular matrix $G \in \GL_n(\Z)$, $A G \Z^n$ and $A \Z^n$ are the same lattice. The quadratic form with matrix $(AG)^T (AG)$ is not necessarily the same as the quadratic form with matrix $A^T A$.  These two quadratic forms are, however, integrally equivalent, and any two $n$-dimensional positive definite quadratic forms that are integrally equivalent have identical representation numbers for all $t \geq 0$.  

For a flat torus $\R^n/\G$ with $\G=A \Z^n$ we associate the equivalence class of quadratic forms that are integrally equivalent to $A^T A$.  There is a natural bijection between the length spectrum and the representation numbers of this equivalence class, taking for some $x\in \ZZ^n$, $\|Ax\|$ to $q(x)=x^TA^TAx=\|Ax\|^2$.  It then follows from Corollary \ref{cor:dim_vol} that two flat tori are isospectral if and only if their representation numbers associated in this way are identical.  They are isometric if and only if their equivalence classes of quadratic forms are in fact identical. We therefore define isospectrality for flat tori, lattices, and quadratic forms.

\begin{defn}[Isospectrality of lattices and quadratic forms]  Two lattices $\Gamma_i \subset \R^n$, $i=1$, $2$, are isospectral if and only if the flat tori $\R^n/\Gamma_i$ have identical Laplace spectra, or equivalently, they have identical length spectra.  Two quadratic forms are isospectral if and only if they have identical representation numbers. 
\end{defn} 

The precise number theoretic formulation in \S \ref{ss:three} is then:  is a quadratic form uniquely determined by its representation numbers, up to integral equivalence? We will see that the answer to the question depends on the dimension.


\subsection{Constructing and deconstructing lattices with implications for isospectrality} \label{ss:biggerdonuts}  
One can construct higher dimensional flat tori by taking products of lower dimensional ones, or equivalently, one can build a full-rank lattice by summing lower rank lattices. This technique has been historically important, which will be apparent in Section \ref{s:differentdonuts}.  We will also use it to give an elegant proof of the lower bound for the number of isospectral but non-isometric flat tori in each dimension in \S \ref{ss: asymp}.  If a flat torus has been built as a product of lower dimensional flat tori it is \em reducible, \em and if not, it is \em irreducible.\em\footnote{Irreducible may also be termed \em indecomposable.\em} We use the same terminology for the lattice that defines the flat torus. Reducibility of lattices have connections to root systems and Dynkin diagrams that are studied in the theory of Lie groups; see for example \cite[p. 217]{hall2015lie}.  A key ingredient in the definition of reducibility is the Minkowski sum.  

\begin{defn}[Minkowski sum] 
Let $A$ and $B$ be two non-empty sets in $\R^n$. Their \em Minkowski sum, \em denoted $A+B$ is 
\[ A+B:= \{ a+b \in \R^n : a \in A, b \in B \}.\] 
The product 
\[ A \cdot B := \{ a \cdot b : a \in A, \quad b \in B\} ,\] 
where $a \cdot b$ is the scalar product in $\R^n$.   
If $A\cdot B=\{0\}$, $A+B = A\oplus B$ is a \textit{direct sum}. 
\end{defn}

\begin{defn}[Reducible \& irreducible lattices and flat tori]  \label{defn:irreducibility} Let $\G\subseteq \RR^n$ be a non-trivial lattice. Then $\G$ is reducible if $\G = \G_1 \oplus \G_2$ for two non-trivial lattices $\G_i$, $i=1,2$. Otherwise $\G$ is irreducible. The associated flat torus  $\R^n/\G$ is reducible or irreducible if $\G$ is reducible or irreducible, respectively.  
\end{defn}

By the definition of irreducibility, a full-rank lattice $\G \subset \R^n$ can be decomposed as a sum of irreducible sublattices $\G_i$, so that 
\[ \G = \G_1 \oplus \ldots \oplus \G_k,\] 
for some $k \geq 1$.  This is known as the \em irreducible decomposition, \em
and is unique up to re-ordering. Kneser investigated a more general setup in \cite{kneser1954theorie}.  The following lemma shows that an equivalent way to define reducibility and irreducibility is through products.

\begin{lemma} \label{le:irreducibility} 
A lattice $\G \neq \{0\}$  is reducible if and only if it is congruent to a lattice of the form $\G_1\times \G_2$ where $\G_1,\G_2$ are of dimensions at least $1$. Conversely, if there are no such $\G_1$ and $\G_2$, then $\G$ is irreducible.  
\end{lemma}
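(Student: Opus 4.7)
The strategy is to prove both directions of the equivalence by transporting the orthogonality condition in the definition of reducibility through an orthogonal change of basis. I use Lemma~\ref{le: orthl} throughout: elements of $\oO_n(\R)$ preserve the inner product and send orthonormal bases to orthonormal bases.

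For the forward direction, assume $\G \subset \R^n$ is reducible, so $\G = \G_1 \oplus \G_2$ with $\G_1 \cdot \G_2 = \{0\}$ and both $\G_i$ non-trivial. Let $V_i := \Span_{\R}(\G_i)$ and $k_i := \dim V_i = \mathrm{rank}(\G_i) \geq 1$. Bilinearity of the inner product extends the orthogonality from generators to the full spans, so $V_1 \perp V_2$; in particular $V_1 \cap V_2 = \{0\}$ and $k_1 + k_2 \leq n$. I would then pick an orthonormal basis $u_1,\ldots,u_{k_1}$ of $V_1$, extend it by an orthonormal basis $u_{k_1+1},\ldots,u_{k_1+k_2}$ of $V_2$, and complete to an orthonormal basis $u_1,\ldots,u_n$ of $\R^n$. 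By Lemma~\ref{le: orthl} the matrix $C$ whose columns are the $u_i$ lies in $\oO_n(\R)$, and $\trans{C}$ sends each $u_i$ to the standard basis vector $e_i$. Consequently $\trans{C}\G_1 \subset \R^{k_1}\times\{0\}^{n-k_1}$, $\trans{C}\G_2 \subset \{0\}^{k_1}\times\R^{k_2}\times\{0\}^{n-k_1-k_2}$, and
\[
\trans{C}\G \;=\; (\G'_1 \times \G'_2) \times \{0\}^{n-k_1-k_2}
\]
for the coordinate representations $\G'_i \subset \R^{k_i}$. This realizes the required congruence, with trivial padding if $\G$ is not full-rank.

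For the reverse direction, assume $\G$ is congruent to $\G_1 \times \G_2$ with $\G_i \subset \R^{k_i}$ non-trivial. Inside $\R^{k_1+k_2}$, the Cartesian product manifestly decomposes as the orthogonal direct sum
\[
\G_1 \times \G_2 \;=\; (\G_1 \times \{0\}^{k_2}) \oplus (\{0\}^{k_1} \times \G_2).
\]
Applying the orthogonal matrix furnished by the congruence (together with the trivial padding $\{0\}^{n-k_1-k_2}$ if $n > k_1+k_2$) preserves the inner product by Lemma~\ref{le: orthl}, so the image decomposition is again an orthogonal direct sum of two non-trivial lattices. This is precisely the definition of reducibility, and the second sentence of the lemma is its contrapositive.

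The only mild obstacle is the bookkeeping around ambient dimensions when $k_1 + k_2 < n$, but this is handled cleanly by the trivial-padding convention built into the definition of congruence. Beyond this, the argument reduces to the single geometric observation that orthogonality of lattice subsets passes to their real linear spans and is preserved by elements of $\oO_n(\R)$.
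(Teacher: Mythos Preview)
Your argument is correct and follows essentially the same route as the paper: both proofs observe that an orthogonal direct sum $\G_1\oplus\G_2$ yields an orthogonal decomposition of the ambient span, and then rotate this decomposition by an element of $\oO_n(\R)$ so that the two pieces sit in complementary coordinate blocks, realizing the Cartesian product. The paper phrases this more briefly via abstract isometries $\Phi_i:V_i\to\R^{k_i}$ and reduces to the full-rank case at the outset, whereas you construct the orthogonal matrix explicitly from an orthonormal basis adapted to $V_1\oplus V_2$ and carry the trivial padding through; you also spell out the reverse implication, which the paper leaves implicit.
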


\begin{proof}  Without loss of generality, we may assume that $\G$ is a full-rank lattice in $\R^n$.  Then, $\G$ is reducible if and only if there are two non-trivial sublattices $G_1$ and $G_2$ such that $\G = G_1 \oplus G_2$.  These induce an orthogonal decomposition of $\R^n$ into two subspaces of dimensions $k_1$ and $k_2$, the ranks of $G_1$ and $G_2$, respectively, with $k_1+k_2=n$.  There is an isometry, denoted $\Phi_i$, between each of these subspaces and $\R^{k_i}$, respectively, for $i=1,2$.  Consequently, $G_i \cong \Phi_i(G_i) =: \Gamma_i \subset \R^{k_i}$, for each $i=1, 2$, and $\G = G_1 \oplus G_2 \cong  \Phi_1(G_1) \times \Phi_2(G_2) = \Gamma_1 \times \Gamma_2$.   
\end{proof} 

We note that a lattice is irreducible if and only if its dual lattice is irreducible.  The following lemma shows that re-arranging the constituents in a product of lattices results in a congruent lattice.

\begin{lemma} \label{le:permutations} The product of lattices $\G_1\times \cdots\times \G_m$ is congruent to the product $\G_{\sigma(1)}\times \cdots\times \G_{\sigma(m)}$ for any permutation $\sigma\in S_m$.
\end{lemma}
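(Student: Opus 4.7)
The plan is to exhibit an explicit orthogonal matrix realizing the congruence.  Let $n_i$ denote the ambient dimension of $\G_i$, set $N := n_1 + \cdots + n_m$, and fix basis matrices $A_i$ of $\G_i$.  Then $\G := \G_1 \times \cdots \times \G_m \subset \R^N$ admits the block-diagonal basis matrix $A = \diag(A_1, \ldots, A_m)$, and $\G' := \G_{\sigma(1)} \times \cdots \times \G_{\sigma(m)}$ admits $A' = \diag(A_{\sigma(1)}, \ldots, A_{\sigma(m)})$.  By Lemma \ref{le:cong}, it suffices to produce $C \in \oO_N(\R)$ and $B \in \GL_N(\Z)$ satisfying $CA = A'B$.

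To construct $C$, I would partition the standard basis $\{e_1, \ldots, e_N\}$ of $\R^N$ into consecutive blocks $G_1, \ldots, G_m$ of sizes $n_1, \ldots, n_m$, and separately into blocks $G_1', \ldots, G_m'$ of sizes $n_{\sigma(1)}, \ldots, n_{\sigma(m)}$.  Define $C$ to be the unique linear map sending the $k$-th vector of $G_i$ to the $k$-th vector of $G_{\sigma^{-1}(i)}'$ for each $i$ and each $1 \leq k \leq n_i$.  Since $C$ merely permutes the standard orthonormal basis of $\R^N$, Lemma \ref{le: orthl} gives $C \in \oO_N(\R)$ at once.

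At the level of lattice points, $C$ realizes the natural bijection $(v_1, \ldots, v_m) \mapsto (v_{\sigma(1)}, \ldots, v_{\sigma(m)})$ from $\G$ onto $\G'$, so that $C\G = \G'$.  Equivalently, letting $B \in \GL_N(\Z)$ be the corresponding block permutation on the column side, a direct computation at the level of block matrices gives $CA = A'B$.  The congruence $\G \cong \G'$ then follows from Lemma \ref{le:cong}.

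I do not anticipate any substantive obstacle.  The only mild subtlety is bookkeeping: tracking how the block permutation acts simultaneously on row-blocks of $A$ (through $C$, acting in $\R^N$) and on column-blocks of $A'$ (through $B$, acting on $\Z^N$), given that the target basis matrix $A'$ has block widths $n_{\sigma(1)}, \ldots, n_{\sigma(m)}$ rather than $n_1, \ldots, n_m$.  Once this is handled, everything reduces to the observation from Lemma \ref{le: orthl} that any matrix permuting an orthonormal basis is orthogonal.
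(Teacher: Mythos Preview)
Your proposal is correct and follows essentially the same approach as the paper's proof: both use the observation that block permutation matrices lie in $\oO_N(\R)$ (for the row side) and in $\GL_N(\Z)$ (for the column side), so that a single block permutation realizes the congruence via Lemma~\ref{le:cong}. Your version is simply a more detailed and carefully bookkept execution of the brief sketch the paper provides.
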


\begin{proof} 
We outline the key ideas.  Let $A=[a_j]$ be an $n \times n$ matrix.  Right multiplication by elements of $\GL_n(\Z)$ can re-order the columns of $A$ in any desired way.  Left multiplication by elements of $O_n(\R)$ can re-order of the rows of $A$ in any desired way.  
\end{proof}

\begin{prop} \label{le:congruenceproducts} Assume that a lattice $\G$ in $\R^n$ can be decomposed into an orthogonal sum of sublattices 
\[\G= \G_1\oplus\cdots\oplus \G_k.\]
Then $\Lambda \cong \G$  if and only if $\Lambda$ is a direct sum of $k$ sublattices $\Lambda_i \cong \G_i$.  
\end{prop}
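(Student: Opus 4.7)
The plan is to exploit the fact that orthogonal transformations preserve inner products, so they carry orthogonal direct sum decompositions to orthogonal direct sum decompositions. This makes both implications essentially formal consequences of the definitions once one has, on each side, an orthogonal map realizing the congruence.

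For the forward implication ($\Lambda \cong \Gamma \Rightarrow$ decomposition), I would assume without loss of generality (after extending the ambient space if ranks differ) that both lattices sit in $\R^n$ and that there is $C \in \oO_n(\R)$ with $C\Lambda = \Gamma$. Then define $\Lambda_i := C^{-1} \Gamma_i$. Each $\Lambda_i$ is a sublattice of $C^{-1}\Gamma = \Lambda$, and $\Lambda_i \cong \Gamma_i$ by construction since $C$ (restricted appropriately) is orthogonal. Using Lemma \ref{le: orthl}, orthogonality is preserved: for $i \neq j$ and any $v \in \Lambda_i$, $w \in \Lambda_j$, we have $v \cdot w = Cv \cdot Cw = 0$ because $Cv \in \Gamma_i$ and $Cw \in \Gamma_j$ are orthogonal. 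Finally,
\[
\Lambda = C^{-1}\Gamma = C^{-1}(\Gamma_1 \oplus \cdots \oplus \Gamma_k) = \Lambda_1 + \cdots + \Lambda_k,
\]
which is a direct sum by the orthogonality just verified.

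For the reverse implication, suppose $\Lambda = \Lambda_1 \oplus \cdots \oplus \Lambda_k$ with each $\Lambda_i \cong \Gamma_i$. Let $V_i := \Span(\Lambda_i)$ and $W_i := \Span(\Gamma_i)$; these are mutually orthogonal subspaces inside the span of $\Lambda$ and $\Gamma$, respectively, and $\dim V_i = \dim W_i$ since the ranks agree. Each congruence $\Lambda_i \cong \Gamma_i$ yields an orthogonal linear isomorphism $C_i \colon V_i \to W_i$ sending $\Lambda_i$ to $\Gamma_i$. I would then glue these maps: on $V_1 \oplus \cdots \oplus V_k$ define $C$ coordinate-wise by $C_1 \oplus \cdots \oplus C_k$, and extend by any orthogonal isomorphism between the orthogonal complements of $\bigoplus V_i$ and $\bigoplus W_i$ in $\R^n$ (which have the same codimension). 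The result lies in $\oO_n(\R)$ by Lemma \ref{le: orthl}, and by construction $C\Lambda = \Gamma$, hence $\Lambda \cong \Gamma$.

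The main obstacle is really just bookkeeping in the reverse direction: each individual congruence $\Lambda_i \cong \Gamma_i$ only provides an orthogonal map between the spans of $\Lambda_i$ and $\Gamma_i$, which a priori are different subspaces of $\R^n$. The key observation that makes the argument go through cleanly is that the orthogonality of the decompositions $\{V_i\}$ and $\{W_i\}$ allows the local orthogonal maps $C_i$ to be assembled into a single global orthogonal transformation, with the orthogonal complements absorbing any ambient-dimension mismatch.
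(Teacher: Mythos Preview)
Your proof is correct and follows essentially the same strategy as the paper's: in both directions you transport the orthogonal decomposition through an orthogonal map. The only cosmetic difference is in the reverse implication, where the paper takes each $C_i$ as a global element of $\oO_n(\R)$ and assembles them via $C := \sum_i C_i \circ \Pi_i$ using the orthogonal projections $\Pi_i$ onto $\Span(\G_i)$, whereas you take each $C_i$ as a local orthogonal isomorphism $V_i \to W_i$ and form the block sum $C_1 \oplus \cdots \oplus C_k$, extending on complements; these are two ways of writing the same map.
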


\begin{proof} Without loss of generality we assume that $\Lam \subset \R^n$, and that $\G$ is full-rank.  If $\Lam \cong \G$, there is $C \in \oO_n(\R)$ such that $\Lam = C\G = C(\G_1\oplus\cdots\oplus \G_k )$.  Since $C$ preserves orthogonality, $\Lam = C\G_1 \oplus \cdots \oplus C \G_k$.  Thus, defining $\Lam_i := C \G_i \cong \G_i$ completes the proof in this direction. For the other direction, assume that $\Lambda_i \cong \G_i$ for each $i=1, \ldots, k$, with $\Lam := \Lam_1 \oplus \cdots \oplus \Lam_k$.  Then, there exist orthogonal transformations $C_i \in \oO_n(\R)$ such that $C_i \G_i = \Lam_i$.  By orthogonality, $\R^n$ admits an orthogonal decomposition into $k$ subspaces, the $i^{th}$ subspace containing $\G_i$.  Let $\Pi_i$ be orthogonal projection onto the $i^{th}$ subspace.  We therefore define \phantom\qedhere
\[\pushQED{\qed}  C := \sum_{i=1} ^k C_i \circ \Pi_i \in \oO_n (\R), \quad C(\G) = \Lam_1 \oplus \cdots \oplus \Lam_k = \Lam \implies \G \cong\Lam. \qedhere
\popQED\]
\end{proof}

An immediate consequence is 

\begin{cor} \label{cor:irreducible} 
Two products of irreducible lattices are congruent: 
\[ \G_1\times \cdots \times \G_k \cong \Lam_1\times \cdots \times \Lam_{k'} \]  if and only if $k=k'$, and up to reordering $\G_i \cong \Lam_i$. 
\end{cor}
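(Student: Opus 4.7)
The plan is to dispatch the two directions separately, with the forward implication leaning on the uniqueness of the irreducible decomposition stated just after Definition \ref{defn:irreducibility}, and the reverse implication following from Lemma \ref{le:permutations} together with Proposition \ref{le:congruenceproducts}.

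For the reverse direction, suppose $k=k'$ and there is a permutation $\sigma \in S_k$ with $\G_i \cong \Lam_{\sigma(i)}$ for each $i$. First I apply Lemma \ref{le:permutations} to obtain
\[
\Lam_1 \times \cdots \times \Lam_k \cong \Lam_{\sigma(1)} \times \cdots \times \Lam_{\sigma(k)}.
\]
The product $\G_1 \times \cdots \times \G_k$ equals the orthogonal direct sum $\G_1 \oplus \cdots \oplus \G_k$ (after embedding each factor into the appropriate coordinate block of $\R^n$ with $n = \sum \dim \G_i$), so Proposition \ref{le:congruenceproducts} applies directly with $\Lam_i := \Lam_{\sigma(i)}$ to yield $\G_1 \times \cdots \times \G_k \cong \Lam_{\sigma(1)} \times \cdots \times \Lam_{\sigma(k)}$. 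Chaining the two congruences gives the desired congruence of the original products.

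For the forward direction, assume $\G_1 \times \cdots \times \G_k \cong \Lam_1 \times \cdots \times \Lam_{k'}$. Let $C \in \oO_n(\R)$ realize this congruence. Since congruence preserves orthogonal direct sums and carries irreducible lattices to irreducible lattices, the image $C(\G_1 \oplus \cdots \oplus \G_k) = C\G_1 \oplus \cdots \oplus C\G_k$ is an irreducible decomposition of the common lattice $\Lam := \Lam_1 \oplus \cdots \oplus \Lam_{k'}$. The paper has already asserted that the irreducible decomposition of a lattice is unique up to reordering, so $k = k'$, and there is a permutation $\sigma \in S_k$ with $C\G_i = \Lam_{\sigma(i)}$ (the orthogonal summands must match as subspaces, which forces the sublattices to coincide). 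Since the restriction of $C$ to the span of $\G_i$ is still orthogonal, we conclude $\G_i \cong \Lam_{\sigma(i)}$, completing the proof.

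The main obstacle is the invocation of uniqueness of the irreducible decomposition. If one wished to avoid citing this as a black box, the route I would take is to project each irreducible summand $C\G_i$ orthogonally onto the span of each $\Lam_j$; because these projections land in $\Lam_j$ (being the components of lattice vectors under the decomposition $\Lam = \bigoplus_j \Lam_j$), they give an orthogonal decomposition of $C\G_i$ into sublattices, and irreducibility forces exactly one projection to be nontrivial. This defines a map $i \mapsto \sigma(i)$ whose bijectivity (and hence the equality $k=k'$) follows by running the same argument with the roles of the two decompositions reversed.
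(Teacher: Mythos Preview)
Your proof is correct and matches the paper's intended approach: the paper presents the corollary as an immediate consequence of Proposition~\ref{le:congruenceproducts}, implicitly together with the uniqueness of the irreducible decomposition asserted after Definition~\ref{defn:irreducibility}, and this is precisely what you invoke. Your additional sketch of how to prove that uniqueness via orthogonal projections is a nice bonus beyond what the paper supplies.
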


\begin{prop}\label{prop:cong1} Two lattices $\G,\Lambda$ are congruent if and only if $\G^{n},\Lambda^{n}$ are congruent for some $n\in \Z_{\geq 2}$. \end{prop}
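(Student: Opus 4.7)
The forward direction is immediate: if $C \in \oO_m(\R)$ realizes $\Lambda = C\G$, then the block-diagonal matrix $\diag(C, \ldots, C) \in \oO_{mn}(\R)$ sends the $n$-fold product $\G^n$ to $\Lambda^n$, so $\G^n \cong \Lambda^n$ for every $n \geq 1$. The content lies in the reverse implication, and my plan is to extract it from the uniqueness of irreducible decompositions (Corollary~\ref{cor:irreducible}) combined with a multiplicity-counting argument.

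First I would fix irreducible decompositions $\G = \G_1 \oplus \cdots \oplus \G_k$ and $\Lambda = \Lambda_1 \oplus \cdots \oplus \Lambda_{k'}$, which exist by the discussion following Definition~\ref{defn:irreducibility}. Then I group the summands by congruence class: let $[A_1], \ldots, [A_s]$ be the distinct congruence classes of irreducible lattices appearing in either decomposition, and write $a_\ell$ (resp.\ $b_\ell$) for the number of $\G_i$'s (resp.\ $\Lambda_j$'s) that are congruent to $A_\ell$. By Proposition~\ref{le:congruenceproducts} and Lemma~\ref{le:permutations}, $\G$ is determined up to congruence by the multi-set $(a_1, \ldots, a_s)$ and $\Lambda$ by $(b_1, \ldots, b_s)$, so $\G \cong \Lambda$ if and only if $a_\ell = b_\ell$ for every $\ell$.

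Next I would apply Lemma~\ref{le:permutations} to reorder the factors of $\G^n = \G \times \cdots \times \G$ to obtain
\[
\G^n \cong \underbrace{\G_1 \times \cdots \times \G_1}_{n \text{ copies}} \times \cdots \times \underbrace{\G_k \times \cdots \times \G_k}_{n \text{ copies}},
\]
which is an irreducible decomposition of $\G^n$ containing exactly $n a_\ell$ factors in the class $[A_\ell]$ for each $\ell$. The analogous statement holds for $\Lambda^n$ with $n b_\ell$ factors in $[A_\ell]$. Since $\G^n \cong \Lambda^n$, Corollary~\ref{cor:irreducible} forces the two irreducible decompositions to match up to reordering and congruence, so $n a_\ell = n b_\ell$ for every $\ell$. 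Cancelling $n$ (which is possible because $n \geq 2 \geq 1$, and in particular $n \neq 0$) yields $a_\ell = b_\ell$, hence $\G \cong \Lambda$.

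The main obstacle I anticipate is a bookkeeping one rather than a conceptual one: making sure the irreducible decomposition of the product $\G^n$ is genuinely the product of irreducible decompositions of the factors, and that Corollary~\ref{cor:irreducible} can be applied symmetrically to both sides to obtain multi-set equality of the congruence classes. Once this is in hand, the cancellation step is what makes the hypothesis $n \geq 2$ (or indeed any $n \geq 1$) do its work.
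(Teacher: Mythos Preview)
Your proposal is correct and follows the same approach as the paper: both decompose $\G$ and $\Lambda$ into irreducibles, pass to the induced irreducible decompositions of $\G^n$ and $\Lambda^n$, and then invoke Corollary~\ref{cor:irreducible} to match multiplicities and cancel the factor $n$. Your write-up is in fact more explicit than the paper's about the multiplicity bookkeeping (the $a_\ell,b_\ell$ formulation), which is exactly the content the paper leaves implicit in its phrase ``by possibly re-ordering and re-naming.''
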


\begin{proof} If $\G$ and $\Lambda$ are congruent, we leave it to the reader to show that $\G^n$ and $\Lambda^n$ are congruent. On the other hand, we may up to congruence consider irreducible decompositions
\[ \G_1^{n}\times \cdots \times \G_k^{n} \quad\&\quad \Lambda_1^{n}\times \cdots \times \Lambda_{k'}^{n}.  \] 
By Corollary \ref{cor:irreducible}, $nk=nk'$, and therefore $k=k'$.  By possibly re-ordering and re-naming, without loss of generality, $\Lambda_i \cong \G_i$ for each $i=1, \ldots, k$. 
\end{proof}

The following theorem is an immediate consequence of the definitions and preceding results together with Witt's cancellation theorem.  Witt's theorem \cite{witt_cancel} shows that if the same term appears in a product, one can \em cancel \em that term. 

\begin{theorem} \label{th:inheritance1} Fix two congruent lattices $\Lam$ and $\Lam'$.  Two lattices $\G$ and $\G'$ are congruent if and only if $\G \times \Lambda$ is congruent to $\G'\times \Lam'$.
\end{theorem}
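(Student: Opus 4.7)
The forward direction is immediate: if $\G \cong \G'$ and $\Lam \cong \Lam'$, then since $\G\times\Lam = \G \oplus \Lam$ (viewed via the standard orthogonal embedding) and similarly for $\G'\times\Lam'$, Proposition~\ref{le:congruenceproducts} applied to the decomposition into the two factors yields $\G\times\Lam \cong \G'\times\Lam'$.  So the content lies in the reverse direction, where we are told $\G\times\Lam \cong \G'\times\Lam'$ and $\Lam \cong \Lam'$, and must conclude $\G \cong \G'$.

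\noindent The plan is to pass to irreducible decompositions and invoke the unique factorization statement in Corollary~\ref{cor:irreducible}.  Write irreducible decompositions
\[
\G \;\cong\; \G_1 \times \cdots \times \G_a, \qquad \G' \;\cong\; \G'_1 \times \cdots \times \G'_{a'}, \qquad \Lam \;\cong\; \Lam_1\times\cdots\times\Lam_b.
\]
Since $\Lam \cong \Lam'$, by Corollary~\ref{cor:irreducible} the lattice $\Lam'$ also decomposes into $b$ irreducible factors matching the $\Lam_j$ up to reordering and congruence.  Substituting into the congruence $\G\times\Lam \cong \G'\times\Lam'$ and using Lemma~\ref{le:permutations} to freely permute the factors, we get two presentations of the same lattice as a product of irreducibles; by Corollary~\ref{cor:irreducible} the two resulting multisets of congruence classes of irreducible factors agree.

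\noindent The key step is then a multiset cancellation.  Let $M(\cdot)$ denote the multiset of congruence classes of irreducible factors in an irreducible decomposition.  We have $M(\G)\sqcup M(\Lam) = M(\G')\sqcup M(\Lam')$ as multisets, and $M(\Lam)=M(\Lam')$ since $\Lam\cong\Lam'$.  Cancellation of finite multisets gives $M(\G)=M(\G')$, and one more application of Corollary~\ref{cor:irreducible} yields $\G \cong \G'$.  The place where one must be attentive is precisely this cancellation: the decomposition of $\G\times\Lam$ does not remember which irreducibles came from $\G$ versus $\Lam$ (an irreducible factor of $\G$ could very well be congruent to one of $\Lam$), so the argument is not pointwise but at the level of multisets, where it is valid because both multisets are finite.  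This is the ``Witt cancellation'' phenomenon alluded to in the statement, and it is the only nontrivial ingredient; everything else is bookkeeping using the preceding lemmas.
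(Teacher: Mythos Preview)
Your proof is correct. The paper does not give a detailed argument for this theorem; it simply declares the result ``an immediate consequence of the definitions and preceding results together with Witt's cancellation theorem,'' citing Witt's 1937 paper on quadratic forms over fields as the external input for the cancellation step.

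Your route is slightly different and in fact more self-contained: rather than invoking Witt cancellation as a black box, you derive the needed cancellation from the uniqueness of the irreducible decomposition, which the paper has already stated (with reference to Kneser) and packaged as Corollary~\ref{cor:irreducible}. Since that corollary gives a bijection between the multisets of irreducible factors of two congruent products, your multiset-subtraction argument goes through cleanly. This has the advantage of staying entirely within the toolkit the paper has built up, whereas the paper's appeal to Witt's theorem is a pointer to a classical result whose usual formulation (over fields) does not literally cover $\Z$-lattices without further comment. Your remark that the decomposition of $\G\times\Lam$ forgets which factor each irreducible came from, so that the argument must be at the multiset level, is exactly the right caveat.
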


With help from the theta series we obtain a similar isospectral cancellation result for isospectrality.  

\begin{lemma}  \label{le:thetaproducts} 
If $\G_1$ and $\G_2$ are lattices and $\G = \G_1 \times \G_2$, then their theta series satisfy $\theta_\G = \theta_{\G_1}\theta_{\G_2}$. As a consequence, for an arbitrary lattice $\Lambda$, $\Lambda\times \G_1$ is isospectral to $\Lambda \times \G_2$ if and only if $\G_1$ and $\G_2$ are isospectral.
\end{lemma}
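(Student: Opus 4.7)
The plan has two steps: first establish the factorization $\theta_\G = \theta_{\G_1}\theta_{\G_2}$ by a direct computation, then leverage it together with Corollary \ref{cor:dim_vol} to get the cancellation statement.

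For the factorization, since $\G_1$ and $\G_2$ sit in orthogonal complementary subspaces inside the ambient $\R^n$, every $\g\in\G=\G_1\times\G_2$ decomposes uniquely as $\g=\g_1+\g_2$ with $\g_i\in\G_i$, and $\|\g\|^2=\|\g_1\|^2+\|\g_2\|^2$. Fixing $z$ with $\Ima z>0$, we have $|e^{i\pi z\|\g\|^2}|=e^{-\pi (\Ima z)\|\g\|^2}$, and the double sum
\[ \sum_{\g_1\in\G_1}\sum_{\g_2\in\G_2}\bigl|e^{i\pi z(\|\g_1\|^2+\|\g_2\|^2)}\bigr| \]
converges absolutely by standard lattice-counting estimates (the number of lattice points in a ball grows polynomially, while the Gaussian decays). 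Hence Fubini applies, and
\[ \theta_\G(z)=\sum_{\g_1\in\G_1}\sum_{\g_2\in\G_2}e^{i\pi z\|\g_1\|^2}e^{i\pi z\|\g_2\|^2}=\theta_{\G_1}(z)\,\theta_{\G_2}(z). \]

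For the cancellation, apply the factorization to get $\theta_{\Lambda\times\G_i}=\theta_\Lambda\,\theta_{\G_i}$ for $i=1,2$. By Corollary \ref{cor:dim_vol}, $\Lambda\times\G_1$ is isospectral to $\Lambda\times\G_2$ if and only if their theta series agree, i.e.\ $\theta_\Lambda(\theta_{\G_1}-\theta_{\G_2})\equiv 0$ on the upper half plane $\{\Ima z>0\}$. The main subtlety is here: to conclude $\theta_{\G_1}=\theta_{\G_2}$ we must know that $\theta_\Lambda$ is not identically zero. This is immediate because $\theta_\Lambda(it)\to 1$ as $t\to\infty$, since every nonzero term $e^{-\pi t\|\g\|^2}$ tends to $0$ while the $\g=0$ term contributes $1$. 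The upper half plane is connected, so the identity theorem (equivalently, the fact that the ring of holomorphic functions on a connected domain is an integral domain) yields $\theta_{\G_1}\equiv\theta_{\G_2}$, hence $\G_1$ and $\G_2$ are isospectral by Corollary \ref{cor:dim_vol} again. The converse direction is immediate from the factorization.

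The only genuine subtlety is the nonvanishing of $\theta_\Lambda$ as a holomorphic function; once that is in hand, everything else is a direct consequence of the multiplicativity of theta series and the equivalence between isospectrality and equality of theta series established earlier. No result beyond those already stated in the excerpt is needed.
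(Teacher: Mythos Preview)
Your proof is correct and follows essentially the same approach as the paper: factorization via the Pythagorean theorem, then cancellation using the identity theorem for holomorphic functions. The only cosmetic difference is that the paper divides pointwise on the ray $i\R_{>0}$ (where $\theta_\Lambda$ is strictly positive) before invoking the identity theorem, whereas you appeal directly to the integral-domain property of holomorphic functions on a connected domain; both arguments are equivalent.
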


\begin{proof} The first statement is left to the reader; it can be demonstrated using the Pythagorean theorem. Consider $n$-dimensional lattices $\Gamma_1,\Gamma_2$ and an $m$-dimensional lattice $\Gamma$. By Corollary \ref{cor:dim_vol}  $\Gamma_1 \times \Gamma$ and $\Gamma_2 \times \Gamma$ are isospectral if and only if $\theta_{\Gamma_1\times\Gamma}=\theta_{\Gamma_2\times\Gamma}$. This is equivalent to $\theta_{\Gamma_1}\theta_{\Gamma}=\theta_{\Gamma_2}\theta_{\Gamma}$.  By Definition \ref{def:theta}, evaluating the theta series at $i \R_{>0}$, they are positive, so we may divide obtaining $\theta_{\Gamma_1}(iy)=\theta_{\Gamma_2}(iy)$ holds for all $y>0$.  By the identity theorem, since theta series are holomorphic in the upper half plane, $\theta_{\Gamma_1}$ and $\theta_{\Gamma_2}$ are identical.  By Corollary \ref{cor:dim_vol}, this is true if and only if $\Gamma_1$ and $\Gamma_2$ are isospectral. 
\end{proof}

The next lemma can be proven by combining the ingredients we have collected thus far to show that once we have isospectral non-isometric flat tori in dimension $n$,  then we have them in all higher dimensions.  

\begin{lemma}\label{le:schiemlemma} If there exist $k$ mutually isospectral and pairwise non-isometric flat tori in dimension $n$, then there exist $k$ mutually isospectral and pairwise non-isometric flat tori in all higher dimensions. 
\end{lemma}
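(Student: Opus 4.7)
The plan is to take the $k$ given tori and take a product of each with a common auxiliary lattice, then show that isospectrality is inherited (via the theta series) while non-isometry is preserved (via Witt cancellation).

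More precisely, let $\G_1,\ldots,\G_k \subset \R^n$ be full-rank lattices whose associated flat tori are pairwise isospectral and pairwise non-isometric. Fix any integer $m \geq 1$ and set $\Lam := \Z^m$. Define the $(n+m)$-dimensional lattices
\[ \G_i' := \G_i \times \Lam, \qquad i=1,\ldots,k. \]
I claim that these $k$ lattices yield $k$ mutually isospectral and pairwise non-isometric flat tori in dimension $n+m$, and iterating over $m$ delivers the conclusion.

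For isospectrality, I apply Lemma~\ref{le:thetaproducts}. Since the $\G_i$ are pairwise isospectral, Corollary~\ref{cor:dim_vol} gives $\theta_{\G_i} = \theta_{\G_j}$ for all $i,j$, and multiplying by the common factor $\theta_{\Lam}$ yields $\theta_{\G_i'} = \theta_{\G_i}\theta_{\Lam} = \theta_{\G_j}\theta_{\Lam} = \theta_{\G_j'}$. Invoking Corollary~\ref{cor:dim_vol} once more, the flat tori associated to $\G_1',\ldots,\G_k'$ are pairwise isospectral.

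For pairwise non-isometry, I apply Theorem~\ref{th:inheritance1} with the same choice $\Lam = \Lam' = \Z^m$ on both sides. That theorem states $\G \times \Lam \cong \G' \times \Lam'$ if and only if $\G \cong \G'$. Thus, for $i \neq j$, if $\G_i' \cong \G_j'$, then by Witt cancellation $\G_i \cong \G_j$, contradicting our hypothesis that the original $k$ tori are pairwise non-isometric. Hence $\G_i'$ and $\G_j'$ are non-congruent, and by Theorem~\ref{th: RieIsom} the associated flat tori are non-isometric.

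I do not foresee a real obstacle: both key ingredients (Lemma~\ref{le:thetaproducts} for isospectrality of products and Theorem~\ref{th:inheritance1} for Witt cancellation) have already been established. The only point to double-check is that iterating $m = 1, 2, 3, \ldots$ covers every higher dimension, which it does since any integer greater than $n$ can be written as $n+m$ with $m\geq 1$.
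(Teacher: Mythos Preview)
Your proof is correct and follows precisely the approach the paper signals (``by combining the ingredients we have collected thus far''): product with a common lattice preserves isospectrality via Lemma~\ref{le:thetaproducts}, and non-congruence is preserved by the cancellation in Theorem~\ref{th:inheritance1}. One cosmetic remark: there is no need to speak of ``iterating over $m$''---for any target dimension $n+m$ you simply choose $\Lam=\Z^m$ once.
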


\subsection{One can hear the shapes of rectangular flat tori and Euclidean boxes}\label{s:rectangular}   
A \em rectangular flat torus \em can be completely reduced to a product of intervals.     
\begin{defn}[Rectangular Lattices]  \label{def:rectangular} A \em rectangular lattice \em  $\G$ is a lattice that has a diagonal basis matrix.  The associated flat torus $\R^n/\G$ is a \em rectangular flat torus.  \em 
\end{defn}

If an n-rank lattice $\Gamma$ has a diagonal basis matrix, then there are scalars $\{ c_j \}_{j=1} ^n$ such that a basis for the lattice is $ \{ c_j e_j \}_{j=1} ^n$, 
where $e_j$ are the standard orthonormal basis vectors of $\R^n$.  Consequently, the rectangular lattice $\G \cong \Gamma_1 \times \ldots \times \Gamma_n$,   for the one-dimensional lattices $\Gamma_j = \Z c_j \cong \Z c_j e_j$.  

\begin{theorem} If two rectangular flat tori are isospectral then they are isometric. 
\end{theorem}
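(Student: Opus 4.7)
The plan is to induct on the dimension $n$. In dimension one a rectangular lattice is simply $\Z c$ for some $c>0$, whose smallest positive length $c$—a spectral invariant by Corollary \ref{cor:dim_vol}—determines it up to congruence, establishing the base case. The strategy for the inductive step is to peel off a common one-dimensional orthogonal summand from the two lattices and invoke the isospectral cancellation afforded by Lemma \ref{le:thetaproducts}, thereby reducing to a problem in dimension $n-1$.

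For the inductive step, consider two isospectral rectangular lattices $\G_1 = \bigoplus_{j=1}^n \Z c_j e_j$ and $\G_2 = \bigoplus_{j=1}^n \Z d_j e_j$ in $\R^n$ with $c_j, d_j > 0$ (signs can be absorbed since $\Z c = \Z(-c)$). The shortest nonzero vector in a rectangular lattice is a standard basis vector, so its length equals $\min_j c_j$. Since the length spectrum is a spectral invariant (Corollary \ref{cor:dim_vol}), $\min_j c_j = \min_j d_j =: a$. Using Lemma \ref{le:permutations} to reorder the orthogonal summands, one may assume $c_1 = d_1 = a$. Set $\Lambda := \Z a e_1$, so that $\G_i = \Lambda \oplus \G_i'$, where $\G_i' = \bigoplus_{j=2}^n \Z c_j e_j$ (respectively with $d_j$) lies in the orthogonal complement of $\R e_1$ and is still rectangular. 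By Lemma \ref{le:thetaproducts}, isospectrality of $\G_1$ and $\G_2$ forces isospectrality of $\G_1'$ and $\G_2'$ in $\R^{n-1}$. The inductive hypothesis then yields $\G_1' \cong \G_2'$, and Proposition \ref{le:congruenceproducts} reassembles this into $\G_1 \cong \G_2$. Theorem \ref{th: RieIsom} finally translates the congruence of lattices into isometry of the flat tori.

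The main subtlety is ensuring that the split produces \emph{identical} one-dimensional summands in both decompositions, so that Lemma \ref{le:thetaproducts} genuinely applies (the lemma cancels a common factor, not merely an isospectral one). This is precisely where the rigidity of the rectangular setting shows up: because each minimum-length vector is aligned with a coordinate axis, $\Z c_1 e_1$ and $\Z d_1 e_1$ coincide once we have matched the shortest length $a$ and reordered coordinates. Without this alignment—e.g., for general lattices where a shortest vector may lie in a generic direction—the same approach would fail, foreshadowing the difficulty that Schiemann's theorem in \S \ref{s:schiemann} must confront.
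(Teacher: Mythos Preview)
Your proof is correct and follows essentially the same route as the paper: induction on dimension, identification of the common shortest length via Corollary \ref{cor:dim_vol}, reordering via Lemma \ref{le:permutations}, cancellation of the common one-dimensional factor via Lemma \ref{le:thetaproducts}, and reassembly of the congruence. The only cosmetic differences are that the paper invokes Theorem \ref{th:inheritance1} rather than Proposition \ref{le:congruenceproducts} for the reassembly step (either works), and a small slip in wording on your side: the shortest nonzero vector of $\bigoplus_j \Z c_j e_j$ is $\pm c_{j_*} e_{j_*}$, a scalar multiple of a standard basis vector, not a standard basis vector itself.
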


\begin{proof} 
The proof is by induction on the dimension.  If two rectangular flat tori are isospectral, then they are the same dimension, so they are both defined by full-rank lattices in $\R^n$ that have diagonal basis matrices.  The case $n=1$ follows immediately from the equality of the first positive eigenvalue that completely determines a one-dimensional flat torus.  So we now assume the theorem has been proven for dimensions up to some $n \geq 1$.  Assume that two $n+1$ dimensional rectangular flat tori are isospectral.  The rectangular tori are each defined by the products of the one-dimensional lattices 
\[\Gamma= \Gamma_1 \times \ldots \times \Gamma_{n+1}, \textrm{ and } \Lam = \Lam_1 \times \ldots \times \Lam_{n+1},\]
with each $\Gamma_j = \Z c_j \cong \Z c_j e_j$, for the standard unit vector $e_j$ and for some non-zero $c_j$.  The length of the shortest non-zero vector in $\Gamma$ is therefore the minimal $|c_j|$.  The length of the shortest non-zero vectors of $\Gamma$ and $\Lambda$ are the same by Corollary \ref{cor:dim_vol}.  By Lemma \ref{le:permutations} we can without loss of generality re-arrange these products to assume that $\Gamma_1 = \Z c_1 e_1$, and $\Lam_1 = \Z c_1 e_1$.  Consequently, these are congruent and isospectral.  By Lemma \ref{le:thetaproducts}, we therefore have that $\Gamma_2 \times \ldots \times \Gamma_{n+1}$ and $\Lam_2 \times \ldots \times \Lam_{n+1}$ are isospectral, and they are $n$-dimensional.  Consequently, by the induction assumption, they are congruent.  We therefore have by Theorem \ref{th:inheritance1} that $\Gamma$ and $\Lambda$ are also congruent, and consequently the flat tori they define are isometric.  
\end{proof}

Is there a rectangular flat torus that is isospectral to a non-rectangular flat torus? The answer will be revealed in Section \ref{ss: rect6}.

\subsubsection{One can hear the shape of a Euclidean box} \label{ss:boxes} 
Although we would expect the following result to be known, we are unaware of a reference in the literature and therefore include it here.  
A \em Euclidean box \em is a bounded domain in $\R^n$ that is the Cartesian product of $n$ bounded intervals.    

\begin{theorem} Assume that two Euclidean boxes are isospectral with respect to the Dirichlet boundary condition or the Neumann boundary condition.  Then the two boxes are isometric.  
\end{theorem}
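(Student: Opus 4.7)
The plan is to induct on the dimension $n$ of the boxes, mirroring the strategy used just above for rectangular flat tori. For the base case $n=1$, a box is an interval $[0,a]$ and, under either Dirichlet or Neumann boundary conditions, the smallest positive Laplace eigenvalue equals $\pi^2/a^2$, which uniquely determines $a$. Hence any two isospectral one-dimensional boxes are isometric.

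For the inductive step, assume the result holds in dimensions up to $n$ and let $B=\prod_{i=1}^{n+1}[0,a_i]$ and $B'=\prod_{j=1}^{n+1}[0,b_j]$ be isospectral. Weyl's law (the leading $t\to 0^+$ asymptotic of the heat trace) guarantees they share the same dimension. The key step is to extract the largest side length $A$ together with its multiplicity $m$ from the spectrum. In the Neumann case, the smallest positive eigenvalue is $\pi^2/A^2$, with multiplicity equal to the number of indices satisfying $a_i=A$. In the Dirichlet case, the smallest eigenvalue is $\lambda_0:=\pi^2\sum_i 1/a_i^2$, coming from the tuple $(1,\ldots,1)$, and the next eigenvalue is $\lambda_0+3\pi^2/A^2$, obtained by promoting exactly one coordinate from $k_i=1$ to $k_i=2$ in a slot with $a_i=A$; its multiplicity is again $m$. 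So in both cases $A$ and $m$ are spectral invariants, and therefore equal for $B$ and $B'$.

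Relabel so that $a_1=\cdots=a_m=A=b_1=\cdots=b_m$ and the remaining sides are strictly smaller than $A$. Write $B=[0,A]^m\times B_{\mathrm{rest}}$ and $B'=[0,A]^m\times B'_{\mathrm{rest}}$, where $B_{\mathrm{rest}},B'_{\mathrm{rest}}$ are $(n+1-m)$-dimensional boxes with all sides $<A$. Because the Laplacian on a product of intervals separates variables, the heat trace factors as
\[
Z(B,t)=Z([0,A],t)^{m}\,Z(B_{\mathrm{rest}},t),
\]
and analogously for $B'$. Since $Z([0,A],t)$ is a strictly positive sum of exponentials, the equality $Z(B,t)=Z(B',t)$ forces $Z(B_{\mathrm{rest}},t)=Z(B'_{\mathrm{rest}},t)$. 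By the same argument used in Corollary \ref{cor:dim_vol}, equality of heat traces implies isospectrality, so $B_{\mathrm{rest}}$ and $B'_{\mathrm{rest}}$ are isospectral. When $m=n+1$ the remaining factors are trivial and we are done; otherwise the inductive hypothesis yields $B_{\mathrm{rest}}\cong B'_{\mathrm{rest}}$, hence $B\cong B'$.

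The main obstacle is the multiplicity bookkeeping: one must verify that no eigenvalue of the form $\pi^2/A^2$ (Neumann) or $\lambda_0+3\pi^2/A^2$ (Dirichlet) arises from lattice tuples other than single moves at a maximum-side coordinate. This reduces to the elementary observation that each nonzero contribution $(k_i^2-\delta)/a_i^2$ is bounded below by the corresponding quantity at $a_i=A$, with equality forcing exactly one non-trivial coordinate at a side of length $A$. Once this count is pinned down, the inductive skeleton above goes through uniformly for both boundary conditions.
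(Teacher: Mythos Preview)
Your proof is correct and follows essentially the same route as the paper: induct on dimension, read off the longest side length from the gap between the first two eigenvalues, use multiplicativity of the heat trace on a product to strip off that factor, and apply the inductive hypothesis. The only difference is cosmetic bookkeeping: you extract the multiplicity $m$ of the longest side and peel off all $m$ copies of $[0,A]$ at once, whereas the paper removes a single copy of $[0,\ell_{\max}]$ per inductive step; both reach the same conclusion.
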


\begin{proof} 
If two boxes are isospectral, then the same boundary condition must be taken on both boxes.  The Dirichlet boundary condition requires eigenfunctions to vanish on the boundary, whereas the Neumann boundary condition requires eigenfunctions to have vanishing normal derivative on the smooth components of the boundary.  In the Neumann case, $0$ is an eigenvalue, whereas in the Dirichlet case, the spectrum is strictly positive. Although the proof is similar to the one for rectangular flat tori, the flavor is a bit different.  For flat tori, one has the Poisson summation formula, which we do not have for Euclidean boxes.  We replace this ingredient with the explicit calculation of the eigenvalues.  We will prove the theorem for the Dirichlet boundary condition because the proof for the Neumann condition is completely analogous.  

In one dimension, the spectrum consists of $n^2 \pi^2/\ell^2$ for $n \in \N$, with $\ell$ the length of the one-dimensional box (interval). Thus, if one dimensional boxes are isospectral then they are isometric. Now assume the statement is true for all dimensions from $1$ to $n$ for some $n \geq 1$.  By Weyl's law \cite{weyl}, if the eigenvalues are listed in non-decreasing order, then the $k^{th}$ eigenvalue grows with $k \to \infty$ on the order of $k^{2/n}$ for an $n$-dimensional box.  As a consequence, if two boxes are isospectral, then they are the same dimension. Consider now an $n+1$ dimensional box.  The first two smallest eigenvalues are 
\[ \lambda_1 = \sum_{k=1} ^{n+1} \frac{\pi^2}{\ell_k^2}, \quad \lambda_2 = \frac{4\pi^2}{\ell_{max}^2} + \sum_{\ell_k \neq \ell_{max}} \frac{\pi^2}{\ell_k ^2}. \] 
Here, $\ell_{max}$ is the length of the longest side of the box.  If two boxes are isospectral, they have the same first two eigenvalues, as well as the same difference between these first eigenvalues, that is $3\pi^2/\ell_{max}^2$.  Consequently, $\ell_{max}$ is the same for both boxes.  The two boxes are therefore each respectively isometric to 
\[ B \times [0, \ell_{max}], \quad B' \times [0, \ell_{max}].\]   
Here, $B$ and $B'$ are boxes of dimension $n$.  For simplicity, set $\ell:=\ell_{max}$. Since the eigenvalues of $B \times [0, \ell]$ are equal to the sum of the eigenvalues of $B$ and the eigenvalues of $[0, \ell]$, the heat trace satisfies  $H_{B\times [0, \ell]}(t) = H_B(t) H_{[0, \ell]}(t)$ and similarly $H_{B'\times [0, \ell]}(t) = H_{B'}(t) H_{[0, \ell]}(t)$. The two heat traces are equal by isospectrality, and therefore $H_B=H_{B'}$, from which it follows that $B$ and $B'$ are isospectral. Since $B$ and $B'$ are dimension $n$, by induction they are also isometric.  By Theorem \ref{th:inheritance1}, the two boxes are respectively isometric to $B \times [0, \ell] \cong B' \times [0, \ell]$.
\end{proof}


\section{Key kitchenware for computation and construction}\label{s:kitchen} 
In the preceding section we introduced the essential ingredients to understand the geometry and spectra of flat tori.  Here we present useful tools for manipulating and combining these ingredients.  In this section, all lattices are full-rank.

\subsection{Congruency tests} \label{s:tech_cong}
Given two lattices $\G_1=A_1\ZZ^n$ and $\G_2=A_2\ZZ^n$ with two explicit bases, we can easily check if they are the same lattice, since this is the case if and only if $A_1^{-1} A_2$ is a unimodular matrix.  Checking whether or not the two lattices are congruent is not as simple.  The lattices are congruent if and only if there is an orthogonal matrix $C$ and a unimodular matrix $B$ such that $CA_1 B=A_2$.  There are infinitely many orthogonal matrices and infinitely many unimodular matrices, so checking congruency is a seemingly infinite task.   One way to conclude that lattices are not congruent is furnished by Proposition \ref{le:congruenceproducts}, from which we immediately obtain 

\begin{cor}\label{cor: red} If $\G$ is reducible and $\Lambda$ is not, then $\G$ is not congruent to $\Lambda$.
\end{cor}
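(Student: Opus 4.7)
The plan is to argue by contraposition, using Proposition \ref{le:congruenceproducts} essentially off the shelf. Suppose toward contradiction that $\G \cong \Lambda$. Since $\G$ is reducible, by Definition \ref{defn:irreducibility} we may write $\G = \G_1 \oplus \G_2$ with both $\G_i$ non-trivial (so both have rank at least $1$).

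Now invoke Proposition \ref{le:congruenceproducts} with $k=2$ applied to the decomposition $\G = \G_1 \oplus \G_2$. Since $\Lambda \cong \G$, the proposition yields sublattices $\Lambda_1,\Lambda_2 \subset \Lambda$ with $\Lambda = \Lambda_1 \oplus \Lambda_2$ and $\Lambda_i \cong \G_i$. Because congruence preserves rank and each $\G_i$ is non-trivial, each $\Lambda_i$ is also non-trivial. Hence $\Lambda$ admits an orthogonal direct sum decomposition into two non-trivial sublattices, which by Definition \ref{defn:irreducibility} means $\Lambda$ is reducible, contradicting the hypothesis.

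The argument is genuinely immediate, so there is no substantive obstacle beyond being careful that the $\Lambda_i$ produced by Proposition \ref{le:congruenceproducts} really are non-trivial; this follows from $\Lambda_i \cong \G_i$ together with the observation that congruence is an isometry and therefore preserves the rank of a lattice. Thus no new tools beyond Proposition \ref{le:congruenceproducts} and the definition of reducibility are required.
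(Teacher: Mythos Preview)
Your proof is correct and is exactly the argument the paper has in mind: it states the corollary as an immediate consequence of Proposition~\ref{le:congruenceproducts}, and your contrapositive unpacking of that proposition (with the observation that congruence preserves non-triviality) is precisely the intended one-line justification.
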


A second method uses Corollary \ref{cor:dim_vol} together with the fact that orthogonal transformations preserve the scalar product.  

\begin{cor} Let $\G_1,\G_2$ be two lattices.  For $s>0$, let $S_i(s)$ be the sets of all vectors in $\G_i$ of length $s$. If there exists an $s>0$ such that the number of elements in $S_i(s)$ are not equal, then $\G_i$ are not congruent.  Let $P_i(s) = \{ a \cdot b : a, b \in S_i(s) \}$.  If there exists an $s>0$ such that $P_i(s)$ are not equal, then $\G_i$ are not congruent.
\end{cor}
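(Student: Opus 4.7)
The plan is to prove both assertions simultaneously by contrapositive: assume $\G_1 \cong \G_2$ and deduce that $\#S_1(s) = \#S_2(s)$ and $P_1(s) = P_2(s)$ for every $s > 0$. By definition of congruency, there exists $C \in \oO_n(\R)$ with $C\G_1 = \G_2$, and by Lemma \ref{le: orthl}, $C$ preserves both the Euclidean norm and the Euclidean scalar product on $\R^n$.

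For the cardinality statement, the cleanest route is to invoke Corollary \ref{cor:dim_vol}: congruent lattices define isometric flat tori, which are consequently isospectral and share a common length spectrum. Since the length spectrum is precisely the multiset $\{ \|\g\| : \g \in \G \}$, its multiplicity at value $s$ is exactly $\#S(s)$. Alternatively, one observes directly that $v \mapsto Cv$ restricts to an injection $S_1(s) \to S_2(s)$ because $\|Cv\| = \|v\|$ and $Cv \in \G_2$; the inverse map $C^{-1}$ provides surjectivity, so we obtain a bijection.

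For the second claim, I would exploit that $C$ preserves scalar products, so for any $a, b \in S_1(s)$ the vectors $Ca, Cb$ lie in $S_2(s)$ and satisfy $Ca \cdot Cb = a \cdot b$. Hence every element of $P_1(s)$ lies in $P_2(s)$, giving $P_1(s) \subseteq P_2(s)$. Applying the identical argument to $C^{-1} \in \oO_n(\R)$, which realizes the congruency $\G_2 \cong \G_1$, yields the reverse inclusion and thus $P_1(s) = P_2(s)$.

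There is no serious obstacle here; the corollary is essentially a direct consequence of the inner-product-preserving nature of $\oO_n(\R)$, packaged as a practical criterion for ruling out congruency. The mild subtlety worth flagging is that neither implication admits a converse: as the later sections illustrate, two non-congruent lattices can nonetheless share $\#S(s)$ (and even the finer data $P(s)$) for every $s$, which is precisely why isospectral non-isometric flat tori can exist at all.
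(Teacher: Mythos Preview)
Your proof is correct and follows exactly the approach the paper sketches: the corollary is presented there as an immediate consequence of Corollary~\ref{cor:dim_vol} together with the fact that orthogonal transformations preserve the scalar product, and that is precisely what you have written out. Your direct bijection argument for the cardinality claim is a fine alternative to invoking the length spectrum, and your closing remark about the failure of the converses is apt and consistent with the examples in \S\ref{s:differentdonuts}.
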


A third method uses the equivalence classes of quadratic forms we associate to lattices, with the help of the following corollary that is a consequence of Proposition \ref{PosQ}.   

\begin{cor}\label{le:finQ} Let $Q_1$ and $Q_2$ be positive definite $n \times n$ matrices.  Let $\lambda_{\min}$ be the smallest eigenvalue of $Q_1$. If $B=[b_j]$ is a matrix with column vectors $b_j$, and $B^TQ_1B=Q_2$, then 
\[ b_j^TQ_1b_j=(Q_2)_{jj}, \quad j=1, \ldots, n.\] 
Moreover, $\|b_j\|^2\le (Q_2)_{jj}/\lambda_{\min}$ for each $j=1, \ldots, n$. 
\end{cor}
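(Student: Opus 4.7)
The plan is to prove both assertions by extracting the $(j,j)$-entry of the matrix identity $B^T Q_1 B = Q_2$ and then invoking the eigenvalue bound from Proposition \ref{PosQ}(5).

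First, I would unpack the $(j,j)$-entry of $B^T Q_1 B$ in terms of the columns $b_j$. Writing $b_j = B e_j$, where $e_j$ is the $j$-th standard basis vector, one has
\[
(B^T Q_1 B)_{jj} = e_j^T B^T Q_1 B e_j = (B e_j)^T Q_1 (B e_j) = b_j^T Q_1 b_j.
\]
Since $B^T Q_1 B = Q_2$, comparing $(j,j)$-entries yields the first claim $b_j^T Q_1 b_j = (Q_2)_{jj}$ for each $j=1,\dots,n$.

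For the second assertion, I would apply Proposition \ref{PosQ}(5) directly to $Q_1$: for every $x \in \R^n$ one has $\lambda_{\min} \|x\|^2 \le x^T Q_1 x$. Substituting $x = b_j$ gives
\[
\lambda_{\min} \|b_j\|^2 \le b_j^T Q_1 b_j = (Q_2)_{jj}.
\]
Since $Q_1$ is positive definite, $\lambda_{\min} > 0$ by Proposition \ref{PosQ}(4), so dividing through yields $\|b_j\|^2 \le (Q_2)_{jj}/\lambda_{\min}$, as required.

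There is no real obstacle here; the statement is essentially a bookkeeping consequence of the identity $B^T Q_1 B = Q_2$ read one column at a time, combined with the standard spectral lower bound for the quadratic form associated to a positive definite symmetric matrix. The only subtlety worth flagging is that positive definiteness of $Q_1$ is needed both to guarantee $\lambda_{\min} > 0$ (so that the division is valid) and to ensure the inequality $\lambda_{\min}\|x\|^2 \le x^T Q_1 x$ has content; both are provided by Proposition \ref{PosQ}.
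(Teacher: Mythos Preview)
Your proof is correct and matches the paper's approach exactly: the paper does not give an explicit proof but simply states that the corollary is ``a consequence of Proposition \ref{PosQ},'' and your argument fills this in precisely by reading off the diagonal entries of $B^T Q_1 B = Q_2$ and applying parts (4) and (5) of that proposition.
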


To lattices $A_i \Z^n$ for $i=1,2$ we associate the class of quadratic forms that are integrally equivalent to $Q_i = A_i ^T A_i$ for each $i$.  The lattices are congruent if and only if these equivalence classes are identical.  In turn, these equivalence classes are identical if and only if there is a unimodular matrix $B$ such that $B^TQ_1B=Q_2$.  Since the entries of unimodular matrices are integers, there are only finitely many unimodular matrices whose column vectors satisfy the conditions of Lemma \ref{le:finQ}.  Consequently, one can write a computer program to check congruency based on Corollary \ref{le:finQ}.  A greater challenge is to investigate infinite families of quadratic forms.  In 2011, Cervino and Hein \cite{hein} developed a systematic method to analyze isometry and non-isometry of infinite families of pairs of quadratic forms using modular forms.  An entirely different approach based on nodal counts was given one year later in \cite{bruning2012nodal}.

\subsection{Modular forms in connection to isospectrality}\label{ss:mod} 
Verifying with complete confidence that two flat tori are isospectral requires checking that all of their eigenvalues are identical counting multiplicity, yet again an apparently infinite task.  It turns out that the theory of \textit{modular forms} gives a convenient criterion for determining precisely when flat tori are isospectral. This is one of several reasons why the theory and language of modular forms appear in many of the articles related to the spectral theory of flat tori.

To state the definition, recall that $\mathrm{SL}_2(\Z)$ is the \textit{special linear group} of $2\times 2$ integer matrices with determinant $1$. The \textit{congruence groups} are defined as 
\[ \G_0(N):=\Big\{\begin{bmatrix}a & b \\ c& d\end{bmatrix}\in \GL_2(\Z): c\equiv 0 \mod N\Big\}.\] 
Let $\mathbb{H}$ denote the complex upper half-plane.  

\begin{defn}[Dirichlet characters and modular forms] A Dirichlet character mod $N$ is a function $\chi: \Z \to \C$ such that $\chi(mn)=\chi(m)\chi(n)$, $\chi(m+N) = \chi(m)$, $\chi(a)=0$ if $\gcd(a,N) >1$, and $\chi(a) \neq 0$ if $\gcd(a, N) =1$.  A Dirichlet character defines a character on the congruence group $\G_0(N)$ via 
\[ \chi(\gamma) := \chi(d), \quad \gamma = \begin{bmatrix} a&b \\ c& d \end{bmatrix} \in \G_0(N).\] 
Given a congruence group $\Gamma_0(N)$, a \textit{modular form} of weight $k$ and character $\chi$ mod $N$ is a function $f:\mathbb{H}\to \mathbb{C}$ such that:
\begin{enumerate}
    \item $f$ is holomorphic. 

    \item For any $\left[\begin{smallmatrix}a & b \\ c & d\end{smallmatrix}\right]\in \Gamma_0(N)$, $f(\frac{az+b}{cz+d})=\chi(d)(cz+d)^kf(z)$.
    
    \item For any $\left[\begin{smallmatrix}a & b \\ c & d\end{smallmatrix}\right]\in \SL_2(\Z)$, $(cz+d)^{-k}f(\frac{az+b}{cz+d})$ is bounded as $\mathrm{Im}(z)\to \infty$.
\end{enumerate}
The set of such forms is denoted  $M_k(\G_0(N),\chi)$, following the notation of \cite[p. 127]{koblitz2012introduction}.  
\end{defn}

In fact, it is enough to check conditions (2) and (3) for the generators of $\G_0(N)$ and $\mathrm{SL}_2(\ZZ)$ respectively.  One set of generators for $\SL_2(\Z)$ consists of the matrices $\left[\begin{smallmatrix}0 & -1\\ 1 & 0\end{smallmatrix}\right]$ and $\left[\begin{smallmatrix}1 & 1\\ 0 & 1\end{smallmatrix}\right]$.

Let us now restrict to \textit{rational} quadratic forms (respectively rational flat tori and rational lattices).  Up to a constant, any rational quadratic form is an even quadratic form, as in Definition \ref{def:sets_of_forms}. In the sense of Proposition \ref{prop: int tuple}, this restriction does not limit the investigation of the relationship between isospectrality and congruence, because this can be reduced to only considering rational quadratic forms.  One connection between modular forms and the spectrum of flat tori is through the following theorem that shows that the theta series of an even-dimensional rational lattice is a modular form.

\begin{theorem}[see p. 295 of  \cite{eichler1963einfuhrung}, or Corollary 4.9.5 (iii) of \cite{miyake2006modular}] \label{thm:eichler} Let $Q$ be an even positive definite quadratic form of $2k$ variables, and $N_Q$ be the smallest positive integer such that $N_Q Q^{-1}$ is even. If $\G$ is an underlying lattice, then $\theta_{\G}(z)\in M_k(\G_0(N_Q),\chi)$. The character $\chi$ mod $N_Q$ is defined for 
\begin{equation*} \label{eq:char_leg} \begin{gathered} \gamma = \begin{bmatrix} a&b \\ c& d \end{bmatrix} \in \G_0(N_Q), \quad  \chi(\gamma) :=  \\ 
 \begin{cases} \sgn(d)^k & (-1)^k \det(Q) \not \equiv 0 \textrm{ mod $|d|$ and is a quadratic residue mod $|d|$,} \\ -\sgn(d)^k & (-1)^k \det(Q) \textrm{ is a non-quadratic residue mod $|d|$,} \\ 0 & (-1)^k \det(Q) \equiv 0 \textrm{ mod } |d|. \end{cases} \end{gathered} 
\end{equation*}  
\end{theorem}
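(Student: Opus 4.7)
The plan is to verify directly the three defining conditions of $M_k(\Gamma_0(N_Q),\chi)$ for $\theta_\Gamma$. Holomorphicity on the upper half plane is the easy part: by Proposition \ref{PosQ}(5) one has $Q(x) \ge \lambda_{\min}\|x\|^2$, so for $z=u+iv$ with $v>0$, $|e^{i\pi z Q(x)}| \le e^{-\pi v \lambda_{\min}\|x\|^2}$. This bound is uniform on compact subsets of $\mathbb{H}$, so the series converges uniformly on compacts, and Weierstrass's theorem delivers holomorphicity.

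For the transformation law, I would first verify it on two simple elements and then bootstrap. Because $Q$ is even, $x^T Q x$ is an even integer for every $x \in \Z^n$, so $e^{i\pi(z+1)Q(x)} = e^{i\pi z Q(x)}$, giving $\theta_\Gamma(z+1)=\theta_\Gamma(z)$ and settling the element $T=\left[\begin{smallmatrix}1&1\\0&1\end{smallmatrix}\right]$. Next, applying Poisson summation (in the spirit of Theorem \ref{th: poiss}, but with complex parameter $z$ inside the Gaussian) to $x \mapsto e^{i\pi z x^T Q x}$ and computing the resulting Fourier transform yields the inversion identity
\[
\theta_\Gamma\!\left(-\tfrac{1}{z}\right) \;=\; \frac{(-iz)^k}{\sqrt{\det Q}}\,\theta_{\Gamma^*}(z).
\]
This settles $S=\left[\begin{smallmatrix}0&-1\\1&0\end{smallmatrix}\right]$, but $S$ does not in general lie in $\Gamma_0(N_Q)$, so the real task is a general $\gamma = \left[\begin{smallmatrix}a&b\\c&d\end{smallmatrix}\right] \in \Gamma_0(N_Q)$ with $c \neq 0$. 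The classical approach is to split $x \in \Z^n$ into residue classes modulo $|c|$: writing $x = |c|y + r$ and using $\frac{az+b}{cz+d} = \frac{a}{c} - \frac{1}{c(cz+d)}$, the $y$-sum becomes (after an application of Poisson summation in $y$) a theta series in $z$, while the $r$-sum collapses to a Gauss sum depending only on $d$ and $Q$. The condition $N_Q \mid c$ combined with $N_Q Q^{-1}$ being even ensures that the inner theta series reassembles as $\theta_\Gamma(z)$, while the Gauss sum evaluates via quadratic reciprocity to precisely the Jacobi-symbol expression for $\chi(d)$ in the statement.

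Boundedness at the cusps is then almost automatic. As $\mathrm{Im}(z)\to\infty$, every nonzero term in $\theta_\Gamma$ decays exponentially and $\theta_\Gamma(z)\to 1$; this gives boundedness near the cusp at $\infty$. For any other cusp $\gamma(\infty)$ with $\gamma \in \SL_2(\Z)$, the same Gauss-sum calculation rewrites $(cz+d)^{-k}\theta_\Gamma\!\left(\frac{az+b}{cz+d}\right)$ as a theta-like series that is again bounded as $\mathrm{Im}(z)\to\infty$.

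The step I expect to be the main obstacle is the Gauss-sum computation that identifies $\chi(d)$ exactly as stated, because it requires delicate bookkeeping of signs and of quadratic residues modulo $|d|$. It is precisely this calculation that motivates the definition of $N_Q$ via the condition that $N_Q Q^{-1}$ be even: this ensures that the inner theta series reassembles cleanly and that the Gauss sums are multiplicative in the right way. The remaining steps are routine once the inversion formula and the Gauss-sum identity are in hand.
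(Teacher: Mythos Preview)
The paper does not give its own proof of this theorem; it is stated with attribution to Eichler \cite{eichler1963einfuhrung} and Miyake \cite{miyake2006modular} and used as a black box for the isospectrality certificate that follows. There is therefore nothing in the paper to compare your argument against.

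That said, your outline is a faithful sketch of the classical proof one finds in those references (and in Schoeneberg, Iwaniec, etc.). The three ingredients you isolate---holomorphy from locally uniform convergence via the eigenvalue bound of Proposition~\ref{PosQ}(5), the inversion identity $\theta_\Gamma(-1/z) = (-iz)^k(\det Q)^{-1/2}\theta_{\Gamma^*}(z)$ from Poisson summation, and the reduction of the general $\Gamma_0(N_Q)$ transformation law to a Gauss-sum evaluation after splitting $x\in\Z^n$ into residue classes modulo $|c|$---are exactly the standard route. You are right that the Gauss-sum step is where the real work lies and that the evenness of $N_Q Q^{-1}$ is what makes the inner theta series reassemble as $\theta_\Gamma$; in the cited references this is handled by diagonalising $Q$ modulo $c$ and reducing to products of one-variable quadratic Gauss sums, whose evaluation via reciprocity produces the Kronecker--Jacobi symbol $\left(\frac{(-1)^k\det Q}{d}\right)$ matching the character $\chi$ in the statement. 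Your honest flagging of this as the main obstacle is appropriate: what you have written is a correct plan, not yet a proof, and carrying out that Gauss-sum bookkeeping in full is precisely why the paper outsources the result.
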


One can use the dual quadratic form of $q$ to show that $N_Q$ is a spectral invariant of even positive definite quadratic forms and their underlying lattices.  Modular forms in the same space $M_k(\G_0(N_Q),\chi)$ are determined by finitely many coefficients in their Fourier expansions.  Moreover, this number of coefficients is estimated from above by the \em Sturm bound \em  \cite{sturm_bound}. 

\begin{theorem}[Hecke's identity theorem for modular forms {\cite[p. 811]{heckemathematische}}]\label{th:id}
Let 
\[ f(z)= \sum_{n=0}^\infty a_ne^{2\pi i nz}\in M_k(\G_0(N),\chi), \quad \mu_0(N):=N\prod_{p | N, \, \textrm{ prime} } \left(1+\frac{1}{p}\right).\]  Assume that $\chi$ is real-valued.  
Then the first $\frac{\mu_0(N)k}{12} + 1$ coefficients $a_n$ completely determine $f$.  Equivalently, 
\[ a_n=0 \, \forall n \textrm{ with } 0\leq n \leq \frac{\mu_0(N)k}{12}+1 \implies f=0.\]
\end{theorem}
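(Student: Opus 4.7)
The plan is to reduce the statement to Hecke's identity theorem for $\SL_2(\Z)$, which in turn follows from the classical $k/12$ valence formula: any nonzero holomorphic modular form $h \in M_{k'}(\SL_2(\Z))$ satisfies $\mathrm{ord}_\infty(h) \le k'/12$. The combinatorial quantity $\mu_0(N)$ will enter as the subgroup index $[\SL_2(\Z):\G_0(N)]$, and the hypothesis that $\chi$ is real-valued will be used via a squaring trick to eliminate the character.

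First I would choose coset representatives $\gamma_1 = I,\gamma_2,\ldots,\gamma_\mu$ for $\G_0(N)\backslash \SL_2(\Z)$, where $\mu:=\mu_0(N)$, and form the ``norm''
\[
F(z) := \prod_{j=1}^{\mu}(f|_k\gamma_j)(z).
\]
For any $\delta\in \SL_2(\Z)$, right multiplication permutes the cosets, so $\gamma_j\delta = \alpha_{j,\delta}\,\gamma_{\sigma(j)}$ for some $\alpha_{j,\delta}\in \G_0(N)$ and some permutation $\sigma$ depending on $\delta$. Applying this identity factor-by-factor to the slash operator yields the transformation law $F|_{k\mu}\delta = \big(\prod_{j=1}^{\mu}\chi(\alpha_{j,\delta})\big)F$.

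The crucial point is that for $\alpha = \left[\begin{smallmatrix}a&b\\c&d\end{smallmatrix}\right]\in \G_0(N)$, the relation $ad-bc=1$ together with $c\equiv 0\pmod N$ forces $\gcd(d,N)=1$, so $\chi(\alpha) \ne 0$; real-valuedness of $\chi$ then gives $\chi(\alpha)^2 = 1$. Squaring the transformation law above therefore yields $F^2 \in M_{2k\mu}(\SL_2(\Z))$. Since $\gamma_1 = I$, the form $f$ itself appears as a factor of $F$, while each of the other factors $f|_k\gamma_j$ is holomorphic at $\infty$, so $\mathrm{ord}_\infty(F^2) \ge 2\,\mathrm{ord}_\infty(f)$. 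The $k/12$ bound for $\SL_2(\Z)$ applied to $F^2$ gives, unless $F^2 \equiv 0$, the inequality $\mathrm{ord}_\infty(f) \le k\mu_0(N)/12$. Contrapositively, if $a_n = 0$ for every integer $n$ with $0 \le n \le k\mu_0(N)/12 + 1$, then $\mathrm{ord}_\infty(f)$ strictly exceeds $k\mu_0(N)/12$, forcing $F^2 \equiv 0$ and hence $f \equiv 0$.

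I expect the main obstacle to be the bookkeeping around cosets and characters: verifying that right multiplication by $\delta$ permutes the $\gamma_j$ up to left factors in $\G_0(N)$, and that the induced character on $\SL_2(\Z)$ is annihilated by squaring precisely under the real-valuedness hypothesis on $\chi$. A secondary (but standard) input is the identification $\mu_0(N)=N\prod_{p|N}(1+1/p) = [\SL_2(\Z):\G_0(N)]$, most cleanly seen by viewing $\G_0(N)$ as the preimage of the upper-triangular Borel in $\SL_2(\Z/N\Z)$. Without real-valuedness one would have to pass to a higher power of $F$ to trivialize the induced character, yielding a weaker bound.
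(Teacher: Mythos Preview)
The paper does not supply its own proof of this theorem; it is stated with a citation to Hecke and then immediately applied. So there is no in-paper argument to compare against. Your proposal is the standard ``norm down to level one'' proof of the Sturm--Hecke bound and is correct: forming $F=\prod_j f|_k\gamma_j$ over coset representatives of $\G_0(N)\backslash\SL_2(\Z)$, squaring to kill the $\pm 1$-valued character (this is exactly where real-valuedness of $\chi$ is used), and then invoking the $k/12$ valence bound for $\SL_2(\Z)$ on $F^2\in M_{2k\mu_0(N)}(\SL_2(\Z))$ yields $\mathrm{ord}_\infty(f)\le k\mu_0(N)/12$, which is the desired statement. The only points worth tightening in a write-up are (i) that each factor $f|_k\gamma_j$ has an expansion in a fractional power of $q$ with nonnegative exponents (this is precisely condition~(3) in the paper's definition of modular form), so the product's $q$-order is at least that of the $j=1$ factor $f$; and (ii) that $F=0$ forces $f=0$ because the slash action is invertible. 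Your identification $\mu_0(N)=[\SL_2(\Z):\G_0(N)]$ is the standard index formula.
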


We first apply the identity theorem to modular forms in even dimensions. 

\begin{cor}[Isospectrality certificate]\label{cor: modisos} Let $P$ and $Q$ be two even positive definite quadratic forms of $2k$ variables. They are isospectral if and only if $\det(P)=\det(Q)$, $N_P=N_Q$, and their multiplicities over the integers of values less than or equal to $\frac{\mu_0(N_P)k}{12}+1$ coincide.
\end{cor}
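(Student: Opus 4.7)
The plan is to apply Theorem \ref{thm:eichler} to express both theta series as elements of a single finite-dimensional space of modular forms, and then use Hecke's identity theorem (Theorem \ref{th:id}) to conclude that the theta series agree from finitely many Fourier coefficients. Once the theta series agree, Corollary \ref{cor:dim_vol} delivers isospectrality of $P$ and $Q$.

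The ``only if'' direction is immediate. If $P$ and $Q$ are isospectral, their representation numbers agree for every $t \geq 0$, so certainly for $t \leq \frac{\mu_0(N_P)k}{12}+1$. Since $\det(Q)$ equals the squared volume of any underlying lattice ($Q=A^T A$ gives $\det(Q)=\det(A)^2$), and volume is a spectral invariant by Corollary \ref{cor:dim_vol}, we get $\det(P) = \det(Q)$. The level $N_Q$ is intrinsic to the dual form $Q^{-1}$, and the dual lattice's spectrum is determined by the original via Poisson summation, so $N_P = N_Q$ as noted just before the statement.

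For the converse, set $N := N_P = N_Q$. The character $\chi$ assigned by Theorem \ref{thm:eichler} depends only on $N$ and on the sign and quadratic-residue class of $(-1)^k\det$ modulo $|d|$; equality of the determinants therefore forces $\chi_P = \chi_Q =: \chi$, and both $\theta_{\Gamma_P}$ and $\theta_{\Gamma_Q}$ (for underlying lattices $\Gamma_P, \Gamma_Q$) lie in $M_k(\Gamma_0(N), \chi)$. The difference $f := \theta_{\Gamma_P} - \theta_{\Gamma_Q}$ is then an element of the same space. Since an even quadratic form only takes even integer values, the cuspidal Fourier expansion reads $\theta_\Gamma(z) = \sum_n \mathcal{R}(\cdot,2n)\, e^{2\pi i n z}$, so vanishing of the first several Fourier coefficients of $f$ is equivalent to agreement of representation multiplicities over an initial segment of integer values. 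Because $\chi$ takes values in $\{-1,0,+1\}$ and is thus real-valued, Hecke's identity theorem applies, and the hypothesis furnishes enough vanishing coefficients to force $f \equiv 0$.

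The main obstacle I foresee is a bookkeeping subtlety connecting the Sturm-bound index in Theorem \ref{th:id} to the stated bound on representation \emph{values}: the former counts vanishing Fourier coefficients, while the latter bounds integers $t$ with $\mathcal{R}(p,t) = \mathcal{R}(q,t)$. One must check that the hypothesis supplies the $\lfloor \tfrac{\mu_0(N)k}{12}\rfloor+1$ vanishing coefficients required by Hecke's theorem, which reduces to fixing the normalization of the theta series (equivalently, passing to $\theta_\Gamma(2z)$ so that the Fourier index coincides with the squared-length value). A secondary algebraic check is that equal determinants and levels indeed produce identical characters under the piecewise formula for $\chi$ in Theorem \ref{thm:eichler}; this is a case-by-case verification based on whether $(-1)^k\det$ is zero, a quadratic residue, or a non-residue modulo $|d|$.
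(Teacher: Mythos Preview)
Your proposal is correct and follows essentially the same route as the paper: put both theta series into the common space $M_k(\Gamma_0(N),\chi)$ via Theorem~\ref{thm:eichler}, take the difference, and kill it with Hecke's identity theorem. You are in fact more careful than the paper about the normalization issue (the paper expands in $e^{\pi i z n}$ and tacitly identifies that index with the Sturm-bound index, whereas you correctly note that the standard Fourier coefficient $a_m$ equals $\mathcal{R}(Q,2m)$ for an even form), so the bookkeeping concern you flag is real and worth resolving explicitly rather than treating as a side remark.
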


\begin{proof} It is straightforward to verify that if $P$ and $Q$ are isospectral, then the statements of the corollary hold.  For the converse, note that for any underlying lattice of $Q$, denoted $A \Z^n$, the theta series of this lattice is 
\[\sum_{\gamma \in A \Z^n} e^{i\pi z ||\gamma||^2} = \sum_{x \in \Z^n} e^{i \pi z (Ax)^T (Ax)} = \sum_{x \in \Z^n} e^{i \pi z x^T Q x}.\] 
Consequently, the theta series is identical for all underlying lattices, and we can therefore define the theta series associated to $P$ and $Q$  
\[ \theta_P(z) := \sum_{x \in \Z^n} e^{i \pi z x^T P x}, \quad \theta_Q (z) := \sum_{x \in \Z^n} e^{i \pi z x^T Q x}.\] 
If $N_P=N_Q$, and $\det(P) = \det(Q)$, then their corresponding theta series $\theta_P,\theta_Q$ lie in $M_k(\G_0(N_P),\chi)$ for some $\chi$ determined by $\det(P)=\det(Q)$, by Theorem \ref{thm:eichler}.  The quadratic forms are isospectral if and only if their theta series are identical.  So, consider $f(z) :=\theta_P(z) -\theta_Q(z)$.
We have 
\begin{align*}
    M_k(\G_0(N_P),\chi)\ni f(z)&=\sum_{x\in \Z^n} e^{\pi i zx^TPx} -\sum_{x\in \Z^n} e^{\pi i zx^TQx}\\
    &= \sum_{n\in \NN}  \big(m_P(n)-m_Q(n)\big)e^{\pi izn},
\end{align*}
where $m_P(n)$, $m_Q(n)$ respectively denote the multiplicities of the value $n$ of $P$ and $Q$ as quadratic forms over $\ZZ^n$. By Theorem \ref{th:id},  if $m_P(n)-m_Q(n)=0$ for all $0 \leq n \leq \mu_0(N_P)k/12+1$, then $f(z)=0$ for all $z$, and $\theta_P = \theta_Q$.  
\end{proof}

To extend Corollary \ref{cor: modisos} to odd dimensions, note that if $P$, $Q$ are positive definite $k \times k$ matrices, they define isospectral quadratic forms if and only if the $k+1$-dimensional forms defined by 
$$P'=\begin{bmatrix}
2 & 0 \\
0 & P
\end{bmatrix}\quad \& \quad Q'=\begin{bmatrix}
2 & 0 \\
0 & Q
\end{bmatrix} $$
are isospectral. Moreover, $P'$ and $Q'$ are integrally equivalent if and only if $P$ and $Q$ are.

\subsection{Building lattices from linear codes} \label{ss:lincodes} 
Codes are used in numerous everyday circumstances including data compression, cryptography, error detection and correction, data transmission, and data storage.  Linear codes are useful for studying lattices because they allow one to translate questions for lattices, infinite discrete groups, into questions for finite groups, known as linear codes.  For a general treatment we refer to \cite{ebeling2013lattices} and \cite{nebe2006self}.

\begin{defn}
    A \textit{linear $q$-nary code $C$ of length $n$} is a $\ZZ$-linear subspace (and a subgroup) of the module $(\Z/q\Z)^n$, where $\Z/q\Z$ is the ring of integers modulo $q$. Its elements are \textit{codewords}.
    The linear space $(\ZZ/q\ZZ)^n$ is equipped with the inner product $x\cdot y:= \sum_{i=1} ^n x_iy_i$ $\mathrm{mod}$ $q$. 
\end{defn}

In the literature, $q$ is often assumed to be prime, but we don't need this assumption for our purposes. To construct a lattice from a linear code consider the projection 
\begin{align*}
    \pi:  \Z^n \to (\Z/q\Z)^n \quad\textnormal{by}\quad  z  \mapsto z\  \text{mod }q,
\end{align*}
where mod $q$ acts coordinate-wise. Importantly, $\pi$ is a group homomorphism. 

Let $C$ be a linear code in $(\ZZ/q\ZZ)^n$. The pre-image under $\pi_q$, 
\[  \inv{\pi}_q(C) := \{ \ell\in \ZZ^n : \ell \ \text{mod }q \in C \} \]
is a full-rank lattice; see \cite[Prop. 16.2]{kacroot}.  The codewords $c_i\in C$ partition $\pi_q^{-1}(C)$ into the subsets $\pi_q^{-1}(c_i)$. So, if  $C_1,C_2\subseteq (\ZZ/q\ZZ)^n$ are different codes, then $\pi_q^{-1}(C_1)\neq\pi_q^{-1}(C_2)$.  If $c_1,\ldots,c_k$ are generators of $C$, then 
\begin{equation} \label{eq:inv_code_latt} \pi_q^{-1}(C)=\begin{bmatrix}c_1\: \cdots \:c_k\: qI\end{bmatrix}\ZZ^{k+n}\subseteq \ZZ^n, \end{equation}
with $I$ the $n\times n$ identity matrix.  In \cite{conwaysloane}, building a lattice in this way is called \em construction A. \em In the following theorem, we prove that \em all \em integer lattices can be obtained by construction A, indicating the usefulness of this approach. As a consequence of this characterization, there are only finitely many distinct integer lattices of a given determinant $q$, because the number of linear codes in $(\ZZ/q\ZZ)^n$ is finite.

\begin{theorem} Any integer lattice $L$ is the pre-image of the $\mathrm{vol}(L)$-nary code $\pi_{\mathrm{vol}(L)}(L)$.
\end{theorem}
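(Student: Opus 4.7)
The plan is to prove the set equality $\pi_q^{-1}(C) = L$, where $q = \vol(L)$ and $C = \pi_q(L)$. The inclusion $L \subseteq \pi_q^{-1}(C)$ is immediate from definitions: every $\ell \in L$ satisfies $\pi_q(\ell) \in \pi_q(L) = C$, so $\ell \in \pi_q^{-1}(C)$.

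The substantive direction is $\pi_q^{-1}(C) \subseteq L$. Given $z \in \pi_q^{-1}(C)$, by definition $\pi_q(z) \in C = \pi_q(L)$, so there exists $\ell \in L$ with $\pi_q(z) = \pi_q(\ell)$, i.e.\ $z - \ell \in q\Z^n$. Writing $z = \ell + (z - \ell)$ and using that $L$ is closed under addition, it therefore suffices to show that $q\Z^n \subseteq L$.

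The containment $q\Z^n \subseteq L$ is precisely what Proposition \ref{prop:sublattice}(2) provides, applied with the ambient lattice $\Z^n$ (with basis matrix the identity $I$, $\det(I) = 1$) and sublattice $L$ with basis matrix $A_L$: since $L$ is a full-rank sublattice of $\Z^n$, the ratio $\det(A_L)/\det(I) = \det(A_L)$ is a nonzero integer, and $\bigl(\det(A_L)/\det(I)\bigr)\Z^n = \det(A_L)\Z^n \subseteq L$. Since $\pm q = \det(A_L)$ by the definition of $\vol(L)$ and $q\Z^n = (-q)\Z^n$, this gives $q\Z^n \subseteq L$, which closes the argument.

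I don't anticipate any genuine obstacle here; the entire content of the proof is the observation that $\vol(L)\,\Z^n \subseteq L$, which has already been recorded in Proposition \ref{prop:sublattice} and is remarked on immediately after the definition of $\vol$. The only care needed is to note that the intermediate element $\ell$ produced from membership of $\pi_q(z)$ in $C$ need not equal $z$, which is why the containment $q\Z^n \subseteq L$ is essential rather than cosmetic.
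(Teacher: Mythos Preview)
Your proof is correct and follows essentially the same approach as the paper's: both reduce the equality $\pi_q^{-1}(\pi_q(L)) = L$ to the identity $\pi_q^{-1}(\pi_q(L)) = L + q\Z^n$ together with the containment $q\Z^n \subseteq L$ for $q = \vol(L)$, the latter coming from Proposition~\ref{prop:sublattice}(2). The only difference is cosmetic: the paper establishes $L' = L + q\Z^n$ in one line and then concludes, whereas you treat the two inclusions separately.
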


\begin{proof} Assume that $L$ is an integer lattice.  For any $q$, define 
 \[ L':=\inv{\pi_{q}}(\pi_{q}(L)) = \{x\in \ZZ^n:\pi_q(x)\in \pi_q(L) \}.\]  
For $x \in \Z^n$, $\pi_q(x)\in \pi_q(L)$ if and only if there is some $\g\in L$ such that $\pi_q(x)=\pi_q(\g)$. This is equivalent to $\pi_q(x-\g)=0$, which holds if and only if $x=\g+qz$ for some $z\in \ZZ^n$. This proves $L'=L+q\ZZ^n$. For $q=\mathrm{vol}(\G)$, $q\ZZ^n\subseteq L$ by Proposition \ref{prop:sublattice}, and therefore $L'=L$.
\end{proof}

\begin{cor}\label{cor:eqlattice} Let $C\subseteq (\ZZ/q\ZZ)^n$ be a linear code and $L=A\ZZ^n$ be a lattice, where $A=[a_j]$. If the codewords $\pi_q(a_j)$ generate $C$, then $L\subseteq \pi_q^{-1}(C)$. If in addition, $q\ZZ^n\subseteq L$, then $L=\pi_q^{-1}(C)$. 
\end{cor}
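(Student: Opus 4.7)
The plan is to verify both set inclusions directly by exploiting that $\pi_q$ is a surjective $\Z$-module homomorphism with kernel $q\ZZ^n$.

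For the first containment $L \subseteq \pi_q^{-1}(C)$, I would take an arbitrary $\ell \in L = A\ZZ^n$ and write it as $\ell = \sum_{j=1}^n z_j a_j$ for some $z_j \in \ZZ$. Applying $\pi_q$ and using its $\ZZ$-linearity gives $\pi_q(\ell) = \sum_j z_j \pi_q(a_j)$. Since by hypothesis the $\pi_q(a_j)$ generate $C$ as a $\ZZ$-module, this integer combination lies in $C$, hence $\ell \in \pi_q^{-1}(C)$.

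For the reverse containment $\pi_q^{-1}(C) \subseteq L$ under the extra hypothesis $q\ZZ^n \subseteq L$, I would pick $x \in \pi_q^{-1}(C)$, so $\pi_q(x) \in C$. Using the generating hypothesis again, there exist integers $z_j$ with
\[
\pi_q(x) = \sum_{j=1}^n z_j \pi_q(a_j) = \pi_q\!\left(\sum_{j=1}^n z_j a_j\right).
\]
Therefore $x - \sum_j z_j a_j$ lies in $\ker \pi_q = q\ZZ^n$. Now $\sum_j z_j a_j \in L$ by definition of $L$, and $q\ZZ^n \subseteq L$ by assumption, so $x$ is the sum of two elements of $L$, hence $x \in L$.

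There is no genuine obstacle here; the only subtle point is remembering that $\pi_q$ is not merely a group homomorphism but $\ZZ$-linear, which justifies pulling arbitrary integer coefficients through it and identifies its kernel with $q\ZZ^n$. This is precisely what allows the generating relation for $C$ to be lifted back to a relation in $\ZZ^n$ modulo $q\ZZ^n$, and then absorbed into $L$ using the hypothesis $q\ZZ^n \subseteq L$.
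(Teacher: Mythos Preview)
Your proof is correct and follows essentially the same approach as the paper's. The paper's version is more compressed: it observes $\pi_q(L)=C$, hence $L\subseteq\pi_q^{-1}(\pi_q(L))=\pi_q^{-1}(C)$, and then invokes the identity $\pi_q^{-1}(\pi_q(L))=L+q\ZZ^n$ established in the preceding theorem to conclude equality when $q\ZZ^n\subseteq L$; you unpack this kernel argument explicitly rather than citing it.
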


\begin{proof} If $\pi_q(a_j)$ generate $C$, then $\pi_q(L)=C$, so $L \subseteq \pi_q ^{-1} (\pi_q (L)) = \pi_q^{-1} (C)$.
If $q\ZZ^n\subseteq L$, then the inclusion becomes an equality. 
\end{proof}

We next relate linear codes to isospectrality. For this we need 

\begin{defn} Let $C_1,C_2\subseteq (\ZZ/q\ZZ)^n$ be two linear codes of equal cardinality and list their respective elements as $c_1^{(i)},\ldots,c_k^{(i)}$ for $i=1,2$. The codes have the same \textit{weight distribution} if for each pair $(c_j^{(1)},c_j^{(2)})$, there is a permutation $\sigma \in S_n$ such that $(c_j^{(2)})_{k}=(c_j^{(1)})_{\sigma(k)}$ for each coordinate $k$. The codes constitute an \textit{absolute pairing} if for each $(c_j^{(1)},c_j^{(2)})$ we have $(c_j^{(2)})_k=\pm (c_j^{(1)})_k$ in $\ZZ/q\ZZ$ for each $k$. Both relations are equivalence relations. 
\end{defn}

\begin{prop}\label{prop: weight} Let $C_1,C_2$ be $q$-nary linear codes, and let $L_i=\inv{\pi}_q(C_i)$. If the weight distributions of $C_1$ and $C_2$ are the same, then $L_1$ and $L_2$ are isospectral.  The converse does not hold.
\end{prop}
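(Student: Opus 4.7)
The plan is to prove the forward implication by building a norm-preserving bijection $L_1\to L_2$ coset-by-coset, and then to refute the converse with a small explicit example. First I would fix canonical lifts $\tilde c\in\{0,1,\ldots,q-1\}^n$ for every $c\in(\ZZ/q\ZZ)^n$ and decompose $L_i=\pi_q^{-1}(C_i)$ as the disjoint union $\bigsqcup_{c\in C_i}(\tilde c+q\ZZ^n)$. Assuming $C_1$ and $C_2$ share a weight distribution, the hypothesis supplies a bijection $\phi\colon C_1\to C_2$ together with permutations $\sigma_c\in S_n$ satisfying $\phi(c)_k=c_{\sigma_c(k)}$ for every $k$. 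Let $P_c\in\oO_n(\RR)$ be the associated coordinate-permutation matrix, $(P_c x)_k=x_{\sigma_c(k)}$. Then $P_c$ is an isometry, sends $\tilde c$ to $\widetilde{\phi(c)}$, and preserves $q\ZZ^n$ setwise, so it restricts to a norm-preserving bijection $\tilde c+q\ZZ^n\to\widetilde{\phi(c)}+q\ZZ^n$. Gluing these restrictions over $c\in C_1$ produces a norm-preserving bijection $L_1\to L_2$, after which Corollary~\ref{cor:dim_vol} yields isospectrality of the two lattices.

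For the converse I would look for two cosets related by an orthogonal transformation that is \emph{not} a coordinate permutation. For $q\ge 3$ the simplest such transformation is a sign flip $x_i\mapsto -x_i$, which on codewords acts as $c_i\mapsto q-c_i$ and in general does not yield a coordinate permutation of the original codeword. A concrete candidate is $q=3$, $n=3$ with
\[
C_1=\{(0,0,0),(1,1,0),(2,2,0)\},\qquad C_2=\{(0,0,0),(1,2,0),(2,1,0)\}.
\]
The reflection $(x_1,x_2,x_3)\mapsto(x_1,-x_2,x_3)$ carries $L_1=\{x\in\ZZ^3: x_1-x_2\in 3\ZZ,\ x_3\in 3\ZZ\}$ onto $L_2=\{x\in\ZZ^3: x_1+x_2\in 3\ZZ,\ x_3\in 3\ZZ\}$, so $L_1\cong L_2$ and therefore they are isospectral. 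On the other hand, the weight-$2$ codewords of $C_1$ have value-multisets $\{0,1,1\}$ and $\{0,2,2\}$, while both weight-$2$ codewords of $C_2$ have multiset $\{0,1,2\}$, so no codewise coordinate-permutation pairing is possible and the weight distributions genuinely differ.

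The main conceptual obstacle is that, because $\sigma_c$ may depend on $c$, the bijection $F\colon L_1\to L_2$ built in the forward direction is in general \emph{not} induced by a single orthogonal transformation of $\RR^n$. This flexibility is exactly why the converse is allowed to fail: non-permutation isometries such as sign flips can act on individual cosets of $q\ZZ^n$ without corresponding to any global coordinate permutation of the two codes.
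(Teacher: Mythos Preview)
Your forward direction is correct and essentially identical to the paper's own argument: the paper also partitions $L_i$ into the fibers $\pi_q^{-1}(c_j^{(i)})$ and uses, for each matched pair of codewords, the permutation matrix realizing the coordinate permutation $\sigma$ as a length-preserving bijection between the corresponding fibers. Your write-up is slightly more explicit about the canonical lifts $\tilde c$ and the decomposition $\tilde c+q\ZZ^n$, but the underlying idea is the same.

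Where you differ is in the treatment of the converse. The paper only states ``We prove the first part'' and gives no counterexample at all, leaving the failure of the converse to the reader. Your explicit $q=3$, $n=3$ example is correct: the sign flip in the second coordinate sends $L_1=\{x:x_1-x_2\in 3\ZZ,\ x_3\in 3\ZZ\}$ isometrically onto $L_2=\{x:x_1+x_2\in 3\ZZ,\ x_3\in 3\ZZ\}$, so the lattices are congruent and hence isospectral, while the value-multisets $\{0,1,1\},\{0,2,2\}$ versus $\{0,1,2\},\{0,1,2\}$ obstruct any codewise coordinate-permutation pairing in the sense of the paper's definition. Your closing remark about why the converse is allowed to fail (the coset-dependent maps need not assemble into a single global coordinate permutation) is exactly the right conceptual point and connects naturally to the absolute-pairing notion the paper introduces immediately afterwards in Theorem~\ref{th: absChar}.
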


\begin{proof} We prove the first part.  Let $c^{(i)}_1,\ldots,c^{(i)}_m$ be lists of the codewords as in the definition of equal weight distribution. We give length-preserving bijections from $\pi_q^{-1}(c_j^{(1)})$ to $\pi_q^{-1}(c_j^{(2)})$ for each $j$ which is sufficient since the inverse images of the codewords partition $\pi_q^{-1}(C_i)$.  Let $\sigma$ be the permutation corresponding to the pair $(c_j^{(1)},c_j^{(2)})$. It can be realized as a permutation matrix $\Sigma$, which is orthogonal. Then $x\mapsto \Sigma x$ is length-preserving and by construction a bijection from $\pi_q^{-1}(c_j^{(1)})$ to $\pi_q^{-1}(c_j^{(2)})$. Consequently, $L_i$ have identical length spectra and are therefore isospectral. 
\end{proof}

\begin{theorem}\label{th: absChar} Let $C_1,C_2\subseteq (\Z/q\Z)^n$ be linear codes with 
\[ L_i=A_i\ZZ^n=\inv{\pi}_q(C_i).\] 
There is a bijection $\phi:L_1 \to L_2$ preserving the absolute value of each coordinate if and only if $C_1$, $C_2$ make an absolute pairing. If either is true, then $DL_1$ and $DL_2$ are isospectral for any diagonal invertible matrix $D$.
\end{theorem}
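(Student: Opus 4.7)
The plan is to recast both halves of the biconditional as an ``orbit-count'' criterion for the coordinate-wise sign action of $\{\pm 1\}^n$, and then to link the two criteria via the $\{\pm 1\}^n$-equivariance of $\pi_q$. For each $\alpha\in\Z_{\geq 0}^n$, write $O_\alpha := \{\epsilon\cdot\alpha : \epsilon\in\{\pm 1\}^n\}\subset\Z^n$ for the finite orbit through $\alpha$; these orbits partition $\Z^n$, and analogous orbits $\bar O$ partition $(\Z/q\Z)^n$. Because a bijection $\phi$ preserves the absolute value of every coordinate exactly when $\phi(x)$ lies in the orbit of $x$, the map $\phi$ must restrict to a bijection $L_1\cap O_\alpha\to L_2\cap O_\alpha$ on each orbit; conversely, matching orbit counts allow one to glue arbitrary bijections orbit-by-orbit. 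Hence $\phi$ exists iff $|L_1\cap O_\alpha|=|L_2\cap O_\alpha|$ for every $\alpha$. Similarly, two codewords lie in the same $\{\pm 1\}^n$-orbit of $(\Z/q\Z)^n$ precisely when they are related by some sign pattern, so by definition $C_1$ and $C_2$ form an absolute pairing iff $|C_1\cap\bar O|=|C_2\cap\bar O|$ for every orbit $\bar O$.

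The link between the two criteria comes from noting that $\pi_q$ restricts, for each $\alpha$, to a surjection $O_\alpha\to\bar O_\alpha:=\pi_q(O_\alpha)$ of transitive $\{\pm 1\}^n$-sets, so every fiber has the same cardinality $N_\alpha := |O_\alpha|/|\bar O_\alpha|$. Since $L_i=\pi_q^{-1}(C_i)$, this gives
\[
|L_i\cap O_\alpha| \;=\; \bigl|\pi_q^{-1}(C_i\cap\bar O_\alpha)\cap O_\alpha\bigr| \;=\; N_\alpha\cdot |C_i\cap \bar O_\alpha|,
\]
so $|L_1\cap O_\alpha|=|L_2\cap O_\alpha|$ iff $|C_1\cap\bar O_\alpha|=|C_2\cap \bar O_\alpha|$. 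Since every orbit of $(\Z/q\Z)^n$ has the form $\bar O_\alpha$ for some $\alpha$, the two orbit-count criteria are equivalent, completing the biconditional. In the ``pairing $\Rightarrow\phi$'' direction one may also write $\phi$ explicitly: for paired codewords $(c_j^{(1)},c_j^{(2)})$, choose $\epsilon^{(j)}\in\{\pm 1\}^n$ with $(c_j^{(2)})_k = \epsilon_k^{(j)}(c_j^{(1)})_k$ in $\Z/q\Z$; then $\phi(x):=\epsilon^{(j)}\cdot x$ on the block $\pi_q^{-1}(c_j^{(1)})$ maps bijectively onto $\pi_q^{-1}(c_j^{(2)})$ by $\{\pm 1\}^n$-equivariance of $\pi_q$, and gluing the blocks produces $\phi$.

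For the final claim, given $\phi$ and any diagonal invertible $D=\mathrm{diag}(d_1,\ldots,d_n)$, the map $\tilde\phi:Dx\mapsto D\phi(x)$ is a bijection $DL_1\to DL_2$, and
\[
|\tilde\phi(Dx)_k| = |d_k\phi(x)_k| = |d_k|\,|x_k| = |(Dx)_k|,
\]
so $\tilde\phi$ preserves coordinate-wise absolute values and hence the Euclidean norm. Therefore $DL_1$ and $DL_2$ share a common length spectrum and are isospectral by Corollary \ref{cor:dim_vol}. The only non-routine step in the whole argument is the fiber-size identity for $\pi_q:O_\alpha\to\bar O_\alpha$; everything else is bookkeeping with orbits.
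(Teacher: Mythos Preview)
Your proof is correct, and for the biconditional it takes a more conceptual route than the paper. The paper only writes out the direction ``absolute pairing $\Rightarrow$ bijection $\phi$'' by hand: it fixes representatives $c_j^{(i)}\in\{0,\dots,q-1\}^n$ and defines $\phi$ coordinate-by-coordinate on each block $\pi_q^{-1}(c_j^{(1)})$ via the formula $x_k\mapsto x_k$ or $x_k\mapsto -x_k$ according to whether $(c_j^{(2)})_k$ equals $(c_j^{(1)})_k$ or $q-(c_j^{(1)})_k$; the converse direction is explicitly left to the reader. Your orbit-count reformulation under the $\{\pm1\}^n$-action handles both directions at once, and the key identity $|L_i\cap O_\alpha|=N_\alpha\,|C_i\cap\bar O_\alpha|$ (from the equal-fiber property of the equivariant surjection $\pi_q:O_\alpha\to\bar O_\alpha$) is exactly what makes the reverse implication transparent---something the paper's explicit construction does not immediately give. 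Your optional explicit description of $\phi$ as $x\mapsto \epsilon^{(j)}\cdot x$ on $\pi_q^{-1}(c_j^{(1)})$ is the same map the paper writes down, just packaged through the group action rather than unwound coordinate-wise. The final isospectrality claim via $\tilde\phi(Dx)=D\phi(x)$ is identical to the paper's argument.
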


\begin{proof}[Proof of Theorem \ref{th: absChar}] $ $ 
$\Leftarrow)$ Let $c^{(i)}_1,\ldots,c^{(i)}_m$ be lists of codewords as in the definition of an absolute pairing, and view them as elements of $\{0,1,\ldots,q-1\}^n\subseteq \ZZ^n$. We again give a length-preserving bijection $\phi:\pi_q^{-1}(c_j^{(1)})\to\pi_q^{-1}(c_j^{(2)})$ for a fixed $j$. Write $x\in \pi_q^{-1}(c_j^{(1)})$ uniquely as $c_j^{(1)}+qt$, where $t\in \ZZ^n$. Let $\phi$ act coordinate-wise bijectively as follows:
\[ x_k=(c_j^{(1)})_k+qt_k\mapsto \begin{cases}(c_j^{(2)})_k+qt_k,& \textnormal{ 
if }(c_j^{(2)})_k=(c_j^{(1)})_k\\
(c_j^{(2)})_k-q(t_k+1), & \textnormal{ 
otherwise if }(c_j^{(2)})_k=q-(c_j^{(1)})_k.
\end{cases}
\] 
This map preserves the absolute values of the coordinates.  For the last part, consider $\phi:L_1\to L_2$, a bijection between lattices and a diagonal matrix $D$ as in the statement above. By assumption, if $x\in L_1$, then $|x_i|=|\phi(x)_i|$ for each coordinate. Now define $\varphi:DL_1\to DL_2$ as $\varphi(Dx):=D\phi(x)$ for $x\in L_1$. We have for $x\in L_1$, $|(Dx)_i|=|d_ix_i|=|d_i\phi(x)_i|=|(D\phi(x))_i|=|\varphi(Dx)_i|$. Therefore $\varphi$ is again a bijection that preserves the absolute values of coordinates, from whence it follows that $DL_1$ and $DL_2$ are isospectral.  We leave the direction $\Rightarrow$ of the proof to the reader. 
\end{proof}

Isomorphic codes (in the sense of groups) do not in general correspond to either isospectral or congruent lattices.

\subsubsection{Can one hear cubicity?} \label{ss: rect6} 
In \S \ref{s:rectangular}, we proved that if a pair of rectangular flat tori are isospectral, then they are isometric.  Suppose we only know a priori that one flat torus in the pair is rectangular, then surely the other must be as well? As an application of linear codes, we show that the answer depends on the dimension.  Following Conway \cite[p. 40--42]{conway}, we define \em cubic lattices \em as those that are congruent up to scaling to $ \Z^n$ and say that \em cubicity \em is the property of being cubic.

\begin{prop}
\label{prop: 6dim} The two lattices
\[\Lambda=\left[\begin{smallmatrix}
1 & 1 & 0 & 0 & 0 & 0\\
1 & -1 & 0 & 0 & 0 & 0\\
0 & 0 & 1 & 1 & 0 & 0\\
0 & 0 & 1 & -1 & 0 & 0\\
0 & 0 & 0 & 0 & 1 & 1\\
0 & 0 & 0 & 0 & 1 & -1
\end{smallmatrix}\right]\ZZ^6 \quad \& \quad \Omega=\left[\begin{smallmatrix}
1 & 1 & 0 & 0 & 0 & 1\\
0 & 0 & 0 & 0 & 1 & 1\\
1 & -1 & 1 & 0 & 0 & 0\\
0 & 0 & 0 & 0 & 1 & -1\\
0 & 0 & 1 & 0 & 1 & 0\\
0 & 0 & 0 & 2 & 1 & 1\end{smallmatrix}\right]\ZZ^6,\]
are isospectral and non-congruent. In particular, $\Lambda$ is cubic, but $\Omega$ is not.  Cubicity is audible in dimensions five and lower; it is not audible in dimensions $6$ and higher.   
\end{prop}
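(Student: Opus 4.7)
The plan is to analyze $(\Lambda,\Omega)$ directly in dimension six, lift the resulting counterexample to every $n\ge 6$ via the product constructions of \S\ref{ss:biggerdonuts}, and invoke dimension-specific results for $n\le 5$. The delicate step will be establishing that $\Omega$ is non-cubic, for which I intend to extract a congruence invariant from the linear span of its minimum vectors.

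That $\Lambda$ is cubic is immediate: its basis matrix is block-diagonal with three copies of $B=\bigl[\begin{smallmatrix}1&1\\1&-1\end{smallmatrix}\bigr]=\sqrt{2}\,R$, where $R=\tfrac{1}{\sqrt{2}}B\in\oO_2(\RR)$, so the block-diagonal orthogonal transformation $R^{-1}\oplus R^{-1}\oplus R^{-1}$ carries $\Lambda$ to $\sqrt{2}\,\ZZ^6$. For isospectrality I would apply the code-theoretic apparatus of \S\ref{ss:lincodes}. Expanding along the fourth column gives $|\det(\Omega)|=8$, and reducing the six basis columns of $\Omega$ modulo $2$ produces a subspace of $(\ZZ/2\ZZ)^6$ of $\FF_2$-dimension $3$; hence $[\ZZ^6:\Omega]=8=2^{6-3}$ forces $2\ZZ^6\subseteq\Omega$. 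The inclusion $2\ZZ^6\subseteq \Lambda$ is visible because each $2e_i$ equals the sum of two basis columns of $\Lambda$. By Corollary \ref{cor:eqlattice}, $\Lambda=\pi_2^{-1}(C_\Lambda)$ and $\Omega=\pi_2^{-1}(C_\Omega)$ for the binary codes $C_\Lambda,C_\Omega$ generated by the column reductions, and enumeration of the eight codewords of each shows both have weight enumerator $1+3y^2+3y^4+y^6$. Proposition \ref{prop: weight} then yields isospectrality of $\Lambda$ and $\Omega$.

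To show $\Omega$ is not cubic, I use that the $\RR$-linear span of the minimum vectors is an invariant under $\oO_n(\RR)$, since orthogonal transformations preserve both length and linear independence. Because $2\ZZ^6\subseteq\Omega\subseteq\ZZ^6$ and $C_\Omega$ has minimum Hamming weight $2$, the shortest nonzero vectors of $\Omega$ have length $\sqrt{2}$ and are precisely those $\pm e_i\pm e_j$ whose mod-$2$ reduction lies in $C_\Omega$. The three weight-$2$ codewords of $C_\Omega$ are $e_1+e_3$, $e_3+e_5$, $e_1+e_5$, all supported in the index set $\{1,3,5\}$, so the twelve minimum vectors of $\Omega$ lie in $\Span(e_1,e_3,e_5)$. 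In every cubic lattice $c\,\ZZ^6$ the twelve shortest vectors $\pm c\,e_i$ span all of $\RR^6$, so $\Omega\not\cong c\,\ZZ^6$ for any $c>0$; in particular $\Lambda\not\cong\Omega$, completing the six-dimensional case.

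For $n>6$, set $\Lambda':=\Lambda\times \sqrt{2}\,\ZZ^{\,n-6}$ and $\Omega':=\Omega\times \sqrt{2}\,\ZZ^{\,n-6}$: isospectrality is Lemma \ref{le:thetaproducts}, $\Lambda'\cong\sqrt{2}\,\ZZ^n$ is cubic, and the minimum vectors of $\Omega'$ span a subspace of dimension $3+(n-6)=n-3<n$, ruling out cubicity. For $n\le 5$, audibility of cubicity is trivial for $n=1$, classical for $n=2$, and a special case of Schiemann's theorem (\S\ref{s:schiemann}) for $n=3$; for $n=4,5$ it follows from finer theta-series classifications of small-dimensional lattices. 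This last step — lying outside the general machinery developed in the paper — is the principal obstacle to a self-contained proof.
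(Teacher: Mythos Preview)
Your proof is correct and tracks the paper's argument closely for the six-dimensional isospectrality step: both identify $\Lambda=\pi_2^{-1}(C_\Lambda)$ and $\Omega=\pi_2^{-1}(C_\Omega)$ for binary codes with the same weight distribution and invoke Proposition~\ref{prop: weight}. Your index computation $[\ZZ^6:\Omega]=|\det(\Omega)|=8=2^6/|C_\Omega|$ is a neat way to force $\Omega=\pi_2^{-1}(C_\Omega)$ and hence $2\ZZ^6\subseteq\Omega$ without checking it by hand as the paper does.

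The genuine difference is the non-congruence invariant. The paper observes that in $\Lambda$ any two non-parallel vectors of length $\sqrt{2}$ are orthogonal (they come from the three orthogonal $2$-planes $e_{2i-1},e_{2i}$), whereas in $\Omega$ the vectors $e_1+e_3$ and $e_3+e_5$ are non-parallel and non-orthogonal. You instead use the dimension of the $\RR$-span of the minimum vectors: three for $\Omega$ versus six for any cubic lattice. Both are legitimate congruence invariants extracted from the set of shortest vectors. Your choice has the advantage that it propagates cleanly to the products $\Omega\times\sqrt{2}\,\ZZ^{\,n-6}$, giving a self-contained argument that cubicity fails in every dimension $n\ge 6$; the paper simply appeals to Lemma~\ref{le:schiemlemma}, which is terser but leaves the reader to verify that the cubic/non-cubic dichotomy survives the product. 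For $n\le 5$ both you and the paper defer to outside sources (the paper cites Conway \cite{conway}), and your remark that this is the only non-self-contained step is accurate.
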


\begin{proof} We prove that the pair is isospectral and non-congruent.  As a consequence, Lemma \ref{le:schiemlemma} shows that cubicity is not audible for $n>6$. The fact that cubicity is audible when $n<6$ is shown in \cite[p. 60]{conway}. Consider the following two binary linear codes of length 6.  The rows are the codewords, and the codewords of the same row differ by permutation, showing that the codes are of the same weight distribution: 
\[ C_1: \begin{smallmatrix}
0 & 0 & 0 & 0 & 0 & 0\\
1 & 1 & 0 & 0 & 0 & 0\\
0 & 0 & 1 & 1 & 0 & 0\\
0 & 0 & 0 & 0 & 1 & 1\\
1 & 1 & 1 & 1 & 0 & 0\\
1 & 1 & 0 & 0 & 1 & 1\\
0 & 0 & 1 & 1 & 1 & 1\\
1 & 1 & 1 & 1 & 1 & 1
\end{smallmatrix}\quad \& \quad C_2:\begin{smallmatrix}
0 & 0 & 0 & 0 & 0 & 0\\
1 & 0 & 1 & 0 & 0 & 0\\
0 & 0 & 1 & 0 & 1 & 0\\
1 & 0 & 0 & 0 & 1 & 0\\
0 & 1 & 0 & 1 & 1 & 1\\
1 & 1 & 0 & 1 & 0 & 1\\
0 & 1 & 1 & 1 & 0 & 1\\
1 & 1 & 1 & 1 & 1 & 1
\end{smallmatrix}. \] 
We have that, modulo 2, the vectors of the given bases of $\Lambda$, $\Omega$ generate the codes $C_1$, $C_2$ respectively.
It is straightforward to check that, $2\ZZ^6\subseteq \Lambda$, $\Omega$.  So, the pre-images of the codes are equal to $\Lambda$, $\Omega$ respectively by Corollary \ref{cor:eqlattice}. They are isospectral by Proposition \ref{prop: weight} and non-isometric since the vectors of length $\sqrt 2$ 
in $\Lambda$ that are non-parallel are orthogonal, while this is not the case for $\Omega$.
\end{proof}


\section{Duets of isospectral non-isometric flat tori} \label{s:differentdonuts} 
In 1964, a paper of one single page was published in Proceedings of the National Academy of Science that is famous to this day \cite{milnor}, \em Eigenvalues of the Laplace operator on certain manifolds. \em In it Milnor described an example of two sixteen dimensional flat tori that are isospectral but not isometric.  One can imagine this as a \em duet, \em a pair of perfectly attuned yet differently shaped flat tori that would resonate with identical frequencies.  Milnor's paper  inspired Kac's acclaimed work \cite{kac} titled \em Can one hear the shape of a drum? \em  The resolution of Kac's question by Gordon, Webb, and Wolpert \cite{gww} was based in part on adapting the  \em Sunada method \em \cite{sunada} from four dimensions to two.  Sunada humbly described this method as \cite[p. 169]{sunada} ``a \em geometric \em analogue of a routine method in number theory.''   This simple and elegant method constructs not only isospectral Riemannian manifolds but also families of isospectral abelian varieties.

\subsection{Milnor's duet} \label{ss:milnor} 
Milnor's paper \cite{milnor} referred to a construction of two lattices by Witt \cite{witt} that begins with the 
root lattice $D_n$, also called the \textit{checkerboard lattice}, 
\[ D_n := \left\{ z = (z_1, \ldots, z_n) \in \Z^n : \sum_{i=1} ^n z_i \in 2\ZZ \right\}.\]
A basis for $D_n$ is given by the vectors $\{ e_1+e_2, e_{j-1} - e_j \}_{j=2} ^n$. 
The root lattice of the $E_n$ root system, also denoted by $E_n$, for $n$ divisible by 4 is 
\[E_n :=\left\{x\in \ZZ^n\cup\left(\frac{1}{2}\mathds{1}+\ZZ^n\right): \sum x_i\in 2\ZZ \right\}, \quad \mathds{1} := \sum_{j=1} ^n e_j.\]
We note that $E_8$ is sometimes known as the Gosset lattice after \cite{gosset}.
 
When $n$ is divisible by 4, a basis for $E_n$ is given by $ \left\{e_1+e_2,e_{j-1}-e_{j}, \frac{1}{2}\mathds{1}\right\}_{j=2}^{n-1}$. 
The classical theory of root lattices tells us that $D_n$ with $n>2$ and $E_{4n}$ are irreducible lattices; see \cite[\S 1.4]{ebeling2013lattices} and \cite[\S 4.7-4.8.1]{conway2013sphere}.  We give an alternative method for checking irreducibility that may be of independent interest as we have not seen this elsewhere in the literature.  This method can be applied to prove irreducibility of $D_n$ and $E_{4n}$ but may also be more broadly applicable. 

\begin{lemma} \label{co: irredSpecial}Let $A=[a_j]$ be a basis of a lattice, $\G\subseteq \RR^n$ with irreducible components $\G_i$. For a scalar $s\neq 0$, and a vector $v \in \R^n$, the lattice 
\[ \Lambda: =\begin{bmatrix}
A & v \\
0 & s
\end{bmatrix}\ZZ^{n+1}\]  
is irreducible if:
\begin{enumerate}[label=(\alph*)]
    \item each $(a_j,0)$ is of shortest non-zero length in $\Lambda$,
    \item $(v+\g) \cdot \G_i \neq \{0\}$ for each $\g \in \G$ and $i$. 
\end{enumerate}
\end{lemma}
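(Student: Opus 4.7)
The plan is to argue by contradiction: suppose $\Lambda = \Lambda_1 \oplus \Lambda_2$ is a non-trivial orthogonal decomposition, and derive a contradiction from conditions (a) and (b). First I would exploit (a). Any shortest non-zero vector of $\Lambda$ must lie entirely in one of the summands, since otherwise its squared length would be the sum of two strictly smaller positive squared lengths, each at least the minimum. Thus each $(a_j, 0)$ belongs to a unique $\Lambda_{\epsilon(j)}$, partitioning $\{1,\ldots,n\}$ into $J_1, J_2$ whose $\Z$-spans $V_1, V_2 \subseteq \G$ are mutually orthogonal sublattices with $V_1 \oplus V_2 = \G$.

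\textbf{Case A: both $V_i$ are non-trivial.} Then $\G = V_1 \oplus V_2$ is a non-trivial orthogonal decomposition of $\G$, and by the uniqueness of the irreducible decomposition (Corollary \ref{cor:irreducible}), each irreducible component $\G_i$ is contained in one of $V_1, V_2$. Decompose $(v, s) = w_1 + w_2$ with $w_i = (u_i, t_i) \in \Lambda_i$. Orthogonality of $\Lambda_i$ to $V_{3-i} \times \{0\} \subseteq \Lambda_{3-i}$ forces $u_i \in \R V_i$, so $u_1 \cdot u_2 = 0$; combined with $w_1 \cdot w_2 = 0$ this yields $t_1 t_2 = 0$. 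Without loss of generality $t_1 = 0$, so $(u_1, 0) \in \Lambda$ and $u_1 \in \G \cap \R V_1 = V_1 \subseteq \G$. Setting $\g := -u_1 \in \G$ yields $v + \g = u_2 \in \R V_2$, and picking any $i^*$ with $\G_{i^*} \subseteq V_1$ (which exists because $V_1$ is non-trivial) gives $(v + \g) \cdot \G_{i^*} \subseteq \R V_2 \cdot V_1 = \{0\}$, contradicting condition (b).

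\textbf{Case B: one of $V_1, V_2$ is trivial,} say $V_2 = \{0\}$, so that $\G \times \{0\} \subseteq \Lambda_1$. Any $w \in \Lambda_2$ is orthogonal to every $(a_j, 0)$, hence its $\R^n$-component is perpendicular to $\G$ and so vanishes; thus $\Lambda_2 \subseteq \{0\} \times s\Z$. Non-triviality of $\Lambda_2$ then produces a non-zero $(0, \tilde k s) \in \Lambda_2$, orthogonality to which forces $\Lambda_1 \subseteq \R^n \times \{0\}$, giving $\Lambda_1 = \G \times \{0\}$. Decomposing $(v, s) \in \Lambda$ in $\Lambda_1 \oplus \Lambda_2$ now forces $(0, s) \in \Lambda_2 \subseteq \Lambda$, which requires $v \in \G$. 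Applying condition (b) with $\g := -v \in \G$ then gives $(v + \g) \cdot \G_i = \{0\}$ for every $i$, a contradiction.

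The main subtlety is in Case A: one has to observe that the orthogonal decomposition $(v, s) = w_1 + w_2$ must have some vanishing last-coordinate $t_i$, which is precisely what pulls the corresponding $u_i$ from the ambient $\R V_i$ back into the lattice $V_i \subseteq \G$, making $\g = -u_i$ a legitimate lattice element for condition (b). Condition (a) is used solely to produce the orthogonal splitting $\G = V_1 \oplus V_2$, whereas condition (b) supplies the decisive obstruction in each of the two cases.
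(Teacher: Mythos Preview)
Your proof is correct and follows essentially the same contradiction argument as the paper: both use (a) to place each $(a_j,0)$ into a single summand, obtain an orthogonal splitting of $\G$ compatible with its irreducible components, and then decompose $(v,s)$ to produce a $\gamma\in\G$ violating (b). The only organizational difference is that the paper avoids your Case~B by labeling $\Lambda_1$ as a summand containing an element with nonzero last coordinate and using a rank count to force $\Lambda_2\subseteq\R^n\times\{0\}$, whereas you reach the same conclusion through the observation $t_1 t_2=0$.
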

\begin{proof}
By contradiction, assume that we can write $\Lambda=\Lambda_1\oplus\Lambda_2$, where $\Lambda_i$ are non-trivial, and $\Lambda_1$ is irreducible. Without loss of generality, since $s\neq 0$, we may assume that there is some $\omega \in \Lambda_1$ with $\omega = (x, t)$ for some $t \neq 0$ (for simplicity we use row vector notation in this proof). Then, $\{\omega, (a_j, 0)\}_{j=1} ^n$ are all contained in $\Lambda$, and they are all linearly independent.  Each $(a_j,0)$ lies in precisely one of $\Lambda_i$, by virtue of (a) and the Pythagorean theorem.  Consequently, the number of $(a_j,0)$ that lie in $\Lambda_2$ is equal to the rank of $\Lambda_2$.  In particular, the last coordinate of every element of $\Lambda_2$ vanishes. Therefore, the projection of $\Lambda_2$ onto the first $n$ coordinates is an orthogonal sum of some $\G_i$. We fix an element $\G_i$ in this sum. Since $(v,s) \in \Lambda$, $(v,s)$ is equal to a sum of an element of $\Lambda_1$ and and an element of $\Lambda_2$. We can therefore without loss of generality choose $\omega=(x,s) \in \Lambda_1$ such that $\omega=(v,s)  + (\gamma, 0)$ with $(\gamma,0) \in \Lambda_2$, and $\gamma \in \G$.  Since $\omega \in \Lambda_1$, $(v+\gamma) \cdot \Gamma_i$ vanishes, a contradiction to (b).
\end{proof}

Since $E_{16}$ is irreducible, it is not congruent to $E_8\times E_8$.  Milnor's result is completed by showing that they are isospectral.  These lattices are even and unimodular, and consequently their theta series are modular forms for $\PSL_2(\Z)$.  In 16 dimensions there is only one such form (up to multiplication by scalars), so since these lattices have identical volume, they also have identical theta series and are therefore isospectral.

\subsection{The race to find duets}\label{ss:race} 
As we have seen, one dimensional isospectral flat tori are always isometric, so a natural question is, what is the lowest dimension in which there are isospectral non-isometric flat tori?  Following Milnor, the search for isospectral and non-isometric duets of flat tori became a race towards the lowest possible dimension.  Kneser found a 12 dimensional example \cite{kneser} in 1967.  Ten years later, Kitaoka \cite{kitaoka} reduced this to 8.  In 1986, Conway and Sloane \cite{conwaysloane} found 5 and 6 dimensional examples.  In 1990, Schiemann\cite{schiemann1} constructed a 4 dimensional example. Independently, and using a different approach, Shiota \cite{shiota1991theta} found another example one year later in 1991. The same year, Earnest and Nipp \cite{earnest1991theta} contributed with one more pair.

Kneser's $12$-dimensional pair is $D_{12}$ and $E_8\times D_4$ \cite{kneser}. Kitaoka also made use of $D_4$ in his construction \cite{kitaoka}.  We refer interested readers to the literature for the aforementioned constructions and recall here Schiemann's four dimensional pair.  Consider the positive definite matrices
$$\begin{bmatrix} 4 & 2 & 0 & 1\\
2 & 8 & 3 & 1\\
0 & 3 & 10 & 5\\
1& 1 & 5 & 10
\end{bmatrix}\quad \&\quad \begin{bmatrix} 4 & 0 & 1 & 1\\
0 & 8 & 1 & -4\\
1 & 1 & 8 & 2 \\1 & -4 & 2 & 10
\end{bmatrix}. $$
Schiemann proved that the quadratic forms they define have identical representation numbers using Corollary \ref{cor: modisos}.  He showed that these forms are not integrally equivalent using the theory of Minkowski reduction; see \ref{ss:Mink}.  This can also be seen by Lemma \ref{le:finQ}. These two quadratic forms were systematically found in the sense that they are the integral forms that satisfy these conditions and have the smallest determinant.

\subsection{Conway and Sloane's isospectral family}\label{ss:inf} 
One way to obtain a family of infinitely many pairs of isospectral non-isometric 4-dimensional flat tori is to start with Schiemann's pair, denoted by $(S_1, S_2)$ and take the one parameter family $(cS_1, cS_2)_{c >0}$.  We say that such a family is obtained by scaling.  
Can one obtain an infinite family of isospectral non-isometric pairs that are not simply obtained by scaling?  Conway and Sloane were the first to present an infinite family of 4-dimensional pairs via 

\begin{theorem}[Conway-Sloane {\cite{conwaysloane}}, and Cervino-Hein {\cite{hein}}] \label{th:ConSlo}
Let $a,b,c,d>0$. Consider the two matrices
\[ A_{\pm}=\frac{1}{\sqrt{12}}\left[\begin{smallmatrix}
\sqrt{a} & 0 & 0 & 0\\
0 & \sqrt{b} & 0 & 0\\
0 & 0 & \sqrt{c} & 0\\
0 & 0 & 0 & \sqrt{d}
\end{smallmatrix}\right]\underbrace{\left[\begin{smallmatrix}
\pm3 & 1 & 1 & 1\\
-1 & \pm3 & -1 & 1\\
-1 & 1 & \pm3 & -1\\
-1 & -1 & 1 & \pm3
\end{smallmatrix}\right]}_{=:T_{\pm}}.  \] 
There is a bijection $T_+\ZZ^4\to T_-\ZZ^4$ preserving absolute values of the coordinates, and therefore the lattices $A_+\ZZ^4$, $A_-\ZZ^4$ are isospectral for any $a,b,c,d>0$. They are non-isometric if and only if  $a,b,c$, and $d$ are all distinct.
\end{theorem}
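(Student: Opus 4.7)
The theorem comprises three claims. The central step is the existence of the coordinate-absolute-value-preserving bijection $T_+ \ZZ^4 \to T_- \ZZ^4$; isospectrality will follow from the final clause of Theorem \ref{th: absChar}, and the non-isometry dichotomy is a separate algebraic analysis.

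\textbf{Step 1 (the bijection).} Compute directly that $T_\pm^{\mathsf T} T_\pm = 12 I_4$, which yields $T_\pm^{-1} = \tfrac{1}{12} T_\pm^{\mathsf T}$, $|\det T_\pm| = 144$, and in particular $12 \ZZ^4 \subseteq T_\pm \ZZ^4$. By Corollary \ref{cor:eqlattice}, each lattice is determined by its $12$-nary code $C_\pm := \pi_{12}(T_\pm \ZZ^4) \subseteq (\ZZ/12\ZZ)^4$ of cardinality $144$. Because $T_+ - T_- = 6 I_4$, the $k$-th columns of $T_\pm$ coincide off the diagonal and differ by $\pm 6$ in the $k$-th entry, so under the congruence $3 \equiv -9 \pmod{12}$ these generator columns form an absolute pair in $C_+ \times C_-$. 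The plan is to extend this to an absolute pairing of the full $144$-element codes and then invoke Theorem \ref{th: absChar}. The main obstacle is that the absolute-pairing relation is not $\ZZ$-linear---summing two paired columns need not yield an absolutely paired sum---so the bijection must be constructed coset-by-coset over $12 \ZZ^4$: fix a system of coset representatives for $T_+ \ZZ^4 / 12 \ZZ^4$ and match each to a representative of the analogous quotient on the $T_-$ side by flipping the signs of the diagonal coordinates corresponding to the support of the representative, then verify that this prescription is well-defined and preserves $|y_i|$ for every $i$.

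\textbf{Step 2 (isospectrality).} Apply the final clause of Theorem \ref{th: absChar} with the invertible diagonal matrix $D := \tfrac{1}{\sqrt{12}} \diag(\sqrt a, \sqrt b, \sqrt c, \sqrt d)$. Since $A_\pm \ZZ^4 = D \cdot T_\pm \ZZ^4$ and Step~1 supplies the required bijection between $T_+ \ZZ^4$ and $T_- \ZZ^4$, the theorem immediately yields that $A_+ \ZZ^4$ and $A_- \ZZ^4$ are isospectral for every $a,b,c,d > 0$.

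\textbf{Step 3 (the non-isometry criterion).} Set $G_\pm := A_\pm^{\mathsf T} A_\pm = \tfrac{1}{12} T_\pm^{\mathsf T} \diag(a,b,c,d) T_\pm$. By Lemma \ref{le:cong}, the lattices $A_+\ZZ^4$ and $A_-\ZZ^4$ are congruent iff there exists $B \in \GL_4(\ZZ)$ with $B^{\mathsf T} G_+ B = G_-$. Using $T_+ - T_- = 6 I$, a short computation shows $G_\pm$ share diagonal entries but differ off-diagonally by explicit nonzero linear forms in $(a,b,c,d)$. When two parameters coincide, the resulting transposition symmetry of $\diag(a,b,c,d)$ combined with the symmetry of $T_\pm$ yields an explicit signed-permutation $B$ realizing the integral equivalence. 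Conversely, if $a,b,c,d$ are all distinct, Corollary \ref{le:finQ} confines the columns $b_j$ of any candidate $B$ to the finite set $\{x \in \ZZ^4 : \|x\|^2 \le (G_-)_{jj}/\lambda_{\min}(G_+)\}$; a case check---or, equivalently, the observation that integral equivalence would force a nontrivial polynomial identity in $(a,b,c,d)$---rules out every candidate. This finite combinatorial case check is the secondary obstacle of the proof, after the construction of the bijection in Step~1.
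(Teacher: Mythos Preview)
Your Steps~1 and~2 correctly identify the mechanism the paper has in mind: the lattices $T_\pm\ZZ^4$ contain $12\ZZ^4$, so they are determined by $12$-nary codes, and the paper itself remarks that these codes form an absolute pairing in the sense of Theorem~\ref{th: absChar}, from which isospectrality of $A_\pm\ZZ^4 = D\,T_\pm\ZZ^4$ follows for every diagonal $D$. The paper does not carry out the $144$-element verification either (it calls it ``not easy to check directly'') and defers to Conway--Sloane's original short argument, so your coset-by-coset sketch is at the same level of detail.

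Step~3, however, has a genuine gap in the hard direction. Corollary~\ref{le:finQ} bounds the columns of a putative $B$ by $\|b_j\|^2 \le (G_-)_{jj}/\lambda_{\min}(G_+)$, but this bound depends on $(a,b,c,d)$ and is not uniform: as the parameters spread apart the eigenvalue ratio of $G_+$ blows up, so the ``finite case check'' you propose is finite only for each fixed tuple, not once and for all. Your fallback---that integral equivalence forces a polynomial identity in $(a,b,c,d)$---has the same defect: there are infinitely many $B\in\GL_4(\ZZ)$, each cutting out its own linear subspace of parameter space, and nothing in your argument prevents the union of these subspaces from covering the locus $\{a,b,c,d\text{ distinct}\}$. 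This is not a technicality: the paper records that Conway and Sloane could only verify non-isometry case by case for integer parameters with $abcd<10{,}000$, and the general statement remained open for nearly twenty years until Cervino and Hein~\cite{hein} settled it using modular forms rather than any finite search. Your outline for the easy direction (two parameters equal $\Rightarrow$ congruent via an explicit signed permutation) is plausible, but the converse requires substantially more than what you have written.
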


Both $A_{\pm}$ have determinant $\sqrt{abcd}$. The observant reader may see a connection to Theorem \ref{th: absChar}. Indeed, $T_{\pm}\ZZ^4$ correspond to linear codes that constitute an absolute pairing. However, since $\det(T_{\pm})=144$, this is not easy to check directly. This theorem encompasses, up integral equivalence, Schiemann's pair $(S_1,S_2)$ of quadratic forms by letting $a=1,b=7,c=13,d=19$, which can be seen using Lemma \ref{le:finQ}. Conway and Sloane were only able to verify non-isometry for integers $a<b<c<d$ with $abcd<10,000$ \cite{conwaysloane}; for other values of $a, b, c, d$ the non-isometry was a conjecture at that time.   
Their proof of isospectrality \cite{conwaysloane} is short and simple, but the proof of the non-isometry conjecture resisted solution for nearly 20 years.  Its proof was completed by Cervino and Hein \cite{hein} in 2011 and is significantly more complicated than the proof of isospectrality.  Conway, Sloane, Cervino, and Hein therefore together obtained the following corollary.  

\begin{cor} \label{prop:infex} There are infinitely many pairs of isospectral non-isometric four-dimensional flat tori, that are not simply related by scaling.
\end{cor}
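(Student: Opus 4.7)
The plan is to build the infinite family directly from Theorem \ref{th:ConSlo} and then distinguish infinitely many scaling classes by means of a single scale-invariant isometry invariant.

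First, Theorem \ref{th:ConSlo} produces, for every quadruple $(a,b,c,d)$ of pairwise distinct positive reals, an isospectral non-isometric pair $(\Lambda_+(a,b,c,d),\Lambda_-(a,b,c,d))$ of four-dimensional lattices. It therefore suffices to exhibit infinitely many scaling-inequivalent pairs within this family.

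Second, I would work with the scale-invariant isometry invariant
\[ \mu(\Lambda) := \frac{m(\Lambda)^4}{\vol(\Lambda)}, \]
where $m(\Lambda)$ denotes the length of a shortest non-zero vector. Under the dilation $\Lambda \mapsto \lambda \Lambda$ one has $m \mapsto \lambda m$ and $\vol \mapsto \lambda^4 \vol$, so $\mu$ is unchanged, and it is clearly preserved by isometries. Hence two pairs related by a common scaling must produce the same value of $\mu$ on their $\Lambda_+$-components.

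Third, I would exhibit an explicit one-parameter family on which $\mu(\Lambda_+(a,b,c,d))$ is arbitrarily small. A direct computation gives $T_+^T T_+ = 12I$, whence $\vol(\Lambda_+(a,b,c,d)) = |\det A_+| = \sqrt{abcd}$. Moreover $T_+(3,1,1,1)^T = (12,0,0,0)^T$, so the explicit lattice vector
\[ A_+(3,1,1,1)^T = (\sqrt{12a},0,0,0)^T \in \Lambda_+(a,b,c,d) \]
has squared length $12a$. This forces $m(\Lambda_+(a,b,c,d))^2 \le 12a$ and therefore
\[ \mu(\Lambda_+(a,b,c,d)) \le \frac{(12a)^2}{\sqrt{abcd}} = \frac{144\, a^{3/2}}{\sqrt{bcd}}. \]
Fixing $(b,c,d) = (7,13,19)$ and letting $a$ run through a sequence $a_n \to 0^+$ in $(0,1)\setminus\{7,13,19\}$ yields infinitely many tuples of pairwise distinct positive reals on which $\mu(\Lambda_+(a_n,7,13,19)) \to 0$. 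Since $\mu > 0$, passing to a strictly decreasing subsequence produces infinitely many distinct $\mu$-values and hence infinitely many scaling-inequivalent pairs.

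The main obstacle one might anticipate is the explicit identification of shortest vectors as the parameters vary; the argument above deliberately sidesteps this by using only the upper bound on $m$ coming from the single explicit lattice vector $A_+(3,1,1,1)^T$, which is enough because $\mu$ is scale-invariant and positive.
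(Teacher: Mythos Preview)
Your argument is correct. The paper itself gives no proof of this corollary at all; it simply records it as a consequence of Theorem \ref{th:ConSlo} (the Conway--Sloane construction together with the Cervino--Hein non-isometry result), leaving it to the reader to see why the family cannot be reduced to scalings of a single pair. Your proof supplies exactly that missing step: rather than trying to pin down exactly which tuples $(a,b,c,d)$ yield congruent-up-to-scaling lattices, you introduce the scale-invariant, spectrally determined quantity $\mu(\Lambda)=m(\Lambda)^4/\vol(\Lambda)$ and force it to take infinitely many values along a one-parameter subfamily. This is a genuinely cleaner route than the more obvious ``fix the volume and argue the remaining parameters give non-congruent lattices'' approach, because the latter would still require showing that distinct parameter values with $abcd=1$ yield non-congruent $\Lambda_+$, which is not immediate. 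Your use of the explicit short vector $A_+(3,1,1,1)^T$ to bound $\mu$ from above, combined with $\mu>0$, neatly avoids any need to identify the actual shortest vectors. One cosmetic point: writing $a_n\in(0,1)\setminus\{7,13,19\}$ is redundant since $(0,1)$ already excludes those values; you might simply say $a_n\in(0,1)$ with $a_n\to 0$.
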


\section{The race ends to the sound of Schiemann's symphony} \label{s:schiemann}
The race to find isospectral non-isometric flat tori in successively lower dimensions finally ended in 1994, over three decades after it began, when Alexander Schiemann determined the final answer to the three equivalent questions in \S \ref{ss:three}. The aim of this section is to describe \em Schiemann's symphony\em, the elaborate algorithm that gave a final answer to this question, and that was performed with a comprehensive computer search \cite{schiemann1994ternare}, \cite{schiemann2}.


\subsection{Schiemann's theorem} 
To quantify the isospectral question for flat tori, we introduce the \em choir numbers. \em  
\begin{defn}\label{Fchoir} In each dimension $n\in \N$, we define the \em choir number \em $\flat_n$ to be the maximal number $k$ such that the three following equivalent conditions hold:\footnote{We use the musical symbol $\flat$, ``flat'' since we are working with flat tori.} there is a sequence of $k$ $n$-dimensional mutually isospectral 
\begin{enumerate}
   \item flat tori that are non-isometric,
    \item positive definite quadratic forms that are non-integrally equivalent,
    \item lattices that are non-congruent. 
\end{enumerate}
\end{defn}

Since isospectral flat tori would sound identical, the choir numbers describe how many geometrically distinct flat tori can resonate in perfect unison.  In 1990--1994, Schiemann proved the next remarkable theorem \cite{schiemann1}, \cite{schiemann1994ternare}.

\begin{theorem}[Schiemann's theorem]\label{th:Schi} The choir numbers are equal to $1$ for $n=1,2,3$ and greater than $1$ otherwise.
\end{theorem}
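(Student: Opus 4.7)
The plan is to split Theorem~\ref{th:Schi} into two complementary claims: a \emph{lower bound} $\flat_n \ge 2$ for $n \ge 4$, and an \emph{upper bound} $\flat_n \le 1$ for $n \le 3$. The lower bound follows almost for free from the examples presented in \S\ref{s:differentdonuts}: Schiemann's explicit $4$-dimensional pair (or, alternatively, the Conway--Sloane family of Theorem~\ref{th:ConSlo}, for which non-isometry was finally settled by Cervino--Hein) provides two non-isometric isospectral flat tori in dimension $4$. Applying Lemma~\ref{le:schiemlemma} propagates this to every dimension $n \ge 4$, since taking products with an auxiliary lattice preserves isospectrality by Lemma~\ref{le:thetaproducts} and preserves non-congruence by Theorem~\ref{th:inheritance1}.

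For the upper bound I would proceed dimension by dimension, in each case using reduction theory to replace the equivalence class of a positive definite quadratic form by a canonical representative and then showing that the representation numbers determine this representative. The case $n=1$ is trivial, since $\R / \ell \Z$ is determined by its first positive eigenvalue $4\pi^{2}/\ell^{2}$. For $n=2$, the strategy is to invoke Gauss's reduction of binary forms: every positive definite binary quadratic form is integrally equivalent to a unique reduced form $q(x,y)=ax^{2}+bxy+cy^{2}$ with $0 \le b \le a \le c$ (plus a tie-breaking convention). The entries $a,b,c$ can be read off from the smallest values of $q$ on $\Z_*^{2}$ and their multiplicities (for example, $a$ and $c$ are the first two successive minima, while $b$ is recovered from $q(1,1)$), all of which are spectral invariants by Corollary~\ref{cor:dim_vol}. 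Hence isospectral binary forms share a Gauss-reduced representative and are integrally equivalent.

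The hard case, and the main obstacle, is $n=3$: this is \emph{Schiemann's symphony} itself. The plan mirrors the $n=2$ case but the reduction theory is vastly more intricate. One passes to Minkowski-reduced ternary forms: these have diagonal entries $q_{11}\le q_{22}\le q_{33}$ equal to the squared norms of the three successive minima, and off-diagonal entries constrained by a finite system of inequalities cutting out the Minkowski fundamental domain. Since the diagonal entries are spectral invariants, one only needs to show that within each fixed diagonal box there is at most one Minkowski-reduced form with a given multi-set of representation numbers. The approach is to first bound the problem \emph{a priori}: after normalizing so the form is even (Proposition~\ref{prop: int tuple}) and applying the determinant/volume invariants together with Corollary~\ref{cor: modisos} (extended to odd dimension by the $Q\mapsto Q'$ trick at the end of \S\ref{ss:mod}), it suffices to compare representation numbers up to an explicit Sturm-type bound depending on $\det(Q)$. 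This reduces non-isospectrality of distinct Minkowski-reduced ternary forms to a \emph{finite} but astronomically large collection of comparisons, which must then be discharged by an exhaustive computer search over all Minkowski-reduced ternary forms with determinant below the relevant threshold. I would invoke Schiemann's verification \cite{schiemann1994ternare,schiemann2} for this step rather than attempt to reproduce it, and emphasize that the genuine obstacle is not the conceptual setup but the computational completeness of the search, which is what made the race of \S\ref{ss:race} end only after three decades.
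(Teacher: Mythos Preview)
Your treatment of $n\ge 4$, $n=1$, and $n=2$ is fine and matches the paper's approach in spirit. The gap is in your sketch for $n=3$.

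You propose to normalize to even forms via Proposition~\ref{prop: int tuple}, invoke the Sturm-type bound of Corollary~\ref{cor: modisos}, and then ``discharge by an exhaustive computer search over all Minkowski-reduced ternary forms with determinant below the relevant threshold.'' There is no such threshold. The Sturm bound tells you, for a \emph{fixed} determinant and level $N_Q$, how many representation numbers suffice to certify isospectrality of two given forms; it does not bound the determinant itself. Even integral ternary forms have arbitrarily large determinants, and for each determinant there is a new finite check, so your reduction yields infinitely many cases rather than a terminating search. Knowing that the diagonal entries $q_{11},q_{22},q_{33}$ coincide (which is correct for $n\le 4$, by van der Waerden's Theorem~\ref{th:van}) does not help: the diagonals themselves range over an infinite set.

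This is precisely why Schiemann's actual argument is structured so differently from what you describe. He does \emph{not} enumerate integral forms by determinant. Instead he works with the full real cone $\overline{\choir}\subset\R^6$ of Schiemann-reduced ternary forms, partitions $\overline{\choir}\times\overline{\choir}$ into finitely many rational polyhedral sub-cones via the successive-minimum data of Definition~\ref{Klav}, and iteratively refines this covering (Definitions~\ref{def:refine}--\ref{def:refine2}) until every piece that could contain an isospectral pair is shown to lie in the diagonal $\solos$. The finiteness comes from the polyhedral structure of the Minkowski domain and the combinatorics of the $\MIN$ sets (Lemma~\ref{SchiMIN}, Theorem~\ref{CalcMINq}), not from any determinant cutoff. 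When you ``invoke Schiemann's verification,'' you are invoking a computation whose architecture is entirely different from the one you outlined; the conceptual setup you propose would not terminate.
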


\begin{proof}[Proof that $\flat_2=1$] 
In dimension 2, a full-rank lattice always has a basis consisting of a shortest non-zero vector, and a shortest vector that is linearly independent of the first one.  Interestingly, it is \em not \em always possible to find such a basis for higher dimensional lattices; see \S\ref{ss:Mink}.  
Let $A,A'$ be bases of two isospectral lattices $\G,\G'$ in $\RR^2$ with the aforementioned property. By left-multiplication with an orthogonal matrix, we may assume that 
$$A=\begin{bmatrix}
a & c\\
0 & b
\end{bmatrix}\quad \&\quad
A'=\begin{bmatrix}
a' & c' \\
0 & b'
\end{bmatrix}.$$
Here, $a,a',b,b'$ are positive numbers. By Corollary \ref{cor:dim_vol}, $a=a'$, and $b=b'$. If $|c|< |c'|$, then the closed ball $\overline{D(0, \|(c,b)\|)}$ contains more points from $\G$ than from $\G'$, implying that $\G,\G'$ are not isospectral. If $c=-c'$, then by letting $B=C=\left[\begin{smallmatrix}-1 & 0 \\ 0& 1\end{smallmatrix}\right]$, we see that $CA'=AB$ and $\G$ is congruent to $\G'$. 
\end{proof}

The proof that $\flat_1=1$ was shown in the introduction, and the proof that $\flat_2=1$ above is also quite short.  In contrast, proving $\flat_3=1$ is so difficult that to the best of our knowledge, it has never been achieved without the help of a computer.  This proof in full detail is only available in German \cite{schiemann1}. Consequently, we take this opportunity to present the main ideas, general structure, and strategy of the proof.  This strategy is independently interesting, and we further suspect that Schiemann's methods can be generalized to higher dimensions in order to determine, say $\flat_4$, which is unknown. We describe the proof of Theorem \ref{th:Schi} as \em Schiemann's symphony. \em This piece is performed using positive definite quadratic forms. Up to integral equivalence, these forms can be geometrically represented as a polyhedral cone, which is a convenient structure for our computer algorithms.  The proof is rather technical, so we employ musical analogies as a mnemonic technique to keep track of the different elements in the proof and the roles they play. 



\subsection{Minkowski reduction}\label{ss:Mink} 
A \em representative \em of a quadratic form $Q$ is any element of the equivalence class of integrally equivalent quadratic forms containing $Q$.  There are infinitely many representatives in each integral equivalence class of positive definite quadratic forms. A \em Minkowski reduced \em form is a particularly natural representative which has been of historical interest as described in Sch\"urman's survey \cite{schurmann2009computational}.

\begin{defn}[Minkowski reduced forms and bases] \label{def:min_red} A positive definite quadratic form $q$ is \em Minkowski reduced \em if for all $k = 1, \ldots ,n$ and for all $x \in \Z^n$ with $\gcd(x_k, \ldots, x_n) = 1$, we have $q(x)\geq q_{kk}$. Moreover, a lattice basis $A=[a_j]\in \mathrm{GL}_n(\RR)$ is \textit{Minkowski reduced} if for each $j$,
\[ a_j\in A\Z^n\setminus \{0,a_1,\ldots,a_{j-1}\} , \] 
is a shortest choice of vector such that $a_1,\ldots,a_j$ is part of some basis of $\G$. 
\end{defn}

We summarize the key properties of Minkowski reduced forms and bases in the following proposition.  
\begin{prop} \label{prop:posDmink} (1) A quadratic form $q$ is positive definite and Minkowski reduced if and only if $q_{11}>0$ and $q(x)\geq q_{kk}$ for all $k=1,\ldots,n$ as long as $x\in \Z^n$ with $\gcd(x_k,\ldots,x_n)=1$.  (2) $A$ is a Minkowski reduced lattice basis if and only if $A^TA$ is a Minkowski reduced positive definite form. (3) Each positive definite quadratic form has a positive, finite number of Minkowski reduced representatives. 
\end{prop}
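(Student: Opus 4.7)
The plan is to handle the three parts in order, with (1) following by elementary manipulations, (2) by a dictionary between lattice bases and Gram matrices, and (3) by a classical finiteness argument for reduced bases.

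For (1), the forward direction is immediate: positive definiteness forces $q_{11} = q(e_1) > 0$, and the second condition is exactly Minkowski reducedness by Definition~\ref{def:min_red}. For the converse, I would first establish positivity of $q$ on $\Z^n \setminus \{0\}$: given any non-zero $x \in \Z^n$, factoring out $d = \gcd(x_1, \ldots, x_n)$ writes $x = dy$ with $\gcd(y_1, \ldots, y_n) = 1$, so the hypothesis at $k = 1$ yields $q(x) = d^2 q(y) \geq d^2 q_{11} > 0$. Parts (1) and (2) of Proposition~\ref{PosQ} then promote this to positive semi-definiteness of $Q$. The remaining step -- ruling out a non-trivial kernel -- is the subtlest point, and I would handle it via Dirichlet's simultaneous approximation theorem: any hypothetical kernel vector $v \neq 0$ is necessarily irrational, so approximating it by $y = x/s \in \Q^n$ would produce $q(y)$ arbitrarily small (using $q(v) = 0$ together with $q(\cdot) \leq \lambda_{\max}\|\cdot\|^2$) while simultaneously $q(y) \geq q_{11}/s^2$ by the gcd-factoring argument, forcing the contradiction $q_{11} \leq 0$ in the limit.

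For (2), the central dictionary is $q(x) = \|Ax\|^2$ with $q_{jj} = \|a_j\|^2$, where $a_j$ denotes the $j$th column of $A$. The combinatorial lemma I would verify is that, for each $j \in \{1, \ldots, n\}$ and $x \in \Z^n$, the vectors $a_1, \ldots, a_{j-1}, Ax$ extend to a basis of $A\Z^n$ if and only if $\gcd(x_j, x_{j+1}, \ldots, x_n) = 1$: the forward direction follows from completing to a unimodular matrix and examining the $j$th column modulo $(e_1, \ldots, e_{j-1})$, while the converse uses that any primitive vector in $\Z^{n-j+1}$ is the first column of some unimodular matrix. Under this translation, the Minkowski condition on $A$ -- that $a_j$ is shortest among vectors $Ax \notin \{0, a_1, \ldots, a_{j-1}\}$ extending $a_1, \ldots, a_{j-1}$ to a basis -- matches precisely the inequality $q(x) \geq q_{jj}$ for all $x \in \Z^n$ with $\gcd(x_j, \ldots, x_n) = 1$ (the excluded $x = e_1, \ldots, e_{j-1}$ fail the gcd condition automatically). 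Positive definiteness of $A^TA$ is free since $A$ is invertible, so by part~(1) this matches Minkowski reducedness of $q$.

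For (3), existence is obtained by running Definition~\ref{def:min_red} directly on any underlying lattice $\Gamma = A\Z^n$ of $q$: inductively select $a_j$ as a shortest element of the non-empty discrete set $\{Ax \in \Gamma : \gcd(x_j, \ldots, x_n) = 1\}$, whose minimum is attained by discreteness of $\Gamma$ together with $q > 0$ on $\Z^n \setminus \{0\}$. The main obstacle is the finiteness assertion, for which I would exploit three structural consequences of Minkowski reducedness: monotonicity $q_{11} \leq q_{22} \leq \cdots \leq q_{nn}$ (testing $x = e_j$ against the condition at each index $k \leq j$), the classical Hermite-type inequality $q_{11} q_{22} \cdots q_{nn} \leq c_n \det(Q)$, and off-diagonal control $|q_{ij}| \leq q_{ii}/2$ (from the Minkowski condition at $x = e_i \pm e_j$). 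Together with the universal lower bound $q_{11} \geq \lambda_1(\Gamma)^2 > 0$, these confine every entry of any Minkowski reduced $B^T Q B$ to a compact set depending only on $Q$; equivalently, the reduced basis vectors $A b_j$ all lie in a fixed ball of $\Gamma$, which contains only finitely many lattice points, so only finitely many Minkowski reduced representatives exist in the integral equivalence class of $Q$.
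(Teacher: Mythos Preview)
Your argument is correct in all three parts. The paper itself does not give a self-contained proof here: it remarks that (1) is immediate from Definition~\ref{def:min_red} and for (2) and (3) simply cites \cite{cassels2012introduction} and \cite{terras2012harmonic}. Your write-up therefore supplies what the paper omits.

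A few comparative remarks. For (1), the non-obvious direction is that the stated inequalities force $q$ to be positive \emph{definite} rather than merely semi-definite, and your route via Dirichlet simultaneous approximation is valid (the key identity being $q(y)=q(y-v)$ once $Qv=0$, which follows from positive semi-definiteness). An alternative that stays closer to the paper's toolkit and avoids Diophantine approximation: write $Q=E^{T}E$ with $E$ possibly singular; the bound $q(x)\geq q_{11}$ on $\Z^n\setminus\{0\}$ makes $E|_{\Z^n}$ injective with discrete image, so $E\Z^n$ is a rank-$n$ discrete subgroup of $\operatorname{Im}(E)$, whence $\dim\operatorname{Im}(E)\geq n$ by Proposition~\ref{prop:sublattice}(5) and $E$ is invertible. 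For (2), your combinatorial lemma characterising when $a_1,\ldots,a_{j-1},Ax$ extends to a basis is exactly the content of the cited passage in Cassels. For (3), your chain $q_{11}\leq\cdots\leq q_{nn}$, $|q_{ij}|\leq q_{ii}/2$, and $\prod q_{ii}\leq c_n\det Q$ is the classical finiteness argument; one could equally bound each $q_{ii}$ directly via Theorem~\ref{th:van}, since the successive minima $\lambda_i(Q)$ are class invariants, sidestepping the Hermite--Minkowski inequality.
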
 
\begin{proof} 
The first statement follows from Definition \ref{def:min_red}.  The second statement is demonstrated in \cite[Cor. 4, p. 14]{cassels2012introduction}.  The third statement is proven in \cite[p. 27-28]{cassels2012introduction}, alternatively  \cite[Section 4.4.2]{terras2012harmonic}.
\end{proof}

Is it possible to find an even more intuitive reduction? For example, is there always, given a lattice $\G$, a basis matrix $A=[a_j]$ such that each 
\begin{equation*} \label{Intu}
     a_j\in \G\setminus\Span \{0,a_1,\ldots,a_{j-1}\} 
\end{equation*}
is \textit{any} shortest choice of vector? 
In four dimensions, the following example constructed by van der Waerden \cite[p. 286]{van1956reduktionstheorie} shows that this is not always possible. Consider the basis matrix  
$$A_4=\left[\begin{smallmatrix}
1 & 0 & 0 & 1/2\\
0 & 1 & 0 & 1/2 \\
0 & 0 & 1 & 1/2 \\
0 & 0 & 0 & 1/2\\
\end{smallmatrix}\right] $$
of the lattice $\G=A_4\Z^4$, with the column vector notation $A_4=[a_j]$. We see that $e_4=-a_1-a_2-a_3+2a_4$ is of the same length as $a_4$ and is linearly independent of $a_1,a_2,a_3$, however the vectors $a_1,a_2,a_3,e_4$ do not make a basis for $\G$. In general for $n\geq 5$, let $A_n=[e_1, \ldots, e_{n-1},\frac{1}{2}\mathds{1}]$, where $\mathds{1}=e_1+\cdots+e_n$. Then $e_n\in A_n\Z^n$ is shorter than $\frac{1}{2}\mathds{1}$, but $e_1,\ldots,e_n$ is not a basis for $A_n\Z^n$.

\begin{theorem}[see {\cite[p. 278]{van1956reduktionstheorie}}]\label{IntuRed} As long as $n\leq 3$, we can find a basis matrix $A=[a_j]$ for any $n$-dimensional lattice $\G$ such that $a_1$ is a shortest non-zero vector of $\G$, and each $a_i$ is \em any \em shortest choice such that $a_1,\ldots,a_i$ are linearly independent. If $n=4$, then we have the equivalent statement if we replace \em any \em with \em some. \em  
\end{theorem}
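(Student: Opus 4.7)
My approach is induction on $n$, using the one-dimensional case as base and a codimension-one projection trick. The $n=1$ case is trivial since a 1-rank lattice is $m\mathbb{Z}$ and the shortest nonzero element is $\pm m$, already a basis. For $n=2$, let $a_1, a_2$ be as in the statement and set $\Lambda := \mathbb{Z} a_1 \oplus \mathbb{Z} a_2$; I would argue $\Gamma = \Lambda$ by contradiction. Assume $v \in \Gamma \setminus \Lambda$, and reduce to a representative $v = \alpha a_1 + \beta a_2$ with $|\alpha|, |\beta| \leq 1/2$. Comparing $a_2$ against $a_2 \pm a_1$ (both linearly independent of $a_1$) forces $|a_1 \cdot a_2| \leq \|a_1\|^2/2$. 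Expanding $\|v\|^2 = \alpha^2 \|a_1\|^2 + 2\alpha\beta(a_1 \cdot a_2) + \beta^2 \|a_2\|^2$ and using $\|a_1\| \leq \|a_2\|$ gives $\|v\|^2 \leq \tfrac{1}{2}\|a_1\|^2 + \tfrac{1}{4}\|a_2\|^2 \leq \tfrac{3}{4}\|a_2\|^2 < \|a_2\|^2$. The minimality of $a_1$ rules out $\beta = 0$ (otherwise $v$ would be a nonzero multiple of $a_1$ of length at most $\tfrac{1}{2}\|a_1\|$), so $v$ is linearly independent of $a_1$, contradicting the choice of $a_2$.

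For $n=3$, set $V := \mathrm{span}(a_1, a_2)$ with orthogonal projection $\pi : \mathbb{R}^3 \to V^\perp$. By the $n=2$ case applied to the $2$-dimensional sublattice $\Gamma \cap V$, we obtain $\Gamma \cap V = \mathbb{Z} a_1 \oplus \mathbb{Z} a_2 =: \Lambda$. Then $\pi(\Gamma) \subset V^\perp \cong \mathbb{R}$ is a $1$-rank lattice $\mathbb{Z} p_0$; let $\tilde{p}_0 \in \Gamma$ be a shortest lift of $p_0$, so that $\{a_1, a_2, \tilde{p}_0\}$ is a basis of $\Gamma$. Every vector in $\Gamma$ linearly independent of $a_1, a_2$ projects to a nonzero integer multiple of $p_0$, so $a_3 = \ell + k\tilde{p}_0$ for some $\ell \in \Lambda$ and nonzero $k \in \mathbb{Z}$. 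The claim that $\{a_1, a_2, a_3\}$ is a basis is equivalent to $|k| = 1$.

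The main obstacle is excluding $|k| \geq 2$. The plan is to show that $\tilde{p}_0$ itself is strictly shorter than every lattice vector with $|k(\cdot)| \geq 2$, so no such vector can be a shortest choice. Writing $\tilde{p}_0 = s + p_0$ with $s \in V$ and $p := \|p_0\|$, the shortest-lift property gives $\|s\| \leq \rho$, where $\rho$ is the covering radius of $\Lambda$. A standard Voronoi/Delaunay computation for a $2$-dimensional Minkowski-reduced lattice yields $\rho^2 \leq (\|a_1\|^2 + \|a_2\|^2)/4 \leq \|a_2\|^2/2$. Combining this with $\|a_2\| \leq \|a_3\| \leq \|\tilde{p}_0\|$ forces $p^2 \geq \|a_2\|^2/2$, whence $\|a_3\|^2 \leq \|\tilde{p}_0\|^2 \leq p^2 + \|a_2\|^2/2 \leq 2p^2$. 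On the other hand, any vector $w \in \Gamma$ with $|k(w)| \geq 2$ satisfies $\|w\|^2 \geq k^2 p^2 \geq 4p^2 > \|a_3\|^2$, so such a $w$ cannot be a shortest linearly independent choice, forcing $|k|=1$.

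For $n = 4$ (with \emph{some} in place of \emph{any}), I would invoke the classical existence of a Minkowski-reduced basis $a_1, \ldots, a_4$ via Proposition \ref{prop:posDmink}, in which each $a_i$ is a shortest vector such that $a_1, \ldots, a_i$ extends to a basis of $\Gamma$. The key classical fact, specific to dimensions $\leq 4$, is that in such a basis $\|a_i\|$ equals the shortest length of a vector linearly independent of $a_1, \ldots, a_{i-1}$; this is proved by bounding the successive ratios $\|a_{i+1}\|/\|a_i\|$ forced by the Minkowski reduction inequalities (a finite case analysis in dimension $\leq 4$, which breaks down in dimension $5$). Thus the Minkowski-reduced basis itself realizes a greedy shortest-linearly-independent choice at each step, yielding the \emph{some} statement. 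The van der Waerden example $A_4$ exhibits a greedy shortest choice ($e_4$ in place of $\tfrac{1}{2}\mathds{1}$) that fails to extend to a basis, confirming that \emph{any} cannot replace \emph{some} when $n = 4$.
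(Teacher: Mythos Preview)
The paper does not give its own proof of this result; it simply cites van der Waerden \cite{van1956reduktionstheorie}. Your argument for $n\le 3$ is correct and self-contained, while your treatment of $n=4$ is a reduction to Theorem~\ref{th:van}, which the paper likewise only cites.

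For $n=2$ and $n=3$ your projection/covering-radius approach is sound. The one step that merits a line of justification is the bound $\rho^2\le(\|a_1\|^2+\|a_2\|^2)/4$ for the covering radius of a two-dimensional Minkowski-reduced lattice. It is indeed correct, with equality precisely in the rectangular case: assuming $a_1\cdot a_2=c\ge 0$ and writing $A=\|a_1\|^2$, $B=\|a_2\|^2$, the Voronoi cell is a hexagon all of whose vertices lie at the circumradius $R$ of the triangle $\{0,a_1,a_2\}$, and one computes $R^2=\tfrac{AB(A+B-2c)}{4(AB-c^2)}$; the inequality $R^2\le(A+B)/4$ then reduces to $(A+B)c\le 2AB$, which follows from $c\le A/2$ and $A\le B$. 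With this in hand your chain $\|a_3\|^2\le 2p^2<4p^2\le\|w\|^2$ goes through and forces $|k|=1$.

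For $n=4$, the equality $q_{ii}=\lambda_i(Q)$ in a Minkowski-reduced basis is exactly Theorem~\ref{th:van} with $\Delta_i=1$. The further step---that $q_{ii}=\lambda_i$ forces $a_i$ to be a shortest vector outside $\Span(a_1,\ldots,a_{i-1})$---is correct but uses a small pigeonhole you leave implicit: any $v$ outside that span, together with $a_1,\ldots,a_{i-1}$, gives $i$ independent vectors, so $\max(\lambda_{i-1},\|v\|^2)\ge\lambda_i$; if $\lambda_{i-1}<\lambda_i$ this yields $\|v\|^2\ge\lambda_i$ directly, and if $\lambda_{i-1}=\lambda_i$ one notes that $\|v\|^2<\lambda_{i-1}$ would already contradict the definition of $\lambda_{i-1}$ via $a_1,\ldots,a_{i-2},v$. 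Since you defer the proof of Theorem~\ref{th:van} itself to a ``finite case analysis'', your $n=4$ argument sits at the same level of completeness as the paper's citation.
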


\begin{defn}[Minkowski Domain] We define $\mathcal{M}_n$ to be the set of $n$-dimensional symmetric positive definite quadratic forms that are Minkowski reduced.  
\end{defn}

\subsubsection{Polyhedral cones} To give a geometric description of the Minkowski domain we define \textit{polyhedral cones}. Properties of these cones are used throughout Schiemann's symphony.  First note that we can naturally embed $n$-dimensional symmetric quadratic forms $q(x)=x^TQx$ where $Q=(q_{ij})_{ij}$ in $\R^{n(n+1)/2}$. There are many ways to perform such an embedding. In three dimensions perhaps the most common embedding is 
\[ q\mapsto (q_{11},q_{22},q_{33},q_{12},q_{13},q_{23}). \]

\begin{defn}[Polyhedral cone]\label{def:polycone}  Let $A,B$ be (possibly empty) finite sets of non-zero $n$-dimensional vectors. Then a set of the form 
\[ \Pc(A,B):=\left\{x\in\mathbb{R}^n: a\cdot x\geq 0, \quad b\cdot x>0 \textrm{ for each } a\in A, \textrm{ and } b\in B\right\} \] 
is a \em polyhedral cone. \em For $a \in A$, a set of the form $\{ x \in \R^n : a\cdot x=0\}$ is a \textit{supporting hyperplane,} similarly for $b \in B$. The dimension of $\Pc(A,B)$ is the dimension of the smallest vector space containing it.  A polyhedral cone is \textit{pointed} if it does not contain any lines.  
\end{defn}

It follows from the definition that the closure of a non-empty polyhedral cone 
\[ \overline{\Pc(A,B)} = \Pc\left(A\cup B,\emptyset\right).\] 
Rational polyhedral cones (i.e., the elements of $A$ and $B$ are vectors with rational entries) are amenable to computers because they are easily stored, and a computer can do exact calculations.  Polyhedral cones have facets, faces, edges and vertices.    

\begin{defn}[Facets, $j$-faces, edges, and vertices] \label{def:fjfev} Let $P$ be a $k$-dimensional polyhedral cone with closure $\overline{P}$. A \em facet \em of $P$  is a $k-1$-dimensional intersection of $\overline{P}$ with a collection of its supporting hyperplanes.  A \em$j$-face \em is similarly a $j$-dimensional intersection. A $1$-face is an \textit{edge} and a $0$-face is a \textit{vertex}.
\end{defn}

The following proposition collects properties of polyhedral cones; the proof is omitted as these properties can be readily checked using Definition \ref{def:polycone}; see also \cite[Theorem 3.10]{joswig2013polyhedral}.

\begin{prop} \label{prop:pcs} 
Let $\Pc(A,B)\subset \mathbb{R}^n$ and $\Pc(C,D)\subset \mathbb{R}^m$ be polyhedral cones.  
\begin{enumerate} 
\item The Cartesian product is also a polyhedral cone, specifically 
\[ \Pc(A,B)\times \Pc(C,D) = \Pc \left(A\times \{0\}\cup \{0\}\times C,B\times \{0\}\cup \{0\}\times D \right). \] 
\item If $n=m$, then $\Pc(A,B)\cap\Pc(C,D) = \Pc\left(A\cup C,B\cup D\right)$.  
\item If $U$ is a closed set in $\R^n$, and $\Pc(A,B) \neq \emptyset$, then $\Pc(A,B)\subseteq U$ if and only if $\Pc\left(A\cup B,\emptyset\right)\subseteq U$.  
\item If the polyhedral cone $\Pc(A,B)$ is pointed, then it has finitely many edges that are sets of the form $k_i \R_{\geq 0}$, for $k_i \in \R^n$, and 
\begin{equation} \label{eq:pcone_edges} \overline{\Pc(A,B)}=\sum_{i=1}^rk_i\R_{\ge 0}.  \end{equation} 
Consequently, the dimension of $\Pc(A,B)$ is the dimension of the span of $\{k_i\}_{i=1} ^r$.  
\item If $k_i \R_{\geq 0}$ are the edges of $\Pc(A,\emptyset)$ and $k_j' \R_{\geq 0}$ are those of $\Pc(C,\emptyset)$, then the edges of $\Pc(A,\emptyset)\times \Pc(C,\emptyset)$ are $(k_i, 0)\R_{\geq 0}$ and $(0, k_j')\R_{\geq 0}$. 
\end{enumerate} 
\end{prop}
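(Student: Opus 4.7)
My plan is to handle items (1)--(3) by direct unpacking of Definition~\ref{def:polycone}, and then to reduce items (4)--(5) to the Minkowski--Weyl theorem for pointed polyhedral cones. For (1), expanding $(a,0)\cdot(x,y)=a\cdot x$ and $(0,c)\cdot(x,y)=c\cdot y$ shows that the defining inequalities on the right-hand side are exactly the conjunction of those for the two factors. For (2), when $n=m$ the defining conditions of the intersection are literally the union of conditions, giving the claim tautologically.

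For (3), I would establish the stronger equality $\overline{\Pc(A,B)}=\Pc(A\cup B,\emptyset)$ whenever $\Pc(A,B)\neq\emptyset$, from which the biconditional follows since $U$ is closed. The inclusion $\subseteq$ holds because $\Pc(A\cup B,\emptyset)$ is already closed (a finite intersection of closed half-spaces) and contains $\Pc(A,B)$. For $\supseteq$, fix any $y\in \Pc(A,B)$; given $x\in \Pc(A\cup B,\emptyset)$ and $t\in(0,1]$, form $x_t:=(1-t)x+ty$. Then $b\cdot x_t\geq t\,b\cdot y>0$ for every $b\in B$ and $a\cdot x_t\geq 0$ for every $a\in A$, so $x_t\in \Pc(A,B)$, and letting $t\to 0^+$ places $x$ in the closure.

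Item (4) is the main obstacle, essentially the Minkowski--Weyl theorem specialized to the pointed setting. By (3), I may assume the cone has the form $P=\Pc(S,\emptyset)$ with $S$ finite. My plan is induction on $\dim P$: the one-dimensional case is trivial since a pointed one-dimensional closed cone is a single ray. For the inductive step, each facet of $P$ arises as the intersection of $P$ with a supporting hyperplane coming from a subfamily of $S$, hence is itself a pointed polyhedral cone of strictly smaller dimension. By the inductive hypothesis each facet is generated by finitely many extreme rays, and a Carath\'eodory-style argument, combined with separation of relative interior points from boundary facets, shows that $P$ is the conic hull of the union of all facet edges. Pointedness is crucial here: it prevents $P$ from containing any line and therefore guarantees that extreme rays exist and generate $P$. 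Finiteness follows because every edge is realized as the intersection of $n-1$ linearly independent supporting hyperplanes drawn from the finite set $S$. The dimension identity is then immediate because the vectors $\{k_i\}_{i=1}^r$ span the same subspace as $P$.

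Finally, for (5), by (1) the product equals $\Pc(A\times\{0\}\cup\{0\}\times C,\emptyset)$, which is pointed since both factors are, so (4) applies. If an edge contained a point $(x,y)$ with both $x\neq 0$ and $y\neq 0$, the decomposition $(x,y)=(x,0)+(0,y)$ would express $(x,y)$ as a sum of two elements of the product cone lying on distinct rays, contradicting extremality of the ray $\R_{\geq 0}(x,y)$. Hence every edge has the form $(e,0)\R_{\geq 0}$ or $(0,f)\R_{\geq 0}$, and applying the same extremality reasoning within a single factor forces $e$ (respectively $f$) to generate an edge of $\Pc(A,\emptyset)$ (respectively $\Pc(C,\emptyset)$). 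The converse direction, that each $(k_i,0)\R_{\geq 0}$ and $(0,k_j')\R_{\geq 0}$ is an extreme ray of the product, is immediate from the definition, completing the characterization.
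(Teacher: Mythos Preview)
Your argument is correct in all five parts. The paper, however, does not actually prove this proposition: it simply states that ``the proof is omitted as these properties can be readily checked using Definition~\ref{def:polycone}; see also \cite[Theorem 3.10]{joswig2013polyhedral}.'' So there is no approach to compare against; you have supplied what the paper deliberately left out.

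A few minor remarks. Your proof of (3) via the equality $\overline{\Pc(A,B)}=\Pc(A\cup B,\emptyset)$ is exactly the fact the paper records immediately after Definition~\ref{def:polycone}, and your convex-interpolation argument is the clean way to see it. For (4) your inductive outline is the standard route to Minkowski--Weyl in the pointed case; the paper's reference to \cite{joswig2013polyhedral} points to the same theorem. Your extremality argument in (5) is the right idea and is complete as stated.
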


If no confusion shall arise, we may simply write the vectors $k_i$ to denote the edges of a pointed polyhedral cone. To prove that every element of a pointed polyhedral cone has a certain property, it is often enough to check the property for its edges. For example, if $\Pc(A,B)$ is a pointed polyhedral cone, and $C$ is a convex set, then $\Pc(A,B)\subseteq C$ if and only if the edges $k_i$ of $\Pc(A,B)$ lie in $C$.

\subsubsection{Minkowski's domain}
The next lemma describes $\mathcal{M}_n$ for $n\le 4$ as a pointed polyhedral cone. We refer to a proof that is quite elegant, but technical \cite[p. 257-258]{cassels2008rational}.

\begin{lemma}\label{nle4} For any $n\leq 4$, a quadratic form $q$ is a Minkowski reduced positive definite form if and only if the following hold:  
\begin{enumerate} 
\item $0<q_{11}\leq q_{22}\leq \cdots\leq q_{nn}$,
\item $q(x)\geq q_{kk}$ for $x=(x_1, \ldots, x_n) \in \R^n$ with coordinates $x_i \in \{-1, 0, 1\}$ for all $i=1, \ldots, n$ and satisfying $x_k=1$ and $x_j=0$ for $j>k$. 

\end{enumerate} 
\end{lemma}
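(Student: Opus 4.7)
The forward direction is immediate from Definition \ref{def:min_red}. For (1), the standard basis vector $x = e_{k+1}$ satisfies $\gcd(x_k, \ldots, x_n) = 1$, so $q_{k+1,k+1} = q(e_{k+1}) \geq q_{kk}$; positive definiteness gives $q_{11} = q(e_1) > 0$. For (2), any vector with $x_k = 1$ automatically has $\gcd(x_k, \ldots, x_n) = 1$, so the Minkowski inequality applies verbatim.

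The converse is the substantive direction. By Proposition \ref{prop:posDmink}(1), it suffices to show that (1) and (2) together imply $q(x) \geq q_{kk}$ for every $k$ and every $x \in \Z^n$ with $\gcd(x_k, \ldots, x_n) = 1$; positive definiteness then follows automatically. My first reduction would be to shift $k$ up to the largest index $k' \geq k$ with $x_{k'} \neq 0$: since $\gcd(x_k, \ldots, x_n) = \gcd(x_k, \ldots, x_{k'}) = 1$ and (1) gives $q_{kk} \leq q_{k'k'}$, it suffices to prove $q(x) \geq q_{k'k'}$. A sign flip lets me further assume $x_{k'} > 0$.

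The technical core is a bounded reduction argument: I would show that if some coordinate $x_i$ with $i \leq k'$ has $|x_i| \geq 2$, one-variable minimization of $q$ in $x_i$ with other coordinates fixed — combined with the constraints from (2) — allows replacement of $x_i$ by a smaller integer without raising $q(x)$. The requisite control on off-diagonal entries comes from (2) directly: testing $x = \pm e_i + e_{k'}$ yields $|q_{i k'}| \leq q_{ii}/2$, and triple-sign tests $\pm e_i \pm e_j + e_{k'}$ bound sums of the form $|q_{ij} \pm q_{i k'} \pm q_{j k'}|$. Iterating coordinate by coordinate terminates at a $\{-1,0,1\}$-vector of the exact form required by (2), at which point $q(x) \geq q_{k'k'}$ is given by hypothesis.

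The main obstacle is the dimension-sensitivity of this reduction. The inequalities in (2) supply exactly the polyhedral constraints on $Q \in \cM_n$ needed to close the single-coordinate reductions when $n \leq 4$; for $n \geq 5$ they fail to do so, and additional Minkowski conditions (related to the half-integer vector $\tfrac{1}{2}\mathds{1}$ in the van der Waerden example immediately preceding the statement) are required. Executing the careful case analysis — tracking sign patterns, coordinate supports, and chained off-diagonal bounds — is the technical heart of the argument, and is carried out in full detail in \cite[pp.~257--258]{cassels2008rational}.
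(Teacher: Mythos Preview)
Your proposal is correct and matches the paper's treatment: the paper does not give its own proof but simply refers to \cite[pp.~257--258]{cassels2008rational}, and you outline the same argument before citing the identical source. Your sketch of the forward direction and the coordinate-reduction strategy for the converse is an accurate summary of Cassels' approach, so in fact you have supplied more detail than the paper itself.
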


Based on the preceding Lemma we give a precise description of $\cM_3$ as a polyhedral cone. 

\begin{theorem}[$\mathcal{M}_3$ as a polyhedral cone]\label{th: M3} The set of symmetric positive definite Minkowski reduced forms $q(x)=x^TQx$ in $3$-dimensions is precise those which satisfy 

\begin{equation} \label{eq:strict_m3} \begin{gathered} 0 <q_{11}, \\ 
0\leq q_{22}-q_{11}, \\
0\leq q_{33}-q_{22}, \\
0\leq q_{11}-2q_{12},\\
0\leq q_{11}+2q_{12}, \\
0\leq q_{11}+q_{22}+2q_{12}-2q_{13}-2q_{23}, \\
0\leq q_{11}+q_{22}-2q_{12}-2q_{13}+2q_{23}, \\ 
0\leq q_{11}+q_{22}-2q_{12}+2q_{13}-2q_{23}, \\ 
0\leq q_{11}+q_{22}+2q_{12}+2q_{13}+2q_{23},\\
0\leq q_{11}-2q_{13}, \\ 
0\leq q_{11}+2q_{13},\\
0\leq q_{22}-2q_{23}, \\ 
0\leq q_{22}+2q_{23}. 
\end{gathered} 
\end{equation} 
\end{theorem}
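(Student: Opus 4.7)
The plan is to invoke Lemma \ref{nle4} directly with $n=3$ and then do a bookkeeping exercise: enumerate all test vectors $x$ for which the lemma imposes an inequality, compute $q(x)$ in each case, simplify, and check that the resulting list matches the thirteen inequalities in \eqref{eq:strict_m3}. No new mathematical ideas are needed beyond Lemma \ref{nle4}; the whole content is to carry out its conclusion explicitly in dimension three.

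First, the three inequalities coming from Lemma \ref{nle4}(1) give precisely $0<q_{11}$, $q_{22}-q_{11}\ge 0$, and $q_{33}-q_{22}\ge 0$, accounting for the first three lines of \eqref{eq:strict_m3}. Next, I would enumerate the vectors $x\in\{-1,0,1\}^3$ required by Lemma \ref{nle4}(2). For $k=1$, the unique admissible vector is $(1,0,0)$ and the inequality is trivial. For $k=2$ the admissible vectors are $(\pm 1,1,0)$ together with the trivial $(0,1,0)$; evaluating
\[ q(\pm 1,1,0)=q_{11}+q_{22}\pm 2q_{12}\ge q_{22} \]
yields $q_{11}\pm 2q_{12}\ge 0$, matching the fourth and fifth lines of \eqref{eq:strict_m3}. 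For $k=3$ the admissible vectors are the eight sign-choices in $(\pm 1,0,1)$, $(0,\pm 1,1)$, $(\pm 1,\pm 1,1)$, together with the trivial $(0,0,1)$. Computing
\[ q(\pm 1,0,1)=q_{11}+q_{33}\pm 2q_{13},\quad q(0,\pm 1,1)=q_{22}+q_{33}\pm 2q_{23}, \]
\[ q(\eps_1,\eps_2,1)=q_{11}+q_{22}+q_{33}+2\eps_1\eps_2 q_{12}+2\eps_1 q_{13}+2\eps_2 q_{23} \]
for $\eps_i\in\{\pm 1\}$ and subtracting $q_{33}$ on each side gives the remaining eight inequalities of \eqref{eq:strict_m3}, where the four sign-patterns $(\eps_1,\eps_2)$ correspond to the four ``eight-term'' inequalities listed.

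Finally, I would note that both directions are obtained simultaneously from Lemma \ref{nle4}: a positive definite $q$ is Minkowski reduced iff it satisfies (1) and (2), which by the enumeration above is equivalent to the thirteen inequalities in \eqref{eq:strict_m3}; and conversely, as recorded in Proposition \ref{prop:posDmink}(1), together with $q_{11}>0$ these conditions already force positive definiteness, so no additional definiteness hypothesis need be appended. The only obstacle is book-keeping: making sure that every admissible sign-pattern has been enumerated exactly once, and that the simplifications $q(x)\ge q_{kk}$ are correctly reduced. Since all thirteen inequalities are affine in the entries of $Q$ and each arises as ``$v\cdot (q_{11},q_{22},q_{33},q_{12},q_{13},q_{23})\ge 0$'' for an explicit integer vector $v$, this simultaneously exhibits $\cM_3$ as the pointed rational polyhedral cone $\Pc(A,\{e_{11}\})$ of Definition \ref{def:polycone}, where $A$ collects the twelve non-strict normals and $e_{11}$ is the coordinate direction $q_{11}$, which is the form in which the cone is subsequently used.
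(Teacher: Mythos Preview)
Your proposal is correct and is exactly the paper's approach: the paper's own proof is the single line ``This follows from Lemma~\ref{nle4},'' and you have simply written out that bookkeeping in full. The only quibble is that your appeal to Proposition~\ref{prop:posDmink}(1) for positive definiteness is unnecessary (and slightly mis-cited, since that proposition requires all integer $x$, not just $\{-1,0,1\}^3$); Lemma~\ref{nle4} already asserts the ``if and only if'' for positive definite Minkowski reduced forms, so it carries the definiteness conclusion by itself.
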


\begin{proof} This follows from Lemma \ref{nle4}. \end{proof}

The Minkowski domain is always a pointed polyhedral cone.  Unfortunately, in higher dimensions the number of inequalities \textit{explode}; see Tammela's list \cite[p. 20]{schurmann2009computational}, \cite{tammela1977minkowski}. 

\begin{theorem}[$\mathcal{M}_n$ is a polyhedral cone {\cite[p. 256-257]{cassels2008rational}}]\label{MinPcone} 
Only finitely many of the conditions for a quadratic form $q$ to be in $\mathcal{M}_n$ according to Proposition \ref{prop:posDmink} are non-redundant.  These conditions define $\mathcal{M}_n$ as a pointed polyhedral cone. 
\end{theorem}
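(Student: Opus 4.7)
The plan is to establish three structural facts that together yield the theorem: linearity of the defining inequalities, pointedness of the resulting cone, and finiteness of the non-redundant inequalities among them.

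First I would identify the ambient vector space as $\mathrm{Sym}^n(\R) \cong \R^{n(n+1)/2}$ and observe that each condition from Proposition \ref{prop:posDmink} is linear in the entries $q_{ij}$ of $Q$. Indeed, for fixed $x$ and $k$, the expression $q(x) - q_{kk} = \sum_{i,j} x_i x_j \, q_{ij} - q_{kk}$ is a linear form in the $q_{ij}$ with coefficients depending only on $x$ and $k$. Thus $\mathcal{M}_n$ is cut out from $\mathrm{Sym}^n(\R)$ by the single strict inequality $q_{11} > 0$ together with a countable family of non-strict linear inequalities, which is precisely the shape of a polyhedral cone $\Pc(A,B)$ from Definition \ref{def:polycone}, with $A$ a priori infinite. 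Pointedness then follows immediately from $\mathcal{M}_n \subseteq \mathcal{S}_{>0}^n$ by Proposition \ref{PosQ}: any $Q$ with $Q, -Q \in \mathcal{M}_n$ would have both only positive and only negative eigenvalues, forcing $Q = 0$.

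The substantive step is showing that only finitely many inequalities are non-redundant. My strategy would be to extract from a small, explicit finite subcollection of the defining inequalities — those obtained by taking $x$ with coordinates in $\{-1,0,1\}$ — uniform shape bounds on $Q$. Taking $x = e_j$ with $k < j$ already gives the monotonicity $q_{11} \leq q_{22} \leq \cdots \leq q_{nn}$, and taking $x = e_i \pm e_j$ with appropriate $k$ gives off-diagonal bounds of the form $|q_{ij}| \leq \tfrac{1}{2}\min(q_{ii}, q_{jj})$. From these finitely many bounds I would then derive a dimension-dependent spectral gap of the form $\lambda_{\min}(Q) \geq c_n \, q_{nn}$ for every $Q \in \mathcal{M}_n$, with $c_n > 0$ depending only on $n$. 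Equipped with such a spectral gap, Proposition \ref{PosQ}(5) gives that any $x \in \Z^n$ with $\|x\|^2 > 1/c_n$ satisfies $q(x) \geq \lambda_{\min}(Q)\|x\|^2 \geq q_{nn} \geq q_{kk}$ automatically, so the corresponding inequality is redundant. Only the finitely many integer vectors with $\|x\|^2 \leq 1/c_n$ can contribute non-redundant facets, and pruning still-redundant members of this finite list leaves the desired finite defining set.

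The main obstacle is the spectral-gap bound $\lambda_{\min}(Q) \geq c_n q_{nn}$ on the Minkowski domain using only finitely many of the inequalities. This is the classical deep content of Minkowski's reduction theory and is morally equivalent to Hermite's constant being finite, or to the compactness of the quotient $\mathcal{M}_n/\R_{>0}$ intersected with $\{\det Q = 1\}$. One can either quote this bound directly from Cassels \cite{cassels2008rational}, or derive it from the finite subcollection of $\pm 1, 0$-coordinate inequalities via an induction on $n$ controlling how far the off-diagonal part can push the smallest eigenvalue down relative to the largest diagonal entry. The rapid explosion of facets in Tammela's list \cite{tammela1977minkowski} shows that, although $c_n$ exists for each $n$, it degrades quickly with dimension; nevertheless, finiteness in each fixed $n$ is all that is required to conclude that $\mathcal{M}_n$ is a pointed polyhedral cone.
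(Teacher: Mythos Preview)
The paper does not prove this theorem; it simply cites Cassels. Your overall architecture --- linearity of the constraints, pointedness via $\mathcal{M}_n\subseteq\mathcal{S}_{>0}^n$, and finiteness via an a priori bound on the $x$ that can yield non-redundant facets --- is the standard one and is essentially what Cassels does.

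However, the specific bound you propose, $\lambda_{\min}(Q)\geq c_n\,q_{nn}$ for all $Q\in\mathcal{M}_n$, is false, and the argument collapses at this point. Take $Q=\diag(\varepsilon,1,\ldots,1)$ with $0<\varepsilon<1$: this form is Minkowski reduced (diagonal with nondecreasing entries), yet $\lambda_{\min}(Q)=\varepsilon$ while $q_{nn}=1$, so the ratio can be made arbitrarily small. Consequently your redundancy criterion ``$\|x\|^2>1/c_n$ forces $q(x)\geq q_{nn}\geq q_{kk}$'' has no content. The bound you could hope for from the $\{-1,0,1\}$ inequalities is $\lambda_{\min}(Q)\geq c_n\,q_{11}$, but since $q_{kk}/q_{11}$ is unbounded on $\mathcal{M}_n$ this gives no uniform cutoff on $\|x\|$ either. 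The remark that this gap is ``morally equivalent to Hermite's constant being finite'' is off the mark: Hermite's inequality controls $q_{11}$ against $\det Q$, not $\lambda_{\min}$ against $q_{nn}$.

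The inequality that actually drives Cassels's argument is the \emph{weighted} lower bound
\[
q(x)\;\geq\; c_n\sum_{i=1}^n q_{ii}\,x_i^2\qquad\text{for all }Q\in\mathcal{M}_n,\ x\in\R^n,
\]
which respects the anisotropy of $Q$. For the constraint indexed by $(x,k)$ with $\gcd(x_k,\ldots,x_n)=1$, some $x_j$ with $j\geq k$ is nonzero, and then $q(x)\geq c_n\,q_{jj}\,x_j^2\geq c_n\,q_{kk}\,x_j^2$; hence $q(x)<q_{kk}$ forces $x_j^2<1/c_n$ for every $j\geq k$. This bounds the last $n-k+1$ coordinates uniformly. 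Bounding $x_1,\ldots,x_{k-1}$ requires a separate step (for fixed $x_k,\ldots,x_n$ one completes the square in the remaining variables and shows that only the integer point nearest the real minimizer can give a facet). Replace your spectral-gap step with this weighted inequality and the subsequent two-stage bound, and the proof goes through.
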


Although the set of Minkowski reduced forms may contain more than one representative for each equivalence class, the representatives are \textit{almost} unique in the following sense. 

\begin{theorem}\label{th:alun} Assume that $q$ is in the interior of $\mathcal{M}_n$ and has associated matrix $Q$.  Then for any unimodular $B$, the quadratic form with associated matrix $B^T Q B$ is in $\mathcal{M}_n$ if and only if $B$ is diagonal.
\end{theorem}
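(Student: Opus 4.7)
The plan is to prove the two directions of the equivalence separately. The ``if'' direction is a quick verification: a unimodular diagonal matrix has diagonal entries $\epsilon_j \in \{\pm 1\}$, so $(B^T Q B)_{ii} = q_{ii}$; the map $x \mapsto Bx$ is a bijection on $\Z^n$ preserving $\gcd(x_k, \ldots, x_n)$; and $(B^T Q B)(x) = q(Bx)$. Together these transfer every Minkowski reducedness inequality from $Q$ to $B^T Q B$.

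For the ``only if'' direction, write $B = [b_1, \ldots, b_n]$ and show by induction on $k$ that $b_k = \epsilon_k e_k$ for some $\epsilon_k \in \{\pm 1\}$. For the base case, since $B$ is unimodular, $b_1 \in \Z^n$ is primitive, and $x \mapsto Bx$ bijects $\Z^n$, so
\[ q(b_1) = (B^T Q B)_{11} = \min_{x \in \Z^n \setminus \{0\}} (B^T Q B)(x) = \min_{x \in \Z^n \setminus \{0\}} q(x) = q_{11}, \]
where the second equality uses that the minimum of a Minkowski reduced form is its $(1,1)$ entry (any $x$ may be scaled to a primitive vector, which satisfies $q(x) \geq q_{11}$). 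The interior assumption then forces $b_1 = \pm e_1$, because every primitive $x \neq \pm e_1$ satisfies $q(x) > q_{11}$ strictly there.

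For the inductive step, assume $b_j = \epsilon_j e_j$ for $j < k$. Then $B$ has block form $\left[\begin{smallmatrix} D & C \\ 0 & B' \end{smallmatrix}\right]$ with $D = \diag(\epsilon_1, \ldots, \epsilon_{k-1})$ and $B' \in \GL_{n-k+1}(\Z)$. The last $n-k+1$ entries of $b_k$ form the first column of $B'$, hence are primitive, so Minkowski reducedness of $Q$ yields $q(b_k) \geq q_{kk}$. For the reverse inequality, I use the test vector $B^{-1}e_k$: the compatible block form of $B^{-1}$ shows its last $n-k+1$ entries equal the first column of $(B')^{-1}$, again primitive. Applying Minkowski reducedness of $B^T Q B$ gives
\[ q_{kk} = (B^T Q B)(B^{-1} e_k) \geq (B^T Q B)_{kk} = q(b_k). \]
Combining the two inequalities yields $q(b_k) = q_{kk}$, and the interior assumption forces $b_k = \pm e_k$, completing the induction.

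The main obstacle is identifying the test vector $B^{-1}e_k$ for the reverse inequality at the inductive step; the block-triangular structure of $B$ produced by the induction hypothesis is precisely what makes the tail of $B^{-1}e_k$ primitive, permitting the application of Minkowski reducedness to $B^T Q B$. A subtlety worth checking is that membership in the interior of $\mathcal{M}_n$ guarantees $q(x) > q_{kk}$ strictly for \emph{every} primitive $x \neq \pm e_k$, not only for the finitely many non-redundant defining inequalities; this follows because $q \mapsto q(x) - q_{kk}$ is linear and nonnegative on $\mathcal{M}_n$, so it must be strictly positive throughout its topological interior.
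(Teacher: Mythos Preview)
Your proof is correct and follows essentially the same inductive strategy as the paper: show $q(b_k)=q_{kk}$ by sandwiching with the Minkowski reducedness of $Q$ on one side and of $B^TQB$ on the other, then invoke the interior hypothesis to force $b_k=\pm e_k$. The only notable difference is in how the reverse inequality $q(b_k)\le q_{kk}$ is extracted: the paper switches to the lattice-basis formulation of Minkowski reduction (Proposition~\ref{prop:posDmink}(2)) and argues that both $A$ and $AB$ are reduced bases of the same lattice, whence $\|Ab_k\|=\|a_k\|$; you instead stay in the quadratic-form language and feed the explicit test vector $B^{-1}e_k$ into the reducedness of $B^TQB$, which is the same content unpacked more directly.
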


\begin{proof}Write $B=[b_j]$, where $b_j$ are column vectors. Since the determinant of a unimodular matrix is $\pm 1$, the entries of the first column, $b_1$, satisfy $\gcd((b_1)_1,\ldots,(b_n)_1)$ $=1$.  Then, $q(b_1)=q(B e_1)>q_{11}$ unless $b_1=\pm e_1$. However, we know that $q(b_1)=q_{11}$, because by Minkowski reduction $q(B e_1)$ is a smallest non-zero value. This means that the lower right $(n-1)\times (n-1)$ matrix of $B$, say $B'$, has determinant $\pm 1$ and is also unimodular. For this reason, the second vector $b_2$ has $\gcd((b_2)_2,\ldots,(b_2)_n)$ $=1$. Therefore $q(b_2)>q_{22}$ unless $b_2=\pm e_2$.  The equality $q(b_2)=q_{22}$ follows from the second formulation of Minkowski reduction in terms of lattices; write $Q=A^TA, A=[a_j]$ and note that $AB=[\pm a_1\:Ab_2\:\cdots\:Ab_n]$. Since $a_1,Ab_2$ are part of a basis of $A\ZZ^n$, the length of $Ab_2$ must be equal to the length of $a_2$. We can repeat this process until each $b_i=\pm e_i$. 
\end{proof}

\begin{cor}\label{Mplus} The set $\mathcal{M}_n^+:=\mathcal{M}_n\cap \{q: q_{1j}\geq 0 \textnormal{ for each } 1\leq j \leq n\}$ contains a representative of each positive definite quadratic form. In the interior, the representatives are unique. 
\end{cor}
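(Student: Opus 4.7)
The plan splits into existence and uniqueness. For existence, starting from an arbitrary positive definite $q$ with matrix $Q$, I would first invoke Proposition~\ref{prop:posDmink}(3) to replace $Q$ by a Minkowski reduced representative in its integral equivalence class. To then drag it into $\mathcal{M}_n^+$, I would act by a diagonal unimodular matrix $D = \diag(\epsilon_1, \ldots, \epsilon_n)$ with $\epsilon_i \in \{\pm 1\}$. Under this conjugation entries transform as $Q_{ij} \mapsto \epsilon_i \epsilon_j Q_{ij}$, so fixing $\epsilon_1 = 1$ and setting $\epsilon_j = \sgn(Q_{1j})$ (or $+1$ if $Q_{1j} = 0$) forces $(D^T Q D)_{1j} \geq 0$ for every $j$. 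A short verification shows that $D^T Q D$ is still Minkowski reduced: the identity $q'(x) = q(Dx)$ together with the fact that $Dx$ and $x$ have identical coordinate-wise absolute values preserves both the $\gcd$ conditions and the diagonal entries appearing in Proposition~\ref{prop:posDmink}(1).

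For uniqueness in the interior, I would suppose $Q$ and $Q'$ both lie in the interior of $\mathcal{M}_n^+$ and $Q' = B^T Q B$ for some unimodular $B$. A point in the interior of $\mathcal{M}_n^+$ satisfies the strict versions of the inequalities defining $\mathcal{M}_n$ together with $q_{1j} > 0$ for all $j \geq 2$, and so in particular sits in the interior of $\mathcal{M}_n$. Theorem~\ref{th:alun} then forces $B = \diag(\epsilon_1, \ldots, \epsilon_n)$ with $\epsilon_i \in \{\pm 1\}$. Comparing first-row entries via $(Q')_{1j} = \epsilon_1 \epsilon_j Q_{1j}$ and using strict positivity of both sides for $j \geq 2$ pins down $\epsilon_j = \epsilon_1$ for each $j$, whence $B = \pm I$ and $Q' = Q$.

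The only delicate point is recognizing that the additional constraints $q_{1j} \geq 0$ contribute genuinely new strict inequalities in the interior of $\mathcal{M}_n^+$, which is precisely what lets the sign ambiguity in the diagonal $B$ be eliminated. After that observation, everything follows from Theorem~\ref{th:alun} together with the explicit action of $\diag(\pm 1, \ldots, \pm 1)$ on quadratic forms.
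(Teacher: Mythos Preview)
Your proof is correct and follows exactly the approach the paper intends: the paper's own proof is the single sentence ``This corollary is a direct consequence of Theorem~\ref{th:alun},'' and you have simply spelled out that direct consequence. The existence argument via the diagonal sign-change $D=\diag(\epsilon_1,\ldots,\epsilon_n)$ and the uniqueness argument via Theorem~\ref{th:alun} followed by comparison of the strictly positive first-row entries are precisely the details one would supply when unpacking that sentence.
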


This corollary is a direct consequence of Theorem \ref{th:alun}.

\subsubsection{Successive minima} To understand one of the most striking results about Minkowski reduction, we require \em successive minima.  \em

\begin{defn} \label{def:succ_min} 
Assume that $Q$ is a positive definite symmetric $n \times n$ matrix.  The \em i:th successive minimum of Q \em is 
\begin{align*}
    \lambda_i(Q):= \min \left\{q \left(x^{(i)}\right):\; \right. & x^{(1)},\ldots,x^{(i)}\in\Z^n \textnormal{  are linearly independent, }  \\
 &    \left .\textnormal{ and }q \left(x^{(j)} \right)\le q\left(x^{(j+1)}\right)\textnormal{  for each }1\le j\le i-1 \right \}.
\end{align*} 
\end{defn} 

\begin{theorem}[van der Waerden, Satz 7 \cite{van1956reduktionstheorie}]\label{th:van} For a Minkowski reduced positive definite form $q$ we have
$$q_{ii}\le \Delta_i\lambda_i(Q)\quad \textnormal{\textit{with}} \quad \Delta_i:=\max\left\{1,\Big(\frac{5}{4}\Big)^{i-4}\right\}.$$
\end{theorem}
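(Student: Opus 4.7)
The easy reverse inequality $\lambda_i(Q) \leq q_{ii}$ is immediate from the primitivity characterisation of Minkowski reduction (Proposition~\ref{prop:posDmink}(1)): the standard basis vectors $e_1,\ldots,e_i$ are linearly independent in $\Z^n$, and for $x = e_j$ with $j \geq k$ one has $\gcd(x_k,\ldots,x_n) = 1$, so $q_{jj} = q(e_j) \geq q_{kk}$ whenever $j \geq k$; in particular $q_{11} \leq q_{22} \leq \cdots \leq q_{ii}$, whence $\lambda_i(Q) \leq q_{ii}$. The substantive content of the theorem is the reverse bound $q_{ii} \leq \Delta_i \lambda_i(Q)$.

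Write $Q = A^T A$ with $A = [a_j]$ the associated Minkowski reduced basis of $\G := A\Z^n$ (Proposition~\ref{prop:posDmink}(2)); then $q_{jj} = \|a_j\|^2$ and $\lambda_j(Q)$ is the $j$-th successive minimum of $\G$. By definition of Minkowski reduction, $a_i$ is a shortest vector in $\G \setminus \{0, a_1, \ldots, a_{i-1}\}$ for which $a_1,\ldots,a_i$ extends to a basis of $\G$. Hence it suffices to produce $w \in \G$ satisfying $\|w\|^2 \leq \Delta_i \lambda_i(Q)$ for which $a_1,\ldots,a_{i-1}, w$ is part of a basis of $\G$; then $q_{ii} = \|a_i\|^2 \leq \|w\|^2$.

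For $i \leq 4$ one aims at $\|w\|^2 = \lambda_i(Q)$, forcing $\Delta_i = 1$. The structural reason this works in low rank is Theorem~\ref{IntuRed}: every maximally short linearly independent extension of $a_1,\ldots,a_{i-1}$ can be completed to a (partial) basis. Combining this with an elementary-divisor argument applied to the rank-$i$ sublattice spanned by $a_1,\ldots,a_{i-1}$ and a vector $v_i \in \G$ achieving $\lambda_i(Q)$ yields the required $w$. For $i \geq 5$ the extension step genuinely fails, as the lattice $A_n = [e_1,\ldots,e_{n-1}, \tfrac{1}{2}\mathds{1}]$ shows: here $\lambda_5 = 1$ but every Minkowski reduced basis forces $\|a_5\|^2 = 5/4$, so $\Delta_5 = 5/4$ is attained. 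In general I would expand $v_i = \sum_j c_j a_j$ in the Minkowski basis, bound the coefficients $c_j$ by repeatedly invoking the primitivity inequalities $q(x) \geq q_{kk}$ for $\gcd(x_k,\ldots,x_n) = 1$ applied to suitable combinations of the $a_j$, and induct on $i$: each additional dimension beyond four contributes at most a factor $5/4$ in squared length, compounding to $\Delta_i = (5/4)^{i-4}$.

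\textbf{Main obstacle.} The difficulty resides entirely in the inductive bound for $i \geq 5$. The number of defining inequalities of $\cM_n$ proliferates with $n$ (Theorem~\ref{MinPcone}), so direct geometric manipulation of the reduction domain is not viable, and one is forced to argue purely through the primitivity conditions. Moreover, the constant $5/4$ per dimension is sharp, witnessed by the $A_n$, so the proof admits no lossy estimates. The low rank cases $i \leq 4$ hide their combinatorics inside Theorem~\ref{IntuRed}; it is the jump at $i = 5$ and the clean compounding of the defect thereafter that form the technical core of van der Waerden's argument.
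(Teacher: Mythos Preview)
The paper does not prove this theorem at all: it is stated with attribution to van der Waerden (Satz~7 of \cite{van1956reduktionstheorie}) and used as a black box. There is therefore no proof in the paper to compare your proposal against.

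As a standalone sketch, your outline is sensible in its architecture but remains a plan rather than a proof. Two points deserve comment. First, for $i\le 4$ your appeal to Theorem~\ref{IntuRed} is slightly misdirected: that theorem guarantees the existence of \emph{some} basis built by always choosing a shortest linearly independent vector, whereas what you need is that for the \emph{given} Minkowski reduced basis $a_1,\dots,a_{i-1}$ there is a vector $w$ realising $\lambda_i(Q)$ which extends $a_1,\dots,a_{i-1}$ to a partial basis. These are not the same statement, and the ``elementary-divisor argument'' you allude to, which bridges the gap, is where the actual work for $i\le 4$ lies; you have not supplied it. Second, for $i\ge 5$ your paragraph is a description of the desired outcome (``each additional dimension contributes at most a factor $5/4$'') rather than an argument. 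Van der Waerden's proof at this stage involves a delicate analysis of how a vector achieving $\lambda_i(Q)$ sits relative to the sublattice generated by $a_1,\dots,a_{i-1}$, and the factor $5/4$ emerges from a specific geometric estimate, not from an obvious induction. Your identification of the $A_n$ example as witnessing sharpness of $\Delta_5=5/4$ is correct, but sharpness of the constant does not by itself suggest a mechanism for the upper bound.
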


It is conjectured in \cite{schurmann2009computational} that there is an even tighter bound: $q_{ii}\le i\lambda_i(Q)/4$ for $i>5$. By Theorem \ref{th:van}, if two Minkowski reduced forms $q^{(1)},q^{(2)}$ are isospectral, then $q^{(1)}_{22}=q^{(2)}_{22}$. There is no guarantee however that $q^{(1)}_{33}=q^{(2)}_{33}$, as we observed in \S \ref{s:differentdonuts}.

\subsection{Schiemann's reduction}\label{sss:schiered} In his thesis \cite{schiemann1994ternare}, Schiemann introduced the following reduction, which he called the \textit{Vorzeichennormalform}.  We call it \textit{Schiemann reduction}. 

\begin{defn}[Schiemann reduction]\label{def: schie}
A ternary positive definite form $f$ is said to be \em Schiemann reduced \em if

$1a)$ $f$ is Minkowski reduced,

$1b)$ $f_{12}\geq 0$, and  $f_{13}\geq 0$,

$1c)$ $2f_{23}> -f_{22}$,\\
and the following facet conditions hold:

$2a)$ $f_{12}=0$ $\Longrightarrow f_{23}\geq 0$,

$2b)$ $ f_{13}=0$ $\Longrightarrow f_{23}\geq 0$,

$3a)$ $f_{11}=f_{22} \Longrightarrow |f_{23}|\leq f_{13}$,

$3b)$ $f_{22}=f_{33} \Longrightarrow f_{13}\leq f_{12}$,

$4a)$ $f_{11}+f_{22}-2f_{12}-2f_{13}+2f_{23}=0 \Longrightarrow f_{11}-2f_{13}-f_{12}\leq 0$,

$4b)$ $2f_{12}=f_{11}\Longrightarrow f_{13}\leq 2f_{23}$,

$4c)$ $2f_{13}=f_{11}\Longrightarrow f_{12}\leq 2f_{23}$,

$4d)$ $2f_{23}=f_{22}\Longrightarrow f_{12}\leq 2f_{13}$.
\\ 
\em Schiemann's domain, \em denoted $\choir$, is the set of Schiemann reduced forms embedded in $\RR^6$. Schiemann's pointed polyhedral cone, denoted $\choir_p$ is the set of elements of $\R^6$ that satisfy conditions 1a), 1b), and 1c). 
\end{defn} 

Schiemann's pointed polyhedral cone, $\choir_p$ is a non-closed pointed polyhedral cone that properly contains $\choir$.  
It has several facets. The facet conditions $2-4$ determine which forms of $\choir_p$ are contained in $\choir$.  Unfortunately, the facet conditions cannot be expressed according to the definition of polyhedral cone, which is why we cannot express $\choir$ itself as a pointed polyhedral cone. The more well-known set of Eisenstein reduced forms (see \cite[p. 142-143]{o1980indecomposable}) contains precisely one representative of each positive definite ternary quadratic form, and in fact, Schiemann used this theory to prove 

\begin{theorem} Any positive definite quadratic form in three variables has a unique representative that is Schiemann reduced.
\end{theorem}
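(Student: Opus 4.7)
The plan is to derive existence and uniqueness of a Schiemann-reduced representative from three ingredients: the explicit description of the Minkowski domain $\mathcal{M}_3$ given by Theorem \ref{th: M3}, the interior-uniqueness statement of Theorem \ref{th:alun}, and the classical theory of Eisenstein reduction of ternary forms, which is known to produce exactly one canonical representative per integral equivalence class. The strategy is to start with any Minkowski representative, refine it by diagonal sign changes and a small number of boundary-resolving unimodular moves until all Schiemann conditions 1a)--4d) are satisfied, and then show that no two Schiemann-reduced forms can be integrally equivalent.

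For existence, let $q$ be a positive definite ternary form. By Proposition \ref{prop:posDmink}(3) its class contains a Minkowski-reduced representative $f$. The diagonal unimodular matrices $\operatorname{diag}(\pm 1,\pm 1,\pm 1)$ preserve $\mathcal{M}_3$ while independently changing the signs of $f_{12}, f_{13}, f_{23}$, so we may enforce $f_{12}\geq 0$ and $f_{13}\geq 0$, which is condition 1b). Condition 1c) comes from the Minkowski inequality $|2f_{23}|\leq f_{22}$ (one of the inequalities in \eqref{eq:strict_m3}), together with the fact that the boundary case $2f_{23}=-f_{22}$ can be transported to $2f_{23}=+f_{22}$ by the substitution $x_3\mapsto x_2+x_3$ that preserves Minkowski reduction on this stratum. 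Each facet condition 2a)--4d) then corresponds to exactly one of the equalities in \eqref{eq:strict_m3}; on the associated facet a residual finite group of non-diagonal unimodular matrices still acts, and a short, facet-by-facet computation shows that one can always apply such a move to land inside the half-space prescribed by the facet condition.

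For uniqueness, the core input is Theorem \ref{th:alun}: in the interior of $\mathcal{M}_3$ the only unimodular matrices sending $f$ to another Minkowski-reduced form are diagonal, and these are killed by 1b) and 1c). Hence two Schiemann-reduced forms that are integrally equivalent can only differ on a boundary facet of $\mathcal{M}_3$. Here the conditions 2)--4) enter precisely as tie-breaks: one verifies that on each facet (and on each intersection of facets) the group of unimodular matrices preserving Minkowski reduction together with 1b)--1c) acts with fundamental domain described by the remaining conditions. A clean way to carry out this verification is to exhibit an explicit bijection between $\choir$ and the set of Eisenstein-reduced ternary forms (see \cite{o1980indecomposable}), the latter of which is classically known to contain exactly one representative of each equivalence class.

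The main obstacle is the boundary bookkeeping. The facet conditions 2)--4) were not produced from a single generating principle but were engineered to resolve each of the possible boundary coincidences in $\mathcal{M}_3$, and a purely direct argument requires enumerating the stabilizer of every boundary stratum of $\mathcal{M}_3$ under the $\GL_3(\Z)$-action and checking that the Schiemann conditions cut out a single orbit representative. This case analysis is precisely where invoking the Eisenstein theory is indispensable: it reduces an otherwise long combinatorial enumeration to a finite comparison of two explicit domains in $\R^6$.
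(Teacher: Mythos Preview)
Your sketch follows precisely the route the paper indicates: the paper does not give a proof but simply records that Schiemann derived the result from the classical Eisenstein reduction of ternary forms (citing \cite{o1980indecomposable}) and that the argument is a long technical case analysis \cite{schiemann1994ternare}. Your plan---Minkowski existence, diagonal sign normalization for 1b)--1c), Theorem~\ref{th:alun} for interior uniqueness, and then a facet-by-facet comparison with the Eisenstein fundamental domain for the boundary---is exactly that program. One small slip: the diagonal matrices $\operatorname{diag}(\pm1,\pm1,\pm1)$ do \emph{not} change the signs of $f_{12},f_{13},f_{23}$ independently (the product of the three sign changes is always $+1$), but this does not affect your conclusions since two signs suffice to enforce 1b), and in the interior the constraints $f_{12}>0$, $f_{13}>0$ already force any diagonal symmetry to be $\pm I$.
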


The proof is not very interesting, it is long and technical \cite{schiemann1994ternare}. What is interesting is that the Schiemann reduction of a positive definite quadratic form is \em unique\em.

\begin{lemma}\label{ClosV} The closure of $\choir$ is a pointed polyhedral cone equal to the closure of $\mathcal{M}_3^+$.
Alternatively, it is given by the following system
\begin{equation} \label{SR_Form} 
\begin{cases}
0 \le q_{11} \le q_{22} \le q_{33} \\
0 \le 2q_{12} \le q_{11} \textnormal{ \& } 0 \le 2q_{13} \le q_{11} \\
-q_{22} \leq 2q_{23} \le q_{22} \\
0 \le q_{11} + q_{22} -2q_{12} - 2q_{13} + 2q_{23} 
\end{cases}
\end{equation}
\end{lemma}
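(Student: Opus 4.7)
I would introduce the intermediate set $\choir_p$ (from Definition \ref{def: schie}): the pointed polyhedral cone carved out by conditions 1a), 1b), 1c) alone, discarding the facet implications 2)--4). Trivially $\choir\subseteq\choir_p$, and $\choir_p$ differs from $\cM_3^+$ only by strictening $2f_{23}\geq -f_{22}$ into $2f_{23}>-f_{22}$, because the non-strict version is already the Minkowski inequality $q_{22}+2q_{23}\geq 0$ from Theorem \ref{th: M3}. So after closure, $\overline{\choir_p}=\overline{\cM_3^+}$.

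The heart of the argument is to show $\overline{\choir}=\overline{\choir_p}$. The key observation is that each facet implication 2a)--4d) has a hypothesis which is a linear \emph{equality}, e.g.\ $f_{12}=0$, $f_{11}=f_{22}$, or $f_{11}+f_{22}-2f_{12}-2f_{13}+2f_{23}=0$, and therefore can only restrict forms lying on a supporting hyperplane of $\choir_p$. In the topological interior of $\choir_p$ inside $\R^6$ every such equality is strictly violated, so every implication 2)--4) is vacuously true, yielding the sandwich $\mathrm{int}(\choir_p)\subseteq\choir\subseteq\choir_p$. Taking closures and using that $\choir_p$, being a full-dimensional polyhedral cone in $\R^6$, equals the closure of its interior, we obtain $\overline{\choir}=\overline{\choir_p}=\overline{\cM_3^+}$. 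Pointedness and polyhedrality of this common closure are inherited from Theorem \ref{MinPcone}, because $\cM_3^+$ is the intersection of the pointed polyhedral cone $\cM_3$ with the two rational half-spaces $\{q_{12}\geq 0\}$ and $\{q_{13}\geq 0\}$.

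It remains to identify $\overline{\cM_3^+}$ explicitly with the system (\ref{SR_Form}). Starting from the thirteen inequalities of Theorem \ref{th: M3}, adjoining $q_{12}\geq 0$, $q_{13}\geq 0$, and relaxing $0<q_{11}$ to $0\leq q_{11}$, I would verify that exactly five Minkowski conditions become redundant: $q_{11}+2q_{12}\geq 0$ and $q_{11}+2q_{13}\geq 0$ are immediate from the sign constraints; the cube-vertex inequality $q_{11}+q_{22}+2q_{12}-2q_{13}-2q_{23}\geq 0$ rewrites as $(q_{11}-2q_{13})+(q_{22}-2q_{23})+2q_{12}$, a sum of three terms already known to be non-negative; its sibling $q_{11}+q_{22}-2q_{12}+2q_{13}-2q_{23}\geq 0$ splits symmetrically as $(q_{11}-2q_{12})+(q_{22}-2q_{23})+2q_{13}$; and $q_{11}+q_{22}+2q_{12}+2q_{13}+2q_{23}\geq 0$ follows from $q_{11}\geq 0$, $q_{12},q_{13}\geq 0$ and $q_{22}+2q_{23}\geq 0$. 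The eight remaining Minkowski inequalities, together with $q_{12},q_{13}\geq 0$, reassemble as the ten-inequality system (\ref{SR_Form}). The main obstacle is bookkeeping rather than conceptual: one must carefully ensure that each facet condition 2)--4) genuinely cuts only into a lower-dimensional subfacet of $\choir_p$ (never into a full-dimensional piece), and that no Minkowski inequality is silently dropped during the redundancy reduction.
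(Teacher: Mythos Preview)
The paper does not supply a proof of Lemma \ref{ClosV}; it is stated as a fact and the text moves directly to listing the edges of $\overline{\choir}$. Your argument is correct and fills in precisely the details the paper omits. The sandwich $\mathrm{int}(\choir_p)\subseteq\choir\subseteq\choir_p$, justified by noting that every hypothesis in 2a)--4d) is the equality case of one of the defining half-spaces of $\choir_p$, is exactly the right mechanism; and your redundancy check reducing the thirteen Minkowski inequalities (plus the two sign constraints) down to the ten in \eqref{SR_Form} is clean and complete. The only remark I would add is that full-dimensionality of $\choir_p$ in $\R^6$, which you invoke to get $\overline{\mathrm{int}(\choir_p)}=\overline{\choir_p}$, deserves a one-line witness (e.g.\ a small perturbation of the identity form lies strictly inside all constraints), but this is immediate.
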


One can calculate the edges of $\overline{\choir}$ for instance via a package like \texttt{Polymake}~\cite{gawrilow2000polymake} or by applying Section \ref{ss: CalcEdge}.  The set of edges is denoted by 
\[ M = M_1 \cup M_2 \cup M_3,\] 
\begin{align} \label{edges_m1m2}
M_1 & := \left\{ \left(\begin{smallmatrix} 0 & 0 & 0 \\ 0 & 0 & 0 \\ 0 & 0 & 1 \end{smallmatrix}\right) \right\},\quad M_2  := \left\{ \left(\begin{smallmatrix} 0 & 0 & 0 \\ 0 & 2 & \pm 1 \\ 0 & \pm 1 & 2 \end{smallmatrix}\right) \right\}\quad \& \\
M_3 & := \left\{  \left(\begin{smallmatrix} 2 & 0 & 0 \\ 0 & 2 & \pm 1 \\ 0 & \pm 1 & 2 \end{smallmatrix}\right), \left(\begin{smallmatrix} 2 & 0 & 1 \\ 0 & 2 & \pm 1 \\ 1 & \pm 1 & 2 \end{smallmatrix}\right), \left(\begin{smallmatrix} 2 & 1 & 0 \\ 1 & 2 & \pm 1 \\ 0 & \pm 1 & 2 \end{smallmatrix}\right), \left(\begin{smallmatrix} 2 & 1 & 1 \\ 1 & 2 & 0 \\ 1 & 0 & 2 \end{smallmatrix}\right), \left(\begin{smallmatrix} 2 & 1 & 1 \\ 1 & 2 & 1 \\ 1 & 1 & 2\end{smallmatrix}\right) \right\}. \label{edge_m3} 
\end{align}

Note that any element of $\overline{\choir}$ can be expressed as an $\RR_{\ge0}$-linear combination of elements in $M$. By definition of $\choir$, we can for three sets of vectors $\mathfrak{A}$, $\mathfrak{B}$, and $\mathfrak{C}$  write 
\[\choir=\Pc(\mathfrak{A},\mathfrak{B})\cap \{f\in\mathbb{R}^6:\forall (c,d)\in \mathfrak{C}:(c\cdot f=0\Rightarrow d\cdot f\geq 0)\}. \]
The sets $\mathfrak{A},\mathfrak{B}$ consist of vectors in $\R^6$ that correspond to the conditions $1a),b),c)$ in the definition of Schiemann reduced forms. The set of vectors in $\mathfrak{C} \subseteq \R^6 \times \R^6$ produce the facet conditions. The elements of these three sets can be written out explicitly using Theorem \ref{th: M3} and Definition \ref{def: schie}.  There are Schiemann reduced forms that have representatives contained in $\overline \choir \setminus \choir$.  The following algorithm provides a means to remove those superfluous representatives.  To state the algorithm, for $v\in \R^n$ we define the sets 
\begin{equation} \label{eq:v_geq0} v^{\geq 0}:=\{x\in \R^n: v\cdot x\geq 0\}, \quad v^{\bot}:=\{x\in \R^n: v\cdot x=0\}. \end{equation} 
The superfluous forms are removed by checking if the first or second factor lies in a facet of $\choir$ which has facet conditions, and then removing forms that do not satisfy the facet conditions. 

\begin{lemma}\label{le:viol} Assume that the polyhedral cone $T\subseteq \overline{\choir}\times \overline{\choir}$.  We define a sequence of polyhedral cones $T_i$ for $i\in \N_0$ by 
\begin{align*}
T_0 & :=\Big(\Pc(\mathfrak{A},\mathfrak{B})\times \Pc(\mathfrak{A},\mathfrak{B})\Big)\cap T,\\
T_i & :=T_{i-1}\cap \bigcap\limits_{(c,d)\in \mathfrak{C}: T_{i-1} \subseteq (c,0)^{\bot}} (d,0)^{\ge}\bigcap\limits_{(c,d)\in\mathfrak{C}:T_{i-1} \subseteq (0,c)^{\bot}} (0,d)^{\ge}.
\end{align*}
The sequence $T_i$ becomes stationary at some $i_0$.  We define $T_{\choir\times\choir}:=T_{i_0}$. Further, $\overline{T\cap(\choir\times\choir)}=\overline{T_{\choir\times\choir}}$ and $T\cap (\choir\times\choir)\subseteq T_{\choir\times\choir}\subseteq T$. 
\end{lemma}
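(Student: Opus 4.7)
The plan is to establish three things in sequence: stationarity of the sequence $T_i$, the chain of inclusions $T \cap (\choir \times \choir) \subseteq T_{\choir \times \choir} \subseteq T$, and finally the closure equality $\overline{T \cap (\choir \times \choir)} = \overline{T_{\choir \times \choir}}$. The core idea is to interpret every facet condition $(c,d) \in \mathfrak{C}$ as a conditional linear constraint and to check it separately on each of the two factors.

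For stationarity, observe that $T_i \subseteq T_{i-1}$ and that each $T_i$ is a polyhedral cone. Let $A_i \subseteq \mathfrak{C} \times \{1,2\}$ collect the active pairs: those $((c,d),1)$ with $T_i \subseteq (c,0)^\bot$ together with those $((c,d),2)$ with $T_i \subseteq (0,c)^\bot$. Since $T_i$ is decreasing, $A_i$ is non-decreasing in a finite power set, and therefore stabilizes at some $i_0$. Once $A_{i-1} = A_i$, the half-spaces used to form $T_{i+1}$ already contain $T_i$, so $T_{i+1} = T_i$. The forward inclusion $T \cap (\choir \times \choir) \subseteq T_i$ then follows by induction on $i$: the base case uses $\choir \subseteq \Pc(\mathfrak{A},\mathfrak{B})$, and in the step, if $(f_1,f_2) \in T \cap (\choir \times \choir) \subseteq T_{i-1}$ and $(c,d) \in \mathfrak{C}$ satisfies $T_{i-1} \subseteq (c,0)^\bot$, then $c \cdot f_1 = 0$ triggers the facet implication of Definition~\ref{def: schie} for $f_1 \in \choir$, giving $d \cdot f_1 \geq 0$; the second factor is symmetric. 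The other inclusion $T_{\choir \times \choir} \subseteq T$ is immediate from $T_0 \subseteq T$.

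The final and most delicate step is $\overline{T_{\choir \times \choir}} \subseteq \overline{T \cap (\choir \times \choir)}$. Setting $T^* := T_{\choir \times \choir}$, the strategy is to prove that the relative interior of $T^*$ is contained in $\choir \times \choir$; since this relative interior is dense in $\overline{T^*}$ and $T^* \subseteq T$, the containment follows. Fix a relative interior point $(f_1,f_2) \in T^*$ and any $(c,d) \in \mathfrak{C}$. A case-by-case check of the facet conditions in Definition~\ref{def: schie} shows that every such $c$ appears among the inequalities defining $\mathfrak{A}$, so $c \cdot g_1 \geq 0$ for all $(g_1,g_2) \in T^*$. If $c \cdot f_1 = 0$, then the nonnegative linear functional $(g_1,g_2) \mapsto c \cdot g_1$ attains its minimum $0$ at a relative interior point of $T^*$, which forces it to vanish identically on $T^*$; i.e., $T^* \subseteq (c,0)^\bot$. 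By stabilization, this active condition was already present at step $i_0 - 1$, so $T^* \subseteq (d,0)^{\geq}$ and $d \cdot f_1 \geq 0$. If instead $c \cdot f_1 > 0$, the implication is vacuous. The symmetric argument handles the second factor, giving $(f_1,f_2) \in \choir \times \choir$. The main obstacle is precisely this relative-interior step: one must verify that stabilization of $A_i$ detects \emph{every} hyperplane containing $T^*$ (not merely those recorded at intermediate stages), and that each precondition vector $c$ in $\mathfrak{C}$ is nonnegative on $T^*$ — both amount to finite but easily-mishandled bookkeeping with the facet conditions of Definition~\ref{def: schie}, after which the rest follows from standard properties of pointed polyhedral cones.
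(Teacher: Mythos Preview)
Your arguments for stationarity and for the chain $T\cap(\choir\times\choir)\subseteq T_{\choir\times\choir}\subseteq T$ are correct and coincide with the paper's proof: both track the monotone growth of the finite index set of active facet conditions and then induct on $i$ using the implication built into the definition of $\choir$.

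Where you diverge is the closure equality $\overline{T\cap(\choir\times\choir)}=\overline{T_{\choir\times\choir}}$. The paper in fact does not prove this direction at all; it stops after establishing stationarity and the single inclusion. Your relative-interior argument fills this gap and is sound. The two ingredients you rely on are both valid: first, every precondition vector $c$ occurring in $\mathfrak{C}$ is indeed one of the non-strict constraints in $\mathfrak{A}$ (a direct inspection of conditions $2a$--$4d$ against Theorem~\ref{th: M3} and condition $1b$ confirms this), so $c\cdot g_1\geq 0$ throughout $T^*$; second, if a nonnegative linear functional on a convex set vanishes at a relative interior point it vanishes identically, forcing $T^*\subseteq (c,0)^{\bot}$. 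The one phrasing to tighten is ``was already present at step $i_0-1$'': what you actually need and use is that $T_{i_0+1}=T_{i_0}$, so any $(c,d)$ with $T_{i_0}\subseteq(c,0)^{\bot}$ contributes $(d,0)^{\geq}$ to the (trivial) passage from $T_{i_0}$ to $T_{i_0+1}$, whence $T^*\subseteq(d,0)^{\geq}$. With that adjustment your proof is complete and strictly extends what the paper supplies.
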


\begin{proof} Clearly we have $T_{i+1}\subseteq T_{i}$ for each $i$. Let $C_1^i$, respectively $C_2^i$, be the set of $c$ belonging to a pair $(c,d) \in \mathfrak C$, such that $T_i\subseteq (c,0)^{\bot}$, respectively $T_i\subseteq (0,c)^{\bot}$. Since $T_i$ monotonically decreases, $C_1^i$ and $C_2^i$ monotonically increase. However $\mathfrak{C}$ is finite meaning that $C_1^i$ and $C_2^i$ converge and become stationary. It follows that $T_i$ must become stationary. 

We are left to show $T\cap (\choir\times\choir)\subseteq T_{\choir\times\choir}$. Take any $(f,g)\in T\cap (\choir\times\choir)$. It is clear that $(f,g)\in T_0$. Now say $(f,g)\in T_i$. When calculating $T_{i+1}$, note that $T_i\subseteq (c,0)^{\bot}$ only if $f\cdot c=0$ at which point we already know $f \cdot d \geq 0$ for $(c,d)\in \mathfrak{C}$, since $(f,g)\in \choir\times\choir$. We have the analogous situation for $T_i\subseteq (0,c)^{\bot}$. In either case, this implies $(f,g)\in T_{i+1}$. \end{proof}

Note that we always have $(T_{\choir\times\choir})_{\choir\times\choir}=T_{\choir\times\choir}$.


\subsection{The minimal sets}\label{Flute} 
To identify quadratic forms that have identical representation numbers, Schiemann defined minimal sets which induce a transitive relation. Our treatment here is a slight generalization of Schiemann's.

\begin{defn}[Minimal set] \label{def:minqx} Let $\mathcal{Q}$ be a finite set of symmetric quadratic forms in $n$-dimensions. First define the transitive relation
\[ x\preceq_\mathcal{Q} y \Leftrightarrow q(x)\leq q(y) \textnormal{ for each }q\in \mathcal{Q}. \] 
The set of minimal vectors with respect to a set $X\subseteq \Z^n$ is defined
\[ \MIN_\mathcal{Q}(X):=\{x\in X: y\not\preceq_\mathcal{Q} x \textnormal{ for all }y\in X\setminus\{x\}\}. \] 
In case $n=3$, and $\mathcal{Q}$ is the set $M$ of edges of $\overline{\choir}$, listed in \eqref{edges_m1m2} and \eqref{edge_m3}, we simply write $\MIN$ and $\preceq$.
\end{defn}

Note that $\MIN_{\mathcal{Q}}(X)$ consists of all $x\in X$ for which no other element in $X$ is smaller with respect to $\preceq_{\mathcal{Q}}$.  As an example, note that 
\[ \MIN (\ZZ_*^3)=\{(1,0,0)\}.\]

\begin{defn} For a set $\mathcal{Q}$ of $n$-dimensional quadratic forms, $q_1, \ldots, q_N$, recall its positive hull, 
\[\pos(\mathcal{Q}):=\left\{\sum_{i=1} ^N \lambda_iq_i: \lambda_i\ge 0, \quad q_i \in \mathcal{Q}\right\}.\] 
The set $ \mathcal{Q}$ is \em regular \em if (i) all its elements are rational and positive semi-definite, (ii) the positive hull of $\mathcal{Q}$ contains the identity matrix, and (iii) the embedding of $ \mathcal{Q}$ into $\RR^{n(n+1)/2}$ spans $\RR^{n(n+1)/2}$.
\end{defn}
One can check that the edges of $\overline{\choir}$ form a regular set. Next we characterize $\MIN_\mathcal{Q}$ and describe its most important properties.

\begin{lemma}\label{le: posH} We have $x\preceq_\mathcal{Q} y$ if and only if $q(x)\leq q(y)$ for each $q\in \pos(\mathcal{Q})$.  Assume that $\mathcal{Q}$ is a regular set of quadratic forms, and that $\MIN_{\mathcal{Q}}(X)$ $\neq$ $\emptyset$. The minimal set $\MIN_\mathcal{Q}(X)$ is the unique smallest set in $X$ such that for any $z\in X$ there is a $y\in \MIN_\mathcal{Q}(X)$ with $y\preceq_\mathcal{Q} z$.
\end{lemma}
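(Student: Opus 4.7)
My plan is to prove the two assertions of the lemma separately, with the first assertion feeding into the second.

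For the characterization of $\preceq_\mathcal{Q}$, the $\Leftarrow$ direction is immediate because $\mathcal{Q}\subseteq \pos(\mathcal{Q})$. For $\Rightarrow$, suppose $x\preceq_\mathcal{Q} y$ and let $p=\sum_{i=1}^N\lambda_i q_i\in \pos(\mathcal{Q})$ with $\lambda_i\ge 0$ and $q_i\in \mathcal{Q}$. Since each $q_i(x)\le q_i(y)$, multiplying by the non-negative scalar $\lambda_i$ preserves the inequality and summation yields $p(x)\le p(y)$. So this first assertion is essentially bookkeeping about how inequalities interact with non-negative linear combinations.

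For the second assertion, I would first establish \emph{existence}: for every $z\in X$ there is some $y\in \MIN_\mathcal{Q}(X)$ with $y\preceq_\mathcal{Q} z$. The key observation is that regularity condition (ii) puts the identity matrix in $\pos(\mathcal{Q})$, and by the first part of the lemma this means $x\preceq_\mathcal{Q} z$ implies $\|x\|^2\le \|z\|^2$. Hence the set
\[ S_z:=\{x\in X: x\preceq_\mathcal{Q} z\}\subseteq X\cap \overline{B(0,\|z\|)} \]
is finite (since $X\subseteq \Z^n$) and non-empty (it contains $z$). I would then pick $y\in S_z$ to be any element minimizing $\|\cdot\|$ among $S_z$, and verify by the transitivity of $\preceq_\mathcal{Q}$ that $y\in \MIN_\mathcal{Q}(X)$: if some $y'\in X\setminus\{y\}$ satisfied $y'\preceq_\mathcal{Q} y$, then transitivity gives $y'\preceq_\mathcal{Q} z$, so $y'\in S_z$ with $\|y'\|\le \|y\|$, and since $y'\neq y$ and everything lives on the discrete $\Z^n$ lattice, a standard finite-descent argument inside $S_z$ (iterating the construction if necessary) lands in $\MIN_\mathcal{Q}(X)$.

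For \emph{uniqueness} (i.e., $\MIN_\mathcal{Q}(X)$ is the smallest such set), I would argue: let $S\subseteq X$ be any set such that for every $z\in X$ there is $y\in S$ with $y\preceq_\mathcal{Q} z$. Given an arbitrary $x\in\MIN_\mathcal{Q}(X)$, pick such a $y\in S$ with $y\preceq_\mathcal{Q} x$. Since $y\in X$ and $x\in\MIN_\mathcal{Q}(X)$, Definition~\ref{def:minqx} forces $y=x$; hence $x\in S$, proving $\MIN_\mathcal{Q}(X)\subseteq S$.

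The main obstacle is really the existence argument rather than uniqueness. One has to be careful because $\preceq_\mathcal{Q}$ is only a preorder (not antisymmetric), so two distinct points could satisfy $x\preceq_\mathcal{Q} y$ and $y\preceq_\mathcal{Q} x$, and neither would lie in $\MIN_\mathcal{Q}(X)$; the finite-descent step must therefore be iterated but cannot be cyclic, which is precisely guaranteed by the discreteness of $X$ together with the bound $\|x\|^2\le \|z\|^2$ afforded by $I\in\pos(\mathcal{Q})$.
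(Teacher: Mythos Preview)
Your approach mirrors the paper's: the first assertion and uniqueness match exactly, and for existence you run the same finite descent inside $S_z=\{x\in X:x\preceq_\mathcal{Q} z\}$, using $I\in\pos(\mathcal{Q})$ to bound norms and hence make $S_z$ finite. The gap is in your final paragraph. You correctly observe that $\preceq_\mathcal{Q}$ is only a preorder and that distinct $x,y$ can satisfy $x\preceq_\mathcal{Q} y\preceq_\mathcal{Q} x$---and then immediately assert the descent ``cannot be cyclic,'' crediting discreteness and the norm bound. But those two facts only make $S_z$ finite; they do not rule out cycles. If $v,-v\in X$ then $q(v)=q(-v)$ for every quadratic form, so $v\preceq_\mathcal{Q} -v\preceq_\mathcal{Q} v$, neither lies in $\MIN_\mathcal{Q}(X)$, and your descent can oscillate between them indefinitely. (Concretely: with $X=\{e_1,-e_1,2e_2\}\subset\Z^2$ and a regular $\mathcal{Q}$ containing $\diag(5,0)$, one has $\MIN_\mathcal{Q}(X)=\{2e_2\}$ yet $2e_2\not\preceq_\mathcal{Q} e_1$, so the stated conclusion in fact fails without a further hypothesis on $X$.)

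What actually closes the gap is regularity condition (iii): because $\mathcal{Q}$ spans $\R^{n(n+1)/2}$, the relation $x\preceq_\mathcal{Q} y\preceq_\mathcal{Q} x$ forces $x=\pm y$ (this is Proposition~\ref{PropMIN}(ii)). Hence the only possible cycles are antipodal pairs, and once $X$ contains no such pair---which holds in every application in the paper, where $X\subseteq\Z^n_*$---the preorder becomes a genuine partial order on $X$ and your finite-descent argument terminates in $\MIN_\mathcal{Q}(X)$. The paper's own proof runs the same descent and leaves this point equally implicit.
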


\begin{proof} The first statement can be verified from the definitions.  To prove the second statement we proceed by contradiction.  Assume that for some $z\in X$, each $y\in \MIN_\mathcal{Q}(X)$ has $y\not\preceq_\mathcal{Q} z$. Then $z\not\in \MIN_\mathcal{Q}(X)$, meaning that there is an element $z^{(1)}\in X\setminus\{z\}$ with $z^{(1)}\preceq_\mathcal{Q}z$. If $z_1\in \MIN_\mathcal{Q}(X)$, then we arrive at a contradiction. Otherwise, we construct a sequence $z^{(i)}$ inductively by letting $z^{(i+1)}$ be an element in $X\setminus\{z,z^{(1)},\ldots,z^{(i)}\}$ such that $z^{(i+1)}\preceq_Q z^{(i)}$. By transitivity of $\preceq_\mathcal{Q}$, we would arrive at a contradiction if at any point $z^{(i)}\in \MIN_\mathcal{Q}(X)$. Since $I\in \pos(\mathcal{Q})$, by the first statement of the lemma we know that $\|z^{(i)}\|\leq \|z^{(i-1)}\|\leq \cdots \leq \|z\|$. Each $z^{(i)}\in \Z^n$ is distinct, therefore this sequence of norms must become stationary. This will eventually contradict $z^{(i+1)} \in X \setminus\{z,z^{(1)},\ldots,z^{(i)}\}$, because there are only finitely many vectors in $\ZZ^n$ of any given fixed length. To see that $\MIN_{\mathcal{Q}}(X)$ is the unique smallest set with this property, note that if $z\in \MIN_{\mathcal{Q}}(X)$, then the only element $y\in X$ with $y\preceq_{\mathcal{Q}} z$ is $z$ itself. 
\end{proof}

\begin{prop}\label{PropMIN} Let $\mathcal{Q}$ be a regular set. We have the following:

\begin{enumerate}[label=(\roman*)]
    \item $x\preceq_{\mathcal{Q}} y$ implies $\|x\|\leq \|y\|$.
    
    \item If $x\preceq_{\mathcal{Q}} y\preceq_{\mathcal{Q}} x$, then $x=\pm y$.
    
    \item $\MIN_{\mathcal{Q}}(X)$ is finite.
    
    \item If $0\not \in X\neq \emptyset $, and for any shortest vector $u$ of $X$ we have $-u\not\in X$, then $\MIN_{\mathcal{Q}}(X)\neq \emptyset$. 
    
    \item $\emptyset\neq X\subseteq \Z_*^n$ implies  $\MIN_{\mathcal{Q}}(X)\neq \emptyset$.
\end{enumerate}
\end{prop}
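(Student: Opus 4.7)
Statements (i) and (ii) are direct consequences of the regularity axioms combined with Lemma \ref{le: posH}. For (i), Lemma \ref{le: posH} gives $q(x)\le q(y)$ for every $q\in\pos(\mathcal{Q})$; applied to $q=I\in\pos(\mathcal{Q})$, this yields $\|x\|^2\le\|y\|^2$. For (ii), mutual $\preceq_\mathcal{Q}$ forces $q(x)=q(y)$ for all $q\in\mathcal{Q}$, which by the spanning hypothesis on regular sets extends linearly to every symmetric matrix $S$, so $x^TSx=y^TSy$. Letting $S$ range over the standard basis $\{e_ie_i^T\}\cup\{e_ie_j^T+e_je_i^T\}_{i<j}$ of symmetric matrices gives $x_i^2=y_i^2$ and $x_ix_j=y_iy_j$ for all $i,j$, i.e.\ $xx^T=yy^T$, which forces $y=\pm x$.

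For (iii), fix a common denominator $D$ for the rational matrices $\mathcal{Q}=\{q_1,\ldots,q_N\}$ and define $\phi:X\to\N^N$ by $\phi(x):=(Dq_1(x),\ldots,Dq_N(x))$; positive semi-definiteness guarantees the values are non-negative integers. On $\MIN_\mathcal{Q}(X)$ this map is injective (equal images with $y\neq x$ would give $y\preceq_\mathcal{Q} x$, contradicting minimality of $x$) and its image is an antichain in the coordinate-wise order on $\N^N$ ($\phi(x)\le\phi(y)$ with $x\neq y$ would give $x\preceq_\mathcal{Q} y$, contradicting minimality of $y$). Dickson's lemma on antichains in $\N^N$ then yields finiteness.

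For (iv), I will pick a shortest vector $u\in X$ and attempt to build a chain $u=z^{(0)}, z^{(1)}, \ldots$ with $z^{(k+1)}\in X\setminus\{z^{(k)}\}$ and $z^{(k+1)}\preceq_\mathcal{Q} z^{(k)}$; if the construction halts at some step $k$, then $z^{(k)}\in\MIN_\mathcal{Q}(X)$ and we are done. Otherwise, part (i) forces $\|z^{(k)}\|=\|u\|$ for every $k$, so each $z^{(k)}$ is itself a shortest vector of $X$. I will then argue the $z^{(k)}$ are pairwise distinct: if $z^{(j)}=z^{(i+1)}$ for some $j\le i$, transitivity gives $z^{(i+1)}\preceq_\mathcal{Q} z^{(i)}\preceq_\mathcal{Q} z^{(j)}=z^{(i+1)}$, so by (ii) $z^{(i+1)}=\pm z^{(i)}$, and since they are unequal, $z^{(i+1)}=-z^{(i)}$, contradicting the hypothesis that no shortest vector of $X$ has its negative in $X$. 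As $X\subseteq\Z^n$ contains only finitely many vectors of length $\|u\|$, the chain must halt. For (v), observe that $X\subseteq\Z_*^n$ automatically satisfies the hypotheses of (iv): $0\notin\Z_*^n$, and for any $u\in\Z_*^n$, the vector $-u$ has last nonzero coordinate of the opposite sign, so $-u\notin\Z_*^n\supseteq X$.

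The chief obstacle lies in (iv): the chain automatically drops in norm by (i), but once the norms stabilize at the minimum length the chain could in principle oscillate among antipodal shortest vectors forever. The anti-antipodal hypothesis together with (ii) is precisely what rules out such behavior, and the ``no repetitions'' argument must invoke the full history of the chain (via transitivity) rather than just consecutive terms in order to derive a contradiction.
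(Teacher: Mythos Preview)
Your proof is correct and, for parts (i), (ii), (iv), (v), runs parallel to the paper's. The one genuine difference is (iii): the paper argues by contradiction via an infinite sequence in $\MIN_{\mathcal{Q}}(X)$, extracts (somewhat informally) a subsequence along which a single $q\in\mathcal{Q}$ is strictly decreasing, and invokes discreteness of $q(\Z^n)$. Your route---encode each $x$ by the integer vector $(Dq_1(x),\ldots,Dq_N(x))\in\N_0^N$, observe that $\MIN_{\mathcal{Q}}(X)$ injects into an antichain of $(\N_0^N,\le)$, and appeal to Dickson's lemma---is both cleaner and more robust: it sidesteps the subsequence-extraction step in the paper (which needs a little care, since the witnessing $q$ for $x^{(i)}\not\preceq x^{(i+1)}$ may vary with $i$) and makes explicit that the finiteness is purely an order-theoretic phenomenon. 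For (iv) the two arguments are really the same cycle-among-shortest-vectors idea; the paper phrases it as a pigeonhole on the finite set of shortest vectors, while you phrase it as a chain that cannot repeat, but both hinge on (ii) turning any $\preceq_{\mathcal{Q}}$-cycle into an antipodal pair.
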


\begin{proof} $ $
    
(i) Follows from Lemma \ref{le: posH} and the fact that $I\in \pos\mathcal{Q}$.
    
(ii) We have $q(x)=q(y)$ for all $q\in \mathcal{Q}$. This can be written 
\[ 0=q(x)-q(y)=\sum_iq_{ii}(x_i^2-y_i^2)+\sum_{i<j}2q_{ij}(x_ix_j-y_iy_j))=0. \] 
    The fact that $\mathcal{Q}$ spans $\R^{n(n+1)/2}$ implies that $x_ix_j=y_iy_j$ for each $i,j$. Then $y_i=\pm x_i$ for each $i$.  In fact, it must be either $y_i = x_i$ for all $i$ or $y_i = - x_i$ for all $i$.  To see this, assume $x_1, x_2 \neq 0$, and $y_1 = x_1$, $y_2 = -x_2$.  This would contradict $x_1x_2=y_1y_2$.    
    
(iii) If $\MIN_{\mathcal{Q}}(X)$ were infinite, then there would be a sequence $x^{(i)}\in \MIN_{\mathcal{Q}}(X)$ of distinct elements such that $x^{(1)}\not\preceq_{\mathcal{Q}} x^{(2)}\not\preceq_{\mathcal{Q}} x^{(3)} \ldots$. We can find a subsequence $x^{(i_j)}$ such that there is a $q\in \mathcal{Q}$ with $q(x^{(i_1)})>q(x^{(i_2)})>q(x^{(i_3)})>\ldots$. The elements of $\mathcal{Q}$ are semi-positive and rational.  Consequently, for each $q \in \mathcal Q$, $q(\Z^n)$ is a discrete set.  So the sequence must become stationary, a contradiction. 
   
(iv) Denote the distinct vectors in $X$ that share the shortest length in $X$ by $u^{(1)},\ldots,u^{(k)}\in X$. Assume by contradiction that no $u^{(i)}$ is in $\MIN_{\mathcal{Q}}(X)$. Then for each $u^{(i)}$ there is an $x\in X\setminus \{u^{(i)}\}$ such that $x\preceq_{\mathcal{Q}} u^{(i)}$. By statement (i), $x=u^{(j)}$ for some $j\neq i$. If $k=1$, we directly have a contradiction. If $k>1$, then we could find two vectors $u^{(i)}\neq u^{(j)}$ such that $u^{(j)}\preceq u^{(i)}\preceq u^{(j)}$ as a consequence of the set of $u^{(i)}$ being finite. By (ii) this implies $u^{(j)}=\pm u^{(i)}$ which is a contradiction to the assumption in (iv).

(v) Follows directly from (iv) and the definition of $\Z^n_*$. 
\end{proof}

\subsubsection{Calculating $\MIN_{\mathcal{Q}}(X)$} It helps to know that any minimal set is finite given a regular set $\mathcal{Q}$, but how do we find its elements? The following lemma was used by Schiemann to reduce the calculation of $\MIN$ under certain conditions. 

\begin{lemma}\label{SchiMIN}
Let $\emptyset \ne Y \subseteq X \subseteq \Z_*^n$, and let $ \mathcal{Q}$ be regular. If 
\[ W \supseteq \{ x \in X : x \not\succeq_{\mathcal{Q}} y \:\:\forall y \in Y\} \cup Y,\] then $\MIN_{\mathcal{Q}}(X) = \MIN_{\mathcal{Q}}(X \cap W)$.
\end{lemma}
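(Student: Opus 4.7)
The plan is to prove both inclusions $\MIN_{\mathcal{Q}}(X) \subseteq \MIN_{\mathcal{Q}}(X \cap W)$ and $\MIN_{\mathcal{Q}}(X \cap W) \subseteq \MIN_{\mathcal{Q}}(X)$ directly, using the characterization of $W$ together with Proposition \ref{PropMIN}(ii) and the definition \eqref{eq:zn_star} of $\Z_*^n$. The key structural observation is that elements of $X \setminus W$ are, by hypothesis, already dominated (in the $\preceq_{\mathcal{Q}}$-sense) by some element of $Y \subseteq X \cap W$, so they never contribute to the minimal set and can safely be discarded.

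For the forward inclusion, take $x \in \MIN_{\mathcal{Q}}(X)$. First I would show $x \in W$: if not, then by the defining property of $W$, $x \notin Y$ and there exists $y \in Y$ with $x \succeq_{\mathcal{Q}} y$; since $Y \subseteq W$ and $x \notin W$, we have $y \neq x$, so $y \in X \setminus \{x\}$ with $y \preceq_{\mathcal{Q}} x$, contradicting $x \in \MIN_{\mathcal{Q}}(X)$. Hence $x \in X \cap W$, and since no element of $X \setminus \{x\}$ (and in particular of $(X \cap W) \setminus \{x\}$) satisfies $y \preceq_{\mathcal{Q}} x$, we conclude $x \in \MIN_{\mathcal{Q}}(X \cap W)$.

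For the reverse inclusion, take $x \in \MIN_{\mathcal{Q}}(X \cap W)$ and suppose for contradiction that there exists $y \in X \setminus \{x\}$ with $y \preceq_{\mathcal{Q}} x$. If $y \in W$, this immediately contradicts the minimality of $x$ in $X \cap W$. Otherwise $y \in X \setminus W$, so by the hypothesis on $W$ there is $y' \in Y \subseteq X \cap W$ with $y' \preceq_{\mathcal{Q}} y$, and by transitivity $y' \preceq_{\mathcal{Q}} x$. If $y' \neq x$, we again contradict minimality of $x$ in $X \cap W$. The delicate case is $y' = x$: then $x \preceq_{\mathcal{Q}} y \preceq_{\mathcal{Q}} x$, and Proposition \ref{PropMIN}(ii) gives $y = \pm x$. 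Here the sign convention in $\Z_*^n$ enters decisively: both $x$ and $y$ lie in $X \subseteq \Z_*^n$, so their last non-zero coordinate is positive, forcing $y = x$ and contradicting $y \neq x$.

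The main obstacle I anticipate is the case $y' = x$; all other branches reduce instantly to a violation of the defining minimality property of $\MIN_{\mathcal{Q}}(X \cap W)$, but this last case requires invoking the antisymmetry statement Proposition \ref{PropMIN}(ii) together with the $\Z_*^n$ convention to rule out the sign ambiguity $y = -x$. Everything else is bookkeeping with transitivity of $\preceq_{\mathcal{Q}}$ and the inclusion $Y \subseteq W$.
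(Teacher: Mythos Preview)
Your proof is correct and follows essentially the same route as the paper's: both inclusions are handled directly, the forward one by showing $\MIN_{\mathcal{Q}}(X)\subseteq X\cap W$ via the defining property of $W$, and the reverse one by chasing an offending $y\in X\setminus W$ down to some $y'\in Y$ and then invoking Proposition~\ref{PropMIN}(ii). If anything, you are more explicit than the paper in the final step, spelling out that the sign ambiguity $y=\pm x$ from Proposition~\ref{PropMIN}(ii) is resolved by the last-nonzero-coordinate convention in the definition of $\Z_*^n$; the paper simply writes ``by Proposition~\ref{PropMIN}'' and leaves that detail to the reader.
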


\begin{proof} We first show  $\MIN_{\mathcal{Q}}(X) \subseteq \MIN_{\mathcal{Q}}(X \cap W)$, and it's not hard to see that it suffices to prove $\MIN_{\mathcal{Q}}(X)\subseteq X\cap W$. Take $z\in \MIN_{\mathcal{Q}} (X)\subseteq X$. If $z\not\succeq_{\mathcal{Q}} y$ for all $y\in Y$, then by the assumption on  $W$, $z\in W$. If for some $y\in Y$ we have $z\succeq_{\mathcal{Q}} y$ then by construction of $\MIN_{\mathcal{Q}}(X)$, $y=z\in Y$, meaning that also in this case we have $z\in W$. 

Secondly we prove $\MIN_{\mathcal{Q}}(X) \supseteq \MIN_{\mathcal{Q}}(X \cap W)$. Let $z\in \MIN_{\mathcal{Q}}(X\cap W)$ and take any $x\in X$ such that $x\preceq_{\mathcal{Q}} z$. We show that $x$ is necessarily equal to $z$, since then $z\in \MIN_{\mathcal{Q}}(X)$ by definition. If $x\in X\cap W$, then since $z\in \MIN_{\mathcal{Q}}(X\cap W)$, we have $z=x$. If $x\in X\setminus W$, then it follows that there is a $y\in Y$ such that $y\preceq_{\mathcal{Q}} x$. By transitivity, $y\preceq_{\mathcal{Q}} z$. Now since $Y\subseteq X\cap W$, this implies $y=z$. As a consequence, $z\preceq_{\mathcal{Q}} x\preceq_{\mathcal{Q}} z$ which by Proposition \ref{PropMIN} means $z=x$.\end{proof}

To determine $\MIN_{\mathcal{Q}}(X)$ we want to find appropriate, finite $Y$ and $W$ as above. We present a new result for this purpose. First, let $\pi:\RR^n\to \RR^{n-1}$ denote the projection onto the last $n-1$ coordinates.

\begin{theorem}\label{CalcMINq} Assume that $\mathcal{E}$ is a regular set of $n$-dimensional quadratic forms.  Let $\mathcal{E}'$ be the set of non-invertible matrices of $\mathcal{E}$, with first row and column deleted.  Let $\mathcal{E}^{\times}$ be the invertible matrices of $\mathcal{E}$.  If $\mathcal{E}'$ is regular, then for $a\in \mathbb{Z}$ and $X\subseteq \mathbb{Z}^n$ such that $\MIN_{\mathcal{E}'}(\pi(X))\neq \emptyset$, we define $Y(a):= \{(a,z):z\in \MIN_{\mathcal{E}'}\big(\pi(X)\big)\}$.  Then,
\begin{align*}
    W(a):&=\big\{x\in X: t\not\preceq_{\mathcal{E}} x \textnormal{ for each }t\in Y(a)\big\}\cup Y(a)\\
    &\subseteq \big\{x\in \mathbb{Z}^n:\lambda_{\min}\|x\|^2<\lambda_{\max}\max_{t\in Y(a)}\{\|t\|^2\}\big\} \cup Y(a),
\end{align*}
where $\lambda_{\min}$, $\lambda_{\max}$ are respectively the smallest and largest eigenvalues among elements of $\mathcal{E}^\times$. In particular, $W(a)$ is a finite set.
\end{theorem}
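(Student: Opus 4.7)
The plan is to prove the containment
\[ W(a) \subseteq \{x \in \ZZ^n : \lambda_{\min}\|x\|^2 < \lambda_{\max}\max_{t \in Y(a)} \|t\|^2\} \cup Y(a), \]
from which the finiteness of $W(a)$ will follow: $Y(a)$ is finite since $\MIN_{\mathcal{E}'}(\pi(X))$ is finite by Proposition \ref{PropMIN}(iii), and because the invertible elements of $\mathcal{E}$ are positive definite (being simultaneously invertible and positive semi-definite), we have $\lambda_{\min}>0$, so the displayed bounded set contains only finitely many integer points.

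Fix $x \in W(a) \setminus Y(a)$; by definition of $W(a)$, we have $t \not\preceq_{\mathcal{E}} x$ for every $t \in Y(a)$. My strategy is to exhibit a specific $t^* \in Y(a)$ together with a specific $q \in \mathcal{E}^\times$ witnessing $q(t^*) > q(x)$, and then apply Proposition \ref{PosQ}(5) to this single $q$. To produce $t^*$, observe that $\pi(x) \in \pi(X)$ and $\MIN_{\mathcal{E}'}(\pi(X)) \neq \emptyset$, so Lemma \ref{le: posH} supplies some $z^* \in \MIN_{\mathcal{E}'}(\pi(X))$ with $z^* \preceq_{\mathcal{E}'} \pi(x)$. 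Set $t^* := (a, z^*) \in Y(a)$; the hypothesis $t^* \not\preceq_{\mathcal{E}} x$ yields some $q \in \mathcal{E}$ with $q(t^*) > q(x)$.

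The heart of the argument is showing that this $q$ must be invertible. Suppose not, so that $q$ is non-invertible in $\mathcal{E}$ and its submatrix $q'$ (obtained by deleting the first row and column) lies in $\mathcal{E}'$. In the intended setting of this theorem---in particular for the edges of $\overline{\choir}$ in the Schiemann application, where the singular elements all satisfy $q_{11}=0$---positive semi-definiteness forces $q_{1j}=0$ for every $j$, so the entire first row and column of $q$ vanish and $q(v) = q'(\pi(v))$ for every $v \in \RR^n$. But then $z^* \preceq_{\mathcal{E}'} \pi(x)$ gives $q(t^*) = q'(z^*) \leq q'(\pi(x)) = q(x)$, contradicting $q(t^*) > q(x)$. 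Therefore $q \in \mathcal{E}^\times$.

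Letting $\mu_{\min}, \mu_{\max}$ denote the smallest and largest eigenvalues of this particular $q$, Proposition \ref{PosQ}(5) yields
\[ \lambda_{\min}\|x\|^2 \leq \mu_{\min}\|x\|^2 \leq q(x) < q(t^*) \leq \mu_{\max}\|t^*\|^2 \leq \lambda_{\max}\max_{t \in Y(a)} \|t\|^2, \]
establishing the inclusion. The main obstacle is the third step, namely ruling out non-invertible $q$; this is exactly what justifies the ``first row and column deleted'' convention in the definition of $\mathcal{E}'$ and relies on the structural fact that singular forms in the relevant regular sets have vanishing first row and column, so the passage to $q'(\pi(v))$ is an equality rather than merely an inequality.
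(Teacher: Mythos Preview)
Your proposal is correct and follows essentially the same route as the paper's proof: pick $x \in W(a) \setminus Y(a)$, use Lemma~\ref{le: posH} to find $z^* \preceq_{\mathcal{E}'} \pi(x)$, set $t^* = (a,z^*)$, deduce from $t^* \not\preceq_{\mathcal{E}} x$ that the witnessing form must lie in $\mathcal{E}^\times$, and conclude via the eigenvalue bounds of Proposition~\ref{PosQ}(5). Your explicit discussion of why the witnessing $q$ cannot be singular---that in the intended setting the non-invertible forms have vanishing first row and column, so $q(v)=q'(\pi(v))$---is exactly what the paper compresses into the one-line assertion ``note that if $f\in \mathcal{E}'$, then $f(x)=f(\pi(x))$''; you are right that this is the key structural point and that it is not a consequence of the hypotheses as literally stated but holds in the application to the edges of $\overline{\choir}$.
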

\begin{proof} Take any $x\in W(a) \setminus Y(a)$ and note that if $f\in \mathcal{E}'$, then $f(x)=f(\pi(x))$. By assumption and Proposition \ref{le: posH}, there is an element $z$ of $\MIN_{\mathcal{E}'}(\pi(X))$ such that $z\preceq_{\mathcal{E}'}\pi(x)$. Therefore, if $(a,z)\not\preceq_\mathcal{E}x$ we would require $g(a,z)>g(x)$ for some $g\in \mathcal{E}^\times$. It follows that $\lambda_{\min}\|x\|^2<g(a,z)\leq \max \{g(t)\}\le \lambda_{\max}\max\{\|t\|^2\}$, where the maximum is taken over $t\in Y(a)$. This proves the inclusion, since $\lambda_{\min}>0$, $\lambda_{\max}/\lambda_{\min}\ge 1$, and we end by noting that both $\mathcal{E}^\times$ and $Y(a)$ are finite.\end{proof}

If $\mathcal{E}=M$ is the set of edges of $\overline{\choir}$, then the conditions of the theorem are satisfied. 
To apply the theorem in Schiemann's symphony, let
\begin{equation} \label{eq:x0x1x2} \begin{gathered}  X_0:=\ZZ_\ast^3\setminus e_1\ZZ, \quad X_1:=\ZZ_\ast^3\setminus (e_1\ZZ+e_2\ZZ), \\  X_2:=\ZZ_\ast^3\setminus \big((e_1\ZZ+e_2\ZZ)\cup  (e_1\ZZ+e_3\ZZ)\big). \end{gathered} \end{equation} 
In Theorem \ref{CalcMINq}, the corresponding 
\begin{align}
&Y_0(a)=\{ (a,1,0) \} , \quad Y_1(a)=\{(a,0,1)\}, \quad Y_2(a)=\{(a,1,1), (a, -1, 1)\},  \label{eq:MinkApp1}\\
   & W_i(a) \subseteq \left\{x : \|x\| <2 \sqrt{2(a^2+2)}\right\}.
   \label{eq:MinkApp2}
\end{align}
Schiemann showed similar, sharper bounds through less general means. We could find sharper bounds by determining $\max\{g(t)\}$ as in the proof above, instead of estimating using $\lambda_{\max}$. In Schiemann's symphony, we calculate the minimal sets of $X_i\setminus Z$ for finite sets $Z$. Since $Z$ is finite, we can always find an appropriate $a$ such that $Y_i(a)\subseteq X\setminus Z$ and in this way apply Lemma \ref{SchiMIN}.

Although we work in 3 dimensions here, the statements can be generalized to any dimension without problems, essentially by replacing $\overline{\choir}$ by the closure of $\mathcal{M}_n^+$.  
\begin{defn}\label{Klav}
Let $x^{(1)},\ldots, x^{(k)} \in X\subseteq \ZZ_\ast^3$ be distinct elements. We define 
\begin{equation*} \begin{gathered} \csec \left(X, \{x^{(i)}\}_{i=1}^k\right) := \\ 
\left\{f \in \overline{\choir} : f\left(x^{(j)}\right) = \min \left \{ f \left(X \setminus \left\{x^{(1)}, \ldots, x^{(j-1)}\right\}\right) \right\} \:\:\forall j = 1 ,\ldots ,k\right\}. \end{gathered} \end{equation*} 
We set $\csec(X,\emptyset) := \overline{\choir}$.  
\end{defn}

Above, $\csec \left(X, \{x^{(i)}\}_{i=1}^k\right)$ is the set of quadratic forms in $\overline{\choir}$ for which $x^{(1)},\ldots,x^{(k)}$ yield the successively smallest values of the forms evaluated at elements of $X$.  To show the connection to the representation numbers, let $\{x^{(i)},x\}_{i=1}^k$ denote the sequence $x^{(1)},\ldots,x^{(k)},x$ in this order.  If 
$f\in  \csec(X, \{x^{(i)},x\}_{i=1}^k)$, then
\begin{align}
    f(x)&=\min f \left( X\setminus \left\{x^{(1)},\ldots,x^{(k)}\right\} \right)\label{eq: klav1}\\
    &=\min \left\{t_0\in \R_{\ge 0}: \sum_{0\leq t\leq t_0}\mathcal{R}_{X}(f,t)\geq k+1\right\}\label{eq: klav2}. 
\end{align}
The sets $\csec \left(X, \{x^{(i)}\}_{i=1}^k\right)$ are pointed polyhedral cones contained in the closure of Schiemann's domain.

\begin{lemma} Let $X\subseteq \Z_*^3$ and $x^{(1)},\ldots,x^{(k)}\in X$ be distinct. Assume that $\MIN(X \setminus \{x^{(i)}\}_{i=1}^k)\neq \emptyset$. Then $\csec(X, \{x^{(i)}\}_{i=1}^k)$ is a pointed polyhedral cone, specifically 
\begin{align*}
    & \csec(X, \{x^{(i)}\}_{i=1}^k) = \\
&    \left\{ f \in \overline{\choir} : f(x^{(1)}) \le \cdots \le f(x^{(k)})  \textnormal{ and } 
    f(x^{(k)}) \le f(x) \; \forall x \in \MIN(X \setminus  \{x^{(i)}\}_{i=1}^k)\right\}.
\end{align*}
\end{lemma}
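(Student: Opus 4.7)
The plan is to establish the set equality by proving both inclusions, after which the pointed polyhedral cone assertion follows essentially by inspection.

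The inclusion $\csec(X, \{x^{(i)}\}_{i=1}^k) \subseteq$ RHS is immediate from the definition. Since $x^{(j+1)} \in X \setminus \{x^{(1)}, \ldots, x^{(j-1)}\}$, the equality $f(x^{(j)}) = \min f(X \setminus \{x^{(1)}, \ldots, x^{(j-1)}\})$ forces $f(x^{(j)}) \leq f(x^{(j+1)})$; likewise, any $x \in \MIN(X \setminus \{x^{(i)}\}_{i=1}^k)$ lies in $X \setminus \{x^{(1)}, \ldots, x^{(k-1)}\}$, so $f(x^{(k)}) \leq f(x)$.

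The reverse inclusion is the one piece that requires real input. The crucial move is to promote the finitely many inequalities indexed by $\MIN(X \setminus \{x^{(i)}\}_{i=1}^k)$ into inequalities indexed by every element of $X \setminus \{x^{(i)}\}_{i=1}^k$. For this I would apply Lemma \ref{le: posH}: by hypothesis the minimal set is nonempty, so for any $y \in X \setminus \{x^{(i)}\}_{i=1}^k$ there exists $u \in \MIN(X \setminus \{x^{(i)}\}_{i=1}^k)$ with $u \preceq y$, and the first statement of that lemma then gives $g(u) \leq g(y)$ for every $g \in \pos(M)$. By Lemma \ref{ClosV} combined with Proposition \ref{prop:pcs}(4) (equation \eqref{eq:pcone_edges}), the closed pointed polyhedral cone $\overline{\choir}$ equals $\pos(M)$, so $f(u) \leq f(y)$, and together with the hypothesis $f(x^{(k)}) \leq f(u)$ this yields $f(x^{(k)}) \leq f(y)$. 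Chaining with the assumed $f(x^{(j)}) \leq f(x^{(j+1)})$ and splitting an arbitrary $z \in X \setminus \{x^{(1)}, \ldots, x^{(j-1)}\}$ according to whether $z = x^{(j')}$ for some $j' \geq j$ or $z \in X \setminus \{x^{(i)}\}_{i=1}^k$, one sees in either case that $f(x^{(j)}) \leq f(z)$; since $x^{(j)}$ itself lies in $X \setminus \{x^{(1)}, \ldots, x^{(j-1)}\}$, it realizes the minimum.

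Finally, for the pointed polyhedral cone structure, $\overline{\choir}$ is pointed polyhedral by Lemma \ref{ClosV}, and each condition $f(x^{(j)}) \leq f(x^{(j+1)})$ and $f(x^{(k)}) \leq f(x)$ is a linear inequality in the six entries of the symmetric matrix for $f$, so it defines a closed half-space through the origin. By Proposition \ref{PropMIN}(iii) the set $\MIN(X \setminus \{x^{(i)}\}_{i=1}^k)$ is finite, so only finitely many additional half-space constraints appear; the intersection is then of the form $\Pc(A \cup B, \emptyset)$ in the language of Definition \ref{def:polycone}, and pointedness is inherited from $\overline{\choir}$ since any line in the intersection would already lie in $\overline{\choir}$. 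I do not expect a serious obstacle; the only conceptually subtle step is the appeal to Lemma \ref{le: posH} in the reverse inclusion, which rests on the identification $\overline{\choir} = \pos(M)$ and on the regularity of $M$.
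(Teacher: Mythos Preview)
Your proof is correct. The paper states this lemma without proof, so you have supplied what the authors leave implicit; your argument is the natural one given the surrounding machinery, with the reverse inclusion resting on Lemma~\ref{le: posH} and the identification $\overline{\choir}=\pos(M)$ exactly as the setup in \S\ref{Flute} anticipates, and the polyhedral-cone claim following from Lemma~\ref{ClosV}, Proposition~\ref{PropMIN}(iii), and Proposition~\ref{prop:pcs}(2).
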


To see how the sets $\csec \left(X, \{x^{(i)}\}_{i=1}^k\right)$ are realized as polyhedral cones in a computer, we proceed as follows. For fixed $x,y\in \R^n$ and any quadratic form $f$ we have 
\[ f(y) \geq f(x) \iff \sum_{i}f_{ii}(y_i^2-x_i^2) + \sum_{i < j}f_{ij}(2y_iy_j-2x_ix_j) \ge 0. \]  
The right hand side is a linear inequality for the values of $f_{ij}$.  Then, given $x^{(i)}$ and $\MIN(X \setminus \{x^{(i)}\}_{i=1}^k)$, which we calculated in the last section, we note that $\MIN(X \setminus \{x^{(i)}\}_{i=1}^k)$ is a finite set, and it is therefore possible to explicitly write out all the linear inequalities defining the set $\csec \left(X, \{x^{(i)}\}_{i=1}^k\right)$. 

\begin{lemma}\label{KlavUni}
Let $X\subseteq \Z_*^3$ and $x^{(1)}, \ldots, x^{(k)}\in X$ be distinct. If $X\setminus\{x^{(i)}\}_{i=1}^k\neq \emptyset$, then
\[  \csec(X,\{ x^{(i)}\}_{i=1}^k) =\bigcup_{y \in \MIN(X \setminus \{x^{(i)}\}_{i=1}^k)} \csec(X, \{x^{(i)},y\}_{i=1}^k).\] 
\end{lemma}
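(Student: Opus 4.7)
The plan is to prove the two inclusions separately. The easier inclusion $(\supseteq)$ is essentially tautological: if $f \in \csec(X,\{x^{(i)},y\}_{i=1}^k)$ for some $y \in \MIN(X\setminus\{x^{(i)}\}_{i=1}^k)$, then by Definition~\ref{Klav} $f$ satisfies all the successive-minimum conditions required for membership in $\csec(X,\{x^{(i)}\}_{i=1}^k)$ plus one additional condition at $y$. So the set on the right is a union of subsets of $\csec(X,\{x^{(i)}\}_{i=1}^k)$.

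For the harder inclusion $(\subseteq)$, I would take $f \in \csec(X,\{x^{(i)}\}_{i=1}^k)$ and aim to exhibit a witness $y \in \MIN(X\setminus\{x^{(i)}\}_{i=1}^k)$ such that $f(y) = \min f(X\setminus\{x^{(i)}\}_{i=1}^k)$; by Definition~\ref{Klav} this immediately places $f$ in $\csec(X,\{x^{(i)},y\}_{i=1}^k)$. First I would observe that $\MIN(X\setminus\{x^{(i)}\}_{i=1}^k)$ is nonempty by Proposition~\ref{PropMIN}(v) (using $X\setminus\{x^{(i)}\}_{i=1}^k \subseteq \Z_*^3$ and the hypothesis of nonemptiness), and finite by Proposition~\ref{PropMIN}(iii).

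The main step is to show that the infimum of $f$ on $X\setminus\{x^{(i)}\}_{i=1}^k$ is attained, and in fact is attained on the finite set $\MIN(X\setminus\{x^{(i)}\}_{i=1}^k)$. The key observation is that $\overline{\choir}$ coincides with $\pos(M)$ by \eqref{eq:pcone_edges} and the description of $\overline{\choir}$ as a pointed polyhedral cone with edge set $M$. So we may write $f = \sum_i \lambda_i q_i$ with $\lambda_i \geq 0$ and $q_i \in M$. For any $z \in X\setminus\{x^{(i)}\}_{i=1}^k$, Lemma~\ref{le: posH} (applied to $\mathcal{Q}=M$) furnishes some $y_z \in \MIN(X\setminus\{x^{(i)}\}_{i=1}^k)$ with $y_z \preceq z$, which by the same lemma means $q(y_z) \leq q(z)$ for every $q \in \pos(M)$, and hence $f(y_z) \leq f(z)$. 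Therefore
\[ \inf_{z \in X\setminus\{x^{(i)}\}_{i=1}^k} f(z) \;=\; \min_{y \in \MIN(X\setminus\{x^{(i)}\}_{i=1}^k)} f(y), \]
and the right-hand side is a minimum over a finite nonempty set, hence attained by some $y^*$. Setting $y = y^*$ concludes the proof.

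The main obstacle is precisely the step above: guaranteeing that the infimum is attained on $\MIN$ rather than merely being approached. This requires the combination of three ingredients—$\overline{\choir} = \pos(M)$, Lemma~\ref{le: posH} to transfer $\preceq_M$-comparisons to $f$-value comparisons, and the finiteness and nonemptiness of $\MIN$ from Proposition~\ref{PropMIN}. Once this reduction to a finite set is in hand, the existence of the witness $y$ and the verification of the $\csec$ defining condition are immediate from Definition~\ref{Klav}.
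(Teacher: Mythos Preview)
Your proof is correct and rests on the same core ingredients as the paper's (the identity $\overline{\choir}=\pos(M)$, the monotonicity $x\preceq y\Rightarrow f(x)\le f(y)$ for $f\in\pos(M)$, and the finiteness and nonemptiness of $\MIN$ from Proposition~\ref{PropMIN}), but you organize the $\subseteq$ direction in the opposite order. The paper first forms the set $Y_f$ of all $f$-minimizers in $X\setminus\{x^{(i)}\}_{i=1}^k$ (nonempty by discreteness of $f(\Z^3)$), selects $y\in\MIN(Y_f)$, and then checks by a two-case analysis that this $y$ actually lies in $\MIN(X\setminus\{x^{(i)}\}_{i=1}^k)$. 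You instead start from $\MIN(X\setminus\{x^{(i)}\}_{i=1}^k)$ and use Lemma~\ref{le: posH} to show directly that the infimum of $f$ over the whole set is already attained on this finite subset, then pick an $f$-minimizer there. Your route is slightly shorter: it invokes Lemma~\ref{le: posH} as a black box and thereby avoids introducing the auxiliary set $Y_f$ and the separate appeal to discreteness. The paper's route, on the other hand, makes explicit \emph{which} $f$-minimizer is chosen (a $\preceq$-minimal one among all $f$-minimizers), which is conceptually pleasant but not needed for the statement itself.
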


\begin{proof} $ $

``$\supseteq$'': This inclusion is immediate since $\csec(X,\{ x^{(i)},y\}_{i=1}^k)\subseteq \csec(X,\{x^{(i)}\}_{i=1}^k)$ by definition.

``$\subseteq$'': Let $f\in \csec(X, \{x^{(i)}\}_{i=1}^k)$ and define
\[ Y_f:=\left\{y\in X\setminus\{x^{(i)}\}_{i=1}^k: f(y)=\min f(X\setminus\{x^{(i)}\}_{i=1}^k)\right\}. \] 
The set $Y_f$ is non-empty by the discreteness of $f(X\setminus\{x^{(i)}\}_{i=1}^k)$. 
By Proposition \ref{PropMIN} (v), $\MIN(Y_f)\neq \emptyset$. Fix some $y\in \MIN(Y_f)$. We proceed to show that $y\in \MIN(X\setminus\{x^{(i)}\}_{i=1}^k)$. In that case, we are done since by definition we have $f\in \csec(X, \{x^{(i)},y\}_{i=1}^k)$. We consider the following decomposition,
\[X\setminus\{x^{(i)}\}_{i=1}^k=Y_f\cup\left(\left(X\setminus\{x^{(i)}\}_{i=1}^k\right)\setminus Y_f\right). \]
To see that $y\in \MIN(X\setminus\{x^{(i)}\}_{i=1}^k)$, it suffices to show that $x\not\preceq y$ for any $x\in X\setminus\{x^{(i)},y\}_{i=1}^k$. Since $y\in \MIN(Y_f)$, we know that $x\not\preceq y$ for $ x\in Y_f\setminus\{y\}$. Now consider $x\in \left(X\setminus\{x^{(i)}\}_{i=1}^k\right)\setminus Y_f$, and note that $x\neq y$.  If $\left(X\setminus\{x^{(i)}\}_{i=1}^k\right)\setminus Y_f=\emptyset$, then we are done. Otherwise, for such an $x$,  $f(x)>\min f(X\setminus\{x^{(i)}\}_{i=1}^k) $, since $x\not\in Y_f$. This means $f(x)>f(y)$, because $y\in Y_f$ which implies $x\not \preceq y$, completing the proof. \end{proof}

\subsection{Schiemann's symphony} \label{Expl} We are almost ready to perform Schiemann's symphony, the algorithm that proves $\flat_3=1$; three-dimensional flat tori are determined by their spectra. For this we define the \em duets, \em that are pairs of isospectral Schiemann reduced forms 
\[\duets:=\{(f,g)\in \choir\times\choir:f \textrm{ and  $g$ are isospectral} \}.\] 
The goal of Schiemann's symphony is to prove that all duets are in fact \em solos, \em meaning the two forms are identical.  We therefore aim to prove that $\duets \subset \solos$, where
\[ \solos := \{(f,f):f\in\RR^{6}\}.\]

\begin{defn}[Coverings and refinements] \label{def:cover} 
For $A \subseteq \R^n$, $P$ is a \em covering \em of $A$ if 
\[ A \subseteq \bigcup_{U \in P} U.\] 
If $P'$ is also a covering of $A$, it is a \em refinement \em if each element of  $P'$ is contained in some $U \in P$. 
\end{defn} 

To prove that $\duets \subset \solos$, from which it immediately follows that $\flat_3=1$, we use a computer algorithm to calculate polyhedral cones that cover $\duets$ and iteratively refine these coverings to prove that eventually the coverings are all contained in $\solos$.

\begin{defn}[In tune] \label{acc} A polyhedral cone $T$ is \textit{in tune with respect to $\Lambda(T), k=k(T)$ and sequences $x^{(1)},\ldots,x^{(k)},y^{(1)},\ldots,y^{(k)} $} if the following properties hold

\textbf{P1:} $T\subseteq \overline{\choir}\times \overline{\choir}$ is a polyhedral cone and $T_{\choir\times\choir}=T$ as in Lemma \ref{le:viol}.

\textbf{P2:} $\Lambda:=\Lambda(T)$ is the largest of the three nested subsets of $\R^3$,
\[ e_1\ZZ \subseteq e_1\ZZ+e_2\ZZ \subseteq (e_1\ZZ+e_2\ZZ)\cup (e_1\ZZ+e_3\ZZ)\]
such that $f|_{\Lambda(T)}=g|_{\Lambda(T)} $ for all $(f,g)\in T$. 

\textbf{P3:} We have $x^{(i)},y^{(i)}\in \ZZ_*^3\setminus\Lam$ and 
\begin{align*}
T & \subseteq \csec(\Z_*^n\setminus \Lambda,\{x^{(i)}\}_{i=1}^k)\times \csec(\Z_*^n\setminus \Lambda,\{y^{(i)}\}_{i=1}^k\}),\\  
T & \subseteq \{(f,g)\in \overline{\choir}\times \overline{\choir}: f(x^{(i)})=g(y^{(i)})\:\: \forall i=1,\ldots,k\}.
\end{align*}
A covering $\mathcal{T}$ of $\duets$ is \textit{in tune} if each $T\in \mathcal{T}$ is in tune.
\end{defn}

Aiming to define a sequence of coverings of $\duets$, we start with 
\begin{equation}\label{eq: firstcover}
   \mathcal{T}_0:=\left\{\{(f,g)\in \choir_p \times \choir_p   :f_{11}=g_{11}\}_{\choir\times\choir}\right\}.  
\end{equation}
Denote by $T$ the single element of $\mathcal{T}_0$. Observe that $\duets\subseteq T$, by Minkowski reduction and Corollary \ref{cor:dim_vol}. The set $T$ satisfies P1, P2 and P3 by setting $k=1$, $x^{(1)}=y^{(1)}=e_1$, and $\Lambda=e_1\ZZ$. For an in tune covering $\mathcal{T}$ of $\duets$, we define a refinement as follows.

\begin{defn} \label{def:refine} Let $T$ be an in tune polyhedral cone with corresponding $\Lambda,k$,$x^{(i)}$, and $y^{(i)}$ as in Definition \ref{acc}. We define its \em refinement \em $\audit_T$ according to the following to cases.  

\textbf{Case 1.} $T\subseteq \solos$: Let $\audit_T:=\{T\}$. The set $T\in\audit_T$ is in tune with $\Lambda,k,x^{(i)},y^{(i)}$.  

\textbf{Case 2.} $T\not\subseteq \solos$: Write $\twonotes(\Lam,\{z^{(i)}\}):=\MIN((\Z_*^n\setminus \Lambda)\setminus\{z^{(i)}\}_{i=1}^k)$ for a sequence $z^{(i)}$. Let for each $x\in \twonotes(\Lam,\{x^{(i)}\})$ and $y\in \twonotes(\Lambda,\{y^{(i)}\})$,
\begin{align*}
    S_{xy} & := T\cap \big(\csec(\Z_*^n\setminus \Lambda,\{x^{(i)},x\}_{i=1}^k)\times \csec(\Z_*^n\setminus \Lambda,\{y^{(i)},y\}_{i=1}^k)\big),\\
    T_{xy} & := [S_{xy}\cap \{(f,g)\in \overline{\choir}\times \overline{\choir}: f(x)=g(y)\}]_{\choir\times\choir}.
\end{align*}
We define
$$\audit_T:=\bigcup_{\substack{x\in \twonotes(\Lam,\{x^{(i)}\}), \;y\in \smaller{\twonotes}(\Lambda,\{y^{(i)}\})}}\{T_{xy}\}.$$
Each $T_{xy}$ is in tune with variables as follows.  Let $\Lambda_{xy}=\Lambda(T_{xy})$ be maximal with $f|_{\Lambda_{xy}}=g|_{\Lambda_{xy}}$ for all $(f,g)\in T_{xy}$. Let $k(T_{xy})=k(T)+1$ and $x^{(k+1)}=x,y^{(k+1)}=y$. If $\Lambda_{xy}\neq \Lambda$, then let $0\le r\le k(T)+1$ be maximal with $\#(\{x^{(i)}\}_{i=1}^r\setminus \Lambda_{xy})=\#(\{y^{(i)}\}_{i=1}^r\setminus \Lambda_{xy})$, and set instead $k(T_{xy})=r$. In this case, $T_{xy}$ is in tune with $x^{(i)},y^{(i)}$ for $i=1,\ldots,r$. 
\end{defn}

Observe that $T_{xy}\subseteq S_{xy}$, and $(f,g)\in T_{xy}$ implies $(f,g)\in T$, and $f(x)=g(y)$. The sets $S_{xy}$ decompose $T$ into possibly overlapping subsets whose union is equal to $T$. Passing from $S_{xy}$ to $T_{xy}$, multiple representatives of the same equivalence class are removed according to Lemma \ref{le:viol}. We are now ready to define the sequence $\mathcal{T}_i$ of coverings of $\duets$.

\begin{defn} \label{def:refine2} If $\mathcal{T}$ is a covering of $\duets$, then we define its refinement as 
\[ \mathcal{T}':=\bigcup_{T\in \mathcal{T}}\audit_T. \] 
With $\mathcal{T}_0$ as in Equation \ref{eq: firstcover}, we define the sequence $\mathcal{T}_i$ by $\mathcal{T}_{i+1}=\mathcal{T}_i'$ for each $i\geq 0$.
\end{defn} 
We must now argue that $\mathcal{T}'$ is actually a refinement of a given covering $\mathcal{T}$. This is done via the following proposition.

\begin{prop} $\mathcal{T}_i$ is a sequence of coverings of $\duets$, and each iteration is a refinement of the previous one.
\end{prop}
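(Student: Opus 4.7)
The plan is to prove both claims (each $\mathcal{T}_i$ covers $\duets$, and each $\mathcal{T}_{i+1}$ refines $\mathcal{T}_i$) simultaneously by induction on $i$. For the base case $i=0$, any $(f,g)\in\duets$ lies in $\choir\times\choir\subseteq\choir_p\times\choir_p$, and since $q_{11}$ is the smallest value attained by a Minkowski reduced form on $\Z_*^3$ (Lemma~\ref{nle4}), isospectrality forces $f_{11}=g_{11}$. Lemma~\ref{le:viol} then guarantees that passing to the $[\,\cdot\,]_{\choir\times\choir}$ reduction preserves the inclusion, so $\duets\subseteq T$ for the unique $T\in\mathcal{T}_0$.

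For the inductive step, I will assume $\mathcal{T}_i$ covers $\duets$ and that each $T\in\mathcal{T}_i$ is in tune with data $\Lambda,k,\{x^{(i)}\},\{y^{(i)}\}$. The refinement property is then immediate from Definition~\ref{def:refine}: every element of $\mathcal{T}_{i+1}$ is either $T$ itself (Case~1) or some $T_{xy}\subseteq S_{xy}\subseteq T$ (Case~2), and hence is contained in an element of~$\mathcal{T}_i$. For the covering property, fix $(f,g)\in\duets$ and $T\in\mathcal{T}_i$ with $(f,g)\in T$; Case~1 is trivial, so assume Case~2. Since $(\Z_*^3\setminus\Lambda)\setminus\{x^{(i)}\}_{i=1}^k$ is a cofinite subset of the infinite set $\Z_*^3\setminus\Lambda$, Proposition~\ref{PropMIN}(v) yields non-empty minimal sets, and Lemma~\ref{KlavUni} applied in each coordinate supplies $x\in\MIN((\Z_*^3\setminus\Lambda)\setminus\{x^{(i)}\}_{i=1}^k)$ and $y\in\MIN((\Z_*^3\setminus\Lambda)\setminus\{y^{(i)}\}_{i=1}^k)$ with $(f,g)\in S_{xy}$.

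To conclude that $(f,g)\in T_{xy}$, the remaining task is to verify $f(x)=g(y)$. By~\eqref{eq: klav2}, both $f(x)$ and $g(y)$ are determined entirely by the sorted multisets of values of $f$ and~$g$ on $\Z_*^3\setminus\Lambda$: namely, $f(x)$ is the smallest~$t$ for which $\sum_{0\le t'\le t}\mathcal{R}_{\Z_*^3\setminus\Lambda}(f,t')\ge k+1$, and similarly for $g(y)$. Isospectrality gives $\mathcal{R}(f,\cdot)=\mathcal{R}(g,\cdot)$ on~$\Z^3$; property~P2 gives $f|_\Lambda=g|_\Lambda$ and hence $\mathcal{R}_\Lambda(f,\cdot)=\mathcal{R}_\Lambda(g,\cdot)$; since each admissible~$\Lambda$ is closed under sign and integer scaling, the reduction noted after~\eqref{eq:zn_star} applies to $\Z^3\setminus\Lambda$ as well, forcing $\mathcal{R}_{\Z_*^3\setminus\Lambda}(f,\cdot)=\mathcal{R}_{\Z_*^3\setminus\Lambda}(g,\cdot)$ and hence $f(x)=g(y)$. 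A final appeal to Lemma~\ref{le:viol} places $(f,g)\in T_{xy}\in\audit_T\subseteq\mathcal{T}_{i+1}$.

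The main obstacle will be the final chain of identities of representation numbers: one must carefully propagate isospectrality on all of~$\Z^3$ and agreement on~$\Lambda$ through to equality of representation numbers on $\Z_*^3\setminus\Lambda$. This hinges on the fact that each of the three permitted choices of $\Lambda$ is a sublattice closed under scaling and sign, so that the bijection between $\Z^3\setminus\{0\}$ and $\Z_*^3\times(\Z\setminus\{0\})$ implicit in the reduction after~\eqref{eq:zn_star} descends to the complements of~$\Lambda$. All other verifications are bookkeeping with the definitions of $\csec$, $S_{xy}$, $T_{xy}$, and the in-tune data.
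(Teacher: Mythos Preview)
Your argument is correct and follows essentially the same route as the paper's proof: both use Lemma~\ref{KlavUni} to decompose $T$ into the $S_{xy}$, then invoke \eqref{eq: klav2} together with $f|_\Lambda=g|_\Lambda$ to force $f(x)=g(y)$ on each piece, and finally apply Lemma~\ref{le:viol} to land in $T_{xy}$. One small wording issue: the third admissible $\Lambda=(e_1\Z+e_2\Z)\cup(e_1\Z+e_3\Z)$ is not a sublattice (it is not closed under addition), but your argument only uses that each $\Lambda$ is closed under sign and nonzero integer scaling, which does hold for all three choices and is all that is needed for the reduction to $\Z_*^3\setminus\Lambda$.
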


\begin{proof} Assume that $\mathcal{T}=\mathcal{T}_i$ for some $i$ is a covering of $\duets$. We show that $\mathcal{T}'=\mathcal{T}_{i+1}$ is a refinement. Fix some arbitrary $T\in \mathcal{T}$. Let us first note that by Lemma \ref{KlavUni},
\begin{align*}
    \bigcup_{\substack{x\in \twonotes(\Lam,\{x^{(i)}\}),\; y\in \twonotes(\Lam,\{y^{(i)}\})}}&\csec(\Z_*^3\setminus \Lambda,\{x^{(i)},x\}_{i=1}^k)\times \csec(\Z_*^3\setminus \Lambda,\{y^{(i)},y\}_{i=1}^k)
\end{align*}
is equal to $\csec(\Z_*^3\setminus \Lambda,\{x^{(i)}\}_{i=1}^k)\times \csec(\Z_*^3\setminus \Lambda,\{y^{(i)}\}_{i=1}^k)$. By property P3 of $T$, we have
\begin{align*}
    \bigcup_{x\in \twonotes(\Lam,\{x^{(i)}\}),\; y\in \twonotes(\Lam,\{y^{(i)}\})}S_{xy} &= T\cap  \csec(\Z_*^3\setminus \Lambda,\{x^{(i)}\}_{i=1}^k)\times \csec(\Z_*^3\setminus \Lambda,\{y^{(i)}\}_{i=1}^k)\\
    &=T. 
\end{align*}
Consider $(f,g)\in S_{xy}\cap \duets$. We have $\mathcal{R}(f,t)=\mathcal{R}(g,t)$ for all $t\in \R_{\geq 0}$. Since $f|_{\Lambda}=g|_{\Lambda}$, $\mathcal{R}_{\Z_*^3\setminus \Lambda}(f,t)=\mathcal{R}_{Z_*^3\setminus \Lambda}(g,t)$ for all $t\in \R_0^+$. By Equation \ref{eq: klav2} and the definition of $S_{xy}$, we have 
\begin{align*}
    f(x)& =\min \Big\{t_0\in \R_0^+: \sum_{0\leq t\leq t_0}\mathcal{R}_{\Z_*^3\setminus \Lambda}(f,t)\geq k+1\Big\}= \\
    &=\min \Big\{t_0\in \R_0^+: \sum_{0\leq t\leq t_0}\mathcal{R}_{\Z_*^3\setminus \Lambda}(g,t)\geq k+1\Big\}=g(y).
\end{align*}
This implies $(f,g)\in T_{xy}$ by Lemma \ref{le:viol}, and $S_{xy}\cap  \duets\subseteq T_{xy}$. Consequently, 
\[
T\supseteq \bigcup_{x\in \twonotes(\Lam,\{x^{(i)}\}),\; y\in \twonotes(\Lam,\{y^{(i)}\})}T_{xy} \supseteq \bigcup_{x\in \twonotes(\Lam,\{x^{(i)}\}),\; y\in \twonotes(\Lam,\{y^{(i)}\})}\big(S_{xy}\cap  \duets\big)= T\cap  \duets.
\]
By the fact that $\mathcal{T}$ is a covering of $\duets$, we have 
\[
\bigcup_{T\in \mathcal{T}}T\supseteq\bigcup_{T\in \mathcal{T}}\Big(\bigcup_{T_{xy}\in\audit_T}T_{xy}\Big)\supseteq \bigcup_{T\in \mathcal{T}}\big(T\cap  \duets\big) = \duets\cap \bigcup_{T\in \mathcal{T}}T=  \duets. \qedhere \]
\end{proof}

If $\mathcal{T}_i$ becomes stationary, by Definitions \ref{def:refine} and \ref{def:refine2} each $T\in \mathcal{T}_i$ lies in $\solos$, and we have $ \duets\subseteq \solos$. We cannot a priori conclude that this will occur, but we can show that the pairs of the coverings become more and more isospectral in the following sense. 

\begin{lemma} We have 
\begin{equation}\label{eqMet}
    \bigcap_{i\in \N}\bigcup_{T\in \mathcal{T}_i}T\cap (\choir\times\choir) = \duets.
\end{equation}
\end{lemma}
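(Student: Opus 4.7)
The plan is to prove the two inclusions separately. The inclusion $\duets \subseteq \bigcap_{i \in \N} \bigcup_{T \in \mathcal{T}_i} T \cap (\choir \times \choir)$ is immediate from the preceding proposition (each $\mathcal{T}_i$ is a covering of $\duets$) together with $\duets \subseteq \choir \times \choir$ by definition.

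For the reverse inclusion, I would fix $(f, g)$ in the intersection and aim to show that $f$ and $g$ are isospectral. The first step is to produce an infinite refinement chain $T_0 \supseteq T_1 \supseteq T_2 \supseteq \cdots$ with $T_i \in \mathcal{T}_i$, $T_{i+1} \in \audit_{T_i}$, and $(f, g) \in T_i$ for every $i$. To this end, $\mathcal{T}_0$ is a singleton and every $\audit_T$ is a finite set, being indexed by the finite sets $\twonotes(\Lambda, \{x^{(i)}\})$ and $\twonotes(\Lambda, \{y^{(i)}\})$ (finite by Proposition \ref{PropMIN}(iii)), so inductively each $\mathcal{T}_i$ is finite. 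Because each $T' \in \mathcal{T}_{i+1}$ lies in some $\audit_T$ with $T' \subseteq T$, every element of $\mathcal{T}_{i+1}$ containing $(f, g)$ has a parent in $\mathcal{T}_i$ containing $(f, g)$. Thus the family $\{T \in \mathcal{T}_i : (f, g) \in T\}$ forms a finitely-branching subtree of the refinement tree which is nonempty at every level, and König's lemma yields the required chain.

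Along this chain, $\Lambda(T_i)$ is weakly increasing: the condition $f|_{\Lambda(T_i)} = g|_{\Lambda(T_i)}$ for all $(f, g) \in T_i$ passes to $T_{i+1} \subseteq T_i$, so $\Lambda(T_{i+1}) \supseteq \Lambda(T_i)$ by maximality in P2. Since $\Lambda$ takes only three possible values, it stabilizes at some $\Lambda_\infty$ from some index $i_0$ onward. If $T_i \subseteq \solos$ ever occurs (Case 1 of Definition \ref{def:refine}), then $f = g$ and we are done. Otherwise Case 2 applies for all $i \ge i_0$ with $\Lambda$ constant, so $k(T_{i+1}) = k(T_i) + 1$, and therefore $k(T_i) \to \infty$.

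For each $i \ge i_0$, property P3 of in-tuneness furnishes sequences $x^{(1)}, \ldots, x^{(k_i)}$ and $y^{(1)}, \ldots, y^{(k_i)}$ in $\Z_*^3 \setminus \Lambda_\infty$ which are the first $k_i$ successive minima (with respect to $\preceq$) of $f$ and $g$ respectively, with $f(x^{(j)}) = g(y^{(j)})$ for all $j \le k_i$. Since $f$ is positive definite, only finitely many lattice vectors have $f$-values below any given threshold, hence $f(x^{(k_i)}) \to \infty$ as $k_i \to \infty$. Consequently, for any fixed $t > 0$ and $i$ sufficiently large, $\{x^{(1)}, \ldots, x^{(k_i)}\}$ contains every $x \in \Z_*^3 \setminus \Lambda_\infty$ with $f(x) \le t$, and analogously for $g$ via the $y^{(j)}$; matching the sorted value sequences then gives $\mathcal{R}_{\Z_*^3 \setminus \Lambda_\infty}(f, s) = \mathcal{R}_{\Z_*^3 \setminus \Lambda_\infty}(g, s)$ for all $s \le t$, hence, as $t$ was arbitrary, for all $s$. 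Combining with $f|_{\Lambda_\infty} = g|_{\Lambda_\infty}$ (which handles $\Z_*^3 \cap \Lambda_\infty$) yields $\mathcal{R}_{\Z_*^3}(f, t) = \mathcal{R}_{\Z_*^3}(g, t)$ for all $t \ge 0$, so $f$ and $g$ are isospectral and $(f, g) \in \duets$. The hardest part will be the König-style chain construction together with the passage from matching successive minima to matching representation numbers; once these are in hand, the rest is bookkeeping with properties P1--P3.
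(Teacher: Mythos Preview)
Your proposal is correct and follows essentially the same line as the paper's proof sketch: obtain a nested chain $T_0 \supseteq T_1 \supseteq \cdots$ through the refinement tree with $(f,g)\in T_i$, stabilize $\Lambda(T_i)$, then use the growing successive-minima sequences $x^{(j)},y^{(j)}$ together with $f(x^{(j)})=g(y^{(j)})$ to match the values of $f$ and $g$ on $\Z_*^3\setminus\Lambda_\infty$. Your use of K\"onig's lemma to produce the chain is a clean justification of a step the paper passes over with ``by construction,'' and your conclusion via representation numbers is equivalent to the paper's bijection $\phi(x^{(k)})=y^{(k)}$.
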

\begin{proof}[Proof sketch] By definition, $\duets$ is contained in the left side of \eqref{eqMet}.  So, assume that $(f,g)$ is in the left side of \eqref{eqMet}, we wish to show that $(f,g)\in \mathscr{D}$.  
There is a sequence of $T_i\in \mathcal{T}_i$ such that $(f,g)\in T_i$ and $T_i\subseteq T_{i+1}$ for each $i$ by construction. Since $\Lambda(T_i)$ increases monotonically and takes three values, it becomes stationary and equals, say $\Lambda$. Clearly, for $x\in \Lambda$, we have $f(x)=g(x)$. Next we construct a bijection $\phi$ from $\ZZ_\ast^3\setminus\Lambda$ to itself, such that $f(x)=g(\phi(x))$ for each $x\in \ZZ_\ast^3\setminus\Lambda$. The existence of $\phi$ proves that $f$ and $g$ are isospectral. There are sequences $x^{(i)},y^{(i)}\in \ZZ_\ast^3\setminus\Lambda$ for which $T_i$ are in tune with, meaning $f(x^{(k)})=\min f((\ZZ_\ast^3\setminus\Lambda)\setminus \{x^{(i)}\}_{i=1}^{k-1})$ for each $k$, and the analogous for $g(y^{(k)})$. Note that $\ZZ_\ast^3\setminus\Lambda=\{x^{(i)}\}_{i=1}^{\infty}=\{y^{(i)}\}_{i=1}^{\infty}$, or we'd arrive at a contradiction by Proposition \ref{PosQ} (5). In addition, $f(x^{(k)})=g(y^{(k)})$ for each $k$, and so defining $\phi(x^{(k)})=y^{(k)}$ finalizes the proof. 
\end{proof}


\subsubsection{Calculating edges}\label{ss: CalcEdge} Schiemann's symphony is performed by calculating $\mathcal{T}_i$ in Definition \ref{def:refine2} using a computer.  An algorithm calculates the edges of polyhedral cones in order to find $\Lambda$ in Definition \ref{acc} and to check the termination criterion $T\subseteq \mathscr{S}$ of Definition \ref{def:refine}. The statements here follow from the classic literature of \cite[chapters 2 and 3]{grunbaum1967convex}. For the complete proofs we refer to Schiemann's thesis \cite{schiemann1994ternare}; here we aim to give an understanding of the general mechanism. Consider linearly independent vectors $a_1,\ldots,a_n\in \RR^n$. To find the edges of $\Pc(\{a_i\}_{i=1}^n, \emptyset)$ we calculate all the kernels of the $n$ different $(n-1)\times n$ matrices we obtain from the set of $a_i$ by removing one those vectors. This approach is very computationally expensive for a larger set of constraints $a_i$, and therefore we proceed as follows. Assume that we know the edges $k_1,\ldots,k_r$ of some polyhedral cone $\Pc(A,B)$. What can we say about the edges of a polyhedral cone $\Pc(A\cup\{v\},B)\subseteq \R^n$ for some vector $v$?  Since $\Pc(A\cup B,B)=\Pc(A,B)$, we can without loss of generality assume that $B\subseteq A$. In this case, $\overline{\Pc(A,B)}=\Pc(A,\emptyset)$.

\begin{theorem}\label{th: CalcEdg} Let $\Pc(A,B)\neq \emptyset $ be a pointed polyhedral cone such that $B\subseteq A$. Let $k_1,\ldots, k_r$ be the edges of $\Pc(A,B)$. For a non-zero vector $v$ and the set $\Pc(A\cup\{v\},B)$, we have
\begin{enumerate}
    \item[Case 1:] If $k_i\in v^{\geq 0}$ for each $i$, then the edges of $\Pc(A\cup\{v\},B)$ are $k_1,\ldots, k_r$.
    
    \item[Case 2:] If $k_i\not \in v^{\geq 0}$ for some $i$ and each $k_j$ has $k_j\cdot v\leq 0$, then proceed as follows. Let $k_1',\ldots,k_l'$ be those edges among $k_1,\ldots, k_r$ that lie in $v^{\bot}$. If there are no such $k_i'$, then $\Pc(A\cup\{v\},B)$ is either empty or equal to $\{0\}$. The set $\Pc(A\cup\{v\},B)$ is empty if and only if $k:=\sum k_i'$ has $k\cdot b=0$ for some $b\in B$. If it is non-empty and non-zero, then its edges are $k_1',\ldots,k_l'$.
    
    \item[Case 3:] If $k_i\not \in v^{\geq 0}$ for some $i$ and some $k_j$ has $k_j\cdot v>0$, then proceed as follows. The set $\Pc(A\cup\{v\},B)$ is non-empty and its edges are calculated as those edges among $k_1,\ldots,k_r$ such that $k_i\cdot v\geq 0$ and the elements of the set 
\[ \left\{F\cap v^{\bot}: F=k_1\R_{\ge 0}+k_2\R_{\ge 0} \textnormal{ is a }2\textnormal{-face of }  \Pc(A,\emptyset)\textnormal{ with }k_1\cdot v>0, k_2\cdot v<0\right\}. \]
\end{enumerate}
\end{theorem}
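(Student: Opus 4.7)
The plan is to exploit the positive-hull description $\overline{\Pc(A,B)} = \sum_{i=1}^r k_i \R_{\geq 0}$ from Proposition~\ref{prop:pcs}(4), together with the observation that $\Pc(A\cup\{v\},B) = \Pc(A,B) \cap v^{\geq 0}$, so the question reduces to understanding how cutting the closed cone by the half-space $v^{\geq 0}$ transforms its edges. All three cases then unfold from this viewpoint.

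Cases~1 and~2 are quick. In Case~1, every nonnegative combination $x=\sum \lambda_i k_i$ automatically satisfies $v\cdot x\geq 0$, so $\overline{\Pc(A,B)}\subseteq v^{\geq 0}$ and intersecting changes nothing. In Case~2, a point $x=\sum \lambda_i k_i$ lies in the cut cone only if $\sum \lambda_i(v\cdot k_i)\geq 0$; since every $v\cdot k_i\leq 0$ and at least one is strictly negative, this forces $\lambda_i=0$ whenever $v\cdot k_i<0$, leaving only the edges on $v^{\bot}$. The non-emptiness condition for $\Pc(A\cup\{v\},B)$ then reduces to whether the interior point $k=\sum k_i'$ still satisfies the strict inequalities from $B$; if $k\cdot b=0$ for some $b\in B$, this propagates to every nonnegative combination of the $k_i'$, killing $\Pc(A\cup\{v\},B)$.

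Case~3 is the heart of the theorem. I would split the edges of the new cone into two classes. First, each original edge $k_i$ with $k_i\cdot v\geq 0$ still exposes a $1$-dimensional face of the intersection (a standard face-lattice fact: intersecting with a supporting half-space preserves faces that it does not cut off entirely). Second, any genuinely new edge must lie on the fresh boundary piece $\overline{\Pc(A,B)}\cap v^{\bot}$, hence on $v^{\bot}$. The key point is that such a new edge equals $F\cap v^{\bot}$ for a $2$-face $F=k_1\R_{\geq 0}+k_2\R_{\geq 0}$ with $k_1\cdot v>0$ and $k_2\cdot v<0$: any new edge is contained in a unique minimal face $G$ of $\overline{\Pc(A,B)}$, and a dimension count shows $G$ must be $2$-dimensional (if $\dim G=1$ it would already be an original $k_i$, while if $\dim G\geq 3$ then $G\cap v^{\bot}$ would be at least $2$-dimensional and could not be an edge), with its two bounding edges strictly separated by $v^{\bot}$. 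Conversely, one checks directly that each such ray $F\cap v^{\bot}$ is an edge of the new cone, and that non-emptiness holds automatically because some $k_j\cdot v>0$ guarantees the cut cone contains a nontrivial point.

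The main obstacle is precisely this face-lattice argument in Case~3: one must rigorously justify both that every new edge arises from exactly such a $2$-face and that the resulting list contains no redundancies (no listed ray is a nonnegative combination of the others). This is where one invokes the classical theory of pointed polyhedral cones, e.g.\ Chapters~2--3 of \cite{grunbaum1967convex}; the strict inequalities coming from $B$ affect only non-emptiness and not the edge structure, because $\Pc(A,B)$ and $\Pc(A,\emptyset)$ share the same closure and thus the same $2$-face lattice.
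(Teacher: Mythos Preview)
The paper does not give its own proof of this theorem: immediately before the statement it says ``The statements here follow from the classic literature of \cite[chapters 2 and 3]{grunbaum1967convex}. For the complete proofs we refer to Schiemann's thesis \cite{schiemann1994ternare}; here we aim to give an understanding of the general mechanism.'' So there is nothing to compare your argument against except that classical face-lattice machinery, and your sketch is precisely the standard route that machinery takes: pass to the closed cone via $\overline{\Pc(A,B)}=\sum k_i\R_{\ge 0}$, intersect with the half-space $v^{\ge 0}$, and read off the extreme rays of the intersection.

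Two places deserve a line more of care if you flesh this out. In Case~2 your propagation step (``$k\cdot b=0$ forces every nonnegative combination to vanish on $b$'') is correct but uses silently that each $k_i'\cdot b\ge 0$ since $b\in B\subseteq A$; say this. In Case~3 the sentence ``if $\dim G\ge 3$ then $G\cap v^{\bot}$ would be at least $2$-dimensional and could not be an edge'' is not literally a contradiction as written, because you need the edge of the \emph{cut cone}, not of $G\cap v^{\bot}$. The clean way is the dual picture: a new extreme ray of $\Pc(A\cup\{v\},\emptyset)$ is cut out by $n-1$ independent active constraints, one of which must be $v$ (else the ray was already an edge of $\Pc(A,\emptyset)$); the remaining $n-2$ constraints from $A$ carve out exactly a $2$-face $F$ of $\Pc(A,\emptyset)$, and the sign condition on its two bounding edges follows. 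This is what Gr\"unbaum's Chapters~2--3 make rigorous, and it is exactly the reference the paper points to.
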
 

In our algorithm, we calculate a great number of edges.  Theorem \ref{th: CalcEdg} reduces the computing time since it allows us to do it cumulatively, by keeping track of all the edges at all times. Next we give a computable criterion for finding the 2-faces for Case 3 above.

\begin{lemma} Let $k_1\neq k_2$ represent different edges of a polyhedral cone $\Pc(A,\emptyset)$. Let $\{a_1,\ldots, a_r\}= \{a\in A:k_1\subseteq a^{\perp}\}$ and $\{a_1',\ldots,a_s'\}=\{a\in A : k_2\subseteq a^{\perp}\}$. Then $k_i\R_0^++k_j\R_0^+$ is a $2$-face of $\Pc_c(A,\emptyset)$ if and only if 
\[ \dim \bigcap_{a \in \{a_1,\ldots , a_r\}\cap\{a_1'\ldots ,a_s'\}}a^{\perp}=2. \] 
\end{lemma}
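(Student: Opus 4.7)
The plan is to analyze the meaning of ``$k_1\R_{\geq 0}+k_2\R_{\geq 0}$ is a $2$-face'' via Definition~\ref{def:fjfev}: it must coincide with $\Pc(A,\emptyset)\cap \bigcap_{a\in T} a^{\perp}$ for some $T\subseteq A$, and this intersection must be $2$-dimensional. Write $S:=\{a_1,\ldots,a_r\}\cap\{a_1',\ldots,a_s'\}$, so that by definition $S$ is precisely the set of $a\in A$ with $k_1,k_2\in a^{\perp}$. Because the cone is pointed and $k_1\ne k_2$ are two distinct edges, the vectors $k_1,k_2$ are linearly independent, so $k_1\R_{\geq 0}+k_2\R_{\geq 0}$ is automatically a $2$-dimensional pointed cone with the two rays $k_i\R_{\geq 0}$ as its extremal rays.

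For the direction ($\Leftarrow$), assume $\dim\bigcap_{a\in S}a^{\perp}=2$. Since both $k_1$ and $k_2$ lie in this intersection and are linearly independent, the intersection equals $\mathrm{span}(k_1,k_2)$. Then $\Pc(A,\emptyset)\cap\mathrm{span}(k_1,k_2)$ is a pointed $2$-dimensional polyhedral cone containing the rays $k_1\R_{\geq 0}$ and $k_2\R_{\geq 0}$. Since $k_1,k_2$ are edges of $\Pc(A,\emptyset)$, extremality is inherited by the intersection with the linear subspace, so they are the only extremal rays of the restricted $2$D cone; hence that cone equals $k_1\R_{\geq 0}+k_2\R_{\geq 0}$, which is therefore a $2$-face.

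For the direction ($\Rightarrow$), assume $k_1\R_{\geq 0}+k_2\R_{\geq 0}=\Pc(A,\emptyset)\cap\bigcap_{a\in T}a^{\perp}$ for some $T\subseteq A$. Membership of $k_1,k_2$ in $\bigcap_{a\in T}a^{\perp}$ forces $T\subseteq S$, whence $\bigcap_{a\in S}a^{\perp}\subseteq\bigcap_{a\in T}a^{\perp}$, and a short argument shows $\Pc(A,\emptyset)\cap\bigcap_{a\in S}a^{\perp}$ also equals $k_1\R_{\geq 0}+k_2\R_{\geq 0}$. To prove the intersection has dimension exactly $2$, suppose toward a contradiction that there exists $v\in\bigcap_{a\in S}a^{\perp}$ with $v\notin\mathrm{span}(k_1,k_2)$, and consider the perturbation $w_{\varepsilon}:=k_1+k_2+\varepsilon v$. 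For $a\in A\setminus S$ one has $a\cdot k_i\geq 0$ for $i=1,2$ with strict inequality for at least one (since $a\notin S$), so $a\cdot(k_1+k_2)>0$; for $a\in S$, $a\cdot w_\varepsilon=0$. Thus for all sufficiently small $|\varepsilon|$ (of either sign) we have $w_\varepsilon\in\Pc(A,\emptyset)\cap\bigcap_{a\in S}a^{\perp}=k_1\R_{\geq 0}+k_2\R_{\geq 0}$, forcing $v\in\mathrm{span}(k_1,k_2)$, a contradiction.

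The main obstacle is the perturbation step in the $(\Rightarrow)$ direction: one must ensure that $w_{\varepsilon}$ remains in the cone for both signs of $\varepsilon$, which relies on the strict inequality $a\cdot(k_1+k_2)>0$ for $a\in A\setminus S$ together with the finiteness of $A$ to choose a uniform $\varepsilon$. Once this is secured, both containments combine to give the claimed equivalence.
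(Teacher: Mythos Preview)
Your argument is correct. The paper does not supply its own proof of this lemma; it simply remarks that the statements in \S\ref{ss: CalcEdge} follow from the classical polytope literature (Gr\"unbaum) and refers to Schiemann's thesis for complete proofs. So there is no ``paper's proof'' to compare against, and your self-contained argument fills that gap. The two directions are handled cleanly: in $(\Leftarrow)$ you identify $\bigcap_{a\in S}a^{\perp}$ with $\mathrm{span}(k_1,k_2)$ and use inherited extremality to pin down the $2$-face; in $(\Rightarrow)$ the perturbation $w_\varepsilon=k_1+k_2+\varepsilon v$ together with the strict inequality $a\cdot(k_1+k_2)>0$ for $a\in A\setminus S$ and finiteness of $A$ forces $v\in\mathrm{span}(k_1,k_2)$. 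One cosmetic remark: a single nonzero $\varepsilon$ already gives the contradiction, so ``both signs'' is harmless but unnecessary.
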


At each step, we want the number of elements of $A$ (and $B$) to be as few as possible to speed up the algorithm. The next lemma gives a simple condition with which we can remove some of the redundant constraints. 

\begin{lemma} Let $\Pc(A,\emptyset)$ be a pointed polyhedral cone of dimension $d$ and with edges $k_1,\ldots,k_r$. We have $\Pc(A,\emptyset)=\Pc(A',\emptyset)$ for
$$A':=\{c\in A: \#\{k_i:k_i\in c^{\bot}\}\geq d-1\}. $$
\end{lemma}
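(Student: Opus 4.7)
The inclusion $\Pc(A,\emptyset) \subseteq \Pc(A',\emptyset)$ is immediate from $A' \subseteq A$, since adding constraints can only shrink the cone. For the reverse, the plan is to invoke the classical facet representation theorem for pointed polyhedral cones (see \cite[Ch.~3]{grunbaum1967convex}): a $d$-dimensional pointed polyhedral cone equals the intersection of the closed halfspaces defined by the supporting hyperplanes of its facets. Concretely, let
\[ A^* := \{c \in A : c^\bot \cap \Pc(A,\emptyset) \textnormal{ is a facet of } \Pc(A,\emptyset)\}. \]
The facet representation gives $\Pc(A^*,\emptyset) = \Pc(A,\emptyset)$, so it suffices to prove the sandwich $A^* \subseteq A' \subseteq A$, after which monotonicity of $\Pc(\cdot,\emptyset)$ yields
\[ \Pc(A,\emptyset) \subseteq \Pc(A',\emptyset) \subseteq \Pc(A^*,\emptyset) = \Pc(A,\emptyset). \]

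The crux is the inclusion $A^* \subseteq A'$. Fix $c \in A^*$, and let $F_c := c^\bot \cap \Pc(A,\emptyset)$, which by hypothesis is a $(d-1)$-dimensional face. Being a face of a pointed polyhedral cone, $F_c$ is itself a pointed polyhedral cone of dimension $d-1$, so by part (4) of Proposition \ref{prop:pcs} it is the positive hull of its edges $\ell_1, \ldots, \ell_s$. Since the positive hull is contained in the linear span, the $\ell_j$ linearly span the $(d-1)$-dimensional subspace containing $F_c$, forcing $s \geq d-1$. Moreover, each $\ell_j$ is a $1$-face of $F_c$, hence a $1$-face of $\Pc(A,\emptyset)$ (a face of a face is a face), i.e., an edge $k_i$ of $\Pc(A,\emptyset)$ lying in $c^\bot$. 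Therefore $c^\bot$ contains at least $d-1$ of the edges $k_i$, which is precisely the condition $c \in A'$.

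The main difficulty, to the extent there is one, is simply the need to cite two pieces of standard polyhedral-cone folklore, namely the facet representation theorem and the observation that faces of faces are faces; both are found in \cite{grunbaum1967convex} and can be used without reproof. Once these are in hand, the argument collapses to the dimension-counting step above: each facet-defining constraint forces at least $d-1$ of the edges onto its boundary hyperplane, so the constraints retained by the rule defining $A'$ include all facet-defining constraints, which is all that is needed.
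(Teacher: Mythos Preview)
Your sandwich argument is correct when the cone is full-dimensional ($d=n$), but the facet representation you invoke fails when $d<n$: a $d$-dimensional pointed cone in $\RR^n$ is the intersection of its facet halfspaces \emph{together with} its linear span $V$, not the facet halfspaces alone. Concretely, take $n=3$, $A=\{e_1+e_3,\,-e_1+e_3,\,e_2,\,-e_2,\,e_3\}$, so $\Pc(A,\emptyset)=\{(x,0,z):|x|\le z\}$ with $d=2$ and edges $(\pm 1,0,1)$. The facets are the two edge rays, defined by $\pm e_1+e_3$, so your $A^*=\{e_1+e_3,\,-e_1+e_3\}$ and $\Pc(A^*,\emptyset)=\{(x,y,z):|x|\le z\}\supsetneq \Pc(A,\emptyset)$; the constraint $y=0$ is lost. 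This matters in the paper's setting, since the cones in Schiemann's algorithm live in $\RR^{12}$ and are cut by many linear equations (e.g.\ $f(x^{(i)})=g(y^{(i)})$), so they are generically not full-dimensional.

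The fix is short. Let $I:=\{c\in A:\Pc(A,\emptyset)\subseteq c^\bot\}$ be the implicit-equality constraints and set $A^{**}:=A^*\cup I$. The standard irredundancy result for H-descriptions (see Gr\"unbaum or Schrijver) says every constraint in $A$ that is neither an implicit equality nor facet-defining is redundant, whence $\Pc(A^{**},\emptyset)=\Pc(A,\emptyset)$. Now $I\subseteq A'$: for $c\in I$ every edge $k_i$ lies in $V\subseteq c^\bot$, and there are $r\ge d\ge d-1$ of them since the edges span $V$. Combined with your correct argument that $A^*\subseteq A'$, this gives $A^{**}\subseteq A'\subseteq A$ and the sandwich closes. (The paper itself gives no proof, citing Gr\"unbaum and Schiemann's thesis, so there is no alternative approach to compare.)
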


\begin{center}
\begin{table} State after the $i$:th iteration \\ $ $\\ 
\begin{tabular}{ cc }   
\begin{tabular}{ |c|c|c| }  
\hline
$i$ & HH:MM & \# Cones \\ \hline
 1& 00:00  & 1 \\ \hline
   2& 00:00  & 4 \\ \hline
 3& 00:00 & 42 \\ \hline
   4& 00:00  & 500 \\ \hline
 5& 00:02 & 3,311 \\ \hline
   6& 00:05  & 11,164 \\ \hline
  7& 00:28  & 31,334 \\ \hline
\end{tabular} &  
\begin{tabular}{ |c|c|c| } 
\hline
$i$ & HH:MM & \# Cones \\ \hline
  
 8& 00:59 & 59,970 \\ \hline
    9& 01:48  & 34,658 \\ \hline
 10& 02:22 & 4,452 \\ \hline
   11& 02:42  & 1,284 \\ \hline
 12& 02:53 & 702 \\ \hline
   13& 03:00  & 18 \\ \hline
 14& 03:01 & 0 \\ \hline
\end{tabular} \\
\end{tabular}
\caption{This table shows our computational results for Schiemann's symphony. Here, HH:MM correspond to the time in hours and minutes after the $i$:th iteration, and the number of cones is to the number of elements of $\mathcal{T}_i$. 
}  \label{table1}
\end{table}
\end{center}

\subsubsection{Results from the algorithm}  The finale of Schiemann's  symphony is the following result.

\begin{theorem} The sequence $\mathcal{T}_i$ becomes stationary for $i\geq 14$. Further, we have that each set $T\in \mathcal{T}_{14}$ lies in $\solos$.

\end{theorem}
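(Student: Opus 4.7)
The plan is to execute the algorithm specified by Definitions \ref{def:refine} and \ref{def:refine2} on a computer with exact rational arithmetic, starting from $\mathcal{T}_0$ in \eqref{eq: firstcover}, and to verify empirically that $\mathcal{T}_{14}$ consists entirely of cones contained in $\solos$. The correctness of the output as a proof of $\duets \subseteq \solos$ then follows from the preceding theoretical results (each $\mathcal{T}_i$ is a covering of $\duets$ that is in tune, and a cone $T \in \mathcal{T}_i$ remains in $\mathcal{T}_{i+1}$ unchanged if and only if $T \subseteq \solos$).

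To begin, I would implement polyhedral cones in $\RR^{12} \cong \RR^6 \times \RR^6$ via a dual representation: a set of defining constraints $(A,B)$ on one hand, and a set of edge vectors on the other. These two representations can be maintained in parallel using Theorem \ref{th: CalcEdg}, which updates the edges cumulatively each time a new hyperplane or halfspace constraint is added. All inputs to the algorithm (the Schiemann reduction inequalities in Definition \ref{def: schie}, the linear conditions $f(x^{(i)}) = g(y^{(i)})$, and the facet pairs $\mathfrak{C}$) have rational, indeed integer, coefficients, so exact computation is feasible.

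Next, the main loop: at each iteration $i$, for every in-tune cone $T \in \mathcal{T}_i$ with data $(\Lambda, k, \{x^{(j)}\}, \{y^{(j)}\})$, I would first test whether $T \subseteq \solos$ by checking that every edge $(f,g)$ of $T$ satisfies $f = g$; such cones pass into $\mathcal{T}_{i+1}$ unchanged. For the remaining cones, I would compute the finite sets $\twonotes(\Lambda, \{x^{(j)}\})$ and $\twonotes(\Lambda, \{y^{(j)}\})$ via Theorem \ref{CalcMINq} together with the bounds \eqref{eq:MinkApp1}--\eqref{eq:MinkApp2}, then for every resulting pair $(x,y)$ assemble $S_{xy}$ and apply the stabilisation procedure of Lemma \ref{le:viol} to obtain $T_{xy}$. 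Empty cones and cones reduced to $\{0\}$ are discarded; the remaining $T_{xy}$, re-equipped with the data prescribed in Definition \ref{def:refine}, form $\audit_T$. The union over $T \in \mathcal{T}_i$ of the $\audit_T$ is $\mathcal{T}_{i+1}$.

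The hard part is not proving any single step correct; each step is justified by a lemma already in hand. The real obstacle is \emph{computational}: the number of cones grows rapidly to roughly $6 \times 10^4$ around iteration $8$ (see Table \ref{table1}), and each cone requires repeated edge-updates in $\RR^{12}$ with integer entries that can become large. Keeping the constraint sets minimal via the redundancy-elimination lemma, exploiting symmetry between the first and second factor of $\overline{\choir} \times \overline{\choir}$ where possible, and caching minimal-set computations (since many cones share the same $\Lambda$ and nearly the same history $\{x^{(j)}\}, \{y^{(j)}\}$) are the essential practical ingredients. Once the computation runs to completion, the reported output $\mathcal{T}_{14} = \mathcal{T}_{13}$ with every $T \in \mathcal{T}_{14}$ satisfying $T \subseteq \solos$ constitutes the proof, and the theorem $\flat_3 = 1$ follows from Corollary \ref{cor: modisos}-style reasoning combined with $\duets \subseteq \solos$.
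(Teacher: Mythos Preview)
Your proposal is correct and follows essentially the same approach as the paper: the theorem is a computational result, and the ``proof'' consists of implementing the refinement procedure of Definitions~\ref{def:refine} and~\ref{def:refine2} in exact arithmetic and running it to termination, exactly as you describe and as the paper reports in Table~\ref{table1}.

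Two small corrections: first, Table~\ref{table1} shows $18$ cones at $i=13$ and $0$ at $i=14$, so the sequence becomes stationary from $i=14$ onward (i.e., $\mathcal{T}_{14}=\mathcal{T}_{15}=\cdots$), not $\mathcal{T}_{14}=\mathcal{T}_{13}$ as you wrote; second, the final implication $\flat_3=1$ does not use Corollary~\ref{cor: modisos} at all --- it follows directly from $\duets\subseteq\solos$ together with the uniqueness of the Schiemann-reduced representative.
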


As we have previously noted, this completes the symphony.  In other words, we have shown $\flat_3=1$. As documented in Table \ref{table1}, with one processor it took about $3$ hours to finish, and at least 147,442 polyhedral cones were computed. With 50 processors the algorithm took 19 minutes. We wrote the code in \texttt{Julia} \cite{bezanson2012julia} with the following packages and specifications of our computer:  \textbf{CPU:} Intel(R) Xeon(R) Platinum 8180 CPU @ 2.50 Ghz; \textbf{OS:} Fedora 32; \textbf{packages:} Abstract Algebra v.0.9.0, Nemo v.0.17.0 \& Hecke v.0.8.0. We found that the calculation of the minimal sets was not very time consuming in comparison to calculating edges. One way to make this program faster would be to minimize the amount of edges that need to be calculated. Here, we perform the symphony at a comfortable andante pace.  In his thesis, Schiemann performed the symphony at a quicker vivaci tempo by incorporating clever tricks to speed up the algorithm which we have not included here so as to keep the focus on the main arguments of the proof.




\section{Open problems and food for thought} \label{s:conjectures} 
The following question is, to the best of our knowledge, open. 

\begin{question} What are the precise values of the choir numbers $\flat_n$ for $n \geq 4$?  
\end{question} 

With the three equivalent but different perspectives in mind, this question is related to many open problems in number theory concerning the classification of quadratic forms as well as open problems concerning the geometry of lattices.  We take this opportunity to highlight some of these interrelated problems. 

\subsection{Asymptotics of the choir numbers} \label{ss: asymp} The celebrated geometer Wolpert, one of the three authors who proved \em one cannot hear the shape of a drum \em \cite{gww}, studied the moduli space of flat tori in \cite{wolpert}.  This space consists of the set of equivalence classes of flat tori, where all members of the same equivalence class are isometric. He gave a geometric description of this moduli space and proved that it is in a certain sense an unusual phenomenon that flat tori of different shapes are isospectral; the isometry class of a \textit{generic}  flat torus is determined by its spectrum. In his article, he showed that if each flat torus in a \textit{continuous family}, meaning a one-parameter family of flat tori that has a continuous family of basis matrices, is isospectral to any other  in the family, then all are isometric. We obtain Wolpert's result as a consequence of the following

\begin{lemma}\label{le: seq} Consider for $k\in \N$ the sequence of full-rank lattices $\G_k=A_k\Z^n$. Assume that all $\T_{\G_k}$ are mutually isospectral, and that $A_k\rightarrow A$.
At some point the sequence $\G_k$ becomes stationary up to congruency.
\end{lemma}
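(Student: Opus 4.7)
My plan has four steps: show that $A$ itself defines a lattice in the common isospectral family; establish that this family contains only finitely many congruency classes; verify that each congruency class is closed in $\GL_n(\R)$; and combine these via a pigeonhole argument.

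For the first step, by Corollary~\ref{cor:dim_vol} the volumes $|\det A_k|$ equal a common positive constant, and continuity of the determinant forces $|\det A|$ to match it, so $A \in \GL_n(\R)$ and $\Gamma := A\Z^n$ is a full-rank lattice. For $k$ large there is a uniform lower bound $\|A_k v\| \geq c\|v\|$ for $v \in \Z^n$, which furnishes a summable majorant so that dominated convergence gives $\theta_{\Gamma_k}(z) \to \theta_\Gamma(z)$ on the upper half-plane. Since all $\theta_{\Gamma_k}$ coincide by hypothesis, the limit $\theta_\Gamma$ equals each of them, and Corollary~\ref{cor:dim_vol} places $\Gamma$ in the common isospectral family.

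For the second step, I would pick a Minkowski reduced representative $Q$ in each congruency class. Van der Waerden's bound (Theorem~\ref{th:van}) gives $q_{ii} \leq \Delta_i \lambda_i(Q)$ with successive minima $\lambda_i$ that are spectral invariants, and combined with the Minkowski reduction inequalities bounding $|q_{ij}|$ in terms of the $q_{ii}$, this confines $Q$ to a compact subset of $\mathcal{S}_{>0}^n$. Within that compact set, the requirement that $q(e_i)$ and $q(e_i \pm e_j)$ belong to the (discrete) common length spectrum leaves only finitely many possibilities, and these values determine $Q$. The third step is purely topological: if $C_m A_L B_m \to D$ with $C_m \in \oO_n(\R)$, $B_m \in \GL_n(\Z)$, and $A_L$ a fixed basis of a representative $L$, then compactness of $\oO_n(\R)$ lets us pass to a subsequence with $C_m \to C$, so $B_m \to A_L^{-1} C^{-1} D$; since $\GL_n(\Z)$ is discrete, $B_m$ is eventually a constant $B$, yielding $D = C A_L B$ in the congruency class of $L$.

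Finally, enumerating the classes as $[L_1],\ldots,[L_m]$, the pigeonhole principle provides an $i$ with $A_k \in [L_i]$ for infinitely many $k$; closedness together with $A_k \to A$ gives $A \in [L_i]$, so $[\Gamma] = [L_i]$. If some other $[L_j] \neq [\Gamma]$ also captured infinitely many $A_k$, the same argument would yield $[\Gamma] = [L_j]$, contradicting disjointness of the classes. Hence all but finitely many $\Gamma_k$ are congruent to $\Gamma$, which is exactly the claim. The hard part will be step two, the finiteness of congruency classes in a fixed isospectral family; everything else is a soft compactness-plus-discreteness argument.
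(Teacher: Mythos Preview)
Your approach works in outline but contains a factual error and takes a substantially longer route than the paper. The error is in step two: the successive minima $\lambda_i(Q)$ are \emph{not} spectral invariants for $i \geq 3$. Indeed, the paper remarks immediately after Theorem~\ref{th:van} that for isospectral Minkowski reduced forms one need not have $q^{(1)}_{33}=q^{(2)}_{33}$, and in dimension $\leq 4$ the diagonal entries equal the successive minima; Schiemann's four-dimensional pair is an explicit witness. Your argument is salvageable, since all you really need is an upper bound on the $q_{ii}$ in terms of spectral invariants, and that follows from Hermite's inequality $q_{11}\cdots q_{nn}\leq c_n\det Q$ together with the spectral invariance of $q_{11}$ and $\det Q$. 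With that fix your steps two through four do go through.

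The paper's proof, however, is a one-paragraph argument that bypasses all of this machinery. It uses exactly the observation you make at the end of your step two, but applies it directly to the given bases $A_k$ rather than to Minkowski reduced representatives: for each fixed $x\in\Z^n$ the values $\|A_k x\|^2$ all lie in the common discrete length spectrum and converge to $\|Ax\|^2$, so they must stabilize. Applying this to the finitely many vectors $x=e_i+e_j$ with $1\le i,j\le n$ pins down every entry of the Gram matrix, giving $A_k^TA_k=A^TA$ for all large $k$, whence $A_k=C_kA$ with $C_k\in\oO_n(\R)$. There is no need for Minkowski reduction, no need to prove the Finiteness Theorem as a lemma (in fact the paper derives the Finiteness Theorem~\ref{th:fin} \emph{from} this lemma together with Mahler compactness, so your route is somewhat circular relative to the paper's logical structure), and no pigeonhole or closedness argument. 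What your approach buys is a self-contained proof of the Finiteness Theorem along the way; what the paper's approach buys is brevity and the correct order of dependencies.
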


\begin{proof} The positive definite quadratic forms $A_k^TA_k$ all have the same image over $\Z^n$. By Corollary \ref{cor:dim_vol} and continuity of the determinant, $\det(A)=\det(A_k)$ for each $k$. Therefore, $\G=A\Z^n$ is a full-rank lattice, and its length spectrum is a discrete set. Fix any $x\in \Z^n$ and consider the triangle inequality $\big|\|A_kx\|-\|Ax\|\big|\leq \|(A_k-A)x\|$. For sufficiently large $k$, $\|A_kx\|=\|Ax\|$ since the spectra are discrete and identical for all $k$, and $A_k \to A$. There is now a $k$ big enough such that for all $x=e_i+e_j$, where $1\le i,j\le n$, we have
\[x^T\left(A_k^TA_k-A^TA\right)x =0.\]
Consequently, $A_k^TA_k=A^TA$ for all $k$ sufficiently large and thus $A_k=C_k A$ for $C_k \in \oO_n(\R)$.
\end{proof}

When studying limits of  lattices, Mahler's Compactness Theorem is crucial, without which this survey would not be complete.  According to Cassels, this theorem ``may be said to have completely transformed the subject'' in the context of lattice theory. See \cite[p. 136-139]{cassels2012introduction} for this quote and the proof of Mahler's Compactness Theorem.

\begin{theorem}[Mahler's Compactness Theorem] Let $\Lambda_i$ be an infinite sequence of lattices of the same dimension, satisfying the following two conditions: 
\begin{enumerate} 
\item  there exists a number $K>0$ such that $\vol(\Lambda_i)\leq K$ for all $i$;  
\item there exists a number $r>0$ such that $\inf_{0\neq v\in \Lambda_i}\|v\|\geq r$ for all $i$.
\end{enumerate} 
There is then a subsequence $\Lambda_{i_k}$ that converges to some lattice $\Lambda$.
\end{theorem}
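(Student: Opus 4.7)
The plan is to use Minkowski reduced bases to write each $\Lambda_i$ as $A_i\Z^n$ with the column vectors of $A_i$ confined to a fixed compact subset of $\R^n$, extract a convergent subsequence of basis matrices by Bolzano--Weierstrass, and verify that the limit is invertible so that it defines a full-rank lattice $\Lambda$.

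First I would fix, for each $i$, a Minkowski reduced basis $A_i = [a_1^{(i)},\ldots,a_n^{(i)}]$ of $\Lambda_i$, which exists by Proposition \ref{prop:posDmink}. Writing $Q_i := A_i^T A_i$, we have $(Q_i)_{jj} = \|a_j^{(i)}\|^2$, and Theorem \ref{th:van} gives
\[ \|a_j^{(i)}\|^2 \,=\, (Q_i)_{jj} \,\leq\, \Delta_j\, \lambda_j(Q_i), \]
so a uniform bound on $\|a_j^{(i)}\|$ follows from a uniform bound on the $j$-th successive minimum.

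The next step is to establish such uniform two-sided bounds on $\lambda_j(Q_i)$. Hypothesis (2) immediately yields $\lambda_1(Q_i) \geq r^2$, and since successive minima are non-decreasing, $\lambda_j(Q_i)\geq r^2$ for all $i$ and $j$. For the upper bound I would invoke Minkowski's second theorem, which gives a dimensional constant $c_n$ with
\[ \lambda_1(Q_i)\lambda_2(Q_i)\cdots \lambda_n(Q_i) \,\leq\, c_n \,\vol(\Lambda_i)^2 \,\leq\, c_n K^2. \]
Dividing by the product of the other $n-1$ successive minima, each $\geq r^2$, yields $\lambda_j(Q_i) \leq c_n K^2/r^{2(n-1)}$ for every $i,j$. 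Combined with Theorem \ref{th:van}, this produces a constant $C=C(n,r,K)$ such that $r\leq \|a_j^{(i)}\|\leq C$ for all $i,j$.

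With the column vectors of each $A_i$ confined to the closed ball of radius $C$, a diagonal Bolzano--Weierstrass argument produces a subsequence $i_k$ along which $a_j^{(i_k)}\to a_j$ for each $j$, giving a candidate limit matrix $A=[a_1,\ldots,a_n]$. To finish, I must show $A$ is invertible; this is where the lower bound on $\vol(\Lambda_i)$ enters. Combining hypothesis (2) with Minkowski's first theorem gives a dimensional constant $c_n'$ with $r^2 \leq \lambda_1(Q_i) \leq c_n' \vol(\Lambda_i)^{2/n}$, hence a uniform lower bound $\vol(\Lambda_i)\geq (r^2/c_n')^{n/2} > 0$. Since $|\det(A_{i_k})|=\vol(\Lambda_{i_k})$ is bounded away from zero, continuity of the determinant gives $|\det(A)|>0$, and therefore $\Lambda := A\Z^n$ is a full-rank lattice with $\Lambda_{i_k}\to \Lambda$ in the sense that their basis matrices converge entrywise.

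The main obstacle, once these quantitative bounds are in place, is conceptual rather than technical: one must pin down what convergence of lattices means. The natural definition --- the one the above construction verifies --- is that $\Lambda_i$ converges to $\Lambda$ iff there exist basis matrices of $\Lambda_i$ converging entrywise to a basis matrix of $\Lambda$. A minor subtlety is that Minkowski reduced bases are not unique, but Proposition \ref{prop:posDmink}(3) ensures each equivalence class has only finitely many, and Theorem \ref{th:alun} constrains the ambiguity, so after a further (finite) subsequence extraction the conclusion is unambiguous.
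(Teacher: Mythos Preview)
The paper does not give its own proof of Mahler's Compactness Theorem; it merely states the result and refers the reader to Cassels \cite[pp.~136--139]{cassels2012introduction}. Your argument is correct and is essentially the classical proof found there: pass to Minkowski reduced bases, bound the basis vectors uniformly via successive minima, extract a convergent subsequence, and check invertibility of the limit.

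A couple of remarks. First, you invoke Minkowski's first and second theorems on successive minima; these are standard but are not stated anywhere in the paper, so in this context you should either state them explicitly or cite them (e.g.\ \cite[Ch.~VIII]{cassels2012introduction}). Second, your final paragraph about non-uniqueness of Minkowski reduced bases is unnecessary and slightly muddled: once you have fixed \emph{some} Minkowski reduced basis $A_i$ for each $\Lambda_i$ and extracted a subsequence along which $A_{i_k}\to A$ with $A$ invertible, you are done---the definition of lattice convergence only asks for the existence of \emph{some} convergent choice of bases, so the finite ambiguity in the reduction plays no role and no further subsequence is needed.
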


As an application of the results collected above, one can prove the following Finiteness Theorem which was first demonstrated by Kneser in an unpublished work.  

\begin{theorem}[Finiteness Theorem, \cite{wolpert}]\label{th:fin} The number of non-isometric flat tori with a given Laplace spectrum is finite.
\end{theorem}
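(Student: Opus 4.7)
The plan is to argue by contradiction, combining Mahler's Compactness Theorem with Lemma~\ref{le: seq}. Suppose, for contradiction, that there exist infinitely many pairwise non-congruent full-rank lattices $\Gamma_1, \Gamma_2, \ldots \subset \R^n$ whose associated flat tori $\T_{\Gamma_k}$ all share the same Laplace spectrum.

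First, I would verify the hypotheses of Mahler's Compactness Theorem for the sequence $\{\Gamma_k\}$. By Corollary~\ref{cor:dim_vol}, isospectrality determines the dimension $n$ and the volume $V := \vol(\Gamma_k)$, so the volumes are uniformly bounded by $K := V$. That same corollary, together with Poisson's summation formula (Theorem~\ref{th: poiss}), shows that isospectrality is equivalent to equality of length spectra, so the length of a shortest non-zero vector in $\Gamma_k$ equals a fixed positive number $r$ for every $k$, giving the uniform lower bound $\inf_{0 \neq v \in \Gamma_k} \|v\| \geq r$. Both hypotheses of Mahler's theorem are thus satisfied, and we extract a subsequence $\Gamma_{k_j}$ converging to a full-rank lattice $\Lambda$. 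Concretely, we may choose basis matrices $A_{k_j}$ of $\Gamma_{k_j}$ with $A_{k_j} \to A$ in $\GL_n(\R)$, where $A$ is a basis matrix for $\Lambda$.

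At this point Lemma~\ref{le: seq} applies directly to the sequence $\Gamma_{k_j} = A_{k_j}\Z^n$: since these lattices remain mutually isospectral (they sit inside the original isospectral family) and their chosen bases converge, the sequence becomes stationary up to congruency. That is, there exists $J$ such that $\Gamma_{k_j} \cong \Gamma_{k_{j'}}$ for all $j, j' \geq J$. This contradicts the standing assumption that the original family consists of pairwise non-congruent lattices, and by Theorem~\ref{th: RieIsom} the corresponding flat tori are pairwise non-isometric, completing the proof.

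The heavy lifting is performed by Mahler and by Lemma~\ref{le: seq}; the only genuine point to check is that the uniform volume bound and uniform shortest-vector bound required by Mahler are actual spectral invariants, which is exactly the content of Corollary~\ref{cor:dim_vol}. No obstacle arises beyond packaging these ingredients: one obtains a limit lattice from compactness, then promotes the convergence to eventual congruence via Lemma~\ref{le: seq}.
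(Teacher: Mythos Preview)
Your argument is correct and is precisely the route the paper intends: the paper does not spell out a proof of Theorem~\ref{th:fin} but presents it ``as an application of the results collected above,'' namely Lemma~\ref{le: seq} together with Mahler's Compactness Theorem, which is exactly the combination you assemble. The only point worth making explicit is that lattice convergence in Mahler's theorem is indeed convergence of a suitable choice of basis matrices (as in Cassels), so that the hypothesis $A_k\to A$ of Lemma~\ref{le: seq} is legitimately available; you note this, so nothing is missing.
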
 

The following proposition generalizes Wolpert's result to tuples.

\begin{prop}\label{prop: int tuple} If $\flat_n\ge k$, then there exist $k$ isospectral non-isometric even $n$-dimensional quadratic forms.
\end{prop}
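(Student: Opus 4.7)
Plan:

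Given $k$ mutually isospectral, non-integrally-equivalent positive definite quadratic forms $Q_1, \ldots, Q_k$ in $n$ variables (guaranteed by $\flat_n \geq k$), I plan to produce $k$ even such forms via two reductions. The easy reduction is from rational forms to even: if $P_1, \ldots, P_k$ are rational positive definite isospectral non-integrally-equivalent forms, let $N$ be twice the least common multiple of all denominators appearing in the entries of the $P_i$. Then $NP_1, \ldots, NP_k$ are even positive definite forms. Scaling by the positive integer $N$ preserves isospectrality since $\mathcal{R}(NP_i, Nt) = \mathcal{R}(P_i, t)$ for all $t \geq 0$, and preserves pairwise non-integral-equivalence since $B^{\top}(NP_i)B = N(B^{\top}P_i B)$ for $B \in \GL_n(\Z)$. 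Hence it suffices to produce $k$ rational isospectral non-integrally-equivalent forms.

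The substantive reduction is from arbitrary real to rational. Here the key observation is that each entry of $Q_j$ is expressible through values on integer vectors: $(Q_j)_{ii} = Q_j(e_i)$ and $2(Q_j)_{il} = Q_j(e_i + e_l) - Q_j(e_i) - Q_j(e_l)$, all of which lie in the common length spectrum of the tuple. Consequently, if the spectrum consists of rational numbers then every $Q_j$ is itself rational. Rescaling the tuple by a positive constant $c$ multiplies the spectrum by $c$; the spectrum can therefore be rationalized precisely when all pairwise spectrum ratios are rational. To establish this rationality of ratios, I intend to appeal to the rigidity of the isospectral locus: by Lemma \ref{le: seq}, isospectral full-rank lattices (equivalently, positive definite quadratic forms) form a discrete subset of $(\mathcal{S}_{>0}^n)^k$, so no continuous deformations preserve isospectrality apart from common scaling. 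Combined with the theta-series characterization of isospectrality (Theorem \ref{thm:eichler} and Corollary \ref{cor: modisos}), this rigidity ought to force the spectrum into a one-dimensional $\Q$-subspace of $\R$, which is precisely the rationality of ratios.

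The principal obstacle lies in rigorously deriving the rationality of spectrum ratios from the geometric rigidity of isospectral sets. This step bridges a geometric statement (discreteness of the isospectral locus) with an arithmetic one (rationality of spectrum values), and its rigorous treatment requires careful moduli-theoretic analysis. One concrete approach is to work within the Schiemann reduction framework (Section \ref{s:schiemann}): isospectral tuples are contained in polyhedral cones cut out by rational equations, and any real point in such a cone admits rational approximations from within the same cone. The hard part will be verifying that these rational approximations can be chosen to remain isospectral and non-integrally-equivalent, exploiting the finite-dimensional structure of the relevant modular form spaces (via the Sturm bound) to convert pointwise isospectrality into a finite, rationally-defined system of equations whose solution set is preserved under small rational perturbations.
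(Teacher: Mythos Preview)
Your plan has a genuine gap in the ``substantive reduction.'' The claim that rigidity forces the spectrum of an isospectral tuple into a one-dimensional $\Q$-subspace of $\R$ is false, and Lemma~\ref{le: seq} does not imply it. For a concrete counterexample, take the Conway--Sloane pair of Theorem~\ref{th:ConSlo} with $a,b,c,d$ algebraically independent over $\Q$: the resulting $Q_+,Q_-$ are isospectral and non-isometric, yet the spectrum contains $Q_\pm(e_1)=\tfrac{1}{12}(9a+b+c+d)$ and $Q_\pm(e_2)=\tfrac{1}{12}(a+9b+c+d)$, whose ratio is irrational. So you cannot hope to make \emph{the given} tuple rational by a single scaling; you must genuinely perturb. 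Your fallback suggestion---use the Sturm bound to cut isospectrality down to a finite rational system and then approximate---is circular: Corollary~\ref{cor: modisos} applies only to \emph{even} forms, which is exactly what you are trying to produce.

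The paper's argument sidesteps both difficulties with a single observation. Since $Q_1,\dots,Q_k$ are isospectral, there exist bijections $\phi_i:\Z^n\to\Z^n$ with $Q_1(\phi_1(z))=\cdots=Q_k(\phi_k(z))$ for every $z\in\Z^n$. Now \emph{freeze} these bijections and consider
\[
U=\bigcap_{z\in\Z^n}\bigl\{(P_1,\dots,P_k)\in(\mathcal S^n)^k:\ P_1(\phi_1(z))=\cdots=P_k(\phi_k(z))\bigr\}.
\]
Each constraint is linear in the entries of the $P_i$ with integer coefficients, so $U$ is a $\Q$-defined linear subspace; in particular it has a basis of integer-matrix tuples, and rational points are dense in $U$. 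Approximating the real point $(Q_1,\dots,Q_k)\in U$ by a nearby rational point $(\tilde Q_1,\dots,\tilde Q_k)\in U$ gives rational forms that are still positive definite (openness) and still pairwise non-integrally-equivalent (this is also open, e.g.\ by Corollary~\ref{le:finQ} only finitely many unimodular $B$ can witness equivalence, and each imposes a closed condition). Membership in $U$ guarantees isospectrality, and then your easy scaling step finishes. The point you were missing is that fixing the bijections $\phi_i$ linearizes the problem over $\Q$, so no modular-forms machinery or polyhedral approximation is needed.
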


\begin{proof} We follow the ideas of Wolpert. Consider the $n$-variable positive definite forms $Q_1,\ldots,Q_k$ that are isospectral and non-isometric. Then there are bijections $\phi_i:\ZZ^n\to \ZZ^n$ such that 
$$Q_1(\phi_1(z))=\cdots=Q_k(\phi_k(z))$$
for all $z\in \ZZ^n$. In particular, this tuple $Q_i$ lies in the set 
\[U:=\bigcap_{z\in \ZZ^n}\left\{(P_1,\ldots,P_k)\in \left(\mathcal{S}^n\right)^k:P_1(\phi_1(z))=\cdots=P_k(\phi_k(z)) \right\},\]
where $\mathcal{S}^n$ is the set of real symmetric $n\times n$ matrices, and it is not hard to see that $U$ is a linear space. Since it is defined by integer constraints, we can find a basis $(f_1^{(i)},\ldots,f_k^{(i)})$ of integer matrices for $U$. This means that $Q_1$ can be written as a linear combination of the $f_1^{(i)}$ with real coefficients $\lambda_i$. Since the set of positive definite forms is open, we can approximate $Q_1$ by a rational matrix $\tilde{Q_1}$, by choosing rational $\tilde{\lambda_i}\approx \lambda_i$, such that it is still positive definite and lies in the first factor of $U$. Then 
$$(\tilde{Q_1},\ldots,\tilde{Q_k}):=\sum_i\tilde{\lambda_i}(f_1^{(i)},\ldots,f_k^{(i)}),$$
is a tuple of isospectral rational positive definite forms in $U$, and up to a constant $\tilde{Q_i}$ are integral.
\end{proof}

In 1984, Suwa-Bier, a student of Kneser, obtained the following impressive result.

\begin{theorem}[Suwa-Bier \cite{suwa1984positiv}] The choir numbers are finite.
\end{theorem}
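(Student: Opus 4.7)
My plan is to prove $\flat_n < \infty$ by contradiction, first reducing to integer forms and then splitting on the spectral invariant $\det Q$. Suppose $\flat_n = \infty$ for some fixed $n$. By Proposition \ref{prop: int tuple}, for each $k$ we may find mutually isospectral, integrally inequivalent even integer quadratic forms $Q_1^{(k)}, \ldots, Q_k^{(k)}$ of dimension $n$. Replacing each by its unique Minkowski-reduced representative in the interior of $\mathcal{M}_n^+$ (Theorem \ref{th:alun}, Corollary \ref{Mplus}), the $Q_i^{(k)}$ become pairwise distinct integer points in $\overline{\mathcal{M}_n^+}$ sharing a common spectrum for each $k$.

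Next, I would locate these integer points using spectral invariants. Van der Waerden's theorem (Theorem \ref{th:van}) bounds the diagonal entries by $q_{ii} \le \Delta_i \lambda_i(Q)$, with successive minima $\lambda_i$ spectrally determined, and the Minkowski reduction conditions force $|q_{ij}| \le q_{ii}/2$. Hence the $Q_i^{(k)}$ all lie inside a compact region of $\overline{\mathcal{M}_n^+}$ whose size depends on the common spectrum. Since integer points in a bounded region are finite, this at once recovers Theorem \ref{th:fin} (finiteness for a fixed spectrum). To obtain the uniform bound $\flat_n < \infty$, I would split on the common determinant $d_k := \det Q^{(k)}$, itself a spectral invariant. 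If $d_k$ remains bounded along a subsequence, the corresponding compact region is a single region of $\overline{\mathcal{M}_n^+}$ independent of $k$, and its finitely many integer points cap $k$, giving a contradiction.

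The main obstacle is the remaining case $d_k \to \infty$, where the Minkowski region blows up and naive compactness breaks down: a lattice of unit minimum can have arbitrarily large volume and vice versa, so neither rescaling normalization yields Mahler compactness alone. To handle this I would invoke the modular-form rigidity of theta series: by Theorem \ref{thm:eichler}, each $\theta_{Q_i^{(k)}}$ lies in a single finite-dimensional space $M_{n/2}(\Gamma_0(N_k), \chi_k)$ determined by the common spectrum, and mutual isospectrality forces all $k$ theta series to coincide there. Hecke's identity theorem (Theorem \ref{th:id}), packaged as Corollary \ref{cor: modisos}, then reduces the isospectral equation to coincidence of roughly $\mu_0(N_k)\,n/24$ initial Fourier coefficients. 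The hardest step, which is essentially Suwa-Bier's original contribution, is to upgrade this analytic rigidity to a bound on the number of integral Minkowski-reduced preimages of a single theta series that is uniform in $d_k$; I anticipate this requires arithmetic input from the genus theory of integral quadratic forms together with a careful interaction with reduction theory, since the purely geometric Mahler-and-Minkowski argument above stalls at Theorem \ref{th:fin} and cannot by itself prevent $k$ from growing with $d_k$.
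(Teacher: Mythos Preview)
The paper does not prove this theorem; it is stated without proof and attributed to Suwa-Bier's 1984 thesis, so there is no in-paper argument to compare against. Judged on its own, your proposal correctly reduces to integral forms via Proposition~\ref{prop: int tuple} and correctly handles the bounded-determinant case, modulo one slip: the successive minima $\lambda_i$ for $i\ge 3$ are \emph{not} spectral invariants --- the paper itself remarks immediately after Theorem~\ref{th:van} that $q_{33}^{(1)}=q_{33}^{(2)}$ can fail for isospectral Minkowski-reduced forms. What you actually need, and what is true, is that they are spectrally \emph{bounded}: Minkowski's second theorem gives $\lambda_n\le C_n\det Q/\lambda_1^{n-1}$, and $\det Q$, $\lambda_1$ are spectral invariants. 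With that fix your bounded-determinant paragraph goes through and essentially reproves Theorem~\ref{th:fin}.

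The genuine gap is that you explicitly hand the case $d_k\to\infty$ back to Suwa-Bier, and that case \emph{is} the theorem: Theorem~\ref{th:fin} already gives finiteness for each fixed spectrum, and the entire content of Suwa-Bier's result is the uniformity across all spectra in a given dimension. The modular-form route you sketch does not close this. Corollary~\ref{cor: modisos} is an isospectrality \emph{certificate}: it tells you how many Fourier coefficients suffice to decide whether two theta series coincide, not how many inequivalent integral forms can share a single theta series, which is exactly the quantity you must bound. Moreover the relevant space $M_{n/2}(\Gamma_0(N_k),\chi_k)$ has dimension growing like $\mu_0(N_k)\,n/24$, so the constraint weakens as $d_k$ grows; and the number of classes in a genus also grows with the discriminant by the Siegel mass formula, so a direct genus count gives no uniform cap either. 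You have located the obstruction precisely but proposed no mechanism to overcome it; the proposal is a reduction to the hard case, not a proof.
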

With this in mind, it is natural to investigate the asymptotic behavior of the choir numbers. We have already seen in Lemma \ref{le:schiemlemma} that the choir numbers are non-decreasing. 
In light of the four dimensional examples of isospectral, non-isometric lattices we obtain the following lower bound that we have not been able to find in the literature.

\begin{theorem} If $\flat_m\ge k$, then 
$$\flat_{n}\geq {\lfloor n/m \rfloor +k-1\choose k-1}.  $$
In particular, $\flat_n\geq \lfloor n/4\rfloor +1$, and $\flat_n$ tends towards infinity.  
\end{theorem}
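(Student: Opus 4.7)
The plan is a product construction. Let $\Lambda_1,\ldots,\Lambda_k$ be $k$ mutually isospectral, pairwise non-congruent $m$-dimensional lattices, guaranteed by $\flat_m \geq k$. Write $n = mq + r$ with $q = \lfloor n/m \rfloor$ and $0 \leq r < m$, and fix any auxiliary $r$-dimensional lattice $\Gamma$ (trivial if $r = 0$). For each tuple $\mathbf{a} = (a_1,\ldots,a_k) \in \mathbb{Z}_{\geq 0}^k$ with $a_1 + \cdots + a_k = q$, define
\[
\Lambda(\mathbf{a}) := \Lambda_1^{a_1} \times \Lambda_2^{a_2} \times \cdots \times \Lambda_k^{a_k} \times \Gamma.
\]
By Lemma \ref{le:permutations} the order of factors does not affect the congruence class, and by a standard stars-and-bars count the number of such tuples is $\binom{q+k-1}{k-1}$. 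Each $\Lambda(\mathbf{a})$ is $n$-dimensional, and the goal is to show that this family is mutually isospectral and pairwise non-congruent.

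Isospectrality is quick. Iterating Lemma \ref{le:thetaproducts} yields $\theta_{\Lambda(\mathbf{a})} = \theta_\Gamma \prod_{i=1}^{k} \theta_{\Lambda_i}^{a_i}$, and by Corollary \ref{cor:dim_vol} the common theta series $\theta := \theta_{\Lambda_1} = \cdots = \theta_{\Lambda_k}$ does not depend on $i$. Hence $\theta_{\Lambda(\mathbf{a})} = \theta_\Gamma \cdot \theta^q$ is independent of $\mathbf{a}$, and mutual isospectrality then follows from Corollary \ref{cor:dim_vol}.

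The hard part is non-congruence. Suppose $\Lambda(\mathbf{a}) \cong \Lambda(\mathbf{b})$. Applying Witt's cancellation for lattices (Theorem \ref{th:inheritance1}) to the common factor $\Gamma$ reduces this to
\[
\Lambda_1^{a_1} \times \cdots \times \Lambda_k^{a_k} \;\cong\; \Lambda_1^{b_1} \times \cdots \times \Lambda_k^{b_k}.
\]
I would next iteratively cancel $\min(a_i,b_i)$ copies of each $\Lambda_i$ (again by Theorem \ref{th:inheritance1}), so that, setting $c_i := a_i - b_i$, one obtains
\[
\prod_{c_i > 0} \Lambda_i^{c_i} \;\cong\; \prod_{c_i < 0} \Lambda_i^{-c_i}.
\]
If any $c_i \neq 0$, both sides are non-trivial full-rank lattices of equal dimension, and one compares their unique irreducible decompositions via Corollary \ref{cor:irreducible} to force a contradiction with the pairwise non-congruence of $\{\Lambda_1,\ldots,\Lambda_k\}$. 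In the case $k = 2$ this step is immediate: the reduced relation collapses to $\Lambda_1^{c} \cong \Lambda_2^{c}$ for some $c \geq 1$, whence Proposition \ref{prop:cong1} gives $\Lambda_1 \cong \Lambda_2$, a contradiction. The main obstacle is the general $k \geq 3$ case, where the irreducible decompositions of the $\Lambda_i$'s may share components, so the comparison of multisets of irreducibles must be organized carefully to conclude $\mathbf{c} = 0$.

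Finally, the consequence $\flat_n \geq \lfloor n/4 \rfloor + 1$ follows by inserting $m = 4$, $k = 2$ into the main statement and invoking Schiemann's four-dimensional duet (Section \ref{ss:inf}); since $\lfloor n/4 \rfloor + 1 \to \infty$, the choir numbers tend to infinity.
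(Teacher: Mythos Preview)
Your approach is essentially the same product construction as the paper's, and the isospectrality argument and the $k=2$ case (hence the ``in particular'' clause) are handled exactly as the paper intends. The only structural difference is that you work directly in dimension $n=mq+r$ by tacking on an auxiliary $r$-dimensional factor $\Gamma$, whereas the paper builds the $\binom{q+k-1}{k-1}$ examples in dimension $mq$ and then invokes Lemma~\ref{le:schiemlemma} to pass to all $n\geq mq$. Both routes are fine; the paper's is marginally cleaner because it avoids the extra Witt cancellation of $\Gamma$.

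On the non-congruence step for $k\geq 3$: your caution is well placed, but note that the paper itself is no more detailed here---it simply asserts that pairwise non-isometry is ``a direct consequence of Theorem~\ref{th:inheritance1} and Proposition~\ref{prop:cong1}.'' Your reduction to $\prod_{c_i>0}\Lambda_i^{c_i}\cong\prod_{c_i<0}\Lambda_i^{-c_i}$ via iterated Witt cancellation is exactly the intended mechanism, and your observation that shared irreducible components among the $\Lambda_i$ could in principle obstruct the final comparison is a genuine subtlety that the paper does not explicitly resolve either. In short, you have reproduced the paper's argument at the same level of rigor and, if anything, been more honest about where the delicate step lies.
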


\begin{proof} In dimension $mn$, for some positive integer $n$, we construct $n +k-1\choose k-1$ pairwise isospectral non-isometric flat tori. This would prove the statement by Lemma \ref{le:schiemlemma}. In dimension $m$ we have $k$ flat tori $\mathbb{T}_{\G_1},\ldots,\mathbb{T}_{\G_k}$ that are isospectral and non-isometric. Consider the sequence of $mn$-dimensional lattices $$\Omega_{i_1,\ldots,i_k}:=\G_1^{i_1}\times\cdots\times \G_k^{i_k},$$
with non-negative $i_j$ such that $i_1+\cdots+i_k=n$. There are $n +k-1\choose k-1$ different choices of the sequence $i_1,\ldots,i_k$. As a direct consequence of the Theorem \ref{th:inheritance1} and Proposition \ref{prop:cong1}, the flat tori $\mathbb{T}_{\Omega_{i_1,\ldots,i_k}}$ all share a common Laplace spectrum, but are pairwise non-isometric.
\end{proof}

The exact values of the choir numbers remains an open problem. Do they have polynomial or exponential growth?  It appears possible to use Suwa-Bier's techniques to obtain an upper bound for $\flat_n$.  This could be an interesting bachelor's or master's thesis.  Although it may be difficult to determine the exact expression for $\flat_n$ as a function of $n$, perhaps the following question is more tractable.
\begin{question} What is the asymptotic behavior of $\flat_n$ as $n \to \infty$? 
\end{question} 

\subsection{The fourth choir number} \label{ss: howmany} 
Since the third choir number has already been determined, the next step is to consider the fourth one. We could start by trying to show that $\flat_4=2$, by looking at triplets of quadratic forms instead of pairs. Let $\choir_n$ be a set containing a unique representative of each $n$-dimensional positive definite form.  We believe it is possible, albeit tedious, in the spirit of the Eisenstein and Schiemann reductions to obtain $\choir_4$ with this property.  Let $k,n $ be positive integers. We define
\[\duets_{n,k}:=\{(f_1,\ldots,f_k)\in (\choir_n)^k: f_i \textrm{ are isospectral for all } i=1, \ldots, k\}, \] 
so that for instance $\duets_{4,3}$ consists of triplets $(f,g,h)\in \choir_4\times \choir_4\times \choir_4$. It's possible to modify Schiemann's algorithm to deal with triplets instead of pairs. Our termination criterion in this case would be that for a polyhedral cone $T$, each triplet $(f,g,h)\in T$ should have either $f=g,f=h$ or $g=h$, since if the elements of a covering would have this property, then there are no triplets of different forms that all share representation numbers. The first main difficulty in working with triplets is that each iteration would go through three $\MIN$ sets instead of two. For this reason, the algorithm might be too slow, but considering the massive increase in modern computation power since Schiemann's thesis approximately thirty years ago, this might not pose too much difficulty.  However, it could be that the algorithm never terminates.  In that case, we would propose a computer search to obtain a triplet of isospectral non-isometric flat tori and perform the analogous algorithm with $\duets_{4,4}$ to try to prove $\flat_4=3$.  This process could be repeated iteratively.  In this way, using $\duets_{n,k}$, we could write a theoretically functioning program in the spirit of Schiemann's symphony for determining $\flat_n$ for each $n$.  The first step would be to answer 
\begin{question} What is the precise value of $\flat_4$?
\end{question} 


\subsection{The classification of quadratic forms}
The classification of even quadratic forms could be applied to shed light on the spectral geometry of flat tori, for example via Proposition \ref{prop: int tuple}.  One such classification of even quadratic forms is through \em genera. \em  Two even quadratic forms are of the same genus if they are $p$-adically equivalent for each $p$. One strong connection between genera and spectra is that the genus is determined by the spectrum precisely when the dimension is less than or equal to 4 \cite[p. 114]{conway}.  The classification of \em regular \em quadratic forms is related to Schiemann's work; in 3 dimensions Schiemann together with co-authors made a significant contribution \cite{jagy}.   A positive definite integral quadratic form $q$ of $k$ variables is \em regular \em if, for every positive integer $a$ for which the congruence $f(x) \equiv a$ mod $n$ has a solution for each positive integer $n$, the equation $f(x)=a$ has a solution $x \in \Z^k$.  This definition is nearly a century old, originally due to Dickson \cite{dickson}.  A quarter century later, Watson proved  \cite{watson_reg} that there are finitely many inequivalent primitive regular ternary quadratic forms. In 1997, Jagy, Kaplansky, and Schiemann \cite{jagy} produced a list containing representatives of all the \em possible \em equivalence classes of such forms; there are 913.  They proved that 891 of the list are indeed regular, leaving the remaining 22 as an open problem.  In 2011, Oh \cite{oh} proved 8 of the remaining 22 are regular.  In 2020, Oh and Kim classified all 49 regular ternary triangular forms; we refer to \cite{kim_oh} for the details.   These types of problems are generally approached with number theoretic techniques and tools, but it may be interesting to approach them from an analytic or geometric perspective.  What are the spectral and geometric implications for the associated flat tori?  What are the geometric features for the associated lattices?  

\subsection{The geometry of lattices and $k$-spectra} 
Our third perspective on the isospectral problem for flat tori concerns the geometry of lattices.  The $k$-spectrum is a geometric invariant of a lattice for which many questions are open.

\begin{defn}[$k$-spectrum] Let $\G$ be a lattice and $\Lambda\subseteq \G$ a sublattice. Write $[\Lambda]$ for the equivalence class of sublattices in $\G$ with respect to congruence. We define the \em $k$-spectrum \em of $\G$ to be the set
\[\mathcal{L}^k(\G):=\left\{\left([\Lambda],m_{\Lambda}\right): \Lambda \textnormal{ is a }k\textnormal{-rank sublattice of }\G \right\}. \] 
Here, $m_\Lambda=\#[\Lambda]$ is the number of sublattices of $\G$ that are congruent to $\Lambda$.
\end{defn}

Two full-rank lattices in $\R^n$ are congruent if and only if their $n$-spectra agree. Two lattices are isospectral if and only if their $1$-spectra agree. The $k$-spectrum is in this sense a generalization of the length and Laplace spectra.  It gives rise to new problems, perhaps the most natural being: 

\begin{question} For which triplets of positive integers $(n,k,m)$ with $n\ge k,m$, does $\mathcal{L}^k(\G_1)=\mathcal{L}^k(\G_2)$ imply $\mathcal{L}^m(\G_1)=\mathcal{L}^m(\G_2)$ for any $n$-rank lattices $\G_1$, $\G_2$? 
\end{question} 

It follows from our work here that the answer is positive for some triplets, for example $(3,1,3)$ as in \S \ref{s:schiemann}, and negative for others, like $(4,1,4)$ as in \S \ref{s:differentdonuts}. In the unpublished report \cite{claesspectral}, Claes showed that that the answer is yes if $n=m=3$, and $k=2$.  We show below that in any dimension $n$, the $k$ spectra determines the $m$ spectra for $m \leq k$.

\begin{prop} Let $n \geq k \geq m \geq 1$, and let $\G_i$ be full-rank lattices in $\R^n$. If the $k$-spectra of $\G_1$ and $\G_2$ agree, then their $m$-spectra also agree. 
\end{prop}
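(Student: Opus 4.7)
The plan is to reduce to the case $m = k - 1$ by induction on $k - m$, and then to recover $\mathcal{L}^{k-1}(\G)$ from $\mathcal{L}^{k}(\G)$ via a double-counting identity, with primitive sublattices playing the central role.

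The base case $k = m$ is trivial, so by induction on $k - m$ it suffices to show that for every $k \ge 2$, $\mathcal{L}^{k}(\G)$ determines $\mathcal{L}^{k-1}(\G)$. In the boundary case $k = n$ this is easy: the class $[\G]$ itself appears in $\mathcal{L}^{n}(\G)$ as the unique class of minimum covolume (any proper finite-index sublattice has strictly larger covolume), so $\mathcal{L}^{n}(\G)$ determines $[\G]$ and hence every $\mathcal{L}^{j}(\G)$ for $j \le n$. I would therefore focus on $k < n$.

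For the inductive step, fix a congruence class $[L_0]$ of rank-$(k-1)$ lattices and count the set of pairs $\{(L,K): L\subseteq K\subseteq \G,\ \mathrm{rk}\,L=k-1,\ \mathrm{rk}\,K=k,\ [L]=[L_0]\}$ in two different ways. Grouping by $K$ yields
\[
\sum_{K \subseteq \G,\ \mathrm{rk}\,K=k} m^{K}_{L_0}\;=\;\sum_{[K_0]} m^{\G}_{K_0}\cdot m^{K_0}_{L_0},
\]
where $m^{K_0}_{L_0}$ denotes the multiplicity of $[L_0]$ in the $(k-1)$-spectrum of any rank-$k$ representative of $[K_0]$. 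Since this quantity depends only on $[K_0]$, the whole count is determined by $\mathcal{L}^{k}(\G)$ alone, and so is the same for $\G_1$ and $\G_2$. Grouping by $L$ instead gives the same count as $\sum_{L \in [L_0]} N_k(L,\G)$, with $N_k(L,\G):=\#\{K: L\subseteq K\subseteq \G,\ \mathrm{rk}\,K=k\}$.

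The main obstacle will be that $N_k(L,\G)$ depends on how $L$ embeds in $\G$ (through the quotient group $\G/L$), not merely on the congruence class $[L]$, and it can even be infinite. To handle this I would restrict to \emph{primitive} sublattices, namely those with $L=\G\cap \mathrm{span}_\R L$, equivalently $\G/L$ torsion-free. For primitive $L$ of rank $k-1$ one has $\G/L\cong\Z^{n-k+1}$, and the primitive rank-$k$ overlattices of $L$ in $\G$ are in canonical bijection with primitive rank-$1$ subgroups of $\Z^{n-k+1}$, a set whose size depends only on $n-k+1$. Coupled with a M\"obius inversion along the saturation map $M\mapsto\G\cap\mathrm{span}_\R M$ (which relates $\mathcal{L}^{j}(\G)$ with its primitive version $\mathcal{L}^{j}_{\mathrm{prim}}(\G)$, since every rank-$j$ sublattice has a unique primitive saturation containing it), this would let me extract $\mathcal{L}^{k-1}_{\mathrm{prim}}(\G)$ from $\mathcal{L}^{k}_{\mathrm{prim}}(\G)$ and then reassemble $\mathcal{L}^{k-1}(\G)$. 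The hardest part will be making this inversion rigorous when $n-k+1\ge 2$, in which case the relevant primitive count is infinite; I would address this by truncating by a covolume bound and taking limits, so that all sums in sight are finite at every stage.
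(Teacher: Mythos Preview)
Your double-counting identity is set up correctly and you rightly flag that $N_k(L,\G)$ depends on the embedding, but the proposed fix via primitive sublattices and covolume truncation does not close the gap. For a primitive rank-$(k-1)$ sublattice $L\subset\G$, the primitive rank-$k$ overlattices correspond to $\pm$-classes of primitive vectors in $\G/L$, and $\mathrm{covol}(K)$ equals $\mathrm{covol}(L)$ times the norm of such a vector in the metric that $\G/L$ inherits from orthogonal projection onto $(\mathrm{span}_\R L)^\perp$. That induced metric depends on how $L$ sits inside $\G$, not merely on $[L]$: two congruent primitive sublattices of the same $\G$ can have non-congruent quotient lattices. For instance, in the lattice with Gram matrix $\left(\begin{smallmatrix}2&1&0\\1&2&0\\0&0&2\end{smallmatrix}\right)$ the first and third basis vectors $a_1,a_3$ are both primitive of squared length $2$, yet $\G/\Z a_1$ has Gram matrix $\left(\begin{smallmatrix}3/2&0\\0&2\end{smallmatrix}\right)$ while $\G/\Z a_3$ has Gram matrix $\left(\begin{smallmatrix}2&1\\1&2\end{smallmatrix}\right)$, with shortest vectors of different lengths. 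Hence the truncated count $N_k^{\mathrm{prim}}(L,\G;R)$ is not constant on $[L_0]$ at any finite $R$, the sum $\sum_{L\in[L_0]}N_k^{\mathrm{prim}}(L,\G;R)$ cannot be factored as $m^\G_{L_0}$ times a quantity independent of $\G$, and passing to a limit does not repair this. The M\"obius step between $\mathcal{L}^j$ and $\mathcal{L}^j_{\mathrm{prim}}$ is fine in principle, but it never gets off the ground because the primitive overlattice count is already embedding-dependent.

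The paper's argument is entirely different and avoids overlattice counts. It rephrases equality of $k$-spectra as the existence of a Gram-matrix-preserving bijection between linearly independent ordered $k$-tuples in $\G_1$ and in $\G_2$ (equivalently, equality of the representation numbers $R_k(\G,Q)=\#\{(v_1,\dots,v_k)\in\G^k:\langle v_i,v_j\rangle=Q_{ij}\}$ for all positive definite $k\times k$ matrices $Q$), and then descends to $(k-1)$-tuples by looking at the upper-left $(k-1)\times(k-1)$ block of the Gram matrix. If you want to rescue a counting approach, working with these $R_k$ rather than with sublattices is far more tractable; for example, when $k=2$ the identity $\sum_{|w|<a}R_2\!\left(\G,\left(\begin{smallmatrix}a&w\\w&a\end{smallmatrix}\right)\right)=R_1(\G,a)^2-2R_1(\G,a)$ already recovers $R_1$ from $R_2$ without any truncation issues.
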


\begin{proof} The $k$-spectra agree if and only if there is a bijection
$$\phi: \G_1^k\to \G_2^k,$$
such that $\phi$ maps any set of linearly independent vectors $u_1,\ldots,u_k\in \G_1$ to a set of linearly independent $v_1,\ldots,v_k\in \G_2$, where the parallelotope spanned by $u_i$ is congruent to that which is spanned by $v_i$, meaning that we can order $v_i$ such that $(\langle u_i,u_j\rangle)_{ij}=(\langle v_i,v_j\rangle)_{ij}$.  Now consider the function $\phi':\G_1^{k-1}\to \G_2^{k-1}$, sending the sets $u_1,\ldots,u_{k-1}\in \G_1$ to $v_1,\ldots,v_{k-1}\in \G_2$ corresponding to $\phi$. Since the upper left $(k-1)\times (k-1)$ submatrices of the above $k\times k$ Gram matrices are equal, $\phi'$ satisfies precisely the condition that the $(k-1)$-spectra of $\G_1$ and $\G_2$ agree.
\end{proof}

As a consequence of the preceding proposition, the number of $n$-dimensional non-congruent lattices whose $k$-spectra all agree is bounded above by $\flat_n$, for any $k$. One could also phrase the $k$-spectrum as a property of flat tori or quadratic forms. In terms of quadratic forms, there is a connection to \textit{Siegel modular forms}; see \cite[Chapter 1]{pitale2019siegel}. For a certificate similar to Corollary \ref{cor: modisos} in terms of $k$-spectra, see \cite{richter2015sturm}.  We hope that readers investigating these and related problems will keep all three perspectives in mind and thereby reap the benefits of techniques from analysis, number theory, and geometry.

\section*{About the authors} 
Erik Nilsson is a PhD student in the department of Numerical Analysis at the Royal Institute of Technology in Stockholm, Sweden. His supervisor is Sara Zahedi. Erik completed his bachelor's, master's and civil engineering degrees in mathematics at Chalmers University, with one year of the master's studies completed at Politecnico di Milano.  He is interested in partial differential equations and their relation to science, industry, and nature. Erik's focus is currently aimed at finding mass conservative numerical schemes for equations governing flow phenomena in fractured domains.

Julie Rowlett is an associate professor in the Division of Analysis and Probability Theory at the joint Mathematical Sciences Department of Chalmers University and the University of Gothenburg in Sweden. She is also the director of the engineering mathematics program at Chalmers and a member of the Swedish National Committee for Mathematics. She completed her Habilitation at the Georg-August-Universit\"at G\"ottingen, PhD at Stanford University, and bachelor of science at the University of Washington. Julie's research interests include geometric, functional, and microlocal analysis; differential geometry; complex analysis and geometry; spectral theory; mathematical physics; dynamical systems; game theory; and interdisciplinary collaboration.

Felix Rydell is a PhD student in the department of Data Science and Artificial Intelligence at the Royal Institute of Technology in Stockholm, Sweden.  His supervisors are Kathl\'en Kohn and Fredrik Viklund.  Felix completed bachelor's and master's degrees in pure mathematics at the University of Gothenburg.  His research investigates and builds connections between algebraic geometry and machine learning together with artificial intelligence.  Felix is currently focused on different mathematical and geometrical approaches to computer vision.

\end{document}